  \definecolor{limegreen}{rgb}{0.196,0.804,0.196}
  \definecolor{darkgreen}{rgb}{0.0,0.5,0.0}
  \definecolor{darkbluegreen}{rgb}{0,0.3,0.6}
  \definecolor{badgerred}{rgb}{0.715,0.004,0.004}
  \newcommand{\bsp}{\begin{split}}
  \newcommand{\esp}{{\end{split}}}
  \newcommand{\ds}{\displaystyle}
    \newcommand{\hilb}{\mathfrak{H}}
      \newcommand{\cH}{\mathfrak{H}}
   \newcommand{\be}{\begin{equation}}
       \newcommand{\tit}{{\tilde{\tau}}}
  \newcommand{\ee}{\end{equation}}
      \newcommand{\bsi}{{\bar \sigma}}
  \newcommand{\bee}{\begin{equation*}}
  \newcommand{\eee}{\end{equation*}}
  \newcommand{\cE}{\mathcal{E}}
   \newcommand{\bpsi}{{\Psi_2}}
 \newcommand{\Rm}{\mathrm{Rm}}
  \newcommand{\cD}{\mathfrak{D}}
  \newcommand{\cB}{\mathcal{B}}
 \newcommand{\pdd}[1]{\frac\partial{\partial #1}}
  \newcommand{\cC}{\mathcal{C}}
  \newcommand{\cL}{\mathcal{L}}
  \newcommand{\vft}{\varphi_T}
    \newcommand{\vf}{\varphi}
     \newcommand{\hZ}{Z}  
  \newcommand{\hv}{\mathfrak{D}}
  \newcommand{\pr}{\mathcal{P}}      
  \newcommand{\cyl}{\mathcal{C}}     
  \newcommand{\collar}{\mathcal{K}}  
\newcommand{\tip}{\mathcal{T}}     
\newcommand{\sk}{\smallskip} 
  \newcommand{\pd}{\partial}
  \newcommand{\dist}{{\mathrm{dist}}}
    \newcommand{\cQ}{\mathcal{Q}}
  \newcommand{\ud}{z}
  \newcommand{\emu}{{e^\mu du}}
  \newcommand{\R}{{\mathbb R}}
  \newcommand{\supp}{\mathop{\mathrm {supp}}}
  \newtheorem{theorem}{Theorem}[section]
  \newtheorem{proposition}[theorem]{Proposition}
  \newtheorem{lemma}[theorem]{Lemma}
  \newtheorem{definition}[theorem]{Definition}
  \newtheorem{prop}[theorem]{Proposition}
  \newtheorem{corollary}[theorem]{Corollary}
  \newtheorem{conjecture}[theorem]{Conjecture}
  \theoremstyle{remark}
  \newtheorem{remark}[theorem]{Remark}
  \newtheorem{claim}[theorem]{Claim}
  \newtheorem{step}{Step}
  \numberwithin{equation}{section}
  \numberwithin{theorem}{section}
\begin{document}
\title[Uniqueness (\today)]
{Uniqueness   of ancient compact non-collapsed  solutions to the 3-dimensional Ricci flow} 
\date{\today}
\author[Daskalopoulos]{Panagiota Daskalopoulos}
\address{Department of Mathematics, Columbia University, New York}
\author[Sesum]{Natasa Sesum}
\address{Department of Mathematics, Rutgers University, New Jersey}

\thanks{
P.
Daskalopoulos thanks the NSF for support in DMS-1266172.
N.
Sesum thanks the NSF for support in DMS-1811833.
}

\begin{abstract}
  In this paper we study the classification of compact $\kappa$-noncollapsed ancient solutions to the 3-dimensional Ricci flow 
which are rotationally and   reflection symmetric.   We prove that any such solution is isometric to  the  sphere 
    or   the type II 
  ancient solution    constructed by G. Perelman in \cite{Pe1}. 
\end{abstract}
\maketitle
\tableofcontents

\section{Introduction} 
Consider an ancient compact 3-dimensional solution to the Ricci flow
\begin{equation}
  \label{eq-rf}
  \frac{\partial}{\partial}g_{ij} = -2R_{ij}
\end{equation}
existing for $t\in (-\infty,0)$ so that it shrinks to a round point at $T$. The goal in this work is to provide the classification of such
solutions under natural geometric assumptions. 

\sk

Ancient  compact solutions to the 2-dimensional Ricci flow were classified by Daskalopoulos, Hamilton and Sesum in \cite{DHS}. 
 It turns out that in this case,  the complete list contains (up to conformal invariance) only the shrinking sphere solitons and the King  solution.  The latter 
is a  well known example  of ancient {\em collapsed} Ricci flow solution and can be written in closed form.  It was first discovered by J. King \cite{K1} in the context of
the logarithmic fast-diffusion equation on $\R^2$   and later independently by Rosenau \cite{R}  in the same context. 
It also appears as the {\em sausage model}   in the context of quantum field theory,  in the independent work of   Fateev-Onofri-Zamolodchikov \cite{FOZ}.
Although the King ancient solution is not   a soliton,  it may be  visualized as two steady solitons, called ``cigars'', coming from opposite spatial infinities glued together. 
Let us remark that the classification work in \cite{DHS}   classifies both collapsed and non-collapsed solutions. 

\sk
\sk 
In \cite{Ni}, Lei Ni  showed that any $\kappa$-noncollapsed ancient
solution to the Ricci flow which is of Type I and has positive
curvature operator has constant sectional curvature. In \cite{BHS}, Brendle, Huisken and Sinestrari  proved  that any ancient solution to the Ricci flow in dimension $n \ge 3$ which satisfies a suitable curvature pinching condition must have
constant sectional curvature. Fateev's examples (in \cite{Fa2}) which are collapsed show that  the pinching curvature condition in \cite{BHS} can not be removed. They also show the classification of closed ancient solutions even in dimension three, if we do not assume noncollapsedness may be very difficult, if not impossible. In \cite{BKN}, Bakas, Kong and Ni   construct several higher dimensional examples of type I ancient closed solutions to the Ricci flow which are non-collapsed and with positive sectional curvature. Observe that Perelman's solution is of positive curvature
operator, non-collapsed, but of type II.

\sk
Regarding the classification of ancient solutions to other geometric flows, let us mention  related work in the mean curvature flow setting. In \cite{ADS2} the authors showed   that every closed, {\em uniformly 2-convex}  and {\em non-collapsed}  ancient solution to the mean curvature flow must be either the family of contracting spheres or the unique, up to isometries, ancient oval constructed by White in \cite{Wh} and later by Haslhofer and Hershkovits in \cite{HO}. On the other hand, ancient noncompact non-collapsed uniformly 2-convex solutions were considered by Brendle and Choi in \cite{BC} and \cite{BC1}, where the authors showed  that  any noncollapsed, uniformly 2-convex noncompact ancient solution   to the mean curvature flow must be the   rotationally symmetric translating soliton, and hence the Bowl soliton, up to scaling and isometries.  Ancient compact {\em collapsed}  mean curvature flow solutions were studied in a recent interesting work  by Bourni, Langford and Tignalia in \cite{BLT}.  

\sk 
\sk 

Let us now turn our attention to the  3-dimensional Ricci flow. In \cite{Pe1}, G.  Perelman established the existence of 
  a rotationally symmetric ancient $\kappa$-noncollapsed solution on $S^3$ which is not a soliton. This is a   type II ancient solution backward in time, namely  its
  scalar curvature satisfies   $\sup_{M\times (-\infty,0)} |t| |R(x,t)| = \infty$ and forms a type I singularity forward in time, since it shrinks to a round point.  Perelman's ancient solution has backward in time limits  which are the Bryant soliton and the round cylinder $S^2\times \mathbb{R}$, depending on how the sequence of points and times about which one rescales are  chosen.  These are the only backward in time limits of the Perelman  ancient solution.
Let us remark that in contrast to the {\em collapsed } King ancient solution of the 2-dimensional Ricci flow, the Parelman ancient  solution is
{\em noncollapsed}. In fact there exist other ancient compact solutions to the 3-dimensional Ricci flow which are {\em  collapsed}  and are the analogues of the King solution  (see in  \cite{Fa2}, \cite{BKN}). 

\sk
\sk 

In \cite{Pe1}, Perelman introduced the following notion of {\em $\kappa$-noncollapsed} metrics.  

\begin{definition}[$\kappa$-noncollapsed property] 
The  metric $g$ is called  $\kappa$-noncollapsed  on the scale $\rho$, if every metric ball $B_r$  of radius $ r < \rho$  which satisfies 
$|Rm | \leq r^{-2}$  on $B_r$ has volume at least $\kappa \, r^n$. An ancient Ricci flow solution is called  $\kappa$-noncollapsed, if it is
 $\kappa$-noncollapsed on all scales $\rho$, for some $\kappa >0$.  
\end{definition} 

It turns out that this is an important notion in the context of ancient solutions and singularities.  In fact,  in \cite{Pe1}  Perelman proved that every ancient solution arising as a blow-up limit at a singularity of the Ricci flow on compact manifolds is $\kappa$-noncollapsed on all scales for some 
 $\kappa >0$. We have the following conjecture made by Perelman.

\begin{conjecture}[Perelman]
  \label{con-perelman}
  Let $(S^3,g(t))$ be a compact, ancient $\kappa$-noncollapsed solution to the Ricci flow \eqref{eq-rf} on $S^3$.  Then $g(t)$ is either a family of contracting spheres or Perelman's solution.
\end{conjecture}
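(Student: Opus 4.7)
The plan is to reduce to the rotationally and reflection symmetric case handled in this paper, so the two main tasks are to prove that any compact ancient $\kappa$-noncollapsed solution on $S^3$ is (i) rotationally symmetric and (ii) reflection symmetric. I would first establish the preliminary structure: by Hamilton's strong maximum principle combined with the Hamilton--Ivey pinching estimate, an ancient compact solution in dimension three has nonnegative and in fact strictly positive sectional curvature for $t<0$. Together with the $\kappa$-noncollapsing hypothesis, this places $(S^3,g(t))$ in Perelman's class of three-dimensional $\kappa$-solutions, and every parabolic rescaling around a sequence $t_k\to-\infty$ subconverges to a smooth, nonflat ancient $\kappa$-solution of nonnegative curvature. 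By dimension reduction combined with Brendle's uniqueness theorem for the three-dimensional Bryant soliton, such asymptotic limits are classified as the round shrinking sphere, the round shrinking cylinder $S^2\times\R$, or the Bryant steady soliton. A typology argument then shows that $(S^3,g(t))$ is either itself round, in which case we are done, or has two ``cap'' regions asymptotically modelled on the Bryant soliton joined by an increasingly long cylindrical neck as $t\to-\infty$.

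The main obstacle is to upgrade this \emph{asymptotic} rotational symmetry into rotational symmetry of the solution itself. The approach I would follow is Brendle's neck-and-cap improvement scheme, adapted from his program on ancient solutions to mean curvature flow. On any region where $(S^3,g(t))$ is $\eps$-close to a round cylinder one would prove a \emph{neck improvement theorem} asserting that, upon parabolic rescaling, the deviation from the nearest rotationally symmetric model decays by a definite multiplicative factor; the proof proceeds via a spectral analysis of the Lichnerowicz operator on $S^2\times\R$, using approximate rotating Killing fields as barriers and exploiting the positive $L^2$-gap above the neutral modes coming from translation and rotation. A companion \emph{cap improvement theorem} handles each Bryant-like tip, where the rigidity of the Bryant soliton under equivariant perturbations furnishes the requisite contraction. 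Iterating these improvement estimates backward in time and patching the approximate Killing fields through the neck, via a standard covering and extension argument, upgrades to a genuine $\mathrm{SO}(3)$ isometric action on $(S^3,g(t))$ for every $t$. The Ricci-flow version of neck improvement is appreciably more delicate than the mean curvature flow version, because one must work with the full symmetric $2$-tensor Lichnerowicz equation rather than a scalar, and because the cap geometry is genuinely three-dimensional; this step is the technical heart of the proof.

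Once rotational symmetry is established, reflection symmetry would follow from a separate uniqueness argument. The two caps are each asymptotically modelled on the unique three-dimensional Bryant soliton, so in rotationally symmetric coordinates the warping profile $\phi(s,t)$ satisfies the same limiting ODE at both $s\to+\infty$ and $s\to-\infty$. A maximum principle applied to the difference $\phi(s,t)-\phi(-s,t)$, together with the backward uniqueness of the cylindrical and Bryant tangent flows and a Carleman-type estimate to rule out exponential decay of nontrivial antisymmetric modes, would force $\phi(\cdot,t)$ to coincide with its reflection for all $t$. With both symmetries in hand, the main theorem of the present paper identifies $(S^3,g(t))$ with either the family of shrinking round spheres or Perelman's ancient solution, completing the proof of Conjecture~\ref{con-perelman}.
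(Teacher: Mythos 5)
You have set out to prove a statement that the paper itself records only as a conjecture: the paper does not prove Conjecture \ref{con-perelman}. Its actual results (Theorem \ref{thm-main-main}, and Theorem 1.4 obtained by combining it with \cite{Br2}) assume reflection symmetry, and the authors state explicitly that removing this hypothesis is deferred to forthcoming work. So your proposal cannot be checked against a proof in the paper; it has to stand on its own, and it does not. Your step (i), rotational symmetry, is legitimate only as an appeal to \cite{Br2} and \cite{BK} --- which is exactly how the paper passes from Theorem \ref{thm-main-main} to Theorem 1.4 --- but your two-sentence sketch of the neck/cap improvement scheme is a description of Brendle's theorem, not an argument; as a citation it is fine, as a proof it is not.

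The genuine gap is step (ii), reflection symmetry, which is precisely the hypothesis the authors could not yet remove. Concretely: (a) for a general rotationally symmetric ancient oval there is no canonical center, so the function $\phi(s,t)-\phi(-s,t)$ is not even defined until you choose a (possibly time-dependent) origin, and controlling that choice is part of the problem; (b) an ancient solution has no initial time slice, so a maximum principle on the antisymmetric part has nothing to propagate from --- any argument must run from the asymptotics as $t\to-\infty$, where the antisymmetric perturbation of the rescaled profile contains the translation mode $\psi_1=\sigma$ of the cylindrical linearization $\mathcal{L}=\partial_\sigma^2-\tfrac{\sigma}{2}\partial_\sigma+1$, an eigenfunction with \emph{positive} eigenvalue $\tfrac12$, together with neutrally/polynomially decaying contributions. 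These modes cannot be excluded by soft Carleman or maximum-principle reasoning; in the framework of this paper they can only be killed by adjusting free parameters (here $\beta,\gamma$; without reflection symmetry one needs an additional spatial translation parameter) and then rerunning the entire coercivity scheme in the cylindrical and tip regions with the enlarged unstable/neutral eigenspaces. In other words, "reflection symmetry" is not a preliminary lemma one can dispose of by comparing $\phi(s,t)$ with $\phi(-s,t)$; establishing it is of the same nature and difficulty as the uniqueness theorem itself, which is why the paper assumes it. As written, your proposal fills the one gap the authors leave open with an argument that does not work, so it does not prove the conjecture.
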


The well known Hamilton-Ivey pinching estimate tells us that any two or three dimensional Ricci flow ancient solution,  with bounded curvature at each time slice,  has nonnegative sectional curvature.  Since our solution $(S^3,g(t))$ is closed, the strong maximum principle implies that  the sectional curvatures,  and hence the entire curvature operator,  are strictly positive.  It follows by Hamilton's Harnack estimate (see in \cite{Ha1})   that $R_t \ge 0$, yielding the  existence of
 a uniform constant $C > 0$ so that $R(\cdot,t) \le C$, for all $t\in (-\infty,t_0]$.  Since the curvature is positive, one concludes that 
\begin{equation}
  \label{eq-curv-bound}
  \|\Rm\|_{g(t)} \le C, \qquad \mbox{for all} \,\,\,  -\infty < t \le t_0,
\end{equation}
for a  uniform constant $C$. The above  discussion yields  that any closed 3-dimensional $\kappa$-noncollapsed ancient solution is actually a {\em $\kappa$-solution},
 in the sense that was defined by Perelman in \cite{Pe1}.

\sk 
In a recent important paper by S. Brendle (\cite{Br}), the author proved that a 3-dimensional non-compact ancient $\kappa$-solution is isometric to either a family of shrinking cylinders or their quotients, or to the Bryant soliton. The author first shows that all 3-dimensional ancient $\kappa$-solutions which are non-compact have to be rotationally symmetric. After that he shows that such a rotationally symmetric solution, if not a cylinder or its quotient, must be a steady Ricci soliton and hence the Bryant soliton by one of his earlier works (\cite{Br1}) about classification of steady Ricci solitons. 
\smallskip

The techniques of Brendle in \cite{Br} can be also applied to show the rotation symmetry of ancient compact and $\kappa$-noncollapsed solution to the 
Ricci flow \eqref{eq-rf} on $S^3$. Brendle has recently shown this in  \cite{Br2}. Bamler and Kleiner in \cite{BK},  obtained the same result as Brendle in the compact case, using different methods. However, since the rotationally symmetric solutions discovered by Perelman are not solitons, the classification
of rotationally symmetric ancient compact and $\kappa$-noncollapsed solutions is a difficult problem. 
Our goal in this work is to establish this classification under the additional assumption of reflection symmetry. In an upcoming  work we plan to remove this technical assumption.  Our main result states as follows. 

\begin{theorem}\label{thm-main-main}   Let $(S^3, g(t))$ be a  compact, $\kappa$-noncollased ancient solution to the Ricci flow on $S^3$
which is  symmetric with respect to rotation and reflection. Then $g(t)$ is either a family of contracting spheres or Perelman's solution.
\end{theorem}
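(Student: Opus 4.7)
The rotational symmetry of $g(t)$ lets us write the metric on $S^3\setminus\{\text{two poles}\}$ as $g(t) = ds^2 + \psi(s,t)^2\,g_{S^2}$, where $s$ is arclength from one pole and $\psi$ is a smooth warping function with $\psi = 0$ and $\psi_s = \pm 1$ at the poles. The reflection symmetry means that $\psi(\cdot,t)$ is even about the equator $s = L(t)/2$. The Ricci flow reduces to a single parabolic equation for $\psi$, to which Hamilton-Ivey pinching, the strong maximum principle and Hamilton's Harnack inequality apply; in particular \eqref{eq-curv-bound} holds and the flow is a $\kappa$-solution in Perelman's sense. If the curvature were uniformly bounded backward in time, a standard blow-down would yield a compact shrinking soliton limit, forcing $g(t)$ to be the round shrinking sphere. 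So I assume the solution is of type II as $t\to-\infty$.

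\textbf{Backward asymptotic shape.} Combining Brendle's classification \cite{Br} of $3$-dimensional noncompact ancient $\kappa$-solutions with the reflection symmetry, I would show that parabolic rescalings of $g(t)$ centered at the equator along any sequence $t_j\to-\infty$ subconverge to the round shrinking cylinder $S^2\times\R$: a Bryant-soliton limit is excluded at the equator because its unique tip is incompatible with the reflection symmetry. Subsequential convergence upgrades to full convergence, so there is a unique tangent flow at infinity. At each pole, on the other hand, the tip blow-up is the Bryant soliton.

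\textbf{Neck analysis and uniqueness.} In the cylindrical regime introduce the parabolic neck coordinates $z = s/\sqrt{-t}$, $\tau = -\log(-t)$, and write $\psi(s,t) = \sqrt{-t}\,v(z,\tau)$; then $v\to \sqrt 2$ as $\tau\to-\infty$ on compact sets. Setting $v=\sqrt 2 + w$, the function $w$ satisfies a parabolic equation whose linearization is an Ornstein-Uhlenbeck type operator $\cL$ on a Gaussian-weighted $L^2$ space, with discrete spectrum and Hermite-polynomial eigenfunctions. In the spirit of \cite{ADS2}, a positive/neutral/negative mode decomposition of $w$ rules out the unstable modes (which would force $g(t)$ to be a round sphere) and rules out the case in which the neutral projection decays faster than the stable one. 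This yields the sharp expansion $w(z,\tau) = -\tfrac{c}{|\tau|}(z^2-2) + o(|\tau|^{-1})$ for an explicit constant $c>0$, the constant being determined by matching to the Bryant-soliton tip region. A rigidity argument, comparing two ancient solutions with these identical asymptotics via a weighted energy or maximum principle applied to their difference, then shows that such a solution is unique. Since Perelman's solution is itself an ancient, compact, $\kappa$-noncollapsed, rotation-and-reflection symmetric solution with exactly this asymptotic profile, $g(t)$ must be isometric to it.

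\textbf{Main obstacle.} The technical heart of the argument is the sharp cylindrical asymptotic analysis and its matching to the Bryant tip. While \cite{ADS2} provides a blueprint in the mean curvature flow setting, Ricci flow introduces genuine new difficulties: the linearization around the shrinking cylinder comes from variations of a Riemannian metric rather than a hypersurface, the tip model is the Bryant soliton instead of the rotationally symmetric Bowl, and the interior geometry is governed by a nonlinear scalar PDE with less favorable structure than graphical MCF. The hardest step will be establishing the sharp neutral-mode decay $w\sim -\tfrac{c}{|\tau|}(z^2-2)$ with explicit $c$, controlling the nonlinear error uniformly down into the intermediate region, and then closing the matching with the Bryant tip to propagate the uniqueness from the cylindrical region to the entire flow. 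The reflection symmetry is essentially used precisely here, as it pins the center of the neck and hence the location of the neutral mode.
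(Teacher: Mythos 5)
Your high-level picture is the right one and matches the paper's architecture: reduce to the scalar PDE for the profile $\psi$, identify the backward asymptotic soliton (cylinder at the equator, Bryant at the poles), rescale to $u(\sigma,\tau)$, decompose into unstable/neutral/stable modes of the Ornstein--Uhlenbeck operator $\cL$, and then run a weighted $L^2$ comparison of two solutions. However, two things in the proposal need correction.

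First, you identify the sharp neutral-mode expansion $u - \sqrt 2 \sim -\frac{\sqrt 2}{8|\tau|}(\sigma^2-2)$ as ``the hardest step'' and say the constant is ``determined by matching to the Bryant-soliton tip.'' Neither is quite right: the paper takes those asymptotics as \emph{input}, citing them from \cite{ADS3} (Theorem~\ref{thm-asym} here), and the universal constant $\sqrt 2/8$ comes from the nonlinear ODE for the neutral-mode coefficient inside the cylinder analysis, not from matching to the tip. The present paper is devoted entirely to what comes \emph{after} that: the uniqueness of the ancient oval among solutions with those asymptotics.

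Second, and more seriously, you compress the actual content of the proof to ``a rigidity argument \ldots via a weighted energy or maximum principle applied to their difference.'' A maximum-principle route does not work here: the difference $w = u_1 - u_2^{\beta\gamma}$ satisfies a nonlocal linearized equation (note the term $J(\sigma,\tau)$ in \eqref{eqn-u}) with degenerating coefficients near the tips, for which there is no direct comparison principle giving rigidity. The weighted-energy route is the whole paper, and it is far from routine: one must (i) use the two scaling/time-translation parameters $\beta,\gamma$ to annihilate $\pr_+ w_\cyl(\tau_0)$ and $\pr_0 w_\cyl(\tau_0)$ simultaneously (Proposition~\ref{lem-rescaling-components-zero}) and verify the resulting $(\beta,\gamma)$ are admissible; (ii) construct a nontrivial weight $\mu(u,\tau)$ in the tip (formulas \eqref{eqn-weight1}--\eqref{eqn-weight2}) that interpolates between the Gaussian $-\sigma^2/4$ on the collar and a Bryant-adapted weight near the tip, and establish a Poincar\'e inequality and an energy estimate with divergence structure for the variable $\Psi=\sqrt{Y}$; (iii) match the cylindrical $\hv$-norm and the tip $\|\cdot\|_{2,\infty}$-norm on the overlap; and (iv) close with an ODE for $a(\tau)=\langle w_\cyl,\psi_2\rangle$ using that $a(\tau_0)=0$. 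Without at least sketching this three-region scheme and the role of the parameter choices, the proposal leaves a genuine gap at exactly the point where the theorem's difficulty lives. The reflection-symmetry remark is correct but incomplete: its role is not only to pin the neck center but also to reduce the unstable eigenspace to the span of $\psi_0\equiv 1$ (killing the odd mode $\psi_1=\sigma$), which is what makes the two free parameters $\beta,\gamma$ suffice.
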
 

Combining Theorem \ref{thm-main-main} and recent results in \cite{Br2} immediately yield the following result.

\begin{theorem}
Let $(S^3, g(t))$ be a  compact, $\kappa$-noncollased ancient solution to the Ricci flow on $S^3$, which is symmetric with respect to reflection. Then $g(t)$ is either a family of contracting spheres or Perelman's solution.
\end{theorem}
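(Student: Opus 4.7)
The plan is essentially to combine two black-box inputs: Brendle's rotational symmetry theorem in \cite{Br2} and Theorem \ref{thm-main-main} proved in this paper. The role of the reflection symmetry hypothesis is only to allow us to apply Theorem \ref{thm-main-main} once rotational symmetry has been upgraded from Brendle's result.

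First, I would invoke the discussion following Definition 1.1 and the Hamilton--Ivey pinching estimate, together with Hamilton's Harnack inequality, to pass from the $\kappa$-noncollapsed hypothesis on $(S^3,g(t))$ to a uniform curvature bound $\|\Rm\|_{g(t)}\le C$ on $(-\infty,t_0]$ as in \eqref{eq-curv-bound}. This identifies $(S^3,g(t))$ as a $\kappa$-solution in the sense of Perelman and puts us in the setting in which Brendle's compact analogue (in \cite{Br2}, or alternatively Bamler--Kleiner \cite{BK}) applies.

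Next, I would cite Brendle's rotational symmetry result: any compact ancient $\kappa$-solution on $S^3$ is rotationally symmetric. Since the metric $g(t)$ is in addition assumed reflection symmetric, the resulting solution is simultaneously rotationally and reflection symmetric at every time, which is precisely the hypothesis of Theorem \ref{thm-main-main}. Applying that theorem then forces $g(t)$ to be either the family of shrinking round spheres or Perelman's ancient solution, which is what we wanted.

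The statement itself is therefore just a packaging result; the main obstacle is not in this short deduction but in the two prerequisites, namely the rotational symmetry theorem of \cite{Br2} (which handles the symmetry upgrade in the compact case) and Theorem \ref{thm-main-main} (which carries out the classification of reflection- and rotation-symmetric compact ancient $\kappa$-solutions). One should only make sure that the notion of $\kappa$-noncollapsed solution used in \cite{Br2} agrees with the one adopted here and that Brendle's argument does not require any additional symmetry assumption, so that it may be applied to our $g(t)$ before reflection symmetry is used.
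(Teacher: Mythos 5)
Your proposal is correct and coincides with the paper's own argument: the paper derives this statement in a single sentence by invoking Brendle's rotational symmetry theorem for compact ancient $\kappa$-noncollapsed solutions (\cite{Br2}) and then applying Theorem \ref{thm-main-main} under the added reflection symmetry hypothesis. The preliminary reduction to a $\kappa$-solution via Hamilton--Ivey and the Harnack inequality, which you spell out, is exactly the discussion the paper gives just before stating Conjecture \ref{con-perelman}.
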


Assume from now on that  $(S^3,g(t))$ is a Ricci flow solution which satisfies the assumptions of Theorem \ref{thm-main-main}. We will next see how one can express the
Ricci flow under rotational symmetry as a single  equation.  Let us  first remark  that by the work of Perelman we know that  the asymptotic soliton of $(S^3,g(t))$  is either a round cylinder or a sphere. We can understand this that every $\kappa$-solution has a gradient shrinking soliton buried inside of it, in an asymptotic sense as time approaches $-\infty$ (for more details on asymptotic solitons see \cite{Pe1}). In our recent work \cite{ADS3} we show that if the asymptotic soliton is the sphere, then the solution $(S^3,g(t))$ must be the  round sphere itself. Hence, from now on {\em we may assume  
that  the asymptotic soliton of our closed $\kappa$-noncollapsed solution is the  round cylinder $S^2\times\R$}.  

\sk

Since at each time slice, the metric is $SO(3)$-invariant, it can can be
written as
\[
g = \phi^2\, dx^2 + \psi^2\, g_{can}, \qquad \mbox{on} \,\, (-1,1)\times S^2
\]
where $(-1,1)\times S^2$ may be naturally identified with the sphere $S^3$
with its north and south poles removed. The  function $\psi(x,t) > 0$ may be
regarded as the radius of the hypersurface $\{x\}\times S^2$ at time $t$.  
The
distance function from the equator is given by
\[
s(x, t) = \int_0^x \phi(x', t)\, dx'.
\]
and abbreviating    $ds = \phi(x, t)\,  dx $, we  write our metric as
$
  g = ds^2 + \psi^2\, g_{can}. 
$
As it was remarked in  \cite{AK1}, for our metric \eqref{eq-metric} to define a smooth metric on $S^3$ we need  to have
$
  \psi_s(s_-(t)) = 1, \,  \psi_s(s_+(t)) = -1$ holding at the two tips  of our solution.  
Under the Ricci flow, the profile  function $\psi : (s_-(t), s_+(t)) \times(-\infty, 0)\to\R$ evolves by 
\begin{equation}
\label{eq-psi-in}
\psi_t = \psi_{ss} -  \frac{1-\psi_s^2}{\psi}.
\end{equation}

\sk

Consider next  a type I scaling of our metric, which leads to  the rescaled profile $u(\sigma,\tau)$ 
defined by 
\begin{equation}\label{eqn-defnu}
  u(\sigma, \tau) := \frac{\psi(s,t)}{\sqrt{-t}}, \qquad  \mbox{with}\,\, \sigma := \frac{s}{\sqrt{-t}}, \,\, \tau = -\log (-t).  
\end{equation}
A direct calculation  shows that  $u:(\sigma_-(\tau),\sigma_+(\tau))\times(-\infty, 0)\to\R$ satisfies the equation
\begin{equation}  \label{eq-u0}
  u_{\tau} = u_{\sigma\sigma} +  \frac{u_{\sigma}^2}{u}  -  \frac 1u + \frac u2
\end{equation}
with boundary conditions at the tips 
$u_\sigma(\sigma_-(\tau),\tau) = 1, \,  u_\sigma(\sigma_+(\tau),\tau) = -1$. 

\sk
\sk 

It follows from  the discussion above, since we know our solution is rotationally symmetric (\cite{BK}, \cite{Br2}), our main result, Theorem \ref{thm-main-main}, is equivalent to the  following uniqueness result.

\begin{theorem}
  \label{thm-main}
  Let $(S^3, g_1(t))$  and $(S^3, g_2(t))$, $-\infty < t < T$,   be two   compact  non-spherical  rotationally and reflection symmetric, $\kappa$-noncollased ancient solutions  to the 
  3-dimensional Ricci flow  which have  the same    axis  of symmetry and  whose profile functions 
  $\psi_1(s,t)$ and $\psi_2(s,t)$ satisfy equation \eqref{eq-psi-in}. 
  Then,  they are the same up to   translations in time and parabolic rescaling.  
  In particular, they coincide with the Perelman solution. 
\end{theorem}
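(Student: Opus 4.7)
The plan is to adapt to the Ricci flow setting the uniqueness scheme developed by the authors in \cite{ADS2} for compact ancient solutions of the mean curvature flow. After imposing the rotational and reflection symmetries, each solution is encoded by its rescaled profile $u_i(\sigma, \tau)$, $i = 1, 2$, solving \eqref{eq-u0}, and by the standing assumption its asymptotic soliton is the round cylinder $S^2 \times \R$ of radius $\sqrt 2$, so that $u_i \to \sqrt 2$ uniformly on compact subsets of $\sigma$ as $\tau \to -\infty$. The invariance group of \eqref{eq-u0} that preserves the reflection symmetry and the common axis has two continuous parameters: time translation $\tau \mapsto \tau + \tau_0$ and parabolic rescaling $(\sigma, u) \mapsto (\sigma/\lambda,\, \lambda u)$. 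I exploit these two parameters to normalize the pair $(u_1, u_2)$, then show that $w := u_1 - u_2$ must vanish identically.

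The first step is to derive a sharp cylindrical asymptotic expansion for each $u_i$. Writing $u_i = \sqrt 2 + v_i$ and linearizing \eqref{eq-u0}, the linear part is an Ornstein--Uhlenbeck-type operator $\cL$ on Gaussian-weighted $L^2$ whose spectrum is $\{1 - k/2 : k \geq 0\}$, with Hermite polynomial eigenfunctions; reflection symmetry kills the odd-$k$ modes. A Merle--Zaag ODE trichotomy applied to the $L^2$ projections of $v_i$ onto the unstable ($k=0$), neutral ($k=2$), and stable ($k \geq 4$) subspaces produces, after fixing the two normalizations above so that the $k=0$ and $k=2$ coefficients take canonical values, the refined expansion
\[
u_i(\sigma, \tau) = \sqrt 2 - \frac{\sigma^2 - 2}{2\sqrt 2\,|\tau|} + o(|\tau|^{-1}),
\]
uniformly on $|\sigma| \leq L$ as $\tau \to -\infty$. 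In particular the cylindrical asymptotics of $u_1$ and $u_2$ agree to this order.

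Next, I would apply the same spectral machinery to $w = u_1 - u_2$. The difference satisfies a parabolic equation
\[
w_\tau = \cL w + \cE, \qquad \cE = \cE(u_1, u_2, w, w_\sigma),
\]
whose remainder $\cE$ decays as $\tau \to -\infty$ because each $u_i$ is already close to $\sqrt 2$. Decomposing $w$ along the eigenspaces of $\cL$ and running a second ODE trichotomy, one of the three parts $w_+,\, w_0,\, w_-$ must dominate as $\tau \to -\infty$. The Step 1 normalization rules out both the unstable and neutral modes (they correspond precisely to the time translation and rescaling directions already used), so $w$ decays exponentially in $\tau$ in the cylindrical region.

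The final and most delicate ingredient concerns the tip regions, where $u_i \to 0$ and the cylindrical Gaussian weight is inoperative. Near each tip, $u_i$ is modeled on a parabolically rescaled Bryant soliton, and a tip-adapted version of the previous decay estimate for $w$ is required. The main obstacle is the construction of a composite weighted norm that simultaneously controls $w$ in the cylindrical, intermediate, and tip regions, together with an energy inequality patching these three regional estimates. Handling the consistent passage from the Hermite cylindrical analysis to a linearized Bryant soliton analysis at the tip, and absorbing the errors introduced by the cutoffs used to splice the regions, is expected to dominate the technical work. Once the composite inequality is in place, iterating it forces $w \equiv 0$, hence $u_1 \equiv u_2$, which is Theorem \ref{thm-main}.
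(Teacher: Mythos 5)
Your proposal correctly identifies the overall architecture — cylindrical asymptotics via the Ornstein--Uhlenbeck linearization, a decomposition of $w$ into spectral modes, exploiting the time-translation and parabolic-rescaling invariances to normalize, and a separate tip-region analysis patched to the cylindrical one — and this is indeed the strategy of the paper. But there is a genuine gap in your treatment of the neutral mode, and it is precisely where the bulk of the technical work in the paper sits.

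You assert that ``the Step 1 normalization rules out both the unstable and neutral modes (they correspond precisely to the time translation and rescaling directions already used), so $w$ decays exponentially.'' This does not work. The two free parameters $(\beta,\gamma)$ give you two scalar conditions, which you can use to make $\pr_+ w_\cyl(\tau_0)=0$ and $\pr_0 w_\cyl(\tau_0)=0$ at a \emph{single} time $\tau_0$, as in Proposition~\ref{lem-rescaling-components-zero}. They do not make the neutral projection $a(\tau):=\pr_0 w_\cyl(\tau)$ vanish for all $\tau\le\tau_0$, and because the equation for $w$ has nonlinear and nonlocal error terms, nothing forces $a(\tau)$ to stay zero. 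Moreover, a Merle--Zaag trichotomy for $\tau\to-\infty$ only ever concludes that either the unstable or the neutral part dominates; it never yields ``the stable part dominates, hence exponential decay.'' In the present problem the conclusion is in fact the opposite of what you claim: the coercive estimate of Proposition~\ref{prop-cor-main} shows that the \emph{neutral} mode dominates, i.e. $\|\hat w_\cyl\|_{\hv,\infty}\le\epsilon\,\|\pr_0 w_\cyl\|_{\hv,\infty}$. One then has to derive a scalar ODE for $a(\tau)$ of the form $\dot a = 2a/|\tau| + F(\tau)$, prove $|F|$ is small relative to $\|a\|_{\hilb,\infty}$ (Claim~\ref{claim-Fs}, which requires delicate control of local, nonlocal, and cutoff errors), and finally invoke the single-point condition $a(\tau_0)=0$ to force $a\equiv 0$. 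Without this ODE argument the proof does not close, and ``exponential decay'' never appears.

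A second, smaller deficiency: you flag the tip region as the main difficulty but leave it as a black box. In the paper the tip analysis is not a straightforward ``linearize around Bryant'' matching; it requires passing to $Y=u_\sigma^2$ as a function of $u$, then further to $\Psi=\sqrt Y$ to expose a divergence structure, constructing a $\tau$-dependent weight $\mu(u,\tau)$ that interpolates between the Gaussian cylindrical weight and a soliton-adapted weight, proving a Poincar\'e inequality with respect to it, and then a carefully integrated-by-parts energy estimate whose error terms are controlled by convexity $(u^2)_{\sigma\sigma}\le 0$ and by the sharp collar estimate $|1+\tfrac12\sigma u u_\sigma|<\eta$. The upshot (Proposition~\ref{prop-tip}) is a contraction by $C/\sqrt{|\tau_0|}$ from the transition annulus back into the tip, which is what lets the cylindrical estimate dominate. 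Your sketch should at least acknowledge that the weight construction and the divergence form of the $\Psi$-equation are the load-bearing ideas there, not merely ``a tip-adapted version of the previous decay estimate.''
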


A crucial first step  in showing Theorem \ref{thm-main} is to establish the (unique up to scaling) asymptotic behavior of any   compact  rotationally and reflection symmetric $\kappa$-noncollased ancient solution to the Ricci flow on $S^3$ which is not isometric to a sphere.   This was recently established by the authors in \cite{ADS3} and is summarized, for the reader's convenience,  in the next theorem. 

\begin{theorem}[Angenent, Daskalopoulos, Sesum in \cite{ADS3}]\label{thm-asym}
  Let $(S^3, g(t))$ be any reflection and rotationally symmetric compact  $\kappa$-noncollapsed ancient solution to the Ricci flow on $S^3$ which is not isometric to a round sphere.
  Then the rescaled profile $u(\sigma,\tau)$ solution to   \eqref{eq-u0}  has the following asymptotic expansions:
  \begin{enumerate}
    \item[(i)] For every $L > 0$,
    \[
    u(\sigma,\tau) = \sqrt{2} \Bigl(1 - \frac{\sigma^2 - 2}{8|\tau|}\Bigr) + o(|\tau|^{-1}), \qquad {\mbox on} \,\, |\sigma| \le L
    \]
    as $\tau \to -\infty$.
    \sk
    \item[(ii)] Define $z := {\sigma}/{\sqrt{|\tau|}}$ and $\bar{u}(\sigma,\tau) := u(z\sqrt{|\tau|}, \tau)$.
    Then,
    \[
    \lim_{\tau \to -\infty} \bar{u}(z,\tau) = \sqrt{2 - \frac{z^2}2}
    \]
    uniformly on compact subsets of $|z| < 2$.
    \sk
    
    \item[(iii)] Let $k(t) := R(p_t,t)$ be the maximal scalar curvature which is attained at each one of the two tips $p_t$,  for $t \ll -1$.  Then the rescaled Ricci flow solutions $(S^3,\bar g_{t}(s), p_{t})$, with ${\ds \bar {g}_{t}(\cdot, s) = k(t) \, g(\cdot,t+k(t)^{-1}\, s)}$, converge to the unique Bryant translating soliton with maximal scalar curvature one.
    Furthermore, $k(t)$ and the diameter $d(t)$ satisfy the asymptotics
    \[
    \qquad k(t) = \frac{\log|t|}{|t|} (1 + o(1))\, \quad \mbox{and} \quad d(t) = 4\sqrt{|t|\log |t|}\, (1 +o(1))
    \]
    as $t \to -\infty.$
  \end{enumerate}
\end{theorem}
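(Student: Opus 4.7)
The plan is to carry out a three-region matched asymptotic analysis of the rescaled profile $u(\sigma,\tau)$ solving \eqref{eq-u0}, in the spirit of our earlier work on ancient mean curvature flow. Since by Perelman the asymptotic soliton of our non-spherical $\kappa$-noncollapsed solution is the round cylinder $S^2\times\mathbb{R}$, we know a priori that $u(\sigma,\tau)\to\sqrt 2$ locally uniformly as $\tau\to-\infty$. The three parts of the theorem quantify this convergence in, respectively, the \emph{cylindrical region} $|\sigma|\le L$, the \emph{parabolic region} $\sigma\sim\sqrt{|\tau|}$, and the \emph{tip region} where the curvature is maximal.

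For part (i), I write $u=\sqrt 2+v$ on $|\sigma|\le L$ and linearize around the cylindrical profile. In the self-similar variables the linearization takes the form $v_\tau=\mathcal{L}v+Q(v)$, where $\mathcal{L}=\partial_\sigma^2-\tfrac{\sigma}{2}\partial_\sigma+1$ is an Ornstein--Uhlenbeck type operator and $Q(v)$ is quadratic. Acting on the Gaussian-weighted $L^2$ space with weight $e^{-\sigma^2/4}$, $\mathcal{L}$ is diagonalized by Hermite eigenfunctions $H_k$ with eigenvalues $1-k/2$, so the modes $k=0,1$ are unstable, $k=2$ is neutral, and $k\ge 3$ are stable. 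Reflection symmetry removes all odd Hermite modes (in particular $k=1$); the $k=0$ mode is killed by our normalization of the extinction time to $t=0$; so the neutral mode $H_2(\sigma)\propto \sigma^2-2$ must carry the leading-order asymptotics. A Merle--Zaag type ODE analysis on the spectral projections of $v$, combined with the quadratic correction produced by $Q(v)$, then forces the coefficient of $\sigma^2-2$ to decay exactly like $-(8|\tau|)^{-1}$ while the remainder is $o(|\tau|^{-1})$, yielding (i).

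For part (ii) I introduce $z=\sigma/\sqrt{|\tau|}$ and $\bar u(z,\tau)=u(z\sqrt{|\tau|},\tau)$, and derive the equation satisfied by $\bar u$. In this variable the second-order term picks up a factor $1/|\tau|$, so any subsequential limit $U(z)=\lim_{\tau\to-\infty}\bar u(z,\tau)$ must satisfy the first-order ODE
\[
-\tfrac{z}{2}\,U_z \;=\; \tfrac{1}{U} - \tfrac{U}{2}.
\]
Uniform $C^0$ compactness of $\bar u$ on compact subsets of $\{|z|<2\}$ follows from the curvature bound \eqref{eq-curv-bound} together with $\kappa$-noncollapsedness and standard parabolic interior estimates. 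The boundary condition $U(0)=\sqrt 2$, obtained by matching with (i) at $z=0$, singles out the unique solution $U(z)=\sqrt{2-z^2/2}$, defined precisely on $|z|<2$, consistent with the tips sitting near $\sigma\sim\pm 2\sqrt{|\tau|}$.

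For part (iii) I rescale the flow at a tip $p_t$ by $k(t)=R(p_t,t)$ and extract limits along sequences $t_j\to-\infty$. The rescaled flows form complete, non-compact, non-flat, rotationally symmetric $\kappa$-solutions of bounded curvature, so by Brendle's classification \cite{Br} each limit must be either a shrinking round cylinder (or quotient) or the Bryant soliton; the cylinder is excluded because each tip is a smooth pole, which forces rotational symmetry of the limit about a single point. Hence one obtains convergence to the Bryant soliton of maximal scalar curvature one. Matching the spatial asymptotics of this Bryant profile at infinity with the parabolic profile $\sqrt{2-z^2/2}$ at $|z|\uparrow 2$ then pins down the logarithmic correction and yields $k(t)\sim\log|t|/|t|$ and $d(t)\sim 4\sqrt{|t|\log|t|}$. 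The hardest step will be the Merle--Zaag analysis in part (i) combined with the matching across the three regions: one must carefully control the nonlinear and boundary error terms to extract the precise $(8|\tau|)^{-1}$ coefficient, and upgrade the a priori subsequential Bryant limit to a unique one, ultimately using $\kappa$-noncollapsedness together with Hamilton's Harnack inequality.
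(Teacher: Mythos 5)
Note first that this paper does not prove Theorem~\ref{thm-asym}; it is imported verbatim, without proof, from the companion paper \cite{ADS3} (``This was recently established by the authors in \cite{ADS3} and is summarized, for the reader's convenience, in the next theorem''). There is therefore no in-paper argument to compare against, so what follows assesses your sketch as a proposed reconstruction of \cite{ADS3}.

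Your overall three-region matched-asymptotics strategy (cylindrical, parabolic, and tip regions; spectral/Merle--Zaag analysis in the inner region; first-order ODE limit in the intermediate region; tip blow-up pinned down via Brendle's classification \cite{Br}) is the right architecture and mirrors the mean curvature flow predecessor \cite{ADS1}. Your part~(ii) limiting ODE is correct: one checks directly that $U(z)=\sqrt{2-z^2/2}$ solves $-\tfrac z2 U_z = U^{-1}-U/2$, and the disappearance of the diffusion term $u_{\sigma\sigma}$, the nonlocal drift $J u_\sigma$, and $u_\sigma^2/u$ after the $\sqrt{|\tau|}$-rescaling is exactly what produces a first-order limit equation. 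Two points are, however, imprecise in a way that matters. In part~(i) you claim the unstable $k=0$ mode ``is killed by our normalization of the extinction time.'' That is not the actual mechanism: the Merle--Zaag dichotomy for an ancient solution yields that, as $\tau\to-\infty$, either the unstable or the neutral mode dominates, and the unstable-dominated branch corresponds to the shrinking round sphere; it is the \emph{non-spherical hypothesis}, not a time normalization, that forces the neutral mode to dominate, after which the quadratic term pins the coefficient to $-(8|\tau|)^{-1}$. In part~(iii), your exclusion of the cylinder limit (``each tip is a smooth pole, which forces rotational symmetry about a point'') is hand-waving; the clean argument, which this very paper uses in the Appendix (proof of Proposition~\ref{claim-1}), is that the scale-invariant ratio $K_1/K_0$ equals $1$ at a smooth rotationally symmetric pole but is $+\infty$ (i.e.\ $K_0\equiv 0$) on the round cylinder, so the cylinder alternative in Brendle's list is ruled out along every subsequence. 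Finally, the logarithmic corrections $k(t)\sim \log|t|/|t|$ and $d(t)\sim 4\sqrt{|t|\log|t|}$ require the quantitative matching of the Bryant tail $Z_0(\rho)\sim\rho^{-2}$ with the parabolic profile near $|z|\uparrow 2$; you flag this as the hardest step, and rightly so: it is precisely the technical content of \cite{ADS3} that your sketch does not reproduce.
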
 \noindent

\smallskip 

The outline of the paper is as follows. In section \ref{sec-outline} we give a detailed  outline of our proof, including all equations and norms that we consider in 
the two different regions, which are the cylindrical and the tip regions. In section \ref{sec-cylindrical} we study the linearized equation around the cylinder. We show that the norm of the projections of the  difference of our solutions onto unstable  and stable modes of the  linearized operator around the  cylinder is controlled by a tiny multiple of the norm of the projection of a difference of our solutions onto a neutral mode and a tiny multiple of the norm of a difference of our solutions outside the cylindrical region.  Section \ref{sec-tip} is devoted in the analysis of the tip region.  We  show how to define  a suitable {\em  weighted norm}  in the tip so that the norm of the difference of our solutions in the tip region is controlled by a tiny multiple of the norm of the  difference of our solutions inside the cylindrical region. In Section \ref{sec-conclusion}
we give the proof of Theorem \ref{thm-main}  using the results from previous two sections and carefully analyzing the error terms that appear when we approximate 
the Ricci flow equation by its linearization at the cylinder (in the cylindrical region)  or the  Bryant soliton (in the soliton region).  In our final Section appendix \ref{sec-appendix},  we include needed a'priori estimates whose proofs are somewhat similar to the proofs of analogous a'priori estimates in \cite{ADS2} but still different enough to have their proofs included for the sake of completeness of our arguments.

\section{Outline of the proof of Theorem \ref{thm-main}} \label{sec-outline}

Since the proof of Theorem \ref{thm-main} is quite involved,
in this preliminary section we give an outline of its  main steps.
Our method is based on a'priori estimates for the ``distance''   between two given ancient solutions which is 
measured in appropriate weighted $L^2$-norms.
We need to consider two different regions: the {\em cylindrical}  region and the {\em tip} region. In each of these regions, 
our distance norms are dictated by the  behavior of our solutions. 
In what follows, we define these regions, review the equations in each region and define the  weighted $L^2$-norms with respect to which we will prove coercive type estimates in the subsequent sections.
At the end of the section we will give an outline of the proof of Theorem \ref{thm-main}.

\smallskip 
\subsection{Equations under rotational symmetry} We have seen in the introduction that  a solution $g$ of \eqref{eq-rf}
on $S^{3}$ which is $SO(3)$-invariant, can be written as
\[
g = \phi^2\, dx^2 + \psi^2\, g_{can}, \qquad \mbox{on} \,\, (-1,1)\times S^2
\]
where $(-1,1)\times S^n$ may be naturally identified with the sphere $S^{3}$
with its North and South poles removed.  The  function $\psi(x,t) > 0$ may be
regarded as the radius of the hypersurface $\{x\}\times S^2$ at time $t$.  
By the reflection symmetry assumption we may assume that $\phi(x,t) =\phi(-x,t)$  and $\psi(x,t)=\psi(-x,t)$, for all $x\in (-1,1)$, that is $x =0$ is the point of reflection symmetry for our profile functions.
Then the distance function to the center of symmetry (we will refer to it as to the equator)  is given by
\begin{equation}
\label{eq-x0}
s(x, t) = \int_{0}^x \phi(x', t)\, dx'.
\end{equation}
We will write
\[
s_\pm(t) := \lim_{x\to \pm 1} s(x, t), 
\]
or shortly $s_{\pm}$, for the distance from the equator to the South and the North poles,
respectively, which depend on time, along the Ricci flow.  If we abbreviate
\[
ds = \phi(x, t) \, dx \qquad  \text{ and } \qquad 
\frac{\pd}{\pd s} = \frac 1{\phi(x, t)} \frac{\pd}{\pd x}
\]
then we can write our metric as
\begin{equation}\label{eq-metric}
  g = ds^2 + \psi^2\, g_{can}. 
\end{equation}

\sk

Let us next  review how you derive the evolution equation of the profile function $\psi(s,t)$
from the Ricci flow equation. 
 The time derivative does not commute with the $s$-derivative, and in general we
must use
\[
\frac{\pd}{\pd t}ds = \phi_t \, dx = \frac{\phi_t}{\phi} \, ds \qquad 
\text{ and } \qquad 
\left[ \frac{\pd}{\pd t}, \frac{\pd}{\pd s}\right] =
-\frac{\phi_t }{\phi} \frac{\pd}{\pd s}.
\]
The Ricci tensor is given by
\begin{align*}
  {\mathrm{Rc}}
  & = 2K_0 \, ds^2 + \left[K_0 +  K_1\right] \psi^2 g_{\rm can} \\
  & = -2\frac{\psi_{ss}}{\psi }\, ds^2
  + \left\{
  -\psi \psi_{ss}-1)\psi_s ^2+1
  \right\} g_{\rm can}
\end{align*}
where $K_0$ and $K_1$ are the two distinguished sectional curvatures that any
metric of the form \eqref{eq-metric} has.  They are the curvature of a plane tangent to $\{s\}\times S^n$, given by
\begin{equation}
  \label{eq-K1}
  K_1:= \frac{1-\psi_s^2}{\psi^2},
\end{equation}
and the curvature of an orthogonal plane given by
\begin{equation}
  \label{eq-K0}
  K_0 := -\frac{\psi_{ss}}{\psi}.
\end{equation}
Moreover, the scalar curvature is given by
\[
R = g^{jk} R_{jk}
= 4K_0 + 2K_1.
\]
The time derivative of the metric is
\[
\frac{\pd g}{\pd t} =
2\frac{\phi_t}{\phi}\, ds^2 + 2\psi\psi_t\, g_{\rm can}.
\]
Therefore, if the metrics $g(t)$ evolve by Ricci flow $\pd_t g = -2\mathrm{Rc}$,
then
\[
\frac{\phi_t}{\phi} = 2\frac{\psi_{ss}}{\psi},
\]
so that
\[
\pdd t \, ds = 2\frac{\psi_{ss}}{\psi}\, ds \text{ and }
\left[\pdd t, \pdd s\right]
= -2\frac{\psi_{ss}}{\psi}\, \pdd s.
\]
Under Ricci flow the radius $\psi(s,t)$ satisfies the equation  
\begin{equation}
  \label{eq-RF}
  \psi_t = \psi_{ss} -  \frac{1-\psi_s^2}{\psi}.
\end{equation}
As in \cite{AK1}, for our metric \eqref{eq-metric} to define a smooth metric on $S^3$ we need to have
\begin{equation}
  \label{closing-psi}
  \psi_s(s_-) = 1, \,\,\,\, \psi^{(2k)}(s_-) = 0 \qquad \mbox{and} \qquad \psi_s(s_+) = -1, \,\,\,\, \psi^{(2k)}(s_+) = 0,
\end{equation}
for $k\in \mathbb{N}\cup\{0\}$.

\subsection{Ancient Ricci ovals and their invariances} 
In \cite{ADS3} we have discussed results that lead to the conclusion that the asymptotic soliton of any $SO(3)$-invariant closed $\kappa$-solution is either a sphere or a cylinder. In the case it is a sphere,  we have proved in \cite{ADS3} that  our solution has to be the family of shrinking round spheres itself. Hence, for the rest of the paper we may  assume that any closed $\kappa$-solution in consideration below has a  round cylinder as its asymptotic  cylinder. 

\begin{definition}
We define an Ancient Ricci Oval to be any rotationally symmetric closed $\kappa$-solution which has the round cylinder as its asymptotic soliton.
\end{definition}

\smallskip Let  $g_i = ds^2 + \psi_i^2 \, g_{can}$, for $i\in \{1,2\}$ be two reflection symmetric  Ancient Ricci Oval solutions satisfying the assumptions of Theorem \ref{thm-main}. In particular we have fixed   the axis of symmetry and the center of 
refection symmetry  ($s=0$) for both solutions.  Under the Ricci flow each profile $\psi_i(s,t)$ satisfies \eqref{eq-RF}. 
In the statement of our main Theorem \ref{thm-main} we claim the {\em uniqueness} of $g_1$ and $g_2$   up to parabolic scaling  in space-time and translations in time.
Since  each solution $g_i(t)$ gives rise to a two  parameter family of solutions
\begin{equation}
  \label{eq-two-parameters}
  g_i^{\beta \gamma}(\cdot,t) = e^{\gamma/2} \, g_i(\cdot,e^{-\gamma}(t - \beta))
\end{equation}
the  theorem claims the following: \emph{ given two ancient oval solutions we can find $\beta, \gamma$ and $t_0 \in \R$ such that
\[
g_1(\cdot,t) = g_2^{\beta \gamma}(\cdot,t), \qquad \mbox{for} \,\, t \leq t_0.
\]}
The profile function $\psi_2^{\beta \gamma}$ corresponding to the modified solution $g_2^{\beta \gamma}(\cdot,t)$ is given by
\begin{equation}
  \label{eq-Ualphabeta}
  \psi_2^{\beta \gamma} (s, t) = e^{\gamma/2} \psi_2 \Bigl (e^{-\gamma/2}\,s, e^{-\gamma} (t - \beta) \Bigr),
\end{equation}

where ${\ds s(x,t) = \int_0^x \phi(x',t)\, dx'}$.

We rescale the solutions $g_i(\cdot,t)$ by a factor $\sqrt{-t}$ and introduce a new time variable $\tau = -\log(-t)$, that is, we set
\begin{equation}
  \label{eq-type-1-blow-up}
  g_i(\cdot,t) = \sqrt{-t} \, \bar g_i(\cdot,\tau), \qquad \tau:= - \log (-t).
\end{equation}
These are again rotationally and reflection  symmetric with profile function $u$, which is related to $\psi$ by
\begin{equation}
  \label{eq-cv1}
  \psi(s,t) = \sqrt{-t}\, u(\sigma,\tau), \qquad \sigma=\frac s{ \sqrt{-t}}, \quad \tau=-\log (-t).
\end{equation}
If the $\psi_i$ satisfy the Ricci flow equation \eqref{eq-psi-in}, then the rescaled profiles $u_i$ satisfy
\begin{equation}  \label{eq-u}
  u_{\tau} = u_{\sigma\sigma} + \frac{u_{\sigma}^2}{u}  -  \frac 1u + \frac u2
\end{equation}
with boundary conditions at the tips 
$u_\sigma(\sigma_-) = 1, \,  u_\sigma(\sigma_+) = -1$. 

The vector fields  $\partial_{\tau}$ and $\partial_{\sigma}$ do not commute. However, we can make them commute by adding a non-local term in
equation \eqref{eq-u} (see  \cite{ADS3},  Section 2 for details).  In fact in   commuting variables  the equation for $u$ becomes 
\begin{equation}\label{eqn-u} 
 u_\tau = u_{\sigma\sigma} - \frac{\sigma}{2} \, u_{\sigma}
 -  J(\sigma,\tau) \, u_{\sigma}  +  \frac{u_{\sigma}^2}{u}  -  \frac 1u + \frac u2
\end{equation}
where
\begin{equation}\label{eqn-defn-J}
J(\sigma,\tau) = 2 \int_0^\sigma \frac{u_{\sigma\sigma}}{u} \, d\sigma'.
\end{equation}
 
\smallskip

Changing $g_i(\cdot,t)$ to $g_i^{\beta \gamma}(\cdot,t)$ has the following effect on $u_i(\sigma,\tau)$:
\begin{equation}
  \label{eq-ualphabeta}
  u_i^{\beta \gamma}(\sigma,\tau) = \sqrt{1+\beta e^{\tau}} u_i\Bigl( \frac{\sigma} {\sqrt{1+\beta e^\tau} }, \tau+\gamma-\log\bigl(1+\beta e^\tau\bigr) \Bigr).
\end{equation}

To prove the uniqueness theorem we will look at the difference $\psi_1 - \psi_2^{\beta \gamma}$, or equivalently at $u_1 - u_2^{\beta \gamma}$.
The parameters $ \beta, \gamma$  will be chosen so that the projections of $u_1 - u_2^{\beta \gamma}$ onto positive eigenspace (that is spanned by two independent eigenvectors) and zero eigenspace of the linearized operator $\cL$ at the cylinder are equal to zero at time $\tau_0$, which will be chosen sufficiently close to $-\infty$. 
Correspondingly, we denote the difference $\psi_1 - \psi_2^{\beta \gamma}$ by $\psi_1-\psi_2$ and $u_1 - u_2^{\beta \gamma}$ by $u_1-u_2$.  
We will actually observe that the parameters $\beta, \gamma$  can be chosen to lie in a certain range, which allows our main estimates to hold independently of the choice of these parameters during the proof, as long as they stay in the specified range.
We will show in Section \ref{sec-conclusion} that for a given small $\epsilon >0$ there exists $\tau_0 \ll -1 $ sufficiently negative for which we have
\begin{equation}
  \label{eq-asymp-par1}
 \beta \leq \epsilon \, \frac{e^{-\tau_0}}{|\tau_0|}   \qquad \mbox{and} \qquad \gamma \leq \epsilon \, |\tau_0|
\end{equation}
and our estimates hold for $(u_1 - u_2^{\beta \gamma})(\cdot,\tau)$, for all $\tau \leq \tau_0$, as long as $\beta$ and $\gamma$ satisfy \eqref{eq-asymp-par1}.
This inspires the following definition.
\begin{definition}[Admissible triple of parameters $(\beta,\gamma)$]
  \label{def-admissible}
  We say that the pair of  parameters $(\beta,\gamma)$ is admissible with respect to time $\tau_0$ if they satisfy \eqref{eq-asymp-par1}.
\end{definition}

Throughout the proof of Theorem \ref{thm-main} we will make sure hat our choice of parameters $(\beta,\gamma)$ 
satisfies the admissibility condition given by this definition.


\medskip

\subsection{The two regions:  equations, norms and crucial estimates}

The proof of Theorem \ref{thm-main} relies  on sharp coercive estimates in appropriate norms for the difference of our
two solutions $w:= u_1 - u_2^{\beta\gamma}$.  Since the behavior of our solutions changes from being a cylinder near the
equator to being the Bryant soliton at the two tips (see Theorem \ref{thm-asym}) we will need to consider these two regions 
separately.  Namely for a given small  positive constant $\theta$, we define   the {\em cylindrical region}   by
\[
\cyl_{\theta} = \bigl\{(\sigma,\tau) : u_1(\sigma,\tau) \ge \frac{\theta}{4} \bigr\}
\]
and the  {\em tip region } 
\[
\tip_{\theta} = \{(u, \tau):\,\, u_1 \le 2\theta, \, \tau \leq \tau_0\}.
\]

\sk

 We will next outline how we treat each region separately and obtain a coercive estimate for the difference of the two solutions in appropriate  weighted  norms.
 At the end of the outline we will show how these estimates imply unqueness.

\subsubsection{The cylindrical region}\label{subsec-cylindrical}
For a given $\tau \leq \tau_0$ and constant $\theta$ positive and small, consider the cylindrical region $\cyl_{\theta} = \bigl\{(\sigma,\tau) : u_1(\sigma,\tau) \ge  {\theta}/{4} \bigr\}$. 
Let $\varphi_\cyl(\sigma,\tau)$ denote a standard   \emph{cut-off function}    with the following properties:
\[
(i) \,\, \supp \varphi_\cyl \Subset \cyl_{\theta} \qquad (ii)\,\, 0 \leq \varphi_\cyl \leq 1 \qquad (iii) \,\, \varphi_\cyl \equiv 1 \mbox{ on } \cyl_{2\theta}.
\]

\sk 

The solutions $u_i$, $i=1,2$, satisfy equation \eqref{eq-u}.
Setting
\[
w:=u_1-u_2^{\beta \gamma} \qquad \text{ and } \qquad w_\cyl:= w\, \varphi_\cyl
\]
we see that $w_\cyl$ satisfies the equation
\begin{equation}\label{eqn-w100}
  \frac{\pd}{\pd\tau} w_\cyl = \cL[w_\cyl] + \cE[w,\varphi_\cyl]
\end{equation}
where the operator $\cL$ is given by
\[
\cL = \partial_\sigma^2 - \frac \sigma 2 \partial_\sigma + 1
\]
and where the error term $\cE$ is described in detail in Section~\ref{sec-cylindrical}.
We will see that
\[
\cE[w,\varphi_\cyl] = \cE(w_\cC) + \bar \cE[w,\varphi_\cyl] + \cE_{nl}
\]
where $\cE(w_\cC) $ is the error introduced due to the nonlinearity of our equation and is given by \eqref{eq-E10}, $\bar \cE[w,\varphi_\cyl]$ is the error introduced due to the cut off function $\varphi_\cC$ and is given by \eqref{eq-bar-E} (to simplify the notation we have set $u_2:=u_2^{\beta \gamma}$) and $ \cE_{nl}$ is a nonlocal error term and is given by \eqref{eq-nonlocal-error}.  \smallskip

The differential operator $\cL$ is a well studied self-adjoint operator on the Hilbert space $\hilb := L^2(\R, e^{-\sigma^2/4}d\sigma)$ with respect to the norm and inner product
\begin{equation}\label{eqn-normp0}
  \|f\|_{\hilb}^2 = \int_\R f(\sigma)^2 e^{-\sigma^2/4}\, d\sigma, \qquad \langle f, g \rangle = \int_\R f(\sigma) g(\sigma) e^{-\sigma^2/4}\, d\sigma.
\end{equation}
\smallskip

We split $\hilb$ into the unstable, neutral, and stable subspaces $\hilb_+$, $\hilb_0$, and $\hilb_-$, respectively.
The unstable subspace $\hilb_+$ is spanned by the  eigenfunction   $\psi_0 \equiv  1$ corresponding  to the only positive eigenvalue 1
(that is, $\hilb_+$ is one  dimensional, due to our assumption on reflection symmetry).
The neutral subspace $\hilb_0$ is the kernel of $\cL$, and is one dimensional space spanned by $\psi_2 = \sigma^2-2$.
The stable subspace $\hilb_-$ is spanned by all other eigenfunctions.
Let $\pr_\pm$ and $\pr_0$ be the orthogonal projections on $\hilb_\pm$ and $\hilb_0$.  \smallskip

For any function $f : \R \times (-\infty,\tau_0]$, we define the cylindrical norm
\[
\|f\|_{\hilb,\infty}(\tau) = \sup_{\tau' \le \tau} \Bigl(\int_{\tau' - 1}^{\tau'} \|f(\cdot,s)\|_{\hilb}^2\, ds\Bigr)^{\frac12}, \qquad \tau \leq \tau_0
\]
and we will often simply set
\begin{equation}
  \label{eqn-normp}
  \|f\|_{\hilb,\infty}:= \|f\|_{\hilb,\infty}(\tau_0).
\end{equation}
\medskip

In the course of proving necessary estimates in the cylindrical region we define yet another Hilbert space $\hv$ by
\[
\hv = \{f\in\hilb : f, f_\sigma \in \hilb\},
\]
equipped with a norm
\[
\|f\|_\hv^2 = \int_{\R} \{f(\sigma)^2 + f'(\sigma)^2\} e^{-\sigma^2/4}d\sigma.
\]
We will write
\[
( f, g )_\hv = \int_\R \{f'(\sigma)g'(\sigma) + f(\sigma)g(\sigma)\} e^{-\sigma^2/4} d\sigma,
\]
for the inner product in $ \hv $.

Since we have a dense inclusion $ \hv \subset \hilb $ we also get a dense inclusion $ \hilb \subset \hv^* $ where every $ f\in\hilb $ is interpreted as a functional on $ \hv $ via
\[
g\in \hv \mapsto \langle f, g \rangle.
\]
Because of this we will also denote the duality between $ \hv $ and $ \hv^* $ by
\[
(f, g) \in \hv\times \hv^* \mapsto \langle f, g \rangle .
\]
Similarly as above define the cylindrical norm
\begin{equation}
\label{eqn-cyl-norm}
\|f\|_{\hv,\infty}(\tau) = \sup_{\sigma\le\tau}\Bigl(\int_{\sigma-1}^{\sigma} \|f(\cdot,s)\|_{\hv}^2\, ds\Bigr)^{\frac12},
\end{equation}
and analogously we define the cylindrical norm $\|f\|_{\hv^*,\infty}(\tau)$ and set for simplicity $\| f\|_{\hv^*,\infty}:=\|f\|_{\hv^*,\infty}(\tau_0)$.

\smallskip In Section \ref{sec-cylindrical} we will show a coercive estimate for $w_\cyl$ in terms of the error $E[w, \varphi_\cyl]$.
However, as expected, this can only be achieved by removing the projection $\pr_0 w_\cyl$ onto the kernel of $\cL$, generated by $\psi_2$.
More precisely, setting
\[
\hat  w_\cyl := \pr_+ w_\cyl + \pr_- w_\cyl = w_\cyl - \pr_0 w_\cyl
\]
we will prove that for any given $\theta  \in (0, \sqrt{2})$ and $\epsilon >0$ there exist $\tau_0 \ll -1$ (depending on $\theta$ and 
$\epsilon$)  such that the following bound holds
\begin{equation}\label{eqn-cylindrical100}
  \| \hat w_\cyl \|_{\hv,\infty} \leq C \, \|E[w,\varphi_\cyl]\|_{\hv^*,\infty}
\end{equation}
provided that $\pr_+ w_\cyl(\tau_0) =0$.
In fact, we will show in Proposition \ref{lem-rescaling-components-zero} that the parameters $\beta$ and $\gamma$ can be adjusted so that for $w^{\beta \gamma}:= u_1 - u_2^{\beta \gamma}$, we have
\begin{equation}\label{eqn-projections}
  \pr_+ w_\cyl(\tau_0) =0 \qquad \mbox{and} \qquad \pr_0 w_\cyl(\tau_0) =0.
\end{equation}
Thus \eqref{eqn-cylindrical100} will hold for such a choice of $ \beta, \gamma$ and $\tau_0 \ll -1$.
The condition $\pr_0 w_\cyl(\tau_0) =0$ is essential and will be used in Section \ref{sec-conclusion} to give us that $w^{\beta \gamma} \equiv 0$.
In addition, we will show in Proposition \ref{lem-rescaling-components-zero} that $\beta$ and $\gamma$ can be chosen to be admissible according to our Definition \ref{def-admissible}.

The norm of the error term $\|E[w,\varphi_\cyl]\|_{\hv^*,\infty}$ on the right hand side of \eqref{eqn-cylindrical100} will be estimated in Section \ref{sec-cylindrical}.
We will show that given $\epsilon >0$ small, there exists a  $\tau_0 \ll -1$ such that
\begin{equation}\label{eqn-error-111}
  \|E[w,\varphi_\cyl]\|_{\hv^*,\infty} \leq \epsilon \, \big ( \| w_\cC \|_{\hv,\infty} + \| w\, \chi_{D_\theta}\|_{\hv, \infty} \big ).
\end{equation}
where $D_\theta:= \{ (\sigma,\tau): \,\, \theta/4 \leq u_1(\sigma,\tau) \leq \theta/2 \}$ denotes the support of the derivative of $\varphi_{\cC}$.
Combining \eqref{eqn-cylindrical100} and \eqref{eqn-error-111} yields the bound
\begin{equation}\label{eqn-cylindrical1}
  \| \hat w_\cyl \|_{\hv,\infty} \le \epsilon \, (\|w_\cyl\|_{\hv,\infty} + \| w\, \chi_{D_\theta} \|_{\hilb,\infty}),
\end{equation}
holding for all $\epsilon >0$ and $\tau_0 := \tau_0(\epsilon) \ll -1$.

\smallskip To close the argument we need to estimate $\| w \, \chi_{D_\theta}\|_{\hilb,\infty}$ in terms of $\|w_\cyl \|_{\hv,\infty}$.
This will be done by considering the tip region and establishing an appropriate \emph{a priori} bound for the difference of our two solutions there.

\subsubsection{The tip region}\label{subsec-tip}
The tip region is defined by
$
\tip_{\theta} = \{(u, \tau):\,\, u_1 \le 2\theta, \, \tau \leq \tau_0\}.
$ 
Since equation \eqref{eq-u} becomes singular at the tips $\sigma_{\pm}(\tau)$, in the tip region  we  introduce the change of variables $Y:=u_\sigma^2$
and we view $Y$ as a function of  $(u,\tau)$ (see in \cite{ADS3}, Section 2 for details).   A simple calculation shows that  $Y(u,\tau)$ satisfies the equation 
\begin{equation}\label{eqn-Y}
  Y_\tau + \frac u2 \, Y_u =  Y Y_{uu} - \frac 12\, (Y_u)^2 + (1 - Y)\, \frac{Y_u}{u}  + 2  (1-Y) \, \frac{Y}{u^2}.
\end{equation}

Under this change of variables, our solutions $u_1(\sigma,\tau) $ and $u_2^{\beta \gamma}(\sigma,\tau)$ become $Y_1(u,\tau)$ and $Y_2^{\beta \gamma}(u,\tau)$.
In this region we consider a {\em cut-off function $\varphi_T(u)$} with the following properties:
\begin{equation}\label{eqn-cutofftip}
  (i) \,\, \supp \varphi_T \Subset \tip_{\theta} \qquad (ii) \,\, 0 \leq \varphi_T \leq 1 \qquad (iii) \,\, \varphi_T \equiv 1, \,\, \mbox{on} \,\, \tip_{\theta/2}.
\end{equation}

Let $\Psi_1(u,\tau) = \sqrt{Y_1(u,\tau)}$ and $\Psi_2^{\beta\gamma}(u,\tau) = \sqrt{Y_2^{\beta\gamma}(u,\tau)}$.
We will see  in Section \ref{subsec-tip-energy} that   the difference $W(u,\tau):= \Psi_1(u,\tau) - \Psi_2^{\beta \gamma}(u,\tau)$ satisfies
equation \eqref{eqn-W}. 
%
Our next goal is to define an appropriate weighted norm in the tip region $\tip_\theta$, by defining the weight $\mu(u,\tau)$.
To this end we need to further distinguish between two regions in $\tip_{\theta}$: for $L >0$ sufficiently large to be determined in Section \ref{sec-tip}, we define the {\it collar} region to be the set
\be\label{eqn-collar-region}
\collar_{\theta,L} := \Bigl\{y \mid \frac{L}{\sqrt{|\tau|}} \le u_1 (\sigma,\tau) \le 2\theta\Bigr\}
\ee
and the {\it soliton} region to be the set
\be\label{eqn-soliton-region}
\mathcal{S}_L := \Bigl\{y \mid 0 \le u_1(\sigma,\tau) \le \frac{L}{\sqrt{|\tau|}}\Bigr\}.
\ee
This is necessary,  as the behavior of our solutions changes from being cylindrical in the collar region to resembling  the Bryant soliton in the soliton region. 

\sk
In the soliton region we further rescale our solutions by setting for each solutions 
$$Z(\rho,\tau) = Y(u,\tau), \qquad \rho:= u\, \sqrt{|\tau|}.$$
A direct calculation using \eqref{eqn-Y} shows that each rescaled solution $Z$ satisfies the equation
\be\label{eqn-Z}
\frac{1}{|\tau|}\, \Big(Z_{\tau} - \frac{1}{2|\tau|} \rho Z_{\rho} + \frac{\rho Z_{\rho}}{2}\Big) = Z Z_{\rho\rho} - \frac12 Z_{\rho}^2 + (1 -Z) \frac{Z_{\rho}}{\rho} + \frac{2(1-Z)Z}{\rho^2}.
\ee
The collar region can be viewed as the transition region between the cylindrical and soliton regions.  
Furthermore, the result in Theorem \ref{thm-asym} implies that each solution $Z(\rho,\tau) $ converges smoothly on compact subsets 
of $\rho \leq L$ to the translating Bowl soliton $\hZ_0(\rho)$ with maximal curvature one  (see in  \cite{Bryant}    for more details about the Bryant soliton). It was shown in \cite{Bryant} that $\hZ_0(\rho)$ satisfies the following asymptotics 
\begin{equation}\label{eqn-Zasym}
  \hZ_0 (\rho)= \begin{cases} 1 - \rho^2/6 + O(\rho^4), &\qquad \mbox{as} \,\,\,\,\, \rho\to 0 \\
    \rho^{-2 } + O(\rho^{-4}), &\qquad \mbox{as} \,\,\,\, \rho\to\infty.
  \end{cases}
\end{equation}
\smallskip

Let us next define our {\em weight}  $\mu(u,\tau)$ in the {\em tip region} as a function of $(u,\tau)$.  Let  $\zeta (u)$ be a  nonnegative smooth decreasing function defined on $u \in (0, \infty)$ such that 
$$\zeta(u)=1, \quad \mbox{for} \,\, u \geq \theta/2 \qquad \mbox{and}  \quad \zeta(u)=0, \quad \mbox{for}\,\, u \leq \theta/4.$$
Such a function can be chosen to satisfy the derivative estimate $|\zeta'(u)| \leq 5 \theta^{-1}$. 

For our given solution $u(\sigma,\tau)$ which after the coordinate change gives rise to $\sigma(u,\tau)$
and $Y(\sigma,\tau) := u_{\sigma}^2(u,\tau)$ (recall that we have dropped the index and denote $\sigma, Y_1$ 
by  $\sigma, Y$ respectively) we define our {\em weight}   $\mu(u,\tau)$ in the {\em tip region}  to be
\be\label{eqn-weight1}
 \mu(u,\tau) = -   \frac{\sigma^2(\theta,\tau)}4 + \int_\theta^u  \mu_u(u',\tau) \, du'
\ee
where 
\be\label{eqn-weight2}
\mu_u := \zeta (u) \, \big ( - \frac{\sigma^2(u,\tau)}{4} \big ) _u + (1-\zeta(u)) \, \frac{1-Y(u,\tau)}{ u \, Y(u,\tau)}.
\ee

Note that since $\zeta \equiv 1$ for $u \geq \theta/2$, we have $\mu(u,\tau) =  - \frac{\sigma^2(u,\tau)}{4} $
in this region, hence it coincides with our weight in the cylindrical region. This is important as our norms in
the intersection of the cylindrical and tip regions need to coincide. 

\smallskip 
Now that we have defined the weight $\mu(u,\tau)$, let us define the {\em norm} in the {\em tip region}. 
For a function For a function $W:[0,2\theta]\times(-\infty, \tau_0] \to \R$  and any $\tau\leq \tau_0$, we define the weighted $L^2$ norm with respect to the weight $\Psi^{-2} \, e^{\mu(\cdot,\tau)} \, du$ by
\be\label{eqn-normt00}
\|W(\cdot, \tau)\|^2 := \int_0^{2\theta}  W^2(u, \tau) \, \Psi^{-2}\, e^{\mu(u,\tau)}\, du, \qquad \tau \leq \tau_0
\ee
where  $\Psi := \Psi_1 := \sqrt{Y_1}$ denotes one of our  solutions in the tip coordinates. 
Furthermore,  we define the tip norm to be
\be\label{eqn-normt0}
 \|W\|_{2,\infty}(\tau) = \sup_{\tau' \le \tau}|\tau'|^{-1/4}\, \Bigl(\int_{\tau'-1}^{\tau'} \|W(\cdot, s)\|^2\, ds\Bigr)^{1/2} 
\ee
 for any $\tau\leq \tau_0$.
We include the weight in time $|\tau|^{-1/4}$ to make the norms equivalent in the transition region, between the cylindrical and the tip regions, as will become apparent in Corollary \ref{cor-equiv-norm}.
We will also abbreviate
\be\label{eqn-normt}
\|W\|_{2,\infty}:= \|W\|_{2,\infty}(\tau_0).
\ee

\smallskip

For a cutoff function $\varphi_T$ as in \eqref{eqn-cutofftip}, we set $W_T(u,\tau):= W(u,\tau) \, \varphi_T$,  where $W:=\Psi_1 - \Psi_2^{\beta\gamma}$. 
In   Section \ref{sec-tip} we will show  the following crucial estimate which roughly states that the norm of 
the difference $W$ of our two solutions in the tip region can be estimated by the norm of $W$ in the region $\theta \leq u \leq 2\theta$ which 
is included in our cylindrical region $u \geq \theta/4$. More precisely, we will show that there exists a small $\theta >0$ and $\tau_0 \ll -1$ depending on $\theta$ for which 
\begin{equation}
  \label{eqn-tip2}
  \| W_T \|_{2,\infty} \leq \frac{C}{\sqrt{|\tau_0|}} \, \| W \, \chi_{[\theta,2\theta]} \|_{2,\infty}
\end{equation}
for $\tau \leq \tau_0$. Here   $\chi_{[\theta,2\theta]}$ is the characteristic function of the interval $[\theta,2\theta]$. We will next outline how this estimate is combined with that in the cylindrical region  to close the argument and  conclude uniqueness.

\subsection{The conclusion}
\label{subsec-conclusion}
The statement of Theorem \ref{thm-main} is equivalent to showing there exist parameters  $\beta$ and $\gamma$ so that $u_1(\sigma,\tau) = u_2^{\beta \gamma}(\sigma,\tau)$, where $u_2^{\beta \gamma}(\sigma,\tau)$ is defined by \eqref{eq-ualphabeta} and both functions, $u_1(\sigma,\tau)$ and $u_2^{\beta \gamma}(\sigma,\tau)$, satisfy equation \eqref{eq-u}.
We set $w:= u_1 - u_2^{\beta \gamma}$, and $W:= \Psi_1 - \Psi_2^{\beta\gamma}$, where $(\beta,\gamma)$ is an admissible pair  of parameters with respect to $\tau_0$, such that \eqref{eqn-projections} holds for a $\tau_0 \ll -1$.

\sk

Fix $\theta >0$ and small such that \eqref{eqn-tip2} holds for $\tau_0 \ll -1$ depending on $\theta$. For that fixed $\theta$ and any $\epsilon >0$
we also have that  \eqref{eqn-cylindrical1} holds, for $\tau_0 \ll -1$ depending on $\theta, \epsilon$. 
Combining  \eqref{eqn-cylindrical1} and \eqref{eqn-tip2} with the estimates in Corollary \ref{cor-equiv-norm} 
which compare our norms in the intersection of cylindrical and tip  regions, we finally show that  $\| \pr_0 w_\cyl \|_{\hv,\infty}$ dominates over the norms of $\pr_- w_{\cyl}$ and $\pr_+ w_{\cyl}$ (this happens by applying \eqref{eqn-cylindrical1} for  $\epsilon$ sufficiently small depending 
only on  $\theta$). 
We will use this fact in Section \ref{sec-conclusion} to conclude that $w(\sigma,\tau):= w^{\beta \gamma}(\sigma,\tau) \equiv 0$ for our choice of parameters $\beta$ and $\gamma$.
To do so we will look at the projection $a(\tau):= \pr_0 w_\cyl$ and define its norm
\[
\|a\|_{\hilb,\infty}(\tau)
= \sup_{\sigma \le \tau} \Bigl(\int_{\sigma - 1}^{\sigma} \|a(s)\|^2\, ds\Bigr)^{\frac12},
\qquad
\tau \leq \tau_0
\]
with $\|a\|_{\hilb,\infty}:=\|a\|_{\hilb,\infty}(\tau_0)$.

By projecting equation \eqref{eqn-w100} onto the zero eigenspace spanned by $\psi_2$ and estimating error terms by $\| a\|_{\hilb,\infty}$ itself, we will conclude in Section \ref{sec-conclusion} that $a(\tau)$ satisfies a certain differential inequality which combined with our assumption that $a(\tau_0)=0$ (that follows from the choice of parameters $\beta$, $\gamma$  so that \eqref{eqn-projections} hold) will yield that $a(\tau)=0$ for all $\tau \leq \tau_0$.
On the other hand, since $\| a\|_{\hilb,\infty} $ dominates the $\| w_\cyl \|_{\hilb,\infty}$, this will imply that $w_\cyl \equiv 0$, thus yielding $w \equiv 0$, as stated in Theorem \ref{thm-main}.

\section{Cylindrical region}
\label{sec-cylindrical}

Let $u_1(\sigma,\tau)$ and $u_2(\sigma,\tau)$ be two solutions to equation
\eqref{eq-u} as in the statement of Theorem \ref{thm-main} and let
$u_2^{\beta \gamma}$ be defined by \eqref{eq-ualphabeta}.  In this section
we will estimate the difference $w:= u_1-u_2^{\beta \gamma}$ in the
cylindrical region
$\cyl_{\theta} = \{\sigma\,\,\, |\,\,\, u_1(\sigma,\tau) \ge {\theta}/{2}\, \}$, for a
given number $\theta > 0$ small and any $\tau \leq \tau_0 \ll -1$.  Recall that all
the definitions and notations have been introduced in Section \ref{subsec-cylindrical}.
Before we state and prove the main estimate in the cylindrical region we give a
remark that the reader should be aware of throughout the whole section.

\begin{remark}
  \label{rem-cylindrical}
  Recall that we write simply $u_2(\sigma,\tau)$ for
  $u_2^{\beta \gamma}(\sigma,\tau)$, where
  \[
    u_2^{\beta \gamma}(\sigma,\tau) = \sqrt{1+\beta e^{\tau}}\,
    u_2\Bigl(\frac{\sigma}{\sqrt{1+\beta e^{\tau}}}, \tau + \gamma - \log(1 + \beta
    e^{\tau})\Bigr),
  \]
  is still a solution to \eqref{eq-u} and simply write $w(\sigma,\tau)$ for
  $w^{\beta \gamma}(\sigma,\tau) := u_1(\sigma,\tau) -
  u_2^{\beta \gamma}(\sigma,\tau)$.  As it has been already indicated in Section
  \ref{subsec-conclusion}, we will choose 
  $\beta = \beta(\tau_0)$ and $\gamma = \gamma(\tau_0)$ (as it will be explained
  in Section \ref{sec-conclusion}) so that the projections
  $\pr_+ w_\cyl(\tau_0) = \pr_0 w_\cyl(\tau_0) = 0$, at a suitably chosen
  $\tau_0 \ll -1$.  In Section \ref{sec-conclusion} we show the pair
  $(\beta,\gamma)$ is admissible with respect to $\tau_0$, in the sense
  of Definition \ref{def-admissible}, if $\tau_0$ is sufficiently small.  That
  will imply all our estimates that follow are independent of the parameters
  $ \beta, \gamma$, as long as they are admissible with respect to
  $\tau_0$, and will hold for $u_1(\sigma,\tau) - u_2^{\beta \gamma}(\sigma,\tau)$,
  for $\tau\le \tau_0$ (as explained in section \ref{sec-outline}).
\end{remark}

Our goal in this section is to prove that the bound~\eqref{eqn-cylindrical1}
holds as stated next. Recall the notation   $w_C := w \, \varphi_C$, where $\varphi_C$ is the cut off supported 
in the cylindrical region.  

\begin{proposition}\label{prop-cylindrical}
  For every $\epsilon > 0$ and $\theta > 0$ small, there exists a $\tau_0 \ll -1$
  so that if $w:= u_1 - u_2^{\beta\gamma}$ satisfies 
  $\pr_+w_\cyl(\tau_0) = 0$, then we have
  \[
    \| \hat w_\cyl \|_{\hv,\infty} \leq \epsilon\, \big(\| w_\cyl
    \|_{\hv,\infty} + \|w\, \chi_{D_{\theta}}\|_{\cH,\infty}\big),
  \]
  where $D_{\theta} := \{\sigma\,\,\,|\,\,\, \theta/2 \le u_1(\sigma,\theta) \le \theta\}$
  and $\hat{w}_C = \pr_- w_\cyl + \pr_+ w_\cyl$.
\end{proposition}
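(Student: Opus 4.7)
The plan is to establish the abstract coercive bound $\|\hat w_\cyl\|_{\hv,\infty}\le C\,\|\cE[w,\varphi_\cyl]\|_{\hv^*,\infty}$ by exploiting the spectrum of $\cL$ together with the terminal condition $\pr_+ w_\cyl(\tau_0)=0$, and then to absorb most of the error into the left-hand side by using that both $u_1$ and $u_2^{\beta\gamma}$ are $o(1)$-close to the cylinder $u\equiv\sqrt 2$ on $\cyl_\theta$ by Theorem \ref{thm-asym}(i), so that the coefficients multiplying $w_\cyl$ and $\partial_\sigma w_\cyl$ in the nonlinear error can be made arbitrarily small by taking $\tau_0$ very negative.

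Step 1 (coercive estimate from the spectral split). Decompose $w_\cyl = w_+ + w_0 + w_-$ under the projections onto $\hilb_+$, $\hilb_0$, $\hilb_-$ and project the evolution \eqref{eqn-w100} onto each subspace. On $\hilb_-$, the operator $-\cL$ is coercive with a uniform spectral gap; pairing with $w_-$ and integrating by parts against the Gaussian weight yields the dissipative differential inequality
\[
\tfrac12 \tfrac{d}{d\tau}\|w_-\|_\hilb^2 + c\,\|w_-\|_\hv^2 \;\le\; \|\pr_-\cE\|_{\hv^*}\,\|w_-\|_\hv,
\]
which, after integrating over each unit window $[\tau-1,\tau]$ and propagating, gives $\|w_-\|_{\hv,\infty}\le C\,\|\cE\|_{\hv^*,\infty}$. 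The unstable piece $w_+(\tau) = a(\tau)\,\psi_0$ is one-dimensional and satisfies the scalar ODE $a' = a + \langle\pr_+\cE,\psi_0\rangle/\|\psi_0\|_\hilb^2$ with terminal condition $a(\tau_0)=0$; integrating backward from $\tau_0$ turns the positive eigenvalue into a decaying kernel $e^{\tau-s}$ and yields $\|w_+\|_{\hv,\infty}\le C\,\|\cE\|_{\hv^*,\infty}$. Adding the two bounds produces the coercive estimate for $\hat w_\cyl = w_+ + w_-$.

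Step 2 (estimate of the error). Split $\cE = \cE(w_\cyl) + \bar\cE[w,\varphi_\cyl] + \cE_{nl}$. For the genuine nonlinearity $\cE(w_\cyl)$ coming from $\tfrac{u_{1\sigma}^2}{u_1}-\tfrac{u_{2\sigma}^2}{u_2}$ and $\tfrac{1}{u_1}-\tfrac{1}{u_2}$, factor each expression as a coefficient involving $u_i$, $u_{i\sigma}$ times $w_\cyl$ or $\partial_\sigma w_\cyl$; by Theorem \ref{thm-asym}(i) combined with the a priori $C^1$ bounds of the appendix, these coefficients are $o(1)$ on $\cyl_\theta$ as $\tau_0\to-\infty$, so $\|\cE(w_\cyl)\|_{\hv^*}\le \epsilon_1\,\|w_\cyl\|_\hv$. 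The cutoff error $\bar\cE[w,\varphi_\cyl]$ is supported in $\mathrm{supp}(\varphi_\cyl')\subseteq D_\theta$ with pointwise bound $|\bar\cE|\le C_\theta(|w|+|w_\sigma|)$, giving $\|\bar\cE\|_{\hv^*}\le C_\theta\,\|w\chi_{D_\theta}\|_\hilb$. The nonlocal error $\cE_{nl}$ arising from the $J(\sigma,\tau)$-term is estimated via pointwise decay of $J$ and $\partial_\sigma J$ inside $\cyl_\theta$, where $u_{\sigma\sigma}/u$ is small by the cylindrical asymptotics; this gives $\|\cE_{nl}\|_{\hv^*}\le \epsilon_2\,\|w_\cyl\|_\hv$.

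Combining: for any $\epsilon>0$ one can choose $\tau_0\ll-1$ so that
\[
\|\cE\|_{\hv^*,\infty} \;\le\; \tfrac{\epsilon}{C}\bigl(\|w_\cyl\|_{\hv,\infty} + \|w\chi_{D_\theta}\|_{\hilb,\infty}\bigr),
\]
and chaining with Step~1 concludes the proof. The main obstacle I expect is the nonlocal term $\cE_{nl}$: unlike the pointwise pieces, the integral $J = 2\int_0^\sigma u_{\sigma\sigma}/u\,d\sigma'$ does not decay locally and must be controlled in the Gaussian-weighted dual norm by combining Hardy-type inequalities with a careful division between the cylindrical part (where the asymptotics give the smallness) and the transition region to the tip (where the bound must be charged to $\|w\chi_{D_\theta}\|$). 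Ensuring simultaneously that all constants, the coercivity constant from Step~1 and the smallness constants $\epsilon_1,\epsilon_2,C_\theta$ from Step~2, cooperate in the same asymptotic regime $\tau_0\to-\infty$ is the crucial consistency check.
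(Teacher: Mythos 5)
Your overall two-step architecture (abstract coercivity for $\cL$ with terminal condition $\pr_+w_\cyl(\tau_0)=0$, then absorption of the error by smallness of coefficients) matches the paper's, which invokes Lemma~\ref{lem-linear-cylindrical-estimates-sup-L2-version} (quoted from \cite{ADS2}) for Step~1 and proves the error bound as a separate Proposition~\ref{prop-error}. However, Step~2 has a concrete gap.

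You assert that, by Theorem~\ref{thm-asym}(i) and the appendix $C^1$ bounds, \emph{all} coefficients in $\cE(w_\cyl)$ are $o(1)$ on $\cyl_\theta$ as $\tau_0\to-\infty$. This is false for the coefficient $\frac{u_2^2-2}{2u_1u_2}$ multiplying $w_\cyl$. Theorem~\ref{thm-asym}(i) gives $u_i\to\sqrt2$ only uniformly on compact $\sigma$-sets; but $\cyl_\theta=\{u_1\ge\theta/4\}$ reaches out to $|\sigma|\sim 2\sqrt{|\tau|}$, and near that outer boundary $u_2\approx\theta/4$, so $\frac{u_2^2-2}{2u_1u_2}=O(\theta^{-2})$, not $o(1)$. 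A pointwise $o(1)$ absorption therefore does not work for this term. The paper handles it by exploiting the $\hv^*$-structure: the operator $f\mapsto\sigma f$ is bounded $\cH\to\hv^*$ (Lemma~\ref{lemma-esti10}), so one estimates $\big\|\frac{(u_2^2-2)w_\cyl}{2u_1u_2}\big\|_{\hv^*}\lesssim\big\|\frac{1}{\sigma+1}\frac{u_2^2-2}{2u_1u_2}w_\cyl\big\|_\cH$, then splits into $|\sigma|\le K$ (where $|u_2-\sqrt2|$ is made small by taking $\tau_0\ll-1$) and $|\sigma|>K$ (where the explicit $1/K$ prefactor supplies the smallness). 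Some such weighted or dyadic decomposition is essential here and is missing from your argument.

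A second, smaller gap: your bound $\|\bar\cE[w,\varphi_\cyl]\|_{\hv^*}\le C_\theta\,\|w\chi_{D_\theta}\|_\hilb$ cannot be combined to give the claimed $\frac{\epsilon}{C}\|w\chi_{D_\theta}\|_{\hilb,\infty}$ in the final display, since $C_\theta$ is independent of $\tau_0$. The needed smallness comes from the a priori derivative estimates $|u_{1\sigma}|+|u_{1\sigma\sigma}|\lesssim|\tau|^{-1/2}$ on $\cyl_\theta$ (Lemma~\ref{lem-loc-est}), which force $|(\varphi_\cyl)_\sigma|\lesssim C(\theta)/\sqrt{|\tau|}$ and, after an integration by parts in $\hv^*$ for the $(\varphi_\cyl)_\sigma w_\sigma$ piece and the $\cH\to\hv^*$ boundedness of $f\mapsto\sigma f$ for the $\sigma(\varphi_\cyl)_\sigma w$ piece, yield $\|\bar\cE\|_{\hv^*}\lesssim C(\theta)|\tau_0|^{-1/2}\|w\chi_{D_\theta}\|_\cH$. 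You should make this factor explicit. Finally, your claimed bound $\|\cE_{nl}\|_{\hv^*}\le\epsilon_2\|w_\cyl\|_\hv$ omits the $\|w\chi_{D_\theta}\|$ contribution that arises from the part of $J_1-J_2$ supported near the cutoff; the paper's Fubini argument for $I_1,I_2$ produces both terms with an explicit $C(\theta)/|\tau_0|$ prefactor.
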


\smallskip

Our linear operator $\cL(f) = f_{\sigma\sigma} - \frac{\sigma}{2} + f$ is the same is in \cite{ADS2}, and hence, the linear theory we derived in \cite{ADS2} carries over to the Ricci flow case as well. In order for this article to be self-contained, we will state the results from \cite{ADS2} that we will use later, but for the proofs of the same we refer reader to look at \cite{ADS2}. More precisely, in \cite{ADS2} we obtained energy type estimate for ancient solutions
$ f:(-\infty, \tau_0] \to \hv $ of the linear cylindrical equation
\begin{equation}
  \label{eqn-linear1}
  \frac{\partial f}{\partial \tau} - \cL f(\tau) = g(\tau).
\end{equation}

\begin{lemma}[Lemma 5.8 in \cite{ADS2}]
  \label{lem-linear-cylindrical-estimates-sup-L2-version}
  Let $f:(-\infty, \tau_0] \to \hv$ be a bounded solution of equation
  \eqref{eqn-linear1}.  If $T>0$ is sufficiently large, then there is a constant
  $C_{\star}$ such that
  \begin{equation}
    \begin{split}
      \sup_{\tau\leq\tau_0}\| \hat f(\tau)\|_\cH^2
      &+ C_{\star}^{-1} \,  \sup_{n\geq 0} \int_{I_n} \|\hat f(\tau)\|_\hv^2\, d\tau \\
      &\leq \|f_+(\tau_0)\|_\cH^2 + C_\star \sup _{n\geq 0}\int_{I_n} \|\hat
      g(\tau)\|_{\hv^*}^2\, d\tau,
      \label{eq-basic-cylindrical-estimate}
    \end{split}
  \end{equation}
  where $I_n$ is the interval $ I_n = [\tau_0-(n+1)T, \tau_0-nT] $ and where
  $ f_+ = \pr_+ f $ and $ \hat f = \pr_+f + \pr_- f$.
\end{lemma}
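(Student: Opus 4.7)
The plan is to exploit the spectral decomposition of $\cL = \partial_\sigma^2 - (\sigma/2)\partial_\sigma + 1$, which is self-adjoint on $\hilb$ with discrete spectrum $\lambda_n = 1 - n/2$ and eigenfunctions given by (rescaled) Hermite polynomials $\psi_n$. Under reflection symmetry only the even Hermite eigenfunctions survive, so $\hilb_+ = \mathrm{span}\{\psi_0\}$ with $\psi_0 \equiv 1$ and eigenvalue $+1$; $\hilb_0 = \mathrm{span}\{\psi_2\}$ with $\psi_2 = \sigma^2 - 2$; and $\hilb_-$ consists of the remaining even Hermite modes, all with eigenvalues $\le -1$. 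First I would project the equation $f_\tau - \cL f = g$ onto $\hilb_\pm$ (the projections commute with $\cL$), obtaining decoupled equations for $f_\pm$, then estimate each component separately. The crucial algebraic identity, obtained by integrating by parts against the Gaussian weight $e^{-\sigma^2/4}$, is
\[
\langle \cL h, h \rangle = \|h\|_\hilb^2 - \|h_\sigma\|_\hilb^2 = 2\|h\|_\hilb^2 - \|h\|_\hv^2.
\]

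For the stable component, the spectral gap together with this identity yields a coercive bound $-\langle \cL f_-, f_- \rangle \ge c\|f_-\|_\hv^2$ for some absolute constant $c > 0$. Testing the projected equation against $f_-$ in $\hilb$ and estimating the forcing via the $\hv$--$\hv^*$ duality gives the differential inequality
\[
\tfrac{1}{2}\tfrac{d}{d\tau}\|f_-\|_\hilb^2 + \tfrac{c}{2}\|f_-\|_\hv^2 \le C\|g_-\|_{\hv^*}^2.
\]
Since the spectral gap also gives $\|f_-\|_\hv^2 \ge c'\|f_-\|_\hilb^2$, this integrates backward in time via Gr\"onwall to yield exponential decay of $\|f_-(\tau_1)\|_\hilb^2$ as $\tau_1 \to -\infty$; combined with the assumed boundedness of $f$, the boundary contribution from $\tau_1 = -\infty$ vanishes and Duhamel's formula delivers
\[
\|f_-(\tau)\|_\hilb^2 \le C\int_{-\infty}^\tau e^{-c''(\tau - s)}\|g_-(s)\|_{\hv^*}^2\, ds.
\]
Splitting this integral over the intervals $I_n$ and summing the resulting geometric series --- which converges uniformly once $T > 0$ is fixed sufficiently large --- bounds the right side by $C\sup_n \int_{I_n}\|g_-\|_{\hv^*}^2\, d\tau$. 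Integrating the differential inequality over each $I_n$ and absorbing the endpoint term into the just-obtained pointwise control then furnishes the matching estimate on $\sup_n\int_{I_n}\|f_-\|_\hv^2\, d\tau$.

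For the unstable component, $f_+(\tau) = a(\tau)\psi_0$ reduces to the scalar ODE $a' - a = b$, where $b$ is the projection of $g$ onto $\psi_0$. Solving backward from $\tau_0$ by variation of parameters gives $a(\tau) = e^{\tau - \tau_0}a(\tau_0) - \int_\tau^{\tau_0} e^{\tau - s} b(s)\, ds$. A Cauchy--Schwarz estimate of the Duhamel integral over each $I_n$, followed by summation of the geometric series in $e^{-T}$, yields $|a(\tau)|^2 \le 2|a(\tau_0)|^2 + C \sup_n\int_{I_n} |b|^2\, d\tau$; since $\hilb_+$ is one-dimensional, all the relevant norms ($\hilb$, $\hv$, $\hv^*$) are equivalent there, and this controls both $\sup_\tau\|f_+\|_\hilb^2$ and $\sup_n\int_{I_n}\|f_+\|_\hv^2\, d\tau$. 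Adding the two sets of estimates for $f_+$ and $f_-$ produces the desired inequality \eqref{eq-basic-cylindrical-estimate} with $\hat f = f_+ + f_-$. The main technical subtlety is the $\tau_1 \to -\infty$ limit in the stable energy identity: the combination of the spectral gap and the a priori boundedness of $f$ is what allows the boundary term to drop out, and both the constant $C_\star$ and the lower bound on $T$ emerge from the geometric-series step.
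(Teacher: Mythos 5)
Your argument is correct. The paper itself does not prove this lemma---it is imported verbatim from \cite{ADS2} (Lemma 5.8) and used as a black box---and your proof (spectral splitting, the identity $\langle \cL h,h\rangle=2\|h\|_\hilb^2-\|h\|_\hv^2$ giving coercivity of $-\cL$ on $\hilb_-$, backward Gr\"onwall using the boundedness of $f$ to kill the boundary term, the scalar ODE with variation of parameters for the $\psi_0$-mode, and geometric summation over the intervals $I_n$) is exactly the standard energy/ODE route taken in that reference, so there is nothing to correct.
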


We also summarize in the next lemma the following bounds on various operators between the Hilbert spaces $\cH$ and $\hv$ with norms \eqref{eqn-normp0} and \eqref{eqn-normhv} respectively (see in  Section  \ref{subsec-cylindrical} for their definitions). 
 All these bounds were shown  in 
\cite{ADS2}. 

\begin{lemma}\label{lemma-esti10}  The following hold:
\begin{enumerate}
\item [i.] $f \to \sigma f$ is bounded from $\cD$ to $\cH$.
\item[ii.] $f \to \sigma f$, $f \to \partial_{\sigma} f$, $f \to \partial^*_{\sigma} f = (-\partial_{\sigma}f + \frac{\sigma}{2} f)$ are
 bounded   from $\cH$ to $\cD^*$.
\item[iii.]
$f \to \sigma^2 f$, $f \to \sigma \partial_{\sigma} f$, $f \to \partial_{\sigma}^2 f$ are bounded from $\cD$ to $\cD^*$.
\item[iv.]
$f \to f$ is bounded from $\cD$ to $\cH$ and hence from $\cH$ to $\cD^*$.
\end{enumerate}
\end{lemma}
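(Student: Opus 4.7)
The plan is to reduce everything to one key Gaussian–weighted inequality, namely item (i), and then obtain the remaining boundedness statements by integration by parts against the Gaussian weight $e^{-\sigma^2/4}$. By density I can assume throughout that $f,g$ are smooth with compact support, so all boundary terms at $\sigma=\pm\infty$ vanish; the general case then follows from the definition of $\cD$ as the completion in the $\|\cdot\|_\hv$–norm.

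For (i) the idea is a one–line integration by parts that exploits $(e^{-\sigma^2/4})' = -\tfrac{\sigma}{2} e^{-\sigma^2/4}$. Writing
\[
\tfrac12 \int_\R \sigma^2 f^2\, e^{-\sigma^2/4}\, d\sigma
= -\int_\R \sigma f^2\, (e^{-\sigma^2/4})'\, d\sigma
= \int_\R (f^2 + 2\sigma f f_\sigma)\, e^{-\sigma^2/4}\, d\sigma,
\]
and then applying Cauchy–Schwarz with an $\eps$–Young splitting to the term $2\sigma f f_\sigma$, I absorb half of $\int \sigma^2 f^2 e^{-\sigma^2/4}$ into the left side and obtain
\[
\int_\R \sigma^2 f^2\, e^{-\sigma^2/4}\, d\sigma \leq C \bigl(\|f\|_\cH^2 + \|f_\sigma\|_\cH^2\bigr) = C\|f\|_\cD^2.
\]

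For (ii), each case is just pairing with a test function $g\in\cD$ and moving a derivative or a factor $\sigma$ onto $g$. Writing $\langle \sigma f, g\rangle$ as $\langle f,\sigma g\rangle$ and using (i) on $g$ gives the first bound; for $\langle \partial_\sigma f, g\rangle$ I integrate by parts to get $-\langle f, g_\sigma\rangle + \tfrac12\langle f,\sigma g\rangle$, which is controlled by $\|f\|_\cH\|g\|_\cD$ via (i) again; and $\partial^*_\sigma$ is handled by the same calculation since $\partial_\sigma^* = -\partial_\sigma + \tfrac{\sigma}{2}$ by definition. For (iii), I pair $\sigma^2 f$ with $g$ and split as $\langle \sigma f,\sigma g\rangle$, applying (i) to both factors; for $\sigma\partial_\sigma f$ I put $\sigma$ on $g$ and leave $\partial_\sigma f$ alone; for $\partial_\sigma^2 f$ I integrate by parts once, transferring one derivative to $g$ and picking up the usual drift term $\tfrac{\sigma}{2}$, both of which are under control by (i). Finally, (iv) is immediate from $\|f\|_\cH \le \|f\|_\cD$ together with duality.

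I do not expect any real obstacle here: the whole lemma is a packaging of the standard Hermite–Gauss Poincaré inequality (i) together with routine integration by parts. The only mild care is justifying the vanishing boundary terms, which is automatic by density of $C^\infty_c(\R)$ in $\cD$, and keeping track of the drift $\tfrac{\sigma}{2} g$ produced each time a derivative is moved across the Gaussian weight, since that is precisely the term controlled by (i).
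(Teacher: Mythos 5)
Your proof is correct, and it is essentially the standard (and almost certainly the same) argument: the paper delegates the proof to \cite{ADS2} without reproducing it, but the route — deriving (i) by integrating $\sigma f^2$ against $(e^{-\sigma^2/4})'$ and absorbing with Young's inequality, then getting (ii)–(iv) by pairing with $g\in\cD$, moving factors of $\sigma$ and $\partial_\sigma$ across, and collecting the drift term $\tfrac{\sigma}{2}g$ each time a derivative crosses the Gaussian weight — is exactly the natural one. The reduction of all four items to the single Gaussian–weighted Poincar\'e bound (i), with the density of $C_c^\infty$ in $\cD$ used to justify vanishing boundary terms, is a clean way to organize it.
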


The rest of this section will be devoted to the proof of Proposition
\ref{prop-cylindrical}.  To simplify the notation we
will simply denote $u_2^{\beta \gamma}$ by $u_2$ and set $w:=u_1-u_2$.  The difference $w$ satisfies the equation 
\begin{equation} 
\begin{split}
  \frac{\partial}{\partial\tau} w = \cL[w] 
  & +  \frac{w_{\sigma}^2}{u_1} - \frac{w^2}{2u_1} + \frac{2 w_{\sigma} u_{2\sigma}}{u_1} - \frac{u_{2\sigma}^2 w}{u_1 u_2} \\& +  \frac{w}{2u_1u_2} (2-u_2^2 ) - J_1 w_{\sigma} +  u_{2\sigma}\, (J_2 - J_1) 
\end{split}
\end{equation}
Note that
\be\label{eqn-J12}
J_1 - J_2 = - 2 \int_0^\sigma \frac{u_{1\sigma\sigma}}{u_1} d\sigma'  +  2 \int_0^\sigma \frac{u_{2\sigma\sigma}}{u_2}  d\sigma' =
- 2\int_0^{\sigma} \Big(\frac{w_{\sigma\sigma}}{u_1} - \frac{u_{2\sigma\sigma} w}{u_1 u_2}\Big) d\sigma'.\ee
\sk
\sk 
Let $\varphi_C$ be a cut off function as in section \ref{subsec-cylindrical} and let $w_C = w \varphi_C$. An easy computation shows that
\begin{equation}
\label{eq-WC}
\frac{\partial}{\partial \tau} w_C = \cL[w_C] + \cE(w_C) + \bar{\cE}[w,\varphi_C] + \cE_{nl}
\end{equation}
where 
\begin{equation}
\label{eq-E10}
\cE(w_C) := \Big(\frac{w_{\sigma}}{u_1} +  \frac{2 u_{2\sigma}}{u_1}  - J_1\Big)\, (w_C)_{\sigma} - \Big(\frac{w}{2u_1} +\frac{u_{2\sigma}^2}{u_1 u_2} + \frac{u_2^2- 2}{2u_1 u_2}\Big)\, w_C
\end{equation}
and
\begin{equation}
\label{eq-bar-E}
\begin{split}
\bar{\cE}[w,\varphi_C] &:= \varphi_{C,\tau} w - (\varphi_C)_{\sigma\sigma} w - 2(\varphi_C)_{\sigma} w_{\sigma} + \frac{\sigma}{2} (\varphi_C)_{\sigma} w \\
&\quad \, - \frac{ (\varphi_C)_{\sigma}w w_{\sigma}}{u_1} -\frac{2 (\varphi_C)_{\sigma} u_{2\sigma} w}{u_1} + J_1 (\varphi_C)_{\sigma} w
\end{split}
\end{equation}
and
\begin{equation}
\label{eq-nonlocal-error}
\cE_{nl} := u_{2\sigma} \varphi_C \, (J_2 - J_1).
\end{equation}

\sk

The proof of Proposition \ref{prop-cylindrical}  will follow easily by combining  Lemma \ref{lem-linear-cylindrical-estimates-sup-L2-version}
with the next estimate on the error terms in equation \eqref{eq-WC}. 

\begin{proposition}
\label{prop-error}
For every $\epsilon > 0$ and $\theta > 0$ small, there exists a $\tau_0 \ll -1$, so that the error term $\cE := \cE(w_C) + \bar{\cE}[w,\varphi_C] + \cE_{nl}$
satisfies the estimate 
\be\label{eqn-error12}
\|\cE\|_{\cD^*,\infty} \leq  \epsilon\, (\|w_C\|_{\cD,\infty} + \|w\,\chi_{D_{\theta}}\|_{\cH,\infty})
\ee
where $D_{\theta} := \{\sigma\,\,\,|\,\,\, \theta/2 \le u_1(\sigma,\tau) \le \theta\}$.
\end{proposition}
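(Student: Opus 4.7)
The plan is to estimate the three pieces $\cE(w_C)$, $\bar{\cE}[w,\varphi_C]$, and $\cE_{nl}$ separately, passing from pointwise bounds on coefficients to the $\cD^*$-norm via the mapping properties in Lemma \ref{lemma-esti10}. The key observation is that, since multiplication by $\sigma$ and the operators $\partial_\sigma$, $\partial_\sigma^*$ all map $\cH \to \cD^*$ boundedly, any expression of the form $a(\sigma,\tau)\partial_\sigma f + b(\sigma,\tau) f$ with $a,b \in L^\infty$ on the support of $\varphi_C$ satisfies $\|a\partial_\sigma f + bf\|_{\cD^*} \leq C(\|a\|_\infty + \|b\|_\infty)\|f\|_{\cD}$. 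The main task is therefore to exhibit each coefficient entering \eqref{eq-E10}, \eqref{eq-bar-E}, \eqref{eq-nonlocal-error} as small on its support as $\tau_0 \to -\infty$, in a weighted $L^\infty$ sense sufficient for the $\cD^*$ bound.

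For the local error $\cE(w_C)$ given by \eqref{eq-E10}, I would combine Theorem \ref{thm-asym}(i) on the bounded region $|\sigma|\leq L$ (which gives $u_i\to \sqrt{2}$, $u_{i\sigma} = O(|\tau|^{-1})$, and hence that each of $\frac{u_{2\sigma}}{u_1}$, $\frac{u_{2\sigma}^2}{u_1u_2}$, $\frac{u_2^2-2}{u_1u_2}$, $J_1$ is $o(1)$) with the Gaussian weight bound $e^{-\sigma^2/4}\leq e^{-L^2/4}$ on $|\sigma|>L$ inside $\cyl_\theta$, where these coefficients remain merely $O(1)$. The quadratic coefficients $\frac{w_\sigma}{u_1}$ and $\frac{w}{u_1}$ are controlled via the a priori estimates on $w,w_\sigma$ from the Appendix, which vanish uniformly as $\tau_0\to -\infty$. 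Choosing first $L$ large, then $\tau_0$ sufficiently negative, yields $\|\cE(w_C)\|_{\cD^*}\leq \epsilon\|w_C\|_\cD$. For the cutoff error $\bar{\cE}[w,\varphi_C]$ from \eqref{eq-bar-E}, each term is supported in $D_\theta = \{\theta/2 \leq u_1\leq \theta\}$, where Theorem \ref{thm-asym}(ii) gives $|\sigma|\sim \sqrt{(4-\theta^2)|\tau|/2}$ and $|u_{1\sigma}| = O(|\tau|^{-1/2})$. Writing $\varphi_C = \chi(u_1/\theta)$ and expressing its derivatives through $u_{1\sigma}$, $u_{1\sigma\sigma}$, $u_{1\tau}$, one sees that $\sigma(\varphi_C)_\sigma$ stays bounded, so $\|\bar{\cE}\|_{\cD^*}\leq C(\theta)\|w\chi_{D_\theta}\|_\cH$; the exponential smallness of the Gaussian weight on $D_\theta$ then absorbs $C(\theta)$ into an $\epsilon$ as $\tau_0\to-\infty$.

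The main obstacle is the nonlocal term $\cE_{nl} = u_{2\sigma}\varphi_C(J_2 - J_1)$, since by \eqref{eqn-J12} the function $J_1 - J_2$ is an integral of $\frac{w_{\sigma\sigma}}{u_1} - \frac{u_{2\sigma\sigma}w}{u_1u_2}$ over $[0,\sigma]$, mixing information across the cylindrical region. I would integrate by parts once in the $w_{\sigma\sigma}/u_1$ contribution, reducing it to a boundary term at $\sigma' = \sigma$ (a local expression handled exactly like $\cE(w_C)$) plus a volume integral whose integrand involves $w_\sigma$ times bounded factors. Since $u_{2\sigma}$ in front supplies the $o(1)$ smallness on $\cyl_\theta$, and the integration domain $[0,\sigma]$ lies inside $\cyl_\theta$ (because $\varphi_C$ restricts $\sigma$ there), Cauchy--Schwarz together with the $\cD$- and $\cH$-bounds on $w_C$ and $w\chi_{D_\theta}$ closes the estimate with $\|\cE_{nl}\|_{\cD^*}\leq \epsilon(\|w_C\|_\cD + \|w\chi_{D_\theta}\|_\cH)$. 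The delicate technical point is performing these integration-by-parts manipulations while preserving the $\epsilon$-gain uniformly in $\sigma$, and in particular controlling the contributions arising near the transition region $D_\theta$ so that they feed cleanly into the two terms on the right-hand side of \eqref{eqn-error12} rather than producing an uncontrolled global $\|w\|_\cD$-piece.
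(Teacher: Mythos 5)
Your overall architecture (estimate the three pieces separately, use the $\cD^*$-mapping properties from Lemma \ref{lemma-esti10}, integrate by parts once in the $w_{\sigma\sigma}/u_1$ integral of the nonlocal term) matches the paper's proof. However, the mechanisms you invoke to produce the $\epsilon$-gain in the first two pieces are not correct as stated, and in their current form the argument does not close.

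For $\cE(w_C)$: the troublesome coefficient is $\frac{u_2^2-2}{2u_1u_2}$. You claim that on $|\sigma|>L$ this is $O(1)$ but harmless because the Gaussian weight $e^{-\sigma^2/4}$ is $\leq e^{-L^2/4}$ there. This does not work: the right-hand norm $\|w_C\|_\cD$ carries the \emph{same} Gaussian weight, so $\|\frac{u_2^2-2}{2u_1u_2}w_C\mathbb{1}_{|\sigma|>L}\|_\cH\leq C\|w_C\|_\cH$ with no small constant, because $w_C$ could be concentrated precisely in $|\sigma|>L$. The actual gain in the paper comes from passing to the $\cD^*$-norm and observing that the bound of the operator $f\mapsto\sigma f$ from $\cH$ to $\cD^*$ lets one trade $\|g\|_{\cD^*}$ for $C_0\|\frac{g}{\sigma+1}\|_\cH$; the factor $\frac{1}{\sigma+1}\leq\frac1K$ on $|\sigma|>K$ is what produces smallness on the tail (a polynomial, not Gaussian, decay), combined with $|u_2-\sqrt{2}|\to 0$ on $|\sigma|\leq K$.

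For $\bar{\cE}[w,\varphi_C]$: you correctly note $\sigma(\varphi_C)_\sigma = O(1)$, conclude $\|\bar{\cE}\|_{\cD^*}\leq C(\theta)\|w\chi_{D_\theta}\|_\cH$, and then claim the exponentially small Gaussian weight on $D_\theta$ ``absorbs $C(\theta)$ into $\epsilon$.'' This is a double-count: $\|w\chi_{D_\theta}\|_\cH$ already contains that exponentially small weight; you cannot use it again to shrink the constant. The true source of smallness is the derivative bound $|(\varphi_C)_\sigma|=|\chi'(u_1)u_{1\sigma}|\leq C(\theta)/\sqrt{|\tau|}$ from the a priori estimates (Lemma \ref{lem-loc-est}), which one exploits by writing, e.g., $\|2(\varphi_C)_\sigma w_\sigma\|_{\cD^*}\leq C\big(\|\big((\varphi_C)_\sigma w\big)_\sigma\|_{\cD^*}+\|(\varphi_C)_{\sigma\sigma}w\|_\cH\big)\leq C\|(\varphi_C)_\sigma w\|_\cH + C\|(\varphi_C)_{\sigma\sigma}w\|_\cH\leq \frac{C(\theta)}{\sqrt{|\tau_0|}}\|w\chi_{D_\theta}\|_\cH$, and similarly $\|\frac{\sigma}{2}(\varphi_C)_\sigma w\|_{\cD^*}\leq C\|(\varphi_C)_\sigma w\|_\cH\leq \frac{C(\theta)}{\sqrt{|\tau_0|}}\|w\chi_{D_\theta}\|_\cH$. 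So the $\epsilon$-gain is of order $|\tau_0|^{-1/2}$, not exponential, and it comes from the coefficients, not the weight.

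For $\cE_{nl}$, your integration by parts is the right step and matches the paper. What you should make explicit is the Fubini argument that converts the inner integral $\int_0^\sigma |w|$ into the $\cH$-norm of $w_C$ (using the monotonicity of $\varphi_C$ on $\sigma\geq 0$ and $\sigma\leq 0$), together with the prefactor $|u_{2\sigma}|\lesssim |\tau|^{-1/2}$ providing the decay; without this the $\cD^*$-estimate does not close. Once you correct the two points above and flesh out the Fubini step, the argument goes through along the same lines as the paper.
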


\begin{proof}
Note that our unique asymptotics result in \cite{ADS3}, together with more refined asymptotics in the collar region (see section \ref{sec-appendix} for precise statements and their proofs) we have that for every $\epsilon > 0$ there exists a $\tau_0 \ll -1$ so that for $\tau \le \tau_0$ we have $|w(\sigma,\tau)| \leq  \epsilon$ in $\cC_{\theta}$. Combining this, estimate \eqref{eq-E10} and \eqref{eq-der-inter}, since $u \ge \theta/2$ in $\cC_{\theta}$, yield
\begin{equation}
\label{eq-error-120}
\cE(w_C)| \le \frac{\epsilon}{6}\, \Big(|w_C| + |(w_C)_{\sigma}|\Big) +  \frac{|u_2^2 - 2|}{2u_1 u_2} |w_C|
\end{equation}
and 
\begin{equation}
\label{eq-error-121}
|\bar{\cE}[w,\varphi_C]| \le \frac{C(\theta)}{\sqrt{|\tau_0|}}\,  |w\, \chi_{D_{\theta}}|  + C(\theta)|(\varphi_C)_{\sigma}\,w_{\sigma}|
\end{equation}
in $\cC_{\theta}$, for $\tau \le \tau_0$. Then, using the estimates in Lemma \ref{lemma-esti10}  and \eqref{eq-error-120} we have
\begin{equation*}
\begin{split}
\|\cE(w_C)\|_{\cD^*} &\le \frac{\epsilon}{6}\, \|w_C\|_{\cD} + \Big\|\frac{(u_2^2 - 2) w_C}{2u_1 u_2}\Big\|_{\cD^*} \\
&\le \frac{\epsilon}{6}\, \|w_C\|_{\cD} + C_0\, \Big\|\frac{1}{\sigma+1} \frac{(u_2^2 - 2)}{2u_1 u_2}\, w_C\Big\|_{\cH} \\
&\le \frac{\epsilon}{6}\, \|w_C\|_{\cD} + C(\theta) \Big(\int_0^K (u_2 - \sqrt{2})^2 w_C^2\, d\mu\Big)^{\frac12} + \frac{C(\theta)}{K}\Big(\int_K^{\infty} w_C^2\, d\mu\Big)^{\frac12},
\end{split}
\end{equation*} 
where $C_0$ is a uniform constant, that is, an upper bound of the norm of the operator $f \to \sigma f$   from $\cH$ to $\cD^*$. 
For a given $\epsilon > 0$, choose $K$ large so that ${C(\theta)}/{K} \leq  {\epsilon}/{12}$ and then for this $K$ choose 
$\tau_0 \ll -1$ so that for $\tau\le \tau_0$ and for $\sigma\in [0,K]$ we have $|u_2 - \sqrt{2}| \leq {\epsilon}/{12}$. The latter follows from the fact that the $\lim_{\tau\to -\infty} u_2(\sigma,\tau) = \sqrt{2}$, uniformly on compact sets. This finally yields
\begin{equation}
\label{eq-E-wC}
\|\cE(w_C)\|_{\cD^*,\infty} \leq  \frac{\epsilon}{3} \|w_C\|_{\cD,\infty}.
\end{equation}

Furthermore, using \eqref{eq-error-121},  Lemma \ref{lemma-esti10} and the definition of the cut off function $\varphi_C$, for all $\tau \le \tau_0$ we have
\begin{equation*}
\begin{split}
\|\bar{\cE}[w,\varphi_C]\|_{\cD^*} &\le \frac{C(\theta)}{\sqrt{|\tau_0|}} \, \| w \chi_{D_{\theta}} \|_{\cD^*} + C(\theta) \, \| (\varphi_C)_{\sigma} w_{\sigma}\|_{\cD^*} \\
&\le  \frac{C(\theta)}{\sqrt{|\tau_0|}}\,  \| w \chi_{D_{\theta}} \|_{\cH} + C(\theta) \Big ( \|\big((\varphi_C)_{\sigma} w\big)_{\sigma}\|_{\cD^*} + \|(\varphi_C)_{\sigma\sigma} w\|_{\cH} \Big ) \\
&\le \frac{C(\theta)}{\sqrt{|\tau_0|}} \, \|w \chi_{D_{\theta}} \|_{\cH},
\end{split}
\end{equation*}
where the constant $C(\theta)$ may vary from line to line, but is uniform in time. This leads to
\begin{equation}
\label{eq-E-phiC}
\|\bar{\cE}[w,\varphi_C]\|_{\cD^*,\infty} \le \frac{C(\theta)}{\sqrt{|\tau_0|}}\, \|\chi_{D_{\theta}} w\|_{\cH,\infty}.
\end{equation}

It remains to deal with the more delicate bound of the  non-local term $\cE_{nl}$. Using \eqref{eqn-J12} we have 
\[\cE_{nl} =  -2\Big(u_{2\sigma}\varphi_C \int_0^{\sigma} \frac{w_{\sigma\sigma}}{u_1}\, d\sigma' + u_{2\sigma}\varphi_C \int \frac{u_{2\sigma\sigma}}{u_1 u_2}\, w\, d\sigma'\Big) =: -2 (I_1 + I_2)\]
and hence,
\[\|\cE_{nl}\|_{\cD^*} \le 2 \, ( \|I_1\|_{\cD^*} + \|I_2\|_{\cD^*}).\]
Using \eqref{eq-der-inter} we have
\be
\begin{split}
\|I_2\|^2_{\cD^*} \le C_0\, \|I_2\|^2_{\cH} &\le \frac{C(\theta)}{|\tau|^2}\,  \int_{\sigma \ge 0} \varphi_C^2\, \Big(\int_0^{\sigma}|w|\, d\sigma'\Big)^2 e^{-\frac{\sigma^2}{4}}\, d\sigma\\ 
&\qquad + \frac{C(\theta)}{|\tau|^2}\, \int_{\sigma \le 0} \varphi_C^2\, \Big(\int_0^{\sigma}|w|\, d\sigma'\Big)^2 e^{-\frac{\sigma^2}{4}}\, d\sigma.
\end{split}
\ee
It is enough to show how we deal with one of the two terms since the other one is handled similarly. Note that by definition  $\varphi_C$ is nonincreasing for $\sigma \ge 0$ and is nondecreasing for $\sigma \le 0$. Using this and the Fubini theorem we get
\begin{equation*}
\begin{split}
\int_{\sigma \ge 0} \varphi_C^2\, \Big(\int_0^{\sigma}|w|\, d\sigma'\Big)^2 & e^{-\frac{\sigma^2}{4}}\, d\sigma \le \int_{\sigma \ge 0} \varphi_C^2 \Big(\int_0^{\sigma} w^2\, d\sigma'\Big) \sigma e^{-\sigma^2/4}\, d\sigma \\
&\le \int_{\sigma \ge 0} \Big(\int_0^{\sigma} \varphi_C^2(\sigma') w^2(\sigma',\tau)\, d\sigma'\Big) \sigma e^{-\sigma^2/4}\, d\sigma \\
&= \int_0^{u_1^{-1}(\theta,\tau)} w_C^2\, \Big(\int_{\sigma'}^{u_1^{-1}(\theta,\tau)} \sigma e^{-\sigma^2/4}\, d\sigma\Big)\, d\sigma' \\
&\le C_0 \int_{\sigma \ge 0} w_C^2\, d\mu \le C_0 \, \|w_C\|_{\cH}^2.
\end{split}
\end{equation*}
This yields the bound 
\begin{equation}
\label{eq-I2-est}
\|I_2\|_{\cD^*,\infty} \le \frac{C(\theta)}{|\tau_0|} \|w_C\|_{\cH,\infty}.
\end{equation}
We deal with the term $I_1$, similarly as with term $I_2$ above.  Using  \eqref{eq-der-inter}, integration by parts and Fubini's theorem, gives that for all $\tau \leq \tau_0$ we have 
\begin{equation}
\label{eq-I1-est}
\begin{split}
\|I_1\|_{\cD^*}^2 \le C_0 \|I_1\|_{\cH}^2 \le &\frac{C(\theta)}{|\tau|}\Big(\Big\|\varphi_C\int_0^{\sigma} \frac{w_{\sigma} u_{1\sigma}}{u_1^2}\, d\sigma'\Big\|_{\cH}^2 +  \Big\|\varphi_C \, \frac{w_{\sigma}}{u_1}(\sigma,\tau)\Big)\Big\|_{\cH}^2\Big)\\
&\le \frac{C(\theta)}{|\tau|}\,\big  (\|\varphi_C w_{\sigma}\|_{\cH}^2  \big ) 
\le \frac{C(\theta)}{|\tau_0|}\, \big (\|w_C\|_{\cH}^2 + \|w\chi_{D_{\theta}}\|_{\cH}^2  \big). 
\end{split}
\end{equation}
It follows that 
\begin{equation}
\label{eq-I1-est-final}
\|I_1\|_{\cD^*,\infty}^2 \le \frac{C(\theta)}{|\tau_0|}\, (\|w_C\|^2_{\cD,\infty} + \|w\chi_{D_{\theta}}\|^2_{\cH,\infty}).
\end{equation}
Finally, \eqref{eq-E-wC}, \eqref{eq-E-phiC}, \eqref{eq-I2-est} and \eqref{eq-I1-est-final} imply that for every $\epsilon > 0$, there exists a $\tau_0 \ll -1$ so that \eqref{eqn-error12} holds, hence 
concluding the proof of Proposition.
\end{proof}

We can finally finish the proof of  Proposition \ref{prop-cylindrical}.

\begin{proof}[Proof of Proposition \ref{prop-cylindrical}]
Apply Lemma \ref{lem-linear-cylindrical-estimates-sup-L2-version} to $w_C$ solving \eqref{eq-WC}, to conclude that there exist $\tau_0 \ll -1$ and constant $C_0 > 0$, so that if the parameters $( \beta, \gamma)$ are chosen to ensure that $\mathcal{P}_+ w_C(\tau_0) = 0$, then $\hat{w}_C := \mathcal{P}_+ w_C + \mathcal{P}_-w_C$ satisfies the estimate
\[\|\hat{w}_C\|_{\cD,\infty} \le C_0 \, \|\cE\|_{\cD^*,\infty}, \qquad \mbox{for all} \,\, \tau \leq \tau_0\]
where $\cE := \cE(w_C) + \bar{\cE}[w,\varphi_C] + \cE_{nl}$. Combining this together with Proposition \ref{prop-error} concludes the proof of Proposition \ref{prop-cylindrical}.
\end{proof}

\section{The tip region}
\label{sec-tip}

Let $u_1(\sigma,\tau)$ and $u_2(\sigma,\tau)$ be the two solutions to equation
\eqref{eq-u} as in the statement of Theorem \ref{thm-main} and let
$u_2^{\beta \gamma}$ be defined by \eqref{eq-ualphabeta}.  We
will now estimate the difference of these solutions in the {\em tip region} 
$$\tip_{\theta} = \{(\sigma,\tau)\,\,\,|\,\,\, u_1(\sigma,\tau)  \le 2\theta\}$$
for
$\theta >0$ sufficiently small, and $\tau \le \tau_0 \ll -1$, where
$\tau_0$ is to be chosen later. Recall from Section \ref{subsec-tip} that the tip region is further divided into the {\em collar region}  $\collar_{L,\theta}$ defined by \eqref{eqn-collar-region} and
the {\em  soliton region}  $S_L$ defined by \eqref{eqn-soliton-region}. 

\sk
As we have seen in Section \ref{subsec-tip}, in the tip region we exchange the variables $\sigma$ and $u$
and consider $Y(u,\tau):=u_\sigma^2(\sigma,\tau)$ as a function of $u$. This means that for our given solutions   $u_1(\sigma,\tau), u_2^{\beta \gamma}(\sigma,\tau)$ of \eqref{eqn-u},  
we consider  
$$Y_1(u,\tau):=u_{1\sigma}^2(\sigma,\tau), \quad  \mbox{where} \,\, \sigma=\sigma_1(u,\tau) \iff u=u_1(\sigma,\tau)$$
and similarly
$$ Y_2^{\beta \gamma}(u,\tau) := (u^{\beta \gamma}_{2\sigma})^2(\sigma,\tau),  \quad \mbox{where} \,\, \sigma=\sigma_2^{\beta \gamma}(u,\tau) \iff u=u_2^{\beta \gamma}(\sigma,\tau).$$
Note that  by the definition of $ u_2^{\beta \gamma}(\sigma,\tau)$  (see \eqref{eq-ualphabeta}) we have that 
\be\label{eqn-Yabc}
  Y_2^{\beta \gamma}(u,\tau)
  = Y_2\Big(\frac{u}{\sqrt{1+\beta e^{\tau}}}, \tau + \gamma - \log(1+ \beta e^{\tau})\Big).\ee
 \sk

We have seen in Section \ref{subsec-tip} that in the soliton region we   need   further rescale  of our solutions, namely define 
\be\label{eqn-defnZ12}
Z_1(\rho,\tau) := Y_1(u,\tau)   \quad   \mbox{and} \quad Z_2(\rho,\tau) := Y_2(u,\tau), \qquad \rho  := \sqrt{|\tau|}\, u.
\ee
Both rescaled solutions satisfy equation \eqref{eqn-Z}. Also, using  \eqref{eqn-Yabc} we see that 
\be\label{eqn-Zabc}
Z_2^{\beta \gamma}(\rho,\tau) = Z_2\Big(\frac{\rho\, \sqrt{|\tau + \gamma - \log(1 + \beta e^{\tau})|}}{\sqrt{|\tau|}\, \sqrt{1+ \beta e^{\tau}}}, \tau + \gamma - \log(1 + \beta e^{\tau})\Big).
\ee

\sk 

The following simple consequence of Theorem \ref{thm-asym} will be used in the sequel. 

\begin{proposition} \label{prop-convergence-tip}
  If $(\beta, \gamma)$ are $\tau_0$ admissible in the sense of 
  Definition~\ref{def-admissible}, then
  \[
Z_1(\rho,\tau)   \to Z_0(\rho)  \quad \mbox{and} \quad  Z^{\beta \gamma}_2(\rho, \tau) \to Z_0(\rho) \qquad
  \text{as }\tau\to -\infty,
  \]
uniformly on compact sets  and smoothly, where  $Z_0(\rho)$ is  the unique rotationally symmetric Bryant soliton  with maximal scalar curvature equal to one.  
\end{proposition}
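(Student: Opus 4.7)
The plan is to deduce both convergence statements directly from Theorem \ref{thm-asym}(iii), with the admissibility of $(\beta,\gamma)$ being used only to control the reparametrization relating $Z_2^{\beta\gamma}$ to $Z_2$.

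First I would treat $Z_1$. The quantity $Y_1 = u_{1\sigma}^2 = \psi_s^2$ is invariant under parabolic rescaling of the metric, and the variable $\rho = u\sqrt{|\tau|}$ is exactly the Bryant-scale coordinate near the tip. Indeed, Theorem \ref{thm-asym}(iii) gives $k(t) \sim \log|t|/|t|$, hence $\sqrt{k(t)} \sim \sqrt{|\tau|}/\sqrt{|t|}$, while the profile near the tip scales like $\psi \sim 1/\sqrt{k(t)}$, so that $u = \psi/\sqrt{|t|} \sim 1/\sqrt{|\tau|}$ and $\rho \sim 1$. Since the rescaled Ricci flows $(S^3, k(t)\,g(t + k(t)^{-1} s), p_t)$ converge smoothly to the rotationally symmetric Bryant translating soliton of maximal scalar curvature one, and both solution and limit are rotationally symmetric, this translates into smooth convergence of the corresponding profile functions $Z_1(\rho,\tau) \to Z_0(\rho)$ on every compact subset of $[0,\infty)$.

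Next, for $Z_2^{\beta\gamma}$, the same argument applied to the solution $u_2$ yields $Z_2(\rho,\tau) \to Z_0(\rho)$ smoothly on compacts. To transfer this to $Z_2^{\beta\gamma}$ I would use the explicit relation \eqref{eqn-Zabc}, which writes $Z_2^{\beta\gamma}(\rho,\tau) = Z_2(\rho', \tau')$ with
\[
\rho' = \frac{\rho\,\sqrt{|\tau + \gamma - \log(1+\beta e^\tau)|}}{\sqrt{|\tau|}\,\sqrt{1+\beta e^\tau}}, \qquad \tau' = \tau + \gamma - \log(1+\beta e^\tau).
\]
The admissibility bound $\beta \le \epsilon\, e^{-\tau_0}/|\tau_0|$ forces $\beta e^\tau \le \beta e^{\tau_0} \le \epsilon/|\tau_0|$ for every $\tau \le \tau_0$, so in particular $\beta e^\tau \to 0$ as $\tau \to -\infty$. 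Combined with the fact that $\gamma$ is fixed and finite, this yields $\sqrt{1+\beta e^\tau} \to 1$, $\log(1+\beta e^\tau) \to 0$, $|\tau'|/|\tau| \to 1$, and hence $\rho' \to \rho$; moreover the derivatives of the map $\rho \mapsto \rho'$ converge to those of the identity, uniformly on any compact set. A standard chain-rule and triangle-inequality argument, using the smooth convergence $Z_2(\cdot,\tau') \to Z_0(\cdot)$ on compacts as $\tau' \to -\infty$, then gives $Z_2^{\beta\gamma}(\rho,\tau) \to Z_0(\rho)$ smoothly on compact subsets of $[0,\infty)$.

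The step that requires some care, rather than a genuine obstacle, is keeping track of smooth (as opposed to merely pointwise) convergence through the reparametrization $(\rho,\tau) \mapsto (\rho',\tau')$; this reduces to elementary derivative estimates on $\rho'$ and $\tau'$ coming from the admissibility bounds. Every other ingredient is an immediate consequence of Theorem \ref{thm-asym}(iii) together with the uniqueness of the normalized rotationally symmetric Bryant soliton.
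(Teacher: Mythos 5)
Your argument is correct and follows essentially the same route as the paper: deduce $Z_i(\rho,\tau)\to Z_0(\rho)$ from Theorem~\ref{thm-asym}(iii) by identifying $\rho$ as the Bryant-scale variable, and then transfer the statement to $Z_2^{\beta\gamma}$. The paper's proof is slightly cleaner on the first step (it writes the rescaled metric $k(t)\,g(t)$ directly as $\tfrac{d\rho^2}{Z}+\rho^2 g_{S^2}$ rather than arguing heuristically from $\psi\sim 1/\sqrt{k(t)}$), and conversely it simply asserts without comment that $Z_2^{\beta\gamma}\to Z_0$ follows from $Z_2\to Z_0$; your verification via \eqref{eqn-Zabc} and the admissibility bounds supplies exactly the detail the paper omits there.
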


\begin{proof}

Lets first show  that each of  the rescaled solutions  $Z_i(\rho,\tau)$ according to \eqref{eqn-defnZ12} converges, as $\tau\to -\infty$, uniformly smoothly to the unique rotationally symmetric Bryant soliton $Z_0(\rho)$ whose maximum curvature is equal to one. Let's  drop the subscript $i$ from the solutions setting for simplicity, that is set   $Z:=Z_i$. Denote by $(S^3,g(\cdot,t))$ the unrescaled solution of \eqref{eq-rf}. 

\sk

By Theorem \ref{thm-asym} we know  that the maximal scalar curvature $k(t)$ of the solution $(S^3, g(\cdot,t))$ satisfies  $ k(t) =  \frac{\log(-t)}{(-t)} \big ( 1 + o(1) \big )$, as $t \to -\infty$. Moreover, according to  Theorem \ref{thm-asym}, the 
rescaled solution  $(S^3,\tilde{g}(\cdot,\tau))$, defined by $$\tilde{g}(\cdot,\tau) = k(t) \, g(\cdot,t), \qquad \tau = -\log(-t),$$ 
whose maximal scalar curvature is equal to one, converges, as $\tau\to -\infty$,   to the unique rotationally symmetric Bryant soliton whose maximal scalar curvature is equal to one. 
Since, 
\[\tilde{g}(\cdot,\tau) = |\tau|\, \Big(\frac{du^2}{Y(u,\tau)} + u^2 g_{S^2}\Big) = \frac{d\rho^2}{Z(\rho,\tau)} + \rho^2 \, g_{S^2},\]
we conclude from the above discussion that  $Z(\rho,\tau)$ converges, as $\tau \to -\infty$, in $C^\infty_{\rm loc}$ to $Z_0(\rho)$, where 
${\ds \frac{d\rho^2}{Z_0(\rho)} + \rho^2\, g_{S^2}}$ is the  Bryant soliton metric whose maximal scalar curvature is one.  

This in particular shows that $Z_1(\rho, \tau) \to Z_0(\rho)$ and  $Z_2^{\beta\gamma}(\rho, \tau) \to Z_0(\rho)$  in $C^\infty_{\rm loc}$.
\end{proof}

%
%

For a  cutoff function $\varphi_T(u)$ supported in the tip region (see  \eqref{eqn-cutofftip} for its definitions), we set 
\be\label{eqn-W-15}
W_T := \varphi_T\, W  \qquad \mbox{where} \qquad   W:=\Psi_1-\Psi_2^{\beta \gamma},
\ee
and $\Psi_1 := \sqrt{Y_1}$ and $\Psi^{\beta\gamma} := \sqrt{Y_2^{\beta\gamma}}$.
The reason for passing from $Y$ to $\Psi:= \sqrt{Y}$ is that it allows us to easier recognize the divergence structure 
of the equation for $W$ which is essential in establishing our energy estimate in the soliton region (see in Section \ref{subsec-tip-energy}). 

\sk

Recall  the definition of the norm $\| \cdot \|_{2, \infty}$ in the tip region is given by   \eqref{eqn-normt0}-\eqref{eqn-normt}  in Section \ref{subsec-tip}. 
The main goal in this section is to establish the following estimate.

\begin{proposition}\label{prop-tip} There exist $\theta$ with
  $0 < \theta \ll 1$, $\tau_0 \ll -1$ and $C< +\infty$ such that
  \begin{equation}
    \label{eqn-tip}
    \| W_T \|_{2,\infty} \leq \frac{C}{\sqrt{|\tau_0| } } \, \| W \, \chi_{_{[\theta, 2\theta]} } \|_{2,\infty}
  \end{equation}
  holds. Constant $C$ is a uniform constant, independent of $\tau_0$, as long as $\tau_0 \ll -1$.
\end{proposition}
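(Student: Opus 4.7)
The plan is to establish a weighted energy estimate for $W_T = \varphi_T W$ in the $(u,\tau)$ variables, handling the collar and soliton subregions separately. First, since $\Psi_i = \sqrt{Y_i}$ and each $Y_i$ solves \eqref{eqn-Y}, a direct computation produces a second-order parabolic evolution equation for $W = \Psi_1 - \Psi_2^{\beta\gamma}$ with principal part $\Psi_1^2 W_{uu}$, lower-order terms linear in $W, W_u$ (with coefficients depending on $u, Y_1, Y_2^{\beta\gamma}$), and a quadratic nonlinearity in $(W, W_u)$. Multiplying by $\varphi_T$ yields the evolution for $W_T$ together with commutator terms supported on $\supp\varphi_T' \subset [\theta,2\theta]$; these will feed into the right-hand side of \eqref{eqn-tip}.

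Next, I would compute $\frac{d}{d\tau}\|W_T(\cdot,\tau)\|^2 = \frac{d}{d\tau}\int_0^{2\theta} W_T^2 \Psi_1^{-2} e^{\mu(u,\tau)}\, du$. The weight $\mu$ defined by \eqref{eqn-weight1}--\eqref{eqn-weight2} is engineered precisely so that the principal spatial term becomes formally self-adjoint: on $\{u \le \theta/4\}$ the choice $\mu_u = (1-Y)/(uY)$ turns the linearization of the right-hand side of \eqref{eqn-Y} into a divergence-form symmetric operator on $L^2(\Psi^{-2} e^{\mu}\, du)$, while on $\{u \ge \theta/2\}$ the choice $\mu_u = (-\sigma^2/4)_u$ matches the cylindrical weight, ensuring compatibility of norms in the overlap region (as will be needed for Corollary~\ref{cor-equiv-norm}). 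The transition zone $[\theta/4,\theta/2]$, where $\zeta'$ is nontrivial, contributes an error controlled by $|\zeta'| \le 5/\theta$; this error is small because the two candidate expressions for $\mu_u$ agree to leading order there, by the refined collar asymptotics from Appendix~\ref{sec-appendix}.

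The heart of the proof is obtaining coercivity of the resulting quadratic form in each subregion. In the collar $\collar_{L,\theta}$ I would use Theorem~\ref{thm-asym}(ii), which gives $u \sim \sqrt{2 - z^2/2}$ with $z = \sigma/\sqrt{|\tau|}$, to extract explicit leading-order expressions for $Y$, $\sigma_u$, and the zeroth-order coefficients; the exponential weight $e^{\mu}$ then supplies the required coercivity. In the soliton region $\mathcal{S}_L$ I would rescale to $\rho = u\sqrt{|\tau|}$ as in \eqref{eqn-defnZ12}; by Proposition~\ref{prop-convergence-tip} both $Z_1(\rho,\tau)$ and $Z_2^{\beta\gamma}(\rho,\tau)$ converge smoothly on $\{\rho \le L\}$ to the Bryant profile $Z_0$, so to leading order the linearized equation coincides with the linearization at the Bryant soliton, and the $1/|\tau|$ prefactor on $Z_\tau$ in \eqref{eqn-Z} makes the $\tau$-derivative subleading. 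Combining the two subregions produces a differential inequality of the schematic form
\begin{equation*}
\tfrac{d}{d\tau}\|W_T\|^2 \ge c\,\|W_T\|_\star^2 - \tfrac{C}{|\tau|}\|W_T\|^2 - C\,\|W\chi_{[\theta,2\theta]}\|^2,
\end{equation*}
where $\|\cdot\|_\star$ denotes a Dirichlet-type norm associated with our weight. Integrating this inequality in $\tau$, using the $|\tau|^{-1/4}$ factor in \eqref{eqn-normt0} and tracking the Jacobian $1/\sqrt{|\tau|}$ introduced by the soliton rescaling $\rho = u\sqrt{|\tau|}$, then yields the factor $|\tau_0|^{-1/2}$ after taking square roots.

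The main obstacle will be the coercivity in the soliton region. The linearization at the Bryant soliton is not explicitly diagonalizable, so one must rule out non-trivial kernel elements in the relevant weighted $L^2$ space; this should be achievable because the translation and scaling Jacobi fields are precisely absorbed by modding out by the parameters $(\beta,\gamma)$, which are admissible in the sense of Definition~\ref{def-admissible}. A further subtlety is that the weight $\mu(u,\tau)$ depends on the unknown $Y_1$, so differentiating the energy generates a $\mu_\tau$ contribution that must be shown to be lower order; the refined asymptotics gathered in Appendix~\ref{sec-appendix} should suffice for this. Finally, the apparent singularity at $u = 0$ is handled by an approximation argument on $\{u \ge \delta\}$ with $\delta \downarrow 0$, exploiting that $\Psi_1^{-2} e^{\mu}$ extends smoothly to the tip thanks to $Y_1 \to 1$ as $u \to 0$ from the Bryant-type profile.
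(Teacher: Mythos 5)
Your high-level blueprint—working in $(u,\tau)$ coordinates, exploiting the divergence structure of the $\Psi=\sqrt{Y}$ equation relative to the weight $e^{\mu}$, splitting collar from soliton region, and deriving a weighted differential inequality for $\|W_T\|^2$—is indeed the architecture of the paper's proof. But your schematic inequality and, more importantly, your account of where the crucial factor $1/\sqrt{|\tau_0|}$ comes from, are both off in ways that would prevent the argument from closing.

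The paper's central point is a $\tau$-weighted Poincar\'e inequality (Proposition~\ref{prop-Poincare}) that, combined with $\mu_u^2 \gtrsim |\tau|/Y$ from Lemma~\ref{lemma-good1}, produces coercivity of order $|\tau|$: the energy estimate becomes $\frac{d}{d\tau}\int W_T^2 \Psi^{-2}e^{\mu}\,du \le -2\lambda|\tau|\int W_T^2\Psi^{-2}e^{\mu}\,du + C(\theta)\int_\theta^{2\theta}W^2\Psi^{-2}e^\mu\,du$. The $-2\lambda|\tau|$ coefficient—\emph{growing} like $|\tau|$, not $O(1)$ and certainly not $O(1/|\tau|)$—is what, after multiplying by $e^{-\lambda\tau^2}$ and integrating in $\tau$, yields the improvement $\|W_T\|_{2,\infty}\le C|\tau_0|^{-1/2}\|W\chi_{[\theta,2\theta]}\|_{2,\infty}$. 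Your sketch attributes this gain to ``tracking the Jacobian $1/\sqrt{|\tau|}$ introduced by the soliton rescaling'' and to the $|\tau|^{-1/4}$ factor built into the norm; this is not the mechanism. The $|\tau|^{-1/4}$ is there to equalize norms across the transition region (Corollary~\ref{cor-equiv-norm}); the gain comes from the $|\tau|$-coercivity. A constant-coercivity dissipation estimate of the sort your schematic suggests would only deliver $\|W_T\|_{2,\infty}\le C\|W\chi_{[\theta,2\theta]}\|_{2,\infty}$, with no gain, and the iteration with the cylindrical estimate would not close. (Also note your $\ge$ sign and the $-\tfrac{C}{|\tau|}\|W_T\|^2$ term are a confused rendering of the dissipation inequality: the dissipative term should appear on the right-hand side of a $\le$ with a negative sign and coefficient of order $|\tau|$.)

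A second substantive error: you treat the $\mu_\tau$ contribution as ``lower order.'' It is not—the paper's Lemma~\ref{lemma-mutau} shows $\mu_\tau \le C_0\eta/(u^2Y)$, i.e.\ it is of the \emph{same} order $|\tau|$ as the coercive term, and is absorbed only because one can make the constant $\eta$ small by shrinking $\theta$ and taking $L$ large and $\tau_0$ negative enough. Establishing this bound is one of the delicate parts of the argument; it rests on the concavity of $u^2$ (Proposition~\ref{claim-2}), which gives $uY_u+2Y\le 0$, and on a careful integration by parts to avoid naive bounds that would lose the smallness. Finally, you suggest obtaining coercivity in the soliton region by spectral analysis of the Bryant linearization and modding out its Jacobi fields by the $(\beta,\gamma)$-freedom; the paper does not do this. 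The kernel/unstable modes are handled in the cylindrical region (via $\pr_+$ and $\pr_0$); in the tip region coercivity comes entirely from the Poincar\'e inequality and the $1/u^2$ term near the tip, with no spectral input about the Bryant soliton beyond $C^\infty$ convergence (Proposition~\ref{prop-convergence-tip}) used to estimate error terms.
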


To simplify the notation throughout this section we will denote $Y_1$ by $Y$ and $\Psi_1$ by $\Psi$.  Also, we will denote
$Y_2^{\beta \gamma}$ by $\bar Y$ and $\Psi_2^{\beta\gamma}$ by $\bar \Psi$.  The proof of this Proposition will be based on a Poincar\'e
inequality for the function $W_T$ which is supported in the tip region.  These
estimates will be shown to hold with respect to an appropriately chosen weight
$e^{\mu(u,\tau)}\, du$, where $\mu(u,\tau)$ is given by \eqref{eqn-weight1}-\eqref{eqn-weight2}.  We
will begin by establishing various properties on the weight $\mu(u,\tau)$, especially estimates
which we later use in this section.  We
will continue with the  Poincar\'e inequality and the energy estimate. Note that the energy estimate will require careful integration
by parts which is based on the divergence structure of the equation for $W$ with respect to our  appropriately  defined weight
$\mu(u,\tau)$. 
This  estimate is quite more delicate than its analogue in \cite{ADS2}. 
The  proof of Proposition \ref{prop-tip}
will be finished  in Section \ref{subsec-tip-proof}. 

\subsection{Properties of the  weight $\mu(u,\tau)$}

Let  $\zeta (u)$ be a  nonnegative smooth decreasing function defined on $u \in (0, \infty)$ such that 
$$\zeta(u)=1, \quad \mbox{for} \,\, u \geq \theta/2 \qquad \mbox{and}  \quad \zeta(u)=0, \quad \mbox{for}\,\, u \leq \theta/4.$$
Such a function can be chosen to satisfy the derivative estimate $|\zeta'(u)| \leq 5 \theta^{-1}$. 

For our given solution $u(\sigma,\tau)$ which after the coordinate change gives rise to $\sigma(u,\tau)$
and $Y(\sigma,\tau) := u_{\sigma}^2(u,\tau)$ (recall that we have dropped the index and denote $\sigma, Y_1$ 
by  $\sigma, Y$ respectively) we define our {\em weight}   $\mu(u,\tau)$ in the {\em tip region} as in \eqref{eqn-weight1} where
we recall that 
$$
\mu_u := \zeta (u) \, \big ( - \frac{\sigma^2(u,\tau)}{4} \big ) _u + (1-\zeta(u)) \, \frac{1-Y(u,\tau)}{ u \, Y(u,\tau)}.
$$

Note that since $\zeta \equiv 1$ for $u \geq \theta/2$, we have $\mu(u,\tau) =  - \frac{\sigma^2(u,\tau)}{4} $
in this region, hence it coincides with our weight in the cylindrical region. This is important as our norms in
the intersection of the cylindrical and tip regions need to coincide. 

\sk 
In this section we will prove sharp  estimates on  our  weight $\mu$ and its  derivatives which  will be used  in the following two sub-sections. Crucial role will play the convexity  estimate which is shown in the Appendix, Proposition \ref{claim-2}, namely that 
\be\label{eqn-concave} 
(u^2)_{\sigma\sigma} \leq 0, \qquad \mbox{on} \,\, u \geq \frac L{\sqrt{|\tau|}}
\ee
holds for $L \gg 1$ and $\tau \leq \tau_0 \ll -1$. We will also use its 
 consequence \eqref{eqn-good}. To facilitate future references, let us remark that 
\eqref{eqn-concave} expressed in terms of $Y:=u_\sigma^2$ implies that 
\be\label{eqn-concaveY}
u\, Y_u + 2 \, Y \leq 0, \qquad \mbox{on} \,\, \collar_{L,\theta}
\ee
holds for $L \gg 1$ and $\tau \leq \tau_0 \ll -1$. Also, throughout this whole  section we will use the  bound 
\be\label{eqn-diam5}
1- \frac \eta{10}  \leq \frac{\sigma^2}{4|\tau|} \leq 1+ \frac \eta{10}, \qquad \mbox{on}  \,\, \tip_\theta
\ee
which holds for $\tau  \leq \tau_0 \ll -1$ and $\theta=\theta(\eta)$ sufficiently small. This bound is an immediate  consequence of Theorem \ref{thm-asym}. 
 
\sk

\begin{lemma}\label{lemma-good1} Fix $\eta >0$. There exist $\theta >0$ small,  $L \gg 1$ and $\tau_0 \ll -1$ such that 
\be\label{eqn-good1}
(1-\eta)\, \mu_u \leq \frac{1-Y}{u\,  Y} \leq (1+\eta)\, \mu_u
\ee
and
\be\label{eqn-good5} 
(1-\eta)\, |\tau| \leq \frac{ 1}{u^2 \, Y} \leq (1+\eta)\, |\tau|
\ee
hold  on $\collar_{L,\theta}$  and $\tau \leq \tau_0$.

\end{lemma}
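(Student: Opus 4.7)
The plan is to prove \eqref{eqn-good5} first and then deduce \eqref{eqn-good1} from it together with the diameter bound \eqref{eqn-diam5}. By reflection symmetry it suffices to work on the half $\sigma > 0$, where $u_\sigma = -\sqrt{Y}$ and hence $\sigma_u = -1/\sqrt{Y}$. A direct calculation then gives $(-\sigma^2/4)_u = \sigma/(2\sqrt{Y})$, so that from \eqref{eqn-weight2}
\[
\mu_u = \zeta(u)\,\frac{\sigma}{2\sqrt{Y}} + (1-\zeta(u))\,\frac{1-Y}{uY}.
\]
In particular, \eqref{eqn-good1} is a trivial equality on $\{u \le \theta/4\}$ (where $\zeta = 0$), and on the transition interval $[\theta/4,\theta/2]$ it will follow from a convex-combination argument once one controls the ratio $\sigma/(2\sqrt{Y}) \cdot uY/(1-Y) = \sigma\sqrt{u^2 Y}/(2(1-Y))$ on $\{u \ge \theta/2\}$.

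\textbf{Proof of \eqref{eqn-good5}.} The main tool is the convexity estimate \eqref{eqn-concaveY}, which is equivalent to $(u^2 Y)_u = u(2Y + uY_u) \leq 0$. Thus $u^2 Y$ is non-increasing in $u$ on $\collar_{L,\theta}$, and it is enough to sandwich $|\tau|\, u^2 Y$ between $1-\eta$ and $1+\eta$ at the two endpoints. At the inner endpoint $u = L/\sqrt{|\tau|}$ one has $\rho = L$ and Proposition \ref{prop-convergence-tip} gives $Z(L,\tau) \to Z_0(L)$, while the Bryant tail \eqref{eqn-Zasym} provides $L^2 Z_0(L) = 1 + O(L^{-2})$; hence $|\tau|\, u^2 Y|_{u=L/\sqrt{|\tau|}} = 1 + O(L^{-2}) + o_\tau(1)$. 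At the outer endpoint $u = 2\theta$, Theorem \ref{thm-asym}(ii) combined with the refined collar asymptotics supplied in the appendix gives $Y(2\theta,\tau) = (2-4\theta^2)/(8\theta^2|\tau|)(1+o_\tau(1))$, so that $|\tau|\, u^2 Y|_{u=2\theta} = 1 - 2\theta^2 + o_\tau(1)$. Choosing $L$ large, $\theta$ small, and then $\tau_0$ sufficiently negative, both endpoint values land in $[1-\eta, 1+\eta]$; by monotonicity of $u^2 Y$ this bound propagates to all of $\collar_{L,\theta}$.

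\textbf{Proof of \eqref{eqn-good1} and the main obstacle.} On $\{u \ge \theta/2\}$, the identity $u\sigma\sqrt{Y} = \sigma\sqrt{u^2 Y}$ combined with \eqref{eqn-good5} and \eqref{eqn-diam5} gives $u\sigma\sqrt{Y} = 2(1+O(\eta))$; moreover $Y \le (1+\eta)/(u^2|\tau|) \le 4(1+\eta)/(\theta^2|\tau|) \to 0$, so $1-Y = 1 + o_\tau(1)$ and the ratio $\sigma\sqrt{u^2 Y}/(2(1-Y))$ is $1 + O(\eta)$. For $u \in [\theta/4, \theta/2]$, if the two quantities $\sigma/(2\sqrt{Y})$ and $(1-Y)/(uY)$ are within a factor $1\pm\eta$ of one another, then so is any convex combination of them, and the same bound transfers to $\mu_u$; after rescaling $\eta$ at the start one obtains \eqref{eqn-good1} in its stated form. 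The most delicate input is at the outer endpoint of the collar: one needs not merely the $C^0$ convergence of $u$ from Theorem \ref{thm-asym}(ii), but a $C^1$-type statement that allows one to control $Y = u_\sigma^2$ uniformly up to $u = 2\theta$. This is precisely the role of the sharp collar asymptotics proved in the appendix, and is the technically most sensitive ingredient in the argument.
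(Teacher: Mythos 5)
Your proof is correct, but it proceeds in the opposite order from the paper's. The paper treats \eqref{eqn-good1} and \eqref{eqn-good5} as quick corollaries of the single pointwise estimate \eqref{eqn-good}, $\left|1+\tfrac{\sigma u u_\sigma}{2}\right|<\eta$ on $\collar_{L,\theta}$ (Lemma~\ref{lemma-crucial-estimate}, whose proof is deferred to Corollary~4.7 of \cite{ADS2}): the first bound follows by rewriting $(-\sigma^2/4)_u = -\tfrac{\sigma u u_\sigma}{2}\cdot\tfrac{1}{uY}$ and using $Y\ll\eta$, and the second follows by writing $|\tau|u^2Y = \tfrac{4|\tau|}{\sigma^2}\cdot\big(\tfrac{\sigma u u_\sigma}{2}\big)^2$ together with \eqref{eqn-diam5}. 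You instead prove \eqref{eqn-good5} directly by observing that the concavity \eqref{eqn-concaveY} makes $u^2 Y$ nonincreasing in $u$, so the estimate propagates from the two endpoints: the inner one from Proposition~\ref{prop-convergence-tip} plus the Bryant tail \eqref{eqn-Zasym}, the outer one from the parabolic profile in Theorem~\ref{thm-asym}(ii); you then deduce \eqref{eqn-good1}. This is essentially an unpacking of the argument the paper imports as a black box --- it uses the same ingredients (concavity of $u^2$, Bryant-soliton convergence, the diameter estimate), and the outer-endpoint evaluation you perform is, after the algebraic identity above, exactly a verification of \eqref{eqn-good} at $u=2\theta$. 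Your account is more self-contained and makes the mechanism of propagation explicit; the cost, which you correctly flag, is that one still needs a $C^1$-type version of Theorem~\ref{thm-asym}(ii) at $u=2\theta$ (i.e.\ convergence of $\sqrt{|\tau|}\,u_\sigma$ there), which in this paper is only supplied indirectly via the cited result of \cite{ADS2} --- so the argument is not fully internal to the present paper, but it is logically sound.
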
 

\begin{proof} Both bounds follow from our crucial estimate \eqref{eqn-good}, which says that 
\be\label{eqn-good10}
 1- \frac \eta{10} \leq - \frac{\sigma u u_\sigma}{2} \leq  1 + \frac \eta{10}
 \ee
holds  on  $\collar_{L,\theta}$ for $\theta >0$ small,  $L \gg 1$ and $\tau \leq \tau_0 \ll -1$. 

\sk 
The  first estimate  simply follows from the definition of $\mu_u$, and the fact that 
$$\big ( - \frac{\sigma^2}{4} \big ) _u = - \frac{\sigma \sigma_u}2 = - \frac{ \sigma \, u u_\sigma}2 \, \frac 1{u \, u_\sigma^2} = 
- \frac{\sigma u u_\sigma}2 \, \frac{1}{u \, Y}$$
which with the aid of \eqref{eqn-good10} gives 
$$\Big | \mu_u - \frac{1-Y}{uY} \Big | \leq  \Big | \big ( - \frac{\sigma^2}{4} \big ) _u - \frac{1-Y}{uY} \Big | \leq 
\frac \eta{10}\,  \frac{1}{Yu} + \frac 1{u} \leq \eta \, \frac{1-Y}{Yu}$$
where in the last inequality we used that $Y \ll \eta$ in the considered region. 
\sk

For the second inequality \eqref{eqn-good5}, we observe that 
$$|\tau | \, u^2\,  Y = \frac{4 |\tau|}{\sigma^2} \,  \frac{\sigma^2 u^2 u_\sigma^2 }{4}$$
hence the bound readily follows by \eqref{eqn-good10} and \eqref{eqn-diam5}.

\end{proof}

\begin{corollary} There exists an absolute $\theta \ll 1$ and $\tau_0 \ll -1$ such that  
\be\label{eqn-mub1}
\mu(u,\tau) \leq - \frac{|\tau|}4
\ee
holds on $\tip_\theta $ for $\tau \leq \tau_0 \ll -1$. 
\end{corollary}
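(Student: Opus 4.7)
The strategy is to exploit the fact that $\mu$ is nondecreasing in $u$, which reduces the problem to evaluating $\mu$ at the right boundary $u = 2\theta$ of the tip region, where the weight simplifies to $-\sigma^2/4$ and is controlled directly by \eqref{eqn-diam5}.

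The first step is to verify that $\mu_u(u,\tau) \geq 0$ throughout $\tip_\theta$. From the definition \eqref{eqn-weight2}, $\mu_u$ is a convex combination of $(-\sigma^2/4)_u$ and $(1-Y)/(uY)$. The second term is nonnegative since $0 \leq Y \leq 1$ and $u > 0$. For the first term, I would use $(-\sigma^2/4)_u = -\sigma\sigma_u/2 = -\sigma/(2u_\sigma)$; by reflection symmetry it suffices to consider $\sigma \geq 0$, and inside $\tip_\theta$ (i.e., close to the right tip) the profile $u(\sigma,\tau)$ is strictly decreasing, so $u_\sigma < 0$ and hence $-\sigma/(2u_\sigma) > 0$. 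Consequently $\mu$ is nondecreasing on $[0, 2\theta]$, so $\sup_{u \in [0, 2\theta]} \mu(u,\tau) = \mu(2\theta, \tau)$.

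The second step is to compute $\mu(2\theta,\tau)$ explicitly. Since $\zeta \equiv 1$ on $[\theta/2, 2\theta]$, on this subinterval $\mu_u = (-\sigma^2(u,\tau)/4)_u$, so the definition \eqref{eqn-weight1} telescopes to
\[
\mu(2\theta,\tau) = -\frac{\sigma^2(\theta,\tau)}{4} + \int_\theta^{2\theta} \Bigl(-\frac{\sigma^2(u',\tau)}{4}\Bigr)_{u'} du' = -\frac{\sigma^2(2\theta,\tau)}{4}.
\]

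Finally, I would invoke \eqref{eqn-diam5}: fixing any small $\eta$ (say $\eta = 1$), one can choose $\theta$ small and $\tau_0$ sufficiently negative so that $\sigma^2(2\theta,\tau)/(4|\tau|) \geq 1-\eta/10 = 9/10$ for every $\tau \leq \tau_0$. Combining with the monotonicity yields
\[
\mu(u,\tau) \leq \mu(2\theta,\tau) = -\frac{\sigma^2(2\theta,\tau)}{4} \leq -\frac{9|\tau|}{10} \leq -\frac{|\tau|}{4}
\]
on $\tip_\theta$, which is the stated bound \eqref{eqn-mub1}. The only mild subtlety is the sign check in the first step, which is handled cleanly by reflection symmetry; no energy estimates or the convexity bound \eqref{eqn-concave} are needed at this stage, since everything reduces to a direct computation from the definition of $\mu$ together with the tip–region asymptotics of Theorem~\ref{thm-asym}.
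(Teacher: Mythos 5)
Your proof is correct and follows essentially the same two-step strategy as the paper: show $\mu_u \geq 0$ on $\tip_\theta$ so that $\mu$ is bounded above by its value near the right edge, then invoke the diameter estimate \eqref{eqn-diam5}. Your direct sign check of each summand in $\mu_u$ (rather than the paper's appeal to \eqref{eqn-good1} on the collar region) is a mild simplification, and evaluating at $u = 2\theta$ instead of at the base point $u = \theta$ more cleanly covers the full range $0 \leq u \leq 2\theta$ of the tip region.
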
 
\begin{proof}
We claim that $\mu_u \ge 0$ in $\mathcal{T}_{\theta}$. Indeed, in $\mathcal{K}_{\theta,L}$ this is true by \eqref{eqn-good1}. In $\mathcal{S}_L$,  $\zeta = 0$ and thus $\mu_u = \frac{1-Y(u,\tau)}{uY(y,\tau)} \ge 0$. Hence, in $\mathcal{T}_{\theta}$ we have
\[\mu(u,\tau) = -\frac{\sigma^2(\theta,\tau)}{4} - \int_u^{\theta} \mu_u(u',\tau)\, du' \le -\frac{\sigma^2(\theta,\tau)}{4}.\]
The claim \eqref{eqn-mub1} now immediately follows by \eqref{eqn-diam5}, by taking $\eta$ there sufficiently small.
\end{proof}

We will  next estimate $u_\tau$ in the region $\theta/4 \leq u \leq 2\theta$. This will be used later  to estimate the time derivative $\mu_\tau(u,\tau)$ of our weight.  

\begin{lemma} Fix $\eta >0$. There exist $\theta >0$ small,  $L \gg 1$ and $\tau_0 \ll -1$ such that the bounds 
\be\label{eqn-utau}
|u_\tau(\sigma,\tau)| \leq  \frac {\eta}u \qquad \mbox{and} \qquad  \Big | \big ( - \frac{\sigma^2(u,\tau)}{4} \big )_\tau \Big |\leq 
\frac \eta { u^2  Y }
\ee
hold in the collar region $\collar_{\theta,L}$  and $\tau \leq \tau_0$.  

\end{lemma}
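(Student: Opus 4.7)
The plan is to derive both bounds directly from the evolution equation~\eqref{eqn-u} for $u$, making systematic use of the a priori estimates~\eqref{eqn-good10}, \eqref{eqn-good5}, and \eqref{eqn-diam5} that are valid in the collar region.

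First I would write the equation for $u_\tau$ as
\[
u_\tau = \Bigl(-\frac{\sigma u_\sigma}{2} - \frac{1}{u}\Bigr) + \frac{u}{2} + \frac{u_\sigma^2}{u} + u_{\sigma\sigma} - J\, u_\sigma,
\]
and show that each of the five grouped terms is bounded by a small multiple of $1/u$ in $\collar_{\theta,L}$ for $\theta$ small, $L$ large, and $|\tau_0|$ large. The leading cancellation is the key: rewriting the first group as $-(\sigma u u_\sigma + 2)/(2u)$ and invoking~\eqref{eqn-good10} immediately gives $|{-\sigma u_\sigma/2-1/u}|\le \eta/(10 u)$. The term $u/2$ is controlled by $\theta \le \eta/(10 u)$ once $\theta^2 \le \eta/5$. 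For $u_\sigma^2/u = Y/u$, the estimate~\eqref{eqn-good5} together with $u\ge L/\sqrt{|\tau|}$ gives $Y \le (1+\eta)/L^2$, hence $Y/u \le 2/(L^2 u)$, which is absorbed into $\eta/u$ by choosing $L$ large.

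The second derivative term $u_{\sigma\sigma} = \tfrac12 Y_u \cdot 1$ (using $Y_\sigma = Y_u u_\sigma$ and $Y=u_\sigma^2$) is handled by combining the concavity~\eqref{eqn-concaveY}, which gives one-sided control $Y_u \le -2Y/u \le 0$, with the near-equality in~\eqref{eqn-good5} which forces $|\tau|u^2 Y$ to be almost constant and hence $|u\, Y_u + 2Y| \le C\eta/|\tau|$; this converts to $|u_{\sigma\sigma}| \le C\eta/(|\tau| u^3)$, which is $\le \eta/u$ when $|\tau|u^2\ge L^2$ is large enough. The main obstacle is the nonlocal term $J\, u_\sigma$. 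I would integrate by parts using the reflection symmetry $u_\sigma(0,\tau)=0$ to rewrite
\[
J(\sigma,\tau) = 2\,\frac{u_\sigma(\sigma,\tau)}{u(\sigma,\tau)} + 2\int_0^\sigma \frac{u_\sigma^2}{u^2}\, d\sigma'.
\]
The first piece contributes $2 Y/u$, already bounded above. For the integral, I would change variables from $\sigma'$ to $u'$ via $d\sigma' = du'/u_\sigma$ to obtain an integral of $\sqrt{Y}/u'^2$ in $u'$ over the range from $u$ up to roughly $\sqrt{2}$; using~\eqref{eqn-good5} pointwise the integrand is bounded by $C/(\sqrt{|\tau|}\, u'^3)$ and the integral is therefore $\le C/(\sqrt{|\tau|}\, u^2)$. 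Multiplied by $|u_\sigma| = \sqrt{Y} \le C/(\sqrt{|\tau|}\, u)$, this yields $|J u_\sigma| \le C/(|\tau| u^3) \le \eta/u$ in the collar region, completing the first estimate.

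For the second bound I would use implicit differentiation of the identity $u(\sigma(u,\tau),\tau) = u$ (with $u$ held fixed), giving $\sigma_\tau = -u_\tau/u_\sigma$, and therefore
\[
\Bigl(-\frac{\sigma^2(u,\tau)}{4}\Bigr)_\tau = -\frac{\sigma\, \sigma_\tau}{2} = \frac{\sigma\, u_\tau}{2\, u_\sigma}.
\]
Combining $|u_\tau|\le \eta/u$ from the first estimate, $|u_\sigma| = \sqrt{Y}$, the bound $|\sigma| \le 2\sqrt{|\tau|}(1+\eta/20)$ from~\eqref{eqn-diam5}, and the near-equality $|\tau| u^2 Y \ge 1-\eta$ from~\eqref{eqn-good5} yields $\sqrt{|\tau|}/(u\sqrt{Y}) \le (1-\eta)^{-1/2}/(u^2 Y)$, and therefore
\[
\Bigl|\Bigl(-\frac{\sigma^2}{4}\Bigr)_\tau\Bigr| \le \frac{|\sigma|\,|u_\tau|}{2|u_\sigma|} \le \frac{C\,\eta}{u^2 Y}.
\]
Relabeling $\eta$ (taking $\eta/C$ in place of $\eta$ at the outset, which only tightens the choices of $\theta$, $L$, $\tau_0$) gives the stated bound.
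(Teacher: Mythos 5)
Your overall strategy matches the paper's: decompose $u_\tau$ term by term using the evolution equation \eqref{eqn-u}, bound each term by $\eta/u$ with the a priori estimates in the collar region, and then derive the second inequality from $\sigma_\tau = -u_\tau/u_\sigma$, exactly as the paper does. Two of the individual term estimates, however, deviate from the paper, and one of them has a genuine gap.

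For the $u_{\sigma\sigma}$ term: the paper simply invokes the a priori estimate \eqref{eqn-uss} from Proposition \ref{claim-1} (equivalently Lemma \ref{lemma-Yb}), which gives $0 \le -u\, u_{\sigma\sigma} \le \eta$ directly by a compactness/blow-up argument to the Bryant soliton. You instead try to \emph{derive} a pointwise bound $|u Y_u + 2Y| \le C\eta/|\tau|$ by combining the monotonicity $u Y_u + 2Y \le 0$ from \eqref{eqn-concaveY} with the near-constancy of $u^2 Y |\tau|$ in \eqref{eqn-good5}. This does not follow: a nonincreasing function that stays inside a band of width $O(\eta/|\tau|)$ can nevertheless have arbitrarily large derivative at individual points (compare $1 - \eta\tanh(N(u-u_0))$ for large $N$). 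The near-constancy constraint only controls $\int |(u^2 Y)_u|\,du$, i.e.\ the total variation, not the pointwise derivative. The correct move is just to cite \eqref{eqn-uss}, which is exactly the pointwise smallness of $u u_{\sigma\sigma}$ you need.

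For the nonlocal term $J u_\sigma$: both you and the paper integrate by parts using $u_\sigma(0,\tau)=0$. The paper then stops at the cheap one-sided bound $0 \le -J \le 2|u_\sigma|/u$ (dropping the sign-definite integral), which gives $|J u_\sigma| \le 2Y/u$ and is immediately small by \eqref{eqn-good5}. Your further change of variables $\sigma' \mapsto u'$ and pointwise estimate of the integrand $\sqrt{Y}/u'^2$ also works, but needs a supplement you omit: the integration range $u' \in [u, u_{\max}]$ extends from the collar all the way up to $u_{\max}\approx\sqrt 2$, and \eqref{eqn-good5} is not available once $u' > 2\theta$. On that piece you should instead use the cylindrical-region bound $|u_\sigma| \le C(\theta)/\sqrt{|\tau|}$ from \eqref{eqn-est-der100}/\eqref{eq-der-inter}; the resulting contribution is $O(C(\theta)/|\tau|\cdot u^{-1})$, still absorbed for $\tau_0\ll -1$. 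The paper's shorter route avoids this issue entirely.
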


\begin{proof} We recall equation \eqref{eqn-u}, namely that 
$$ u_\tau = u_{\sigma\sigma} - \frac{\sigma}{2} \, u_{\sigma}
 -  J(\sigma,\tau) \, u_{\sigma}  +  \frac{u_{\sigma}^2}{u}  -  \frac 1u + \frac u2.$$
Using the bounds 
$$0 \leq -J \leq 2\, \frac {|u_\sigma|}u, \quad |u_{\sigma\sigma}| \leq \frac{\eta}{10 \, u}, \quad 
\big | 1 + \frac{ \sigma \, u u_\sigma}2 \big | \leq  \frac{\eta}{10}, \quad |u_\sigma|\leq \frac \eta{20}$$
which hold for $\tau_0 \ll -1$,  we conclude that
$$|u_\tau| \leq \frac{\eta}{ 2u} + \frac 1u \big | \frac{\sigma u u_\sigma}2  + 1  \big | \leq \frac {\eta}{2u} + \frac{2\theta^2}{u} \leq  \frac{\eta}{u},$$
if we take $\theta$ so that $4\theta^2 \leq  \eta$.
Now, using this bound, we compute 
$$\Big | \Big ( - \frac{\sigma^2}{4} \Big )_\tau \Big | = \Big |  \frac{\sigma \sigma_\tau}{2}  \Big | =
\Big |  \frac{\sigma u_\tau}{2 u_\sigma}  \Big | = \frac 1{u\, u_\sigma^2} \, \Big |  \frac{\sigma \, u u_\sigma}{2}  \Big | \, |u_\tau| 
\leq \frac{\eta}{u^2 \, Y}.$$

\end{proof}

We will also need an estimate for the time $\mu_\tau(u,\tau)$ derivative of our weight in the whole tip region $\tip_\theta$ where 
$u \leq 2\theta$.  But before we do so, we will  estimate
$Y_u, Y_{uu}$ as well as $Y_\tau$ in this region. 
Recall that $Y$ satisfies  equation \eqref{eqn-Y}. 
Our estimates will be based on the bounds in Proposition \ref{claim-1}, namely that  given an $\eta >0$ there exist $\theta$ and $L \gg 1$ and $\tau_0 \ll -1$ such that for $\tau \le \tau_0$,
\be\label{eqn-uss}
0 \leq -  u\, u_{\sigma\sigma}  \leq \eta \qquad  \mbox{and} \qquad u^2 |u_{\sigma\sigma\sigma}| \leq  \eta
\ee
hold on the collar  region $\collar_{\theta, L}$.

\begin{lemma}\label{lemma-Yb}  Fix $\eta >0$.   There exist $\theta >0$ small,   $L \gg 1$ and $\tau_0 \ll -1$ such that $Y$ satisfies 
derivative bounds 
\be\label{eqn-Yb}
|Y_u| \leq  \frac {\eta}{ u},  \qquad |Y_{uu} | \leq \frac{\eta}{u^2 \sqrt{Y}}, \qquad Y_\tau \leq  \frac{\eta \, \sqrt{Y}}{u^2} + \frac{\eta}{4},
\qquad |Y_\tau| \leq  \frac{\eta }{u^2}
\ee
hold on  $\collar_{\theta,L}$ and for $\tau \leq \tau_0$.  It follows that $\Psi:=\sqrt{Y}$ satisfies the
derivative bounds 
\be\label{eqn-Psib}
|\Psi_u| \leq  \frac {\eta}{ u \Psi},  \qquad |\Psi_{uu} | \leq \frac{\eta}{u^2 \Psi^2}, \qquad \Psi_\tau  \leq \frac{\eta}{u^2} + \frac{\eta}{8\Psi}, \qquad |\Psi_\tau| \leq 
 \frac{\eta }{u^2\, \Psi}
\ee
in the same region. 
\end{lemma}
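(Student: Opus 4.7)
The plan is to translate everything back to $\sigma$-derivatives of $u$, where the cylindrical a priori estimates from Proposition \ref{claim-1} (that is, \eqref{eqn-uss}) directly apply, and then use the evolution equation \eqref{eqn-Y} for the single $\tau$-derivative.

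First I would establish the algebraic identities that convert $u$-derivatives into $\sigma$-derivatives. Since $\sigma=\sigma(u,\tau)$ is the inverse of $u=u(\sigma,\tau)$, one has $\sigma_u=1/u_\sigma$. Writing $Y=u_\sigma^2$ and differentiating in $u$ yields
\begin{equation*}
Y_u = 2\,u_{\sigma\sigma}, \qquad Y_{uu} = \frac{2\,u_{\sigma\sigma\sigma}}{u_\sigma}.
\end{equation*}
Combining with \eqref{eqn-uss} these identities immediately give the spatial bounds: $|Y_u|=2|u_{\sigma\sigma}|\le 2\eta/u$ and $|Y_{uu}|=2|u_{\sigma\sigma\sigma}|/|u_\sigma|\le 2\eta/(u^2\sqrt{Y})$, which (after replacing $\eta$ by a constant multiple) proves the first two estimates in \eqref{eqn-Yb}.

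Next I would obtain the bounds on $Y_\tau$ using equation \eqref{eqn-Y},
\begin{equation*}
Y_\tau = -\tfrac{u}{2}Y_u + YY_{uu} - \tfrac12 (Y_u)^2 + (1-Y)\frac{Y_u}{u} + 2(1-Y)\frac{Y}{u^2}.
\end{equation*}
For the one-sided bound $Y_\tau \le \eta\sqrt{Y}/u^2+\eta/4$, I exploit signs: the concavity \eqref{eqn-concaveY} gives $Y_u\le 0$ in $\collar_{\theta,L}$, so $(1-Y)Y_u/u\le 0$ and $-\tfrac12 (Y_u)^2\le 0$ are dropped. The term $-\tfrac{u}{2}Y_u$ is non-negative and absolutely bounded by $\tfrac{u}{2}\cdot\tfrac{2\eta}{u}=\eta$; since $Y\ll 1$ in the collar and $u\le 2\theta$, we absorb the remaining positive contributions $|YY_{uu}|\le \eta\sqrt{Y}/u^2$ and $2(1-Y)Y/u^2\le 2Y/u^2\le \eta\sqrt{Y}/u^2$ (using $Y\le \eta^2$ for the last one) into $\eta\sqrt{Y}/u^2$, and the $-\tfrac{u}{2}Y_u$ contribution into $\eta/4$. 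For the two-sided bound $|Y_\tau|\le \eta/u^2$, I use $u\le 2\theta$ to convert the absolute $\eta$ bound on $-\tfrac{u}{2}Y_u$ into $\eta\cdot(2\theta)^2/u^2$, which is $\le \eta'/u^2$ once $\theta$ is chosen small; the other terms are already of the form $\mathrm{O}(\eta)/u^2$.

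Finally, the $\Psi$-bounds \eqref{eqn-Psib} follow by chain rule from $\Psi=\sqrt{Y}$: $\Psi_u=Y_u/(2\sqrt{Y})$ gives $|\Psi_u|\le \eta/(u\Psi)$; differentiating once more and using $|Y_u|^2/Y^{3/2}\le \eta^2/(u^2\Psi^3)\le \eta/(u^2\Psi^2)$ (since $\Psi\le 1$ and $\eta$ is small) together with the bound on $Y_{uu}$ gives $|\Psi_{uu}|\le \eta/(u^2\Psi^2)$; and $\Psi_\tau=Y_\tau/(2\sqrt{Y})$ translates the $Y_\tau$-estimate directly into the two $\Psi_\tau$-estimates. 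The main (though minor) obstacle is bookkeeping: ensuring that the various powers of $\eta$ that appear when we combine terms can all be absorbed into a single small constant, which is handled by choosing $\theta$, $1/L$, and $1/|\tau_0|$ small enough in succession.
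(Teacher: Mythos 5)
Your overall route matches the paper's: write $Y_u=2u_{\sigma\sigma}$, $Y_{uu}=2u_{\sigma\sigma\sigma}/u_\sigma$, invoke the collar a priori bounds \eqref{eqn-uss}, and then use the evolution equation \eqref{eqn-Y} together with the concavity to control $Y_\tau$. The one small variation is in the one-sided $Y_\tau$ bound: you keep $2(1-Y)Y/u^2$ and bound it by $\eta\sqrt{Y}/u^2$ using $Y\le\eta^2$, whereas the paper simply groups it with $(1-Y)Y_u/u$ and uses \eqref{eqn-concaveY} directly, since $(1-Y)\tfrac{Y_u}{u}+2(1-Y)\tfrac{Y}{u^2}=\tfrac{1-Y}{u^2}(uY_u+2Y)\le 0$; both are fine, though the paper's grouping is cleaner and does not impose an extra $\eta$-dependence on $L$ at that step.

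However, there is a genuine error in your derivation of the $\Psi_{uu}$ bound. You write
$|Y_u|^2/Y^{3/2}\le \eta^2/(u^2\Psi^3)\le \eta/(u^2\Psi^2)$
and justify the second inequality ``since $\Psi\le 1$ and $\eta$ is small.'' That is backwards: the inequality $\eta^2/\Psi^3\le\eta/\Psi^2$ is equivalent to $\Psi\ge\eta$, not $\Psi\le 1$. On the collar $\collar_{\theta,L}$ the quantity $\Psi=|u_\sigma|$ is emphatically \emph{not} bounded below by $\eta$: by \eqref{eqn-good5}, $\Psi\approx 1/(u\sqrt{|\tau|})$, so near the outer boundary $u\approx 2\theta$ one has $\Psi\approx 1/(2\theta\sqrt{|\tau|})\to 0$ as $\tau\to-\infty$. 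Thus the estimate $|Y_u|\le\eta/u$ together with $\Psi\le 1$ is simply not strong enough to yield $|\Psi_{uu}|\le\eta/(u^2\Psi^2)$; closing this step requires a sharper bound of the type $|Y_u|\lesssim Y/u$ (or a different accounting of the $Y_u^2/\Psi^3$ term), and the paper's one-line ``readily follows'' glosses over exactly this point. As written, your justification for this inequality does not go through.
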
 

\begin{proof}
Recall that
${\ds Y_u = 2 u_\sigma  u_{\sigma u} =  2 u_{\sigma\sigma}}$ and ${\ds Y_{uu} = \big ( u_{\sigma\sigma} )_u =  u_{\sigma\sigma\sigma} \, \sigma_u.}$
Hence, the first two bounds in \eqref{eqn-Yb} readily  follow from  the two bounds in \eqref{eqn-uss}.   
For the third bound we combine \eqref{eqn-Y} with  \eqref{eqn-concaveY} and the two bounds we just obtained 
(with $\eta/2$ instead of $\eta$)   to conclude 
$$Y_\tau \leq Y\, Y_{uu}  - \frac u2 \, Y_u \leq \frac{\eta \sqrt{Y}}{2\, u^2} + \frac{\eta}4.$$
The last bound in \eqref{eqn-Yb} follows the same way, using   the bounds we just obtained  (with $\eta/10$ instead of $\eta$)  and 
$Y \ll 1$.  
The bounds \eqref{eqn-Psib} readily follow from \eqref{eqn-Yb}. 

\end{proof}

We will next combine the estimates   above  to obtain the  bounds for 
$\mu_{uu}(u,\tau)$ and $\mu_\tau(u,\tau)$ which will  be used in the  the next two subsections. 

\begin{lemma}\label{cor-muu} Fix $\eta >0$. There exist $\theta>0$, $L  \gg 1$ and $\tau_0 \ll -1$ such that the bound 
\be\label{eqn-muu}
\mu_{uu}  \leq  \eta \,   \mu_u^2
\ee
holds    in the collar region  $\collar_{\theta, L} $ and  for all $\tau \leq \tau_0 \ll -1$. 
\end{lemma}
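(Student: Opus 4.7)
The plan is to split the collar $\collar_{\theta,L}$ into three sub-regions according to the cutoff $\zeta$: region (a) where $u\in[\theta/2,2\theta]$ and $\zeta\equiv 1$, so $\mu_u=G:=-\sigma\sigma_u/2$; region (b) the transition $\theta/4\le u\le\theta/2$; and region (c) where $L/\sqrt{|\tau|}\le u\le\theta/4$ and $\zeta\equiv 0$, so $\mu_u=F:=(1-Y)/(uY)$. A useful preliminary observation is that, by Lemma~\ref{lemma-good1} together with the identity $G = (-\sigma u u_\sigma/2)\cdot (uY)^{-1}$ and the sharp bound \eqref{eqn-good10}, both $F$ and $G$ (and therefore $\mu_u$) lie in the interval $\bigl[(1-C\eta)/(uY),\,(1+C\eta)/(uY)\bigr]$. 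In particular $|G-F|\le C\eta/(uY)$ and $\mu_u\ge c/(uY)$, so it is enough to show $\mu_{uu}\le C\eta/(u^2Y^2)$ in each of the three sub-regions.

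In region (c), writing $F=1/(uY)-1/u$ and differentiating yields
\[
F_u \;=\; -\frac{Y+uY_u}{u^2Y^2}\;+\;\frac{1}{u^2}.
\]
The concavity bound \eqref{eqn-concaveY}, $uY_u+2Y\le 0$, shows $Y+uY_u\le -Y<0$, while the estimate $|Y_u|\le\eta/u$ from Lemma~\ref{lemma-Yb} gives $|Y+uY_u|\le Y+\eta\le 2\eta$; here we use that $Y\le\eta$ throughout the collar, a fact guaranteed by Theorem~\ref{thm-asym} once $\theta$ is small and $\tau_0$ negative enough. Since $1/u^2 = Y^2/(u^2Y^2)\le\eta^2/(u^2Y^2)$, this is absorbed into the main term, giving $F_u\le C\eta/(u^2Y^2)$.

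In region (a), using $\sigma_u=1/u_\sigma$ together with $u_{\sigma\sigma}=Y_u/2$ and hence $\sigma_{uu}=-Y_u/(2u_\sigma^3)$, a direct computation yields
\[
G_u \;=\; -\frac{\sigma_u^2}{2}-\frac{\sigma\sigma_{uu}}{2} \;=\; -\frac{1}{2Y}+\frac{\sigma Y_u}{4u_\sigma^3}.
\]
The first summand is negative. For the second, \eqref{eqn-diam5} gives $|\sigma|\le 3\sqrt{|\tau|}$ and \eqref{eqn-good5} gives $|\tau|\le 2/(u^2Y)$, hence $|\sigma|\le C/(u\sqrt{Y})$; combining this with $|Y_u|\le\eta/u$ and $|u_\sigma|^3=Y^{3/2}$ bounds the second summand by $C\eta/(u^2Y^2)$. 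Therefore $G_u\le C\eta/(u^2Y^2)$ as well.

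Finally, in the transition region (b), the product rule gives $\mu_{uu}=\zeta_u(G-F)+\zeta G_u+(1-\zeta)F_u$. The last two terms are bounded by $C\eta/(u^2Y^2)$ by the two previous paragraphs. For the first, $|\zeta_u|\le 5/\theta$ and $|G-F|\le C\eta/(uY)$ yield $|\zeta_u(G-F)|\le C\eta/(\theta uY)$; since $u\le\theta/2$ and $Y\le\eta$ there, this is further bounded by $C\eta\cdot(uY/\theta)\cdot (u^2Y^2)^{-1}\le C\eta^2/(u^2Y^2)$. Combining all three cases and using $\mu_u^2\ge c/(u^2Y^2)$ yields $\mu_{uu}\le C\eta\mu_u^2$, and replacing $\eta$ by $\eta/C$ from the outset gives the claimed estimate. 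The most delicate step is region (a), where the sign of the dominant term $-1/(2Y)$ in $G_u$ is used essentially and the subleading term must be controlled via the sharp asymptotics for $\sigma$ and $Y$ in the collar.
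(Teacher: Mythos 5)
Your proof is correct and takes essentially the same route as the paper: you differentiate $\mu_u$ as the convex combination $\zeta G+(1-\zeta)F$ with $G=-\sigma\sigma_u/2$ and $F=(1-Y)/(uY)$, control $G_u$ and $F_u$ via the asymptotic bounds \eqref{eqn-good10}, \eqref{eqn-good5} and \eqref{eqn-Yb} together with the sign of the dominant negative terms, and handle the cut-off term $\zeta'(G-F)$ using $|\zeta'|\leq 5\theta^{-1}$ and $Y\ll 1$. Your preliminary observation that $F$, $G$ and $\mu_u$ are all comparable to $(uY)^{-1}$ is a tidy way to reduce everything to showing $\mu_{uu}\leq C\eta/(u^2Y^2)$, but the underlying computations are the same as the paper's (the paper's proof also splits according to $\zeta$ and uses the same derivative formulas); you should also note explicitly that $Y\ll 1$ throughout the collar requires both $L\gg 1$ and $\tau_0\ll -1$, and that by symmetry it suffices to treat $\sigma>0$, $u_\sigma<0$.
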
 

\begin{proof} Fix  $\eta >0$, and   assume that $\sigma >0$ and $u_\sigma <0$,   as the case $\sigma <0 $ and  $u_\sigma >0$ is similar.  
Recall that $\mu(u,\tau)$ is defined so that it satisfies \eqref{eqn-weight2}. We know that the bounds   \eqref{eqn-good}, \eqref{eqn-good1} and  
\eqref{eqn-Yb} hold on $\collar_{\theta, L}$,    for  $\bar \eta:= \eta/10$ and  $\theta$ small, $L$  large and  $\tau \leq \tau_0 \ll -1$.   Using these bounds, we find that  in the region where $\mu_u = \big (   - \frac{\sigma^2}{4} \big )_u$,
we have $$\mu_{uu}  =  \big ( - \frac{\sigma \sigma_u}{2} \big ) _{u} = - \frac {\sigma \sigma_{uu} + \sigma^2_u}2 \leq  - \frac {\sigma \sigma_{uu}}2 = \frac{\sigma u_{\sigma\sigma}}{2u_\sigma^3}
\leq \frac 12 \, \frac{|\sigma u u_\sigma|}{2} \frac {\bar \eta}{u^2\, Y^2} \leq \bar  \eta \, \mu_u^2$$
while in the region where  ${\ds \mu_u = \big ( \frac {1-Y}{uY}  \big )_u}$, we have 
$$\mu_{uu} =  \Big (  \frac{Y^{-1}-1}{u} \Big )_u  =  - \frac{1- Y}{u^2 Y } -  \frac {Y_u}{u\, Y^2 } \leq  \frac {|Y_u|}{u\, Y^2 }  \leq   \frac{\bar \eta}{u^2\, Y^2} \leq 2 \bar \eta \, \mu_u^2.$$
We conclude, with  the aid of \eqref{eqn-good1}, \eqref{eqn-good}, applied with  $\eta/10$ instead of $\eta$,
 (using also the bound $\zeta'(u) \leq 5 \theta^{-1} \leq 20 u^{-1}$ and $Y \ll 1$) that 
$$\mu_{uu} \leq (1-\zeta) \, \frac{\eta}{u^2\, Y^2}  + \zeta'(u) \Big |  \big ( - \frac{\sigma \sigma_u}{2} \big  ) - \frac{Y^{-1}-1}{u} \Big |  \leq   4 \bar  \eta \, \mu_u^2 \leq  \eta \, \mu_u^2$$
holds on $\collar_{\theta,L}$ and  $\tau \leq \tau_0 \ll -1$. 

\end{proof}

\begin{lemma}\label{lemma-mutau} Fix $\eta >0$. There exist $\theta>0$  and $\tau_0 \ll -1$ such that the bound 
\be\label{eqn-mutau}
\mu_\tau(u,\tau)  \leq  \eta \, |\tau| \big ( 1 + \rho^{-1}\,  \chi_{[0,1]}(\rho) \big ) \leq C_0 \, \frac {\eta}{u^2 Y}, \qquad \rho:= u\, \sqrt{|\tau|}
\ee
holds  in the whole tip region $\tip_\theta$,   for all $\tau \leq \tau_0 \ll -1$. 
\end{lemma}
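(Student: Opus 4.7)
The plan is to differentiate the defining formula \eqref{eqn-weight1} in $\tau$ to obtain
\[
\mu_\tau(u,\tau) = -\bigl[\sigma^2(\theta,\tau)/4\bigr]_\tau + \int_\theta^u (\mu_u)_\tau(u',\tau)\, du',
\]
and bound each piece on the three subregions of $\tip_\theta$ where $\mu_u$ has different structure: the upper range $u' \in [\theta/2, 2\theta]$ where $\zeta \equiv 1$ and $\mu_u = (-\sigma^2/4)_u$, the transition range $u' \in [\theta/4,\theta/2]$, and the lower range $u' \leq \theta/4$ where $\zeta \equiv 0$ and $\mu_u = (1-Y)/(u'Y)$. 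The second inequality $\eta|\tau|(1+\rho^{-1}\chi_{[0,1]}(\rho)) \le C_0\eta/(u^2 Y)$ follows at once from \eqref{eqn-good5} on $\collar_{\theta,L}$ and from the lower bound $Y \ge c_0 > 0$ near $\rho = 0$ (a consequence of \eqref{eqn-Zasym}) on $\soliton_L$, so only the first inequality needs real work.

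For $u \in [\theta/2, 2\theta]$ the identity collapses to $\mu_\tau = -[\sigma^2(u,\tau)/4]_\tau$, and \eqref{eqn-utau} combined with \eqref{eqn-good5} gives $\mu_\tau \le (1+\eta)\eta|\tau|$; here $\rho = u\sqrt{|\tau|} \ge \theta\sqrt{|\tau|}/2 > 1$ for $|\tau_0|$ large, so the singular factor is inactive. The same argument bounds the boundary contribution $-[\sigma^2(\theta,\tau)/4]_\tau$ in the integral identity by $\eta|\tau|$.

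For $u \leq \theta/4$ I would split the integral into the portion on $[\theta/4,\theta/2]$ (where $\zeta$ transitions), the portion on $[\max(u,L/\sqrt{|\tau|}),\theta/4]$ (collar interior), and the soliton portion $[u,L/\sqrt{|\tau|}]$ when $u$ is very small. To handle the $\zeta$-weighted piece of $(\mu_u)_\tau$ I would exchange $\partial_\tau$ with $\partial_u$ (writing $[(-\sigma^2/4)_u]_\tau = [(-\sigma^2/4)_\tau]_u$) and integrate by parts, which converts the $\zeta\,(-\sigma^2/4)_{u\tau}$ contribution into a boundary evaluation of $(-\sigma^2/4)_\tau$ plus a $\zeta'$ transition integral. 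Both are controlled by \eqref{eqn-utau} and \eqref{eqn-good5} as in the previous paragraph, since $|\zeta'| \le 5/\theta$ and $\theta$ is fixed. On the collar interior, where $\mu_u = (1-Y)/(u'Y)$ and hence $(\mu_u)_\tau = -Y_\tau/(u'Y^2)$, I would substitute the evolution equation \eqref{eqn-Y} for $Y_\tau$ and use the collar asymptotic $Y \approx 1/(u'^2|\tau|)$ from \eqref{eqn-good5} to see that the genuine size of $-Y_\tau/(u'Y^2)$ is of order $u'$, so the integral over the collar contributes at most $C\eta\theta^2 \ll \eta|\tau|$.

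Finally, on the soliton portion $u' \in [u, L/\sqrt{|\tau|}]$, I would rescale $\rho' = u'\sqrt{|\tau|}$ and invoke Proposition \ref{prop-convergence-tip}: $Z(\rho',\tau) \to Z_0(\rho')$ smoothly on compact sets, so $|Z_\tau| = o(1)$ and $Y = Z$ is bounded below by a positive constant on $[0,L]$. Writing $Y_\tau = Z_\tau - \rho'Z_{\rho'}/(2|\tau|)$ and using \eqref{eqn-Zasym}, the integrand $(\mu_u)_\tau = -Y_\tau/(u'Y^2)$ has size at most $C(|Z_\tau|+1/|\tau|)\sqrt{|\tau|}/\rho'$, and integration from $\rho$ to $L$ produces a logarithmic factor which is dominated by $\eta|\tau|\rho^{-1}$ for $\rho \in [0,1]$ once $\tau_0$ is taken sufficiently negative. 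The main obstacle will be the collar-interior estimate: the naive absolute-value bound $|Y_\tau| \leq \eta/u^2$ from Lemma \ref{lemma-Yb} is too crude (it yields an untamed $\eta u'|\tau|^2$ integrand after combining with $1/Y^2$), and one must instead insert equation \eqref{eqn-Y} directly into $-Y_\tau/(u'Y^2)$ and exploit the cancellation between the $(1-Y)Y_u/u$ and the $uY_u/2$ terms (i.e., the fact that $Y_\tau$ is genuinely of order $1/|\tau|^2$ in the collar, not of order one) to recover the expected size of the integrand.
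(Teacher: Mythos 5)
Your skeleton matches the paper's: differentiate \eqref{eqn-weight1} in $\tau$, integrate by parts in the $\zeta$-weighted piece so that only a boundary evaluation of $(-\sigma^2/4)_\tau$ and a $\zeta'$-transition integral remain (both controlled by \eqref{eqn-utau} and \eqref{eqn-good5}), treat the collar and soliton portions separately, use Proposition \ref{prop-convergence-tip} and the rescaling $\rho=u\sqrt{|\tau|}$ in $\soliton_L$, and prove the second inequality in \eqref{eqn-mutau} directly from \eqref{eqn-good5} and the boundedness of $Z$; all of that is essentially what the paper does. The genuine gap is in the collar-interior step, which you correctly single out as the main obstacle but then resolve by an argument that does not work. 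First, the cancellation you invoke is the wrong one: substituting \eqref{eqn-Y} into $Y_\tau/(u'Y^2)$, the two terms of size $\sim \frac{1}{u^4|\tau|}$ that nearly cancel are $(1-Y)\frac{Y_u}{u}$ and $\frac{2(1-Y)Y}{u^2}$, whose sum is $\frac{1-Y}{u^2}\,(uY_u+2Y)$; the term $-\frac u2 Y_u$ that you pair with $(1-Y)\frac{Y_u}{u}$ is smaller by a factor $u^2$ and cancels nothing at leading order. Second, and more seriously, your conclusion that the integrand is pointwise of order $u'$ (hence that the collar integral is $O(\theta^2)$) would require a quantitative smallness estimate for $uY_u+2Y=(u^2)_{\sigma\sigma}$ (and sharp two-sided asymptotics for $Y_u$, $Y_{uu}$), and no such estimate is available: the only information at hand is the sign $(u^2)_{\sigma\sigma}\le 0$ from Proposition \ref{claim-2} (i.e.\ \eqref{eqn-concaveY}) together with the crude bounds of Lemma \ref{lemma-Yb} and \eqref{eqn-uss}, which you yourself note are too weak.

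The paper never proves the cancellation quantitatively; it sidesteps it using that only an \emph{upper} bound on $\mu_\tau$ is claimed. After substituting \eqref{eqn-Y} one integrates the $\frac{Y_{uu}}{uY}$ term by parts (a pointwise bound on $Y_{uu}$ via \eqref{eqn-Yb} is also insufficient there), regroups so that the dangerous contribution appears as
\begin{equation*}
\int_u^\theta (1-\zeta)\,\frac{1}{u'^3Y^2}\Bigl(\tfrac 12 u'^2 Y_u + 1 - Y\Bigr)\bigl(u'Y_u+2Y\bigr)\,du',
\end{equation*}
and then discards this term because the first factor is positive (by $|Y_u|\le \eta/u$ and $Y\ll1$) while $u'Y_u+2Y\le 0$ by \eqref{eqn-concaveY}. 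The surviving terms are $\int_u^\theta\frac{(-Y_u)}{u'^2Y}\,du'$ and $\int_u^\theta\frac{(-Y_u)}{2Y^2}\,du'$, which are bounded (using $Y_u\le 0$ and the comparability of $1/(u^2Y)$ across the collar) by $\frac{2}{u^2}+\frac{1}{2Y(\theta,\tau)}\le |\tau|\bigl(\frac{2}{L^2}+\theta^2\bigr)$ --- a small multiple of $|\tau|$, not $O(\theta^2)$ as your heuristic predicts. So as written your collar estimate would fail, and it must be replaced by this integration-by-parts plus sign argument (or something equivalent that supplies the missing quantitative control of $uY_u+2Y$). Your soliton-region and transition estimates, and the deduction of the second inequality in \eqref{eqn-mutau}, are fine.
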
 

\begin{proof}
We use the  definition of $\mu(u,\tau) $ in \eqref{eqn-weight1}-\eqref{eqn-weight2} and that $\zeta \equiv 1$ for $u \geq \theta/2$. 
Integration by parts gives 
\be\label{eqn-mutau5}
\begin{split}
 \mu_\tau &=   \big ( -\frac{\sigma^2(\theta,\tau)}4 \big )_\tau + \int_\theta^u  \zeta \, \big ( - \frac{\sigma^2(u,\tau)}4 \big )_{u\tau }  +
   (1-\zeta) \,  \big ( \frac {Y^{-1}(u,\tau) -1}{u}  \big )_{\tau}    \, du\\
& =    \zeta \, \big ( - \frac{\sigma^2(u,\tau)}4 \big )_{\tau }   + \int_u^\theta  \zeta' \, \big ( - \frac{\sigma^2}4 \big )_{\tau }  \, du  + 
\int_u^\theta (1-\zeta)\, \frac {Y_\tau}{u Y^2} \, du
 \end{split}
 \ee
where, to simplify the notation,  we will  denote the variable of integration by $u$ (instead of $u'$) when there is no danger of confusion. 

\sk

Fix  $\eta >0$ and small. 
We will treat  separately the two  cases of the collar and soliton regions, $L/\sqrt{|\tau|} \leq u  \leq \theta$ and $u \leq L/\sqrt{|\tau|}$,
respectively.  

\sk
\sk
\noindent{\em Case 1:  Given $\eta >0$, there exists $0 < \theta \ll 1$ and  $L \gg 1$ such that \eqref{eqn-mutau} holds on 
 $L/\sqrt{|\tau|} \leq u  \leq 2 \theta$.}  

\sk

Observe first that on the region $u \geq \theta$, we have $\zeta(u)=1, \zeta'(u)=0$, hence the desired  bound simply follows 
from  \eqref{eqn-utau} and \eqref{eqn-good5}   for $\theta $ small and   $\tau \leq \tau_0(\theta)  \ll  -1$.   

\sk

Assume now that $u \leq  \theta$. Then the second bound in \eqref{eqn-utau} (with $\eta$ replaced by $\eta/10$)  and \eqref{eqn-good5}
readily give that
\be\label{eqn-zeta15}  
 \Big |   \zeta \, \big ( - \frac{\sigma^2(u,\tau)}4 \big )_{\tau }   + \int_u^\theta  \zeta' \, \big ( - \frac{\sigma^2}4 \big )_{\tau }  \, du  \Big |
 \leq \frac {\eta}{4} \,|\tau| 
\ee
holds on the {\em  whole tip region $\tip_\theta$},  for  $\theta$ small and $\tau \leq \tau_0 \ll-1$ (recall that both $\zeta$ and $\zeta'$ are zero for $u \leq \theta/4$).  

\sk
Let's now  look  at the last integral in \eqref{eqn-mutau5}.  Using   equation   \eqref{eqn-Y} to substitute  for  $Y_\tau$
and integrating by parts we obtain 
\bee
\begin{split}
\int_u^\theta (1-\zeta)\, \frac {Y_\tau}{u Y^2} \, du &= \int_u^\theta (1-\zeta)  
\Big 
(  \frac{Y_{uu}}{u Y}   - \frac 12\, \frac{Y_u^2}{u Y^2}     + 
\frac{1 - Y}{u ^3 Y^2} \, \big ( u Y_u   +  2 Y \big )    -   \frac 12 \, \frac { Y_u}{Y^2}   \Big ) \, du   \\
&=\int_u^\theta (1-\zeta)  
\Big 
(  \frac 12\, \frac{Y_u^2}{u Y^2}  + 
\frac{1 - Y}{u ^3 Y^2} \, \big ( u Y_u   +  2 Y \big )     -   \frac 12 \, \frac { Y_u}{Y^2}  \Big ) \, du \\
&+ \int_u^{\theta} \zeta_u \frac{Y_u}{uY}\, du + (1 -\zeta(u))\, \frac{Y_u}{uY} (u,\tau)
\end{split}
\eee
To obtain the desired bound, we cannot just use the estimates \eqref{eqn-Yb}, we need to use careful integration by parts together
with \eqref{eqn-concaveY}. In fact we will use the negative term  $u\, Y_u + 2 Y \leq 0$
 in our favor to bound the first term on  the right hand side of the last formula. 
To this end, we write  
\bee
\begin{split}
\int_u^\theta (1-\zeta)\, \frac {Y_\tau}{u Y^2} \, du = &\int_u^\theta (1-\zeta)  
\Big (  \frac 12\, \frac{Y_u}{u^2 Y^2}  \big ( u Y_u   +  2 Y \big )    + 
\frac{1 - Y}{u ^3 Y^2} \, \big ( u Y_u   +  2 Y \big )  \Big ) \, du \\
& - \int_u^\theta (1-\zeta)\,  \Big (   \frac { Y_u}{u^2 Y}    +   \frac 12 \, \frac { Y_u}{Y^2}  \Big ) \, du  + \int_u^{\theta} \zeta_u \frac{Y_u}{uY}\, du \\
&+ (1 -\zeta(u))\, \frac{Y_u}{uY} (u,\tau)
\\
=&\int_u^\theta (1-\zeta) \,  \frac 1{u^3Y^2} \, \Big (  \frac 12\,  u^2 Y_u    + 1 - Y \Big ) \,  \big ( u Y_u   +  2 Y \big )  \, du \\
& - \int_u^\theta (1-\zeta)\,  \Big (   \frac { Y_u}{u^2 Y}    +   \frac 12 \, \frac { Y_u}{Y^2}  \Big ) \, du + \int_u^{\theta} \zeta_u \frac{Y_u}{uY}\, du \\
&+ (1 -\zeta(u))\, \frac{Y_u}{uY} (u,\tau)\\
&\leq  \int_u^\theta  \frac { (-Y_u)}{u^2 Y}  +   \int_u^\theta   \frac {(- Y_u)}{ 2Y^2}  du  + \int_u^{\theta} \zeta_u \frac{Y_u}{uY}\, du \\
&+ (1 -\zeta(u))\, \frac{Y_u}{uY} (u,\tau), \,\,\, \mbox{since} \,\,\, 0\le \zeta \le 1,
\end{split}
\eee
where in the last inequality we used  ${\ds  \frac 12\,  u^2 Y_u    + 1 - Y \geq -\eta u + 1 - Y >0}$ and  $uY_u + 2Y \leq 0$.  By   \eqref{eqn-good5} (with $\eta = 1$), we know    that throughout collar region  the values
of $1/u^2 Y$ are comparable. Hence, we can pull out the  $1/{u^2 Y(u,\tau)}$ from the first integral 
in  the last line  above, evaluated at the end point $u$,  and use  $Y_u \leq 0$,  $u \leq \theta <1$    to  obtain that 
\bee
\begin{split}
\int_u^\theta   \Big (   \frac { (-Y_u)}{u^2 Y}    +   \frac 12 \, \frac {(- Y_u)}{Y^2}  \Big ) \, du 
& \leq   \frac 2{u^2  Y(u,\tau)} \, \int_u^\theta (-Y_u) \, du'   +  \frac 12 \, \int_u^\theta \Big ( \frac 1Y \Big )_u \, du' \\
& \leq \frac  2{ u^2} + \frac 1{2 \, Y(\theta,\tau)} \leq |\tau| \, \big ( \frac{2}{L^2} + \theta^2 \big )  \leq  \frac \eta{20} \, |\tau|
\end{split}
\eee
holds,  for    $L \gg 1$ and  $0 < \theta \ll 1$ both  depending on $\eta$.  Furthermore, by \eqref{eqn-Yb} we have
\[\int_u^{\theta} \zeta_u \frac{Y_u}{uY}\, du + (1 -\zeta(u))\, \frac{Y_u}{uY} (u,\tau) <  \frac \eta{20} \, |\tau|.\]
We conclude that 
\be\label{eqn-zeta5} 
\begin{split}
\int_u^\theta (1-\zeta)\, \frac {Y_\tau}{u \, Y^2} \, du   \leq \frac \eta{10} \, |\tau|
\end{split}
\ee
Finally combining the  bounds   \eqref{eqn-zeta15} and \eqref{eqn-zeta5} we conclude that the first bound in \eqref{eqn-mutau} holds,
in $\collar_{\theta,L}$,  provided $L \gg 1$, $0 < \theta \ll 1$ and   $\tau  \leq \tau_0 \ll -1$. 

\sk
\sk
\noindent{\em Case 2:  For the given $\eta >0$, let $L \gg 1$ be such that \eqref{eqn-mutau} holds in $\collar_{L,\theta}$
and for $\tau \leq \tau_0 \ll -1$.  
Then, we may choose  $\tau_0 \ll -1$ such that \eqref{eqn-mutau} also holds on $S_L$ and $\tau \leq \tau_0$.}

\sk
\sk 
Since \eqref{eqn-zeta15} holds in the whole tip region, using also \eqref{eqn-zeta5} at $u=L/\sqrt{|\tau|}$ and that $\zeta=0$ on $u < \theta/4$,  we obtain 
\be\label{eqn-zeta35} 
\begin{split}
\mu_\tau &\leq \frac \eta{4}\, |\tau|  +  \int_{L/\sqrt{|\tau|}}^{\theta}  (1-\zeta)\, \frac {Y_\tau}{u \, Y^2} \, du
+  \int_u^{L/\sqrt{|\tau|}}   (1-\zeta)\, \frac {Y_\tau}{u \, Y^2} \, du\\
&\leq   \frac \eta{2}\, |\tau| +  \int_u^{L/\sqrt{|\tau|}}  \frac {|Y_\tau|}{u \, Y^2} \, du.
\end{split} 
\ee
Recall that in the soliton region $S_L$, the rescaled solution  $Z(\rho, \tau) := Y(u,\tau) $, with $\rho := u\, \sqrt{|\tau|}$ converges to the Bowl soliton 
$Z_0(\rho)$, which implies (using \eqref{eqn-Z}) that 
\be\label{eqn-Ytau}
 \frac 1{|\tau|} Y_\tau = \frac 1{|\tau|} \, \big ( Z_\tau - \frac {\rho}{2|\tau|} Z_\rho \big )  \to 0, \qquad \mbox{as} \,\, \tau \to -\infty.
 \ee
uniformly on $\rho \in [0,L]$. We conclude that  for our given  constant $L >0$ and any $\eta' >0$,  there exists $\tau_0 \ll -1$, such that for all $\rho \leq L$, we have 
$$\int_{L/\sqrt{|\tau|}}^u  \frac {|Y_\tau|}{u \, Y^2} \, du    \leq \eta' \,   |\tau| \,  \int_\rho^L  \frac 1{\rho \, Z^2_0} \, d\rho
\leq \eta' \,   |\tau| \, \frac{C(L)}{\rho} \leq \frac{\eta}{10} \frac{|\tau|}{\rho}$$
by  taking  $\eta'=\eta/(10 \,  C(L))$, for our given $\eta >0$. Plugging  this last estimate in \eqref{eqn-zeta35} concludes the first bound in 
\eqref{eqn-mutau} holds  in the soliton region. 

\sk

Lets now check that the second bound in \eqref{eqn-mutau} holds for $\tau \ll \tau_0 \ll -1$.  To this end, we fix $L_0$ universal constant so that
${\ds |\tau| \leq \frac 2{u^2Y} }$ holds on $u \geq L_0/|\sqrt{|\tau|}$ (we use again \eqref{eqn-good5}). 
On the other region where $\rho  \leq L_0$, we  use $Z(\rho,\tau) = Y (u,\tau)\leq 1$ to get 
$\rho^2 Z \,  \big ( 1 + \rho^{-1}\,  \chi_{[0,1]}(\rho) \big ) \leq C_0=C(L_0)$, which readily gives the desired bound.

\end{proof}

%
%
%
%

\subsection{Poincar\'e inequality in the tip region}

Our goal in this section is to derive the following Poincar\'e inequality:

\begin{proposition}[Poincar\'e inequality] 
  \label{prop-Poincare}
 There exists an absolute  constant   $C_0 > 0$,  a small absolute constant $\theta_0$,  and $\tau_0 \ll -1$, 
 such that the inequality 
  \begin{equation}
    \label{eq-Poincare}
  \int \mu_u^2  \, f^2  \, e^{-\mu(u,\tau)}\, du 
    \le C_0\, \Big ( \int  f_u^2 \, e^{-\mu(u,\tau)}\, du + 
 \int  \frac{f^2}{u^2}\, e^{-\mu(u,\tau)}\, du  \Big )
  \end{equation}
  holds, for any smooth  compactly supported function $f$ in $\tip_{\theta_0}$  and for all  $\tau \le \tau_0$.  
\end{proposition}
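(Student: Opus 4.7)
The plan is to establish the Poincaré inequality via a standard integration by parts that trades $\mu_u^2$ for $\mu_{uu}$ plus a cross term, and then to use the convexity-type estimate $\mu_{uu}\le \eta\,\mu_u^2$ from Lemma~\ref{cor-muu} in the collar region to absorb the resulting $\mu_{uu}$ term back into the left hand side. In the remaining soliton region, where this bound fails, $\mu_{uu}$ will be controlled directly by $C/u^2$ using the Bryant soliton asymptotics.

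First, using $(e^{-\mu})_u=-\mu_u e^{-\mu}$ to write $\mu_u^2 f^2 e^{-\mu}=-\mu_u f^2 (e^{-\mu})_u$ and integrating by parts, one obtains
\[
\int \mu_u^2 f^2 e^{-\mu}\,du = \int \mu_{uu} f^2 e^{-\mu}\,du + 2\int \mu_u f f_u e^{-\mu}\,du.
\]
The boundary term at $u=2\theta_0$ vanishes by compact support, and the one at $u=0$ vanishes because in the soliton region $\zeta\equiv 0$, $\mu_u=(1-Y)/(uY)$, and the smooth convergence $Y(u,\tau)\to Z_0(u\sqrt{|\tau|})$ with $Z_0(\rho)=1-\rho^2/6+O(\rho^4)$ forces $\mu_u(u,\tau)\to 0$ as $u\to 0$. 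Applying Young's inequality $2\mu_u f f_u \le \tfrac12 \mu_u^2 f^2 + 2 f_u^2$ to the cross term gives
\[
\tfrac12\int \mu_u^2 f^2 e^{-\mu}\,du \le \int \mu_{uu} f^2 e^{-\mu}\,du + 2\int f_u^2 e^{-\mu}\,du.
\]

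Next, split $\tip_{\theta_0}=\collar_{\theta_0,L}\cup \soliton_L$ and estimate $\mu_{uu}$ in each region. In $\collar_{\theta_0,L}$, Lemma~\ref{cor-muu} provides $\mu_{uu}\le \eta\,\mu_u^2$ for any preassigned $\eta>0$, once $L$ is large enough and $\tau_0$ negative enough (both depending on $\eta$). In $\soliton_L$, where $\zeta\equiv 0$, a direct computation gives
\[
\mu_{uu} = -\frac{1-Y}{u^2 Y}-\frac{Y_u}{u Y^2}.
\]
Rescaling by $\rho=u\sqrt{|\tau|}$ and invoking Proposition~\ref{prop-convergence-tip} together with $Z_0(\rho)=1-\rho^2/6+O(\rho^4)$, one verifies that in $\soliton_L$ both $(1-Y)/(u^2Y)$ and $Y_u/(u Y^2)$ are bounded by $C(L)\,|\tau|$; since $u\le L/\sqrt{|\tau|}$ forces $|\tau|\le L^2/u^2$, this yields $\mu_{uu}\le C(L)/u^2$ uniformly on $\soliton_L$ for all $\tau\le\tau_0$. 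Combining,
\[
\int \mu_{uu} f^2 e^{-\mu}\,du \le \eta \int \mu_u^2 f^2 e^{-\mu}\,du + C(L)\int \frac{f^2}{u^2}\,e^{-\mu}\,du,
\]
so taking $\eta=\tfrac14$ produces the desired
\[
\tfrac14 \int \mu_u^2 f^2 e^{-\mu}\,du \le C(L)\int \frac{f^2}{u^2} e^{-\mu}\,du + 2\int f_u^2 e^{-\mu}\,du.
\]

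The main subtlety is that the proposition claims $C_0$ and $\theta_0$ to be \emph{absolute} constants: the soliton bound produces a constant $C(L)$ depending on $L$, yet $L$ itself must be chosen large to apply Lemma~\ref{cor-muu} with a prescribed $\eta$. The resolution is to fix $\eta=1/4$ once and for all, which determines an absolute threshold $L_0$ such that Lemma~\ref{cor-muu} applies in $\collar_{\theta_0,L_0}$, and then $C_0=4\max(C(L_0),2)$ is absolute; $\theta_0$ is the corresponding threshold from Lemma~\ref{cor-muu}, and $\tau_0\ll -1$ is chosen so that the asymptotic estimates \eqref{eqn-good1}--\eqref{eqn-good5} and the $C^\infty_{\rm loc}$ convergence of $Z$ to $Z_0$ are realized. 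The care required in closing the soliton-region bound, and in handling the boundary contribution at $u=0$ via Bryant soliton regularity, are the two delicate points of the argument.
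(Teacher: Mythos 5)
Your proof is correct and follows essentially the same route as the paper: integrate by parts to trade $\mu_u^2$ for $\mu_{uu}$ (you via Young's inequality, the paper by completing the square, but these are equivalent), then bound $\mu_{uu}$ by $\eta\,\mu_u^2$ in the collar region via Lemma~\ref{cor-muu} and by $C(L_0)/u^2$ in the soliton region via the Bryant asymptotics, fixing a universal $\eta$ first so that $L_0$, $\theta_0$, and hence $C_0$ are absolute. The only point you make explicit that the paper leaves implicit is the vanishing of the boundary term at $u=0$ (which indeed follows from $\mu_u\sim u|\tau|/6\to 0$), so your write-up is, if anything, slightly more careful.
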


\begin{proof} 
First we show the following weighted estimate
  \be
    \label{eqn-poinc0}
 \frac 12    \int  \mu_u^2\,   f^2  \, e^{-\mu} \, du -   \int  \mu_{uu}  f^2 \, e^{-\mu}   \leq   2  \int    f^2_u \, e^{-\mu} du
  \ee
 which simply follows by completing the square and integrating by parts. To this end, we write 
\[
  0\leq \bigl( f_u  -   \frac  {\mu_u} 2  \, f \bigr)^2 = f_u^2 + \frac{\mu_u^2}{4} f^2  -  \mu_u  f  f_u =   f_u^2 + \frac{\mu_u^2}{4} f^2 -    \frac 12 \, \mu_u  (f^2 )_u\]
and integrate by parts to obtain 
\begin{align*}
  \int  \bigl(  f_u^2 +   \frac{\mu_u^2}{4} f^2\bigr)\, e^{-\mu}  du  &\geq    \frac 12  \int \mu_u  (f^2 )_u \, e^{-\mu}  du\\
 & =   \frac 12 \int \big ( - \mu_{uu} +  \mu_u^2  \big ) \,  f^2 \, e^{-\mu}  du.  
\end{align*}
Rearranging terms leads to 
\bee
\frac 14 \int \mu_u^2   f^2 \, e^{-\mu}  du -  \frac 12 \int  \mu_{uu}  f^2 \, e^{-\mu}   \leq   \int    f^2_u \, e^{-\mu} du.
\eee
which readily implies \eqref{eqn-poinc0}. 
\sk 
\sk 

We will now  apply \eqref{eqn-poinc0} to our special case where the weight $\mu(u,\tau)$ is given by \eqref{eqn-weight1}-\eqref{eqn-weight2}.   
Let $\theta_0, L_0$ be universal constants such that Corollary \ref{cor-muu} holds with $\eta:=1/8$, namely that ${\ds \mu_{uu} \leq \frac 18 \, \mu_u^2} $  holds on $L_0/\sqrt{|\tau|} \leq u \leq 2\theta_0$, 
for  $\tau \leq \tau_0 \ll -1$.  To deal with 
the region $u \leq L_0/\sqrt{|\tau|}$ we will next consider the change of variable  $\rho:= u\, \sqrt{|\tau|}$
and we will use the  $C^\infty$ convergence of $Z(\rho, \tau):= Y(u, \tau) $ to the Bryant  soliton $Z_0(\rho)$. Since, 
$$ 2 \frac {|\Psi_u|}{u\, \Psi^3} =   \frac {|Y_u|}{u\, Y^2} =  |\tau| \, \frac {|Z_\rho|}{\rho\, Z^2}$$
the convergence of $Z(\rho,\tau) \to Z_0$ implies that 
$$ \frac {|Z_\rho|}{\rho\, Z^2} \leq \frac{C(L_0)}{\rho} \leq \frac{\bar C(L_0)}{\rho^2}$$
where $\bar C(L_0) = L_0 \, C(L_0)$.  Hence, 
\be\label{eqn-mu136}
\mu_{uu} \leq 2 \frac {|\Psi_u|}{u\, \Psi^3}  \leq |\tau|\,  \frac{\bar C(L_0)}{\rho^2} = \frac{\bar C(L_0)}{u^2}
\ee
holds for $u \leq L_0/\sqrt{|\tau|}$. Combining the two estimates \eqref{eqn-muu} with \eqref{eqn-mu136} finally gives  
the bound 
$$\mu_{uu} \leq \frac 18 \mu_u^2 +  \frac{\bar C(L_0)}{u^2}$$
which holds on the whole tip region $\tip_{\theta_0}$. The last estimate combined  with \eqref{eqn-poinc0} readily gives \eqref{eq-Poincare}.

\end{proof}

\sk
\sk

\subsection{Energy estimate for $W_T$}\label{subsec-tip-energy} We  will next derive an  energy estimate for  difference $W:= \Psi_1 - \Psi_2^{\beta\gamma} $ in the weighted $L^2$-space with respect to our weight $e^{\mu}\, du$,  as defined 
in Section \ref{subsec-tip} (see \eqref{eqn-weight1}-\eqref{eqn-weight2} for the definition of $\mu:=\mu(u,\tau)$ and    \eqref{eqn-normt00}-\eqref{eqn-normt} 
for the definition of the $L^2$-norm).  Recall that  we 
denote $\Psi_1, Y_1$by  $\Psi, Y$  and 
$Y_2^{\beta\gamma}, \Psi_2^{\beta \gamma}$ by $\bar Y, \bar \Psi$.

\sk 
A direct calculation shows that both $\Psi (u,\tau), \bpsi(u,\tau)$  satisfy  the equation
\begin{equation}
\label{eq-Psi}
\Psi^{-2}  \Big ( \Psi_\tau + \frac u2\,  \Psi_u \Big )  = \Psi_{uu} + \frac{\Psi^{-2}-1}{u} \, \Psi_u + \frac{\Psi^{-1} - \Psi}{ u^2 } 
\end{equation}
since $Y(u,\tau), \bar Y(u,\tau)$ satisfy  \eqref{eqn-Y}. In fact the reason for considering $\Psi:=\sqrt{Y}$ instead of $Y$ is that  the equation \eqref{eq-Psi} 
for $\sqrt{Y}$ is simpler than that of $Y$ and,  in particular,  it has a  nice divergence structure which will help us derive a sharp energy estimate, suitable for our purposes.

\sk
\sk 
Let $W_T = W \vft$, where  $\vft(u)$ is the cut-off function defined in \eqref{eqn-cutofftip}. In this subsection we will derive a weighted   energy estimate for $W_T$  
and combine it with  our  Poincar\'e inequality to obtain  following differential inequality.

\begin{prop}[Integral Differential inequality]
\label{prop-energy}
There exist absolute constants  $\theta >0$ small, $\lambda >0$ and $\tau_0 \ll -1$ and a constant $C(\theta)$ such that 
\be\label{eqn-diff-ineq}
\begin{split}
\frac {d}{d\tau} \int  W^2_T  \, \Psi^{-2}  \emu   \leq   - 2 \lambda  |\tau|  \int  W^2_T \, \Psi^{-2}  \emu  + C(\theta) \int_\theta^{2\theta}  W^2  \Psi^{-2}  \emu. 
\end{split}
\ee
holds for all  $\tau \leq \tau_0$. 
\end{prop}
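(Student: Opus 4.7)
The strategy is to derive a PDE for $W$, pair it against $W\varphi_T^2\Psi^{-2}e^{\mu}$, integrate by parts to exploit the divergence structure built into the weight $\mu$, and then apply a weighted Poincar\'e-type identity together with the key equivalence $1/(u^2\Psi^2)\approx |\tau|$ from Lemma~\ref{lemma-good1} to extract the coercive factor $-2\lambda|\tau|$. Subtracting equation~\eqref{eq-Psi} for $\Psi:=\Psi_1$ and $\bar\Psi:=\Psi_2^{\beta\gamma}$ and multiplying through by $\Psi^2$ gives
\[
  W_\tau + \frac{u}{2}\,W_u = \Psi^{2}\,W_{uu} + \frac{1-\Psi^{2}}{u}\,W_u + R[W],
\]
where $R[W]$ is a zero-order remainder linear in $W$ whose coefficients involve $\bar\Psi,\bar\Psi_u,\bar\Psi_{uu}$ and are controlled via Lemma~\ref{lemma-Yb} applied to $\bar\Psi$.

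The weight $\mu$ in~\eqref{eqn-weight1}--\eqref{eqn-weight2} was constructed precisely so that $\mu_u=(\Psi^{-2}-1)/u$ exactly in the soliton region (where $\zeta=0$), and by Lemma~\ref{lemma-good1} the same identity $\mu_u = (1+O(\eta))(\Psi^{-2}-1)/u$ holds throughout the collar. Consequently, modulo errors of order $\eta$, the principal-plus-drift operator sits in divergence form,
\[
  \Psi^{2}W_{uu} + \frac{1-\Psi^{2}}{u}\,W_u = \Psi^{2}\,e^{-\mu}\bigl(e^{\mu}\,W_u\bigr)_u.
\]
Multiplying the equation from paragraph one by $2W\varphi_T^2\Psi^{-2}e^\mu$ and integrating by parts in $u$ (all boundary terms vanish because $\varphi_T W$ is compactly supported in $(0,2\theta)$), the contributions from the principal and drift terms combine so that the large integrals $\int W^2\mu_u^2\varphi_T^2 e^\mu\,du$ and $\int W^2 \mu_{uu}\varphi_T^2 e^\mu\,du$ generated by the two IBPs cancel exactly, leaving the good Dirichlet term $-\int W_u^2\varphi_T^2 e^\mu\,du$ along with cutoff errors supported on $u\in[\theta,2\theta]$.

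To convert this Dirichlet term into $-|\tau|\,\|W_T\|^2$, I would use the reverse analogue of Proposition~\ref{prop-Poincare}: expanding $0\leq \int [(e^{\mu/2}\varphi_T W)_u]^2\,du$ and integrating by parts yields
\[
  \int \mu_u^2\varphi_T^2 W^2\, e^\mu\,du \leq C\int \varphi_T^2 W_u^{2}\,e^\mu\,du + C\int |\mu_{uu}|\varphi_T^2 W^{2} e^\mu\,du + \text{(cutoff)},
\]
where by Lemma~\ref{cor-muu} (and its analogous lower bound obtained from the same estimates) one has $|\mu_{uu}|\leq \eta\mu_u^2 + C/u^2$. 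Combining with $\mu_u\approx(\Psi^{-2}-1)/u$ from Lemma~\ref{lemma-good1} and the crucial equivalence~\eqref{eqn-good5} that gives $\Psi^{-2}\approx |\tau| u^2$, one obtains $\mu_u^2\geq c|\tau|\Psi^{-2}$ and $1/u^2\leq C\Psi^{-2}/|\tau|$, so the Dirichlet term absorbs to
\[
  -\int W_u^2\varphi_T^2 e^\mu\,du \leq -c|\tau|\,\|W_T\|^2 + \frac{C}{|\tau|}\,\|W_T\|^2 \leq -\frac{c}{2}|\tau|\,\|W_T\|^2,
\]
valid for $\tau_0\ll -1$; this is the source of the $-2\lambda|\tau|\,\|W_T\|^2$ term in~\eqref{eqn-diff-ineq}.

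All remaining contributions must be shown to be a small multiple of $|\tau|\,\|W_T\|^2$, absorbed by the coercive term above: the weight-time-derivative $[(\Psi^{-2})_\tau + \Psi^{-2}\mu_\tau]e^\mu$ is controlled pointwise by $C\eta|\tau|\Psi^{-2}e^\mu$ via Lemmas~\ref{lemma-Yb} and~\ref{lemma-mutau}, the IBP contribution of the pure transport piece $-(u/2)W_u$ not absorbed by the divergence structure is of the same type and becomes strictly smaller for $\theta$ small, the remainder $R[W]$ paired with $W$ is handled by Lemma~\ref{lemma-Yb} applied to $\bar\Psi$, and the quadratic-in-$W$ nonlinearities are controlled by the $L^\infty$-smallness of $W$ coming from the asymptotics of Theorem~\ref{thm-asym}. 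All cutoff errors from $\varphi_T'$, supported on $u\in[\theta,2\theta]$, assemble into the explicit error $C(\theta)\int_\theta^{2\theta}W^2\Psi^{-2}e^\mu\,du$ in~\eqref{eqn-diff-ineq}. The main obstacle is the exact cancellation in paragraph two, which hinges on the engineered choice of $\mu$ matching the drift coefficient, together with the quantitative bookkeeping of paragraph three to ensure the reverse Poincar\'e combined with~\eqref{eqn-good5} yields coercivity measured in the correct norm $\|\cdot\|$.
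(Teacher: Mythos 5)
Your proposal correctly identifies the divergence structure built into the weight and the key role of Lemma~\ref{lemma-good1}, but it has a concrete gap: you have dropped the large zero-order term $-\tfrac{2}{u^2}W$ in the equation for $W$, and this term is the entire source of coercivity near the tip. Subtracting equation \eqref{eq-Psi} for $\Psi$ and $\bar\Psi$ produces, besides the principal/drift part, the term $-\tfrac{2}{u^2}W$ (coming from $\frac{\Psi^{-1}-\Psi}{u^2}$), which you have folded into a ``zero-order remainder $R[W]$ controlled via Lemma~\ref{lemma-Yb}.'' This is not a small remainder --- its coefficient blows up like $|\tau|/\rho^2$ near the tip --- and it cannot be estimated away; it must be kept with its good negative sign, contributing $-\int\frac{2}{u^2}W_T^2\,e^{\mu}\,du$ to the energy inequality. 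Without it, your coercivity argument fails in the soliton region: writing $\rho=u\sqrt{|\tau|}$ and using $\hZ_0(\rho)=1-\rho^2/6+O(\rho^4)$, one has $\mu_u=\frac{1-Y}{uY}\approx\frac{\rho\sqrt{|\tau|}}{6}\to0$ as $\rho\to0$, so $\mu_u^2$ is nowhere near $c|\tau|\Psi^{-2}\approx c|\tau|$ near the tip; your claimed bound $\mu_u^2\geq c|\tau|\Psi^{-2}$ only holds in the collar.

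The second inequality you invoke in the absorption step, $1/u^2\leq C\Psi^{-2}/|\tau|$, is false: \eqref{eqn-good5} gives $\frac{1}{u^2\Psi^2}\approx|\tau|$ in the collar, hence $\frac{1}{u^2}\approx|\tau|\Psi^2$, whereas $\Psi^{-2}/|\tau|\approx u^2$; your claimed inequality is equivalent to $u\geq c>0$, which fails throughout the collar and worse in the soliton region. So the $+C\int W^2/u^2$ contribution that your reverse Poincar\'e produces via $|\mu_{uu}|\leq\eta\mu_u^2+C/u^2$ cannot be absorbed as a lower-order term. The way the paper closes this is precisely to pair that unavoidable $+\tfrac12\int W_T^2/u^2$ against the $-\int\tfrac{2}{u^2}W_T^2$ coming from the zero-order term you dropped, and then use $c_0\mu_u^2+\tfrac{2}{u^2}\geq\tfrac{c_0}{100}\tfrac{1}{u^2\Psi^4}\geq 2\lambda|\tau|\Psi^{-2}$, where the $\mu_u^2$ piece carries the collar and the $1/u^2$ piece carries the soliton region. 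Restoring the $-\tfrac{2}{u^2}W$ term and running the square-completion/Poincar\'e argument as in the paper repairs both defects.
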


\begin{proof}

We will begin by computing the equation that the  difference $W:= \Psi - \bpsi$ satisfies. 
Subtracting the equation for $\bpsi$ from the equation for $\Psi$, we find 
\bee
\begin{split}
\frac 1{\Psi^2}   \Big ( W_\tau + \frac u2\,  W_u  \Big )  &= W_{uu} + \frac{\Psi^{-2}-1}{u} \, W_u  +  \frac{\Psi^{-2}-\bpsi^{-2}}{u} \, \bpsi_u\\
& - 
\Big (  \frac{1}{u^2\Psi \bpsi}  + \frac 1{u^2} \Big )  W    - \big ( \Psi^{-2} - \bpsi^{-2} \big ) \, \Big ( \bpsi_\tau + \frac u2\,  \bpsi_u  \Big ) .
\end{split}
\eee
We can further express 
$$ \Psi^{-2} - \Psi_{2}^{-2}  = \frac{\bpsi^2 - \Psi^2}{\Psi^2 \bpsi^2} = - \frac{\Psi + \bpsi}{\Psi^2 \bpsi^2} \, W  =
- \frac{2 \Psi_{12}}{\Psi^2}  W $$
where, to simplify the notation,  we have denoted by  $\Psi_{12} := \frac 12  \frac{\Psi +\bpsi}{\bsi^2}.$
Under this notation, the above equation for $W$ becomes 
\bee
\begin{split}
\frac 1{\Psi^2}  \big (  W_{\tau} + \frac u2 \, W_u \big ) = &W_{uu} + \frac{\Psi^{-2} -1}{u} \, W_u - 
\Big ( \frac{2 \bpsi_u \Psi_{12}}{u \Psi^2} +  \frac{1}{u^2\Psi\bpsi}  + \frac 1{u^2} \Big )  W  \\
 & +  \frac {2 \Psi_{12}}{\Psi^2}\,  \Big ( \bpsi_\tau + \frac u2\,  \bpsi_u  \Big )  \, W 
\end{split}
\eee
where we have arranged terms in such a way that the terms in the second line will be considered error terms. 

\sk

In an attempt to recognize a divergence structure in the above equation for $W$,  we next observe that  since $\Psi \approx \bpsi \approx \Psi_{12}^{-1}$ we have
$$ \frac{2 \bpsi_u}{u \Psi_{12}^{-1} \Psi^2}+  \frac{1}{u^2\Psi\bpsi}   + \frac 1{u^2}  \approx   \frac {2 \Psi_{u}} {u\, \Psi^3}  +  \frac{\Psi^{-2} - 1}{u^2} +  
\frac 2{u^2}=     -   \Big ( \frac{\Psi^{-2} - 1}{u}  \Big )_u +  \frac 2{u^2}.$$
It follows that the equation above becomes
\bee\label{eqn-W1}
\begin{split}
\frac 1{\Psi^2}  \big (  W_{\tau} + \frac u2 \, W_u \big )  &= W_{uu} + \frac{\Psi^{-2} -1}{u}  \, W_u + 
 \frac{\partial}{\partial u}     \Big ( \frac{\Psi^{-2} - 1}{u}  \Big ) W  - \frac 2{u^2} \, W   + \frac 1{\Psi^2} \, \cB  \, W
\end{split}
\eee
where 
\bee
\cB := \Big (  \frac {2 \Psi_{u}} {u\, \Psi}   - \frac{2 \bpsi_u\, \Psi_{12}}{u} +  \frac{\Psi-\bpsi}{u^2 \bpsi} \Big ) 
+ 2\Psi_{12}\,  \Big ( \bpsi_\tau + \frac u2\,  \bpsi_u  \Big ).
\eee
The second and third term on the right-hand side of the above equation  can be combined together as one term in divergence form. This finally gives us the equation
\be\label{eqn-W}
\begin{split}
 \frac 1{\Psi^2} W_{\tau}&= W_{uu}  +
 \frac{\partial}{\partial u}  \Big (  \frac{\Psi^{-2} - 1}{u} \, W \Big ) -   \frac u{2\Psi^2}  \, W_u   - \frac 2{u^2} \, W   + \frac 1{\Psi^2} \, \cB  \, W.
\end{split}
\ee
Also, using  the bounds in \eqref{eqn-Psib} (with $\eta/10$)  and the bounds $\Psi /2 \leq \bpsi \leq 2 \Psi$ (which readily follow 
from \eqref{eqn-good5}) we may  estimate $\cB$ as 
\be\label{eqn-B}
 \cB \leq \frac \eta{u^2 \Psi^2}. 
 \ee

\sk
\sk 
Let $W_T = W \vft$, where  $\vft(u)$ is the cut-off function defined in \eqref{eqn-cutofftip}. To simplify the notation,  we drop the subscript $T$ from $\vft$,
and simply set $\vf:=\vft$.  Since, $\vf$ is independent of $\tau$, integration with respect to our measure $\emu$ and differentiation in time $\tau$ yields
\bee
\begin{split}
\frac 12 \frac {d}{d\tau} \int \frac{W^2_T}{ \Psi^2} \, \emu  
=  \int \frac 1{\Psi^2} \, W W_\tau  \,  \vf^2   \emu  +  \int \big (   \frac 12 \mu_\tau  -  \frac{\Psi_\tau}{\Psi} \big ) \frac{W_T^2}{\Psi^2}  \emu . 
\end{split}
\eee
Using equation    \eqref{eqn-W} while integrating by parts the first two  terms we obtain 
\begin{equation}
\label{eqn-WW1}
\begin{split}
\frac 12 \frac {d}{d\tau} \int \frac{W^2_T }{\Psi^2} &\, \emu  
=-  \int W_u^2  \,  \vf^2 \,  \emu  - \int \Big ( \mu_u  + \frac{\Psi^{-2} -1}u + \frac{u}{2\Psi^2} \, \Big ) W W_u \vf^2 \, \emu \\
&- \int \frac{2}{u^2} \, W_T^2 \, \emu  - \int   \mu_u \,  \frac{\Psi^{-2} -1}u \, W_T^2 \, \emu + \int  \frac 1{\Psi^2} \Big (  \cB  +  \frac 12 \mu_\tau  -  \frac{\Psi_\tau}{\Psi}\Big )\,  W_T^2\,   \emu\\
&- 2 \int W W_u \vf  \vf_u \, \emu - 2 \int  \frac{\Psi^{-2}-1}u \, W^2 \, \vf  \vf_u \, \emu. 
\end{split}
\end{equation}

Set  
\be\label{eqn-G12} 
G_1:=  \mu_u  + \frac{\Psi^{-2} -1}u + \frac{u}{2\Psi^2} \qquad \mbox{and} \qquad G_2 :=  \frac 1{\Psi^2} \, \Big ( \cB  +  \frac 12 \mu_\tau  -  \frac{\Psi_\tau}{\Psi} \Big ).
\ee
Furthermore, use 
 $(W_T)_u = W_u \vf + W \vf_u$ to write
$$\int  G_1\,    W  W_u \, \vf^2 \, \, \emu =  \int  G_1  \,   W_T (W_T)_u\, \emu - \int G_1 \,  W^2 \, \vf \vf_u \, \emu$$
and $(W_T)_u^2  = W_u^2 \vf^2 + 2 W W_u \vf \vf_u +  W^2 \vf_u^2$ to write 
$$-  \int W_{u}^2 \vf^2  \, \emu =  -    \int  (W_T)_u^2\, \emu  +  \int  W^2 \,  \vf_u^2  \, \emu +  2 \int  W W_u \, \vf \vf_u \, \emu.$$
Inserting this in \eqref{eqn-WW1} we obtain (after cancelling terms) 
\begin{align*}\label{eqn-WW2}
\frac 12 \frac {d}{d\tau} \int  \frac{W^2_T}{\Psi^2} \, & \emu  
=-  \int (W_{T})_u^2   \,  \emu  - \int  G_1 W_T  (W_{T})_u   \, \emu \\
&- \int   \mu_u \,  \frac{\Psi^{-2} -1}u \, W_T^2 \, \emu - \int \frac{2}{u^2} \, W_T^2 \, \emu + \int G_2 \, W_T^2 \,  \Psi^{-2}\,  \emu\\
&- \int  \Big ( -G_1 + 2  \frac{\Psi^{-2}-1}u  \Big ) \, W^2 \vf  \vf_u \, \emu  +   \int  W^2 \,  \vf_u^2  \, \emu.
\end{align*}
We will see in the sequel that our weight $e^{\mu(u,\tau)}$ is chosen so that $| \mu_u -  \frac{\Psi^{-2} -1}u |  $ is small compared to $\mu_u$ in the whole  tip region. 
Moreover the third term  in $G_1$ is small compared to the other two terms. This inspires us to combine  the first three terms on the right hand side of the above formula  to complete a square  
$$ - \int   ( (W_T)_u  + \mu_u W_T)^2 \, \emu = - \int    \big ( e^{\mu} \, W_T )_u^2 \, e^{-\mu}\, du$$
plus the  remaining terms
$$ \int \Big ( \mu_u -  \frac{\Psi^{-2} -1}u - \frac{u}{2\Psi^2} \Big ) \, W_T  (W_{T})_u   \, \emu 
+ \int    \Big ( \mu_u -   \frac{\Psi^{-2} -1}u \Big ) \, \mu_u\, W_T^2 \, \emu.$$ 
Setting 
$$G_0:= \mu_u -   \frac{\Psi^{-2} -1}u = \mu_u - \frac{1-Y}{u Y}  \qquad \mbox{(recall that  $Y=\Psi^2$)} $$
we   finally obtain the  following energy inequality which holds on the whole tip region:
\bee
\begin{split}
\frac 12 \frac {d}{d\tau} \int  \frac{W^2_T}{\Psi^2}   \,  \emu  
= &-  \int    \big ( e^{\mu} \, W_T )_u^2 \, e^{-\mu}\, du +  \int \Big ( G_0  - \frac{u}{2\Psi^2} \Big ) \, W_T  (W_{T})_u   \, \emu \\
& - \int \frac{2}{u^2} \, W_T^2 \, \emu + \int  \Big (  G_0  \, \mu_u + G_2 \Big ) \, W_T^2 \,  \emu\\
&- \int  \Big ( \Big ( -G_1 + 2  \frac{\Psi^{-2}-1}u  \Big )  \vf  \vf_u  +  \vf_u^2 \Big )\,  W^2   \, \emu.
\end{split}
\eee
To absorb  the cross term with $W_T (W_T)_u$,  we set
${\ds G_3:= G_0 - \frac u{2 \Psi^2}}$ and write 
\bee
\begin{split}
 \int G_3 \, W_T  (W_T)_u   \, \emu  &=  \int  G_3 \, W_T  \big ( e^{\mu} \, W_T \big )_u   \, du  -  \int  G_3 \, \mu_u W_T^2   \, \emu \\
&\leq \frac 12  \int    \big (e^{\mu} \, W_T  \big )_u^2    e^{-\mu}  \, du + \int \big (  \frac 12  G_3^2 - G_3 \, \mu_u \big ) \, W_T^2\, \emu.
 \end{split}
\eee
Combining the last two estimates, we finally obtain that 
\be\label{eqn-energy}
\begin{split}
\frac 12 \frac {d}{d\tau} \int  \frac{W^2_T}{\Psi^2}   \,  \emu  
\le &- \frac 12  \int    \big ( e^{\mu} \, W_T )_u^2 \, e^{-\mu}\, du - \int \frac{2}{u^2} \, W_T^2 \, \emu\\
&+ \int  \Big (  \frac 12 \big (  G_0 - \frac u{2 \Psi^2}   \big )^2 + \frac u{2\Psi^2}\, \mu_u + G_2 \Big ) \, W_T^2 \,  \emu\\
&- \int  \Big ( \big (- G_1 + 2  \frac{\Psi^{-2}-1}u  \big )  \vf  \vf_u  +  \vf_u^2 \Big )\,  W^2   \, \emu.
\end{split}
\ee

\sk

Lets now  combine our energy estimate \eqref{eqn-energy} 
with the Poincar\'e inequality \eqref{eq-Poincare} applied to $f:= e^{\mu} \, W_T$. The latter can be written as 
\be\label{eqn-poin2} \int    \big ( e^{\mu} \, W_T )_u^2 \, e^{-\mu}\, du \geq C_0^{-1} \int \mu_u^2 \, W_T^2 \, \emu -  \int \frac{1}{u^2} \, W_T^2 \, \emu. 
\ee
Combining \eqref{eqn-energy} with \eqref{eqn-poin2} yields the differential inequality 
\be\label{eqn-energy2}
\begin{split}
\frac 12 \frac {d}{d\tau} \int  \frac{W^2_T}{\Psi^2}   \,  \emu  
\le &- c_0 \,  \int \mu_u^2\, W_T^2 \, \emu  - \frac 32  \int \frac 1{u^2}  \, W_T^2 \, \emu\\
&+ \int  \Big (  \frac 12 \big (  G_0 - \frac u{2 \Psi^2}   \big )^2 + \frac u{2\Psi^2}\, \mu_u + G_2 \Big ) \, W_T^2 \,  \emu\\
&- \int_\theta^{2\theta}   \Big ( \big ( -G_1 + 2  \frac{\Psi^{-2}-1}u  \big )  \vf  \vf_u  +  \vf_u^2 \Big )\,  W^2   \, \emu
\end{split}
\ee
where $c_0 :=C^{-1}_0/2$ is an absolute constant. 

\sk
\sk 
We will see below that the terms in the first line of the above formula are our main order terms while the terms in the second and third lines
are small. The negative term $- \frac 32 \int \frac{1}{u^2} \, W_T^2 \, \emu$ is a low order term in the collar region and will not help us estimating
the error terms, however near the tip $u=0$ it becomes large and will help us deal with errors in the soliton region. These estimates
will be done next and we will do them separately in the  collar and soliton regions.

\sk
\sk 

\begin{claim}[Estimate of  error terms  in $\collar_{\theta, L}$] \label{claim-collar}   Fix $\eta >0$.  There exists $\theta >0$ small,  $L \gg 1$  and $\tau_0 \ll -1$ (all three depending on $\eta$) and an absolute constant $C_1$, such that 
\be\label{eqn-Gcollar}
 \Big |   \frac 12 \big (  G_0 - \frac u{2 \Psi^2}   \big )^2 + \frac u{2\Psi^2}\, \mu_u + G_2 \Big | \leq C_1 \, \eta \, \mu_u^2 
 \ee 
holds on $\collar_{\theta,L}$ for $\tau \leq \tau_0$.  Furthermore, the bound 
\be\label{eqn-Gcollar2}
\Big | \big ( -G_1 + 2  \frac{\Psi^{-2}-1}u  \big )  \vf  \vf_u  +  \vf_u^2 \Big | \leq  C(\theta) \,  \Psi^{-2} \, \chi_{_{[\theta, 2\theta]}}
\ee
holds on the support of $\vf_u$ and for $\tau \leq \tau_0\ll -1$. 
\end{claim}

\begin{proof}[Proof of Claim \ref{claim-collar}]  First, lets use the bounds in \eqref{eqn-Psib}, \eqref{eqn-B}, \eqref{eqn-mutau}  
and the lower bound $\mu_u^2 \geq |\tau|/(2Y)$ (which follows from \eqref{eqn-good1} and \eqref{eqn-good5})  to  obtain the bound
$$ G_2   \leq C_0\, \frac{\eta}{u^2 \Psi^4} \leq  2C_0 \, \eta \, \mu_u^2 $$
for an absolute constant $C_0$. Moreover, the bound 
$|G_0 | \leq \eta  \, \mu_u$ (which readily follows from \eqref{eqn-good1})  implies that 
$$ \frac 12 \big (  G_0 - \frac u{2 \Psi^2}   \big )^2 + \frac u{2\Psi^2}\, \mu_u \leq \eta \, \mu_u^2$$
for  $u \leq 2\theta$, if  $\theta$ is sufficiently small depending on $\eta$ and $\eta^2 \ll \eta$.  Combining the two estimates yields \eqref{eqn-Gcollar} for a different absolute constant $C_1$.  

The second estimate \eqref{eqn-Gcollar2} easily follows from the definition of $G_1$ and \eqref{eqn-good1}. 

\end{proof}

\sk

We will  now {\em  fix  $\theta >0$ small and  $L \gg 1$}  so that 
\be\label{eqn-collarc0}
 \Big |   \frac 12 \big (  G_0 - \frac u{2 \Psi^2}   \big )^2 + \frac u{2\Psi^2}\, \mu_u + G_2 \Big | \leq C_1 \,  \eta \, \mu_u^2 
 \leq  \frac{c_0}4  \, \mu_u^2
 \ee 
holds on $\collar_{L,\theta}$, for $\tau \leq \tau_0 \ll -1$, where $c_0$ is the absolute constant from our Poincar\'e inequality \eqref{eqn-poin2}. 
For this choice of $L$, we will 
consider the soliton region $S_L$ and we will use the $C^\infty$ convergence of $Z(\rho,\tau), Z_2(\rho,\tau)$ to the Bryant $Z_0(\rho)$ (see Proposition \ref{prop-convergence-tip})
to absorb the error terms in \eqref{eqn-energy} by the good negative terms. We will next show the analogous estimate on $S_L$,
where we notice that $G_0=0$ (since $\zeta \equiv 0$ there). Hence we claim the following:

\begin{claim}[Estimate of  error terms   in $S_L$] \label{claim-soliton} Let $c_0$ be the  constant from \eqref{eqn-poin2}
and assume that $c_0 <1$. 
 For the given choice of  $L$  so that  \eqref{eqn-collarc0} holds on $\collar_{\theta,L}$,
there exists  $\tau_0 \ll -1$ such that 
\be\label{eqn-Gsoliton}
 \frac {u^2}{8 \Psi^4}  + \frac u{2\Psi^2}\, \mu_u + G_2  \leq  \frac{c_0}4 \frac 1{u^2 \Psi^4}
 \ee 
holds on $S_L$, for all $\tau \leq \tau_0$. \end{claim} 

\begin{proof}[Proof of Claim] First, observe that 
${\ds  \frac {u^2}{8 \Psi^4}  + \frac u{2\Psi^2}\, \mu_u  \ll  \frac{c_0}{8}  \frac 1{u^2 \Psi^4}}$
on $S_L$ hence, it is sufficient to show that 
${\ds G_2  \leq  \frac{c_0}{8}  \frac 1{u^2 \Psi^4}}$ for $\tau \leq \tau_0 \ll -1$, where $G_2$ is defined in \eqref{eqn-G12}. 
Transforming to soliton region variables, this is equivalent to showing that 
$$| \cB|  + \frac 12 \mu_\tau +  \frac 12 \big | Z_\tau - \frac {\rho}{2|\tau|} Z_\rho \big |  \leq \frac{c_0}{8}  \frac {|\tau|}{\rho^2 Z}$$
and it is sufficient to show that each of these three terms is bounded by ${\ds  \frac {\eta |\tau|}{\rho^2 Z}}$ for $\tau \leq \tau_0 \ll -1$,
with $\eta $ sufficiently small.  The desired   bound for the second term follows by \eqref{eqn-mutau} 
and for  the third term by   \eqref{eqn-Ytau} by  taking $\tau \leq \tau_0 \ll -1$.  For the first term, using the convergence $Z(\rho,\tau) \to Z_0$,
$Z_2(\rho,\tau) \to Z_0$ and \eqref{eqn-Ytau}, we find that for any given $\eta >0$ we can find $\tau_0 \ll -1$, such that
$$|\cB | \leq \frac {\eta |\tau|}{\rho^2} \leq  \frac {\eta |\tau|}{\rho^2 Z} \qquad (\mbox{since} \,\, Z \leq 1)$$
holds on $S_L$ and for $\tau \leq \tau_0 \ll -1$.  Combining these three bounds for $\eta = c_0/40$, we finally conclude 
\eqref{eqn-Gsoliton}. 

%

\end{proof} 

\sk
\sk 
We will  combine \eqref{eqn-energy2} with \eqref{eqn-Gcollar}, \eqref{eqn-Gcollar2} and \eqref{eqn-Gsoliton}. In fact,  
using all our bounds, it is easy to see that  \eqref{eqn-energy2} implies the following differential inequality
\bee
\begin{split}
\frac {d}{d\tau} \int  \frac{W^2_T}{\Psi^2}   \,  \emu  
\le  - \int \big ( c_0  \, \mu_u^2 + \frac 2{u^2} \big )  W_T^2 \, \emu  + C(\theta) \,  \int \frac{ W^2}{\Psi^2}  \, \chi_{_{[\theta, 2\theta]}}  \, \emu. 
\end{split}
\eee
We may assume that $c_0 <1$. Then, using \eqref{eqn-good1} once more
$$c_0  \, \mu_u^2 + \frac 2{u^2}  \geq c_0  \, \frac{(1-\Psi^2)^2}{2 u^2 \Psi^4} + \frac 2{u^2} \geq 
\frac{c_0}{100}  \,  \frac 1{u^2 \Psi^4}.$$
By \eqref{eqn-good1} and  the soliton  asymptotics we have that 
there exists a constant $\lambda >0$ (depending on $c_0$) such that on the whole tip region 
$$\frac{c_0}{100}  \,  \frac 1{u^2 \Psi^2} = \frac{c_0}{100}  \,  \frac 1{u^2 Y} \geq 2  \lambda \, |\tau|.$$
Hence, we finally conclude the desired  integral differential inequality \eqref{eqn-diff-ineq}. 
\end{proof}

\sk 

\subsection{Proof of Proposition \ref{prop-tip}}\label{subsec-tip-proof}
The proof of  Proposition \ref{prop-tip}  easily follows from the integral differential inequality \eqref{eqn-diff-ineq}.   
Setting   
\[
  f(\tau) := \int W_T^2 \, \Psi^{-2}  \, e^{\mu}\, du, \qquad g(\tau) := \int
W^2  \, \Psi^{-2} \, \chi_{[\theta,2\theta]} \, e^{\mu}\, du
\]
we may express \eqref{eqn-diff-ineq} as 
\[
  \frac{d}{d\tau} f(\tau) \le - 2 \lambda \,  |\tau|\, f(\tau) + C(\theta)
 \, g(\tau).
\]
Furthermore, setting ${\displaystyle F(\tau) := \int_{\tau-1}^{\tau} f(s)\, ds}$
and ${\displaystyle {G}(\tau) := \int_{\tau-1}^{\tau} g(s)\, ds}$, we have
\begin{equation*}
  \begin{split}
    \frac{d}{d\tau}  {F}(\tau) = f(\tau) - f(\tau-1)
    &= \int_{\tau-1}^{\tau} \frac{d}{ds} f(s)\, ds \\
    &\le - 2 \lambda  \,\int_{\tau-1}^{\tau} |s| f(s)\, ds +  C(\theta)    \int_{\tau-1}^{\tau} g(s)\, ds
  \end{split}
\end{equation*}
implying
\[
  \frac{d}{d\tau} {F} \le - 2 \lambda\, |\tau| \, {F} +
 C(\theta)\, {G}.
\]
This is equivalent to
\[
  \frac{d}{d\tau} \bigl( e^{-  \lambda \tau ^2} F(\tau) \bigr) \le
  \frac{C(\theta)}{|\tau|} e^{-\lambda \tau^2}\, G(\tau).
\]
Furthermore, since $W^2 :=  (\Psi -\bpsi)^2  \leq 1$  and $\Psi^{-2} \emu \leq 1$ on $\tip_\theta$, 
for $\tau \leq \tau_0 \ll -1$ and $\theta \ll 1$ (which follows from \eqref{eqn-mub1}), the functions 
 $F(\tau)$ and  $G(\tau)$ are uniformly bounded functions for
$\tau \leq \tau_0$.  Hence, 
${\displaystyle \lim_{\tau\to-\infty} e^{-  \lambda |\tau|^2} F(\tau) = 0}$, so that
from the last differential inequality we get
\begin{equation*}
  \begin{split}
    e^{-\lambda |\tau|^2}\, F(\tau)
    &\le C\, \int_{-\infty}^{\tau} \frac{G(s)}{|s| } (|s|\, e^{-\lambda s^2} )\, ds \\
    &\le \frac{C}{\sqrt{|\tau|}} \, \sup_{s\le\tau} \frac{G(s)}{\sqrt{|s|}} \, \int_{-\infty}^{\tau} |s|\,  e^{-\lambda  s^2} \, ds\\
    &\le \, \frac{C}{\sqrt{|\tau|}}\,   \sup_{s\le\tau} \frac{G(s)}{\sqrt{|s|}}
  \end{split}
\end{equation*}
where $C=C(\theta)$. It follows that  for all $\tau \leq \tau_0 \ll -1$ we have 
\[|\tau|^{-1/2}\, F(\tau) \le  \frac{C}{|\tau|}\, \sup_{s\le\tau} |s|^{-1/2} \, G(s)\]
and hence,
\[
  \sup_{\tau \le \tau_0} |\tau|^{-1/2} F(\tau ) \le \frac{C}{|\tau_0|}\, \sup_{\tau \le\tau_0} |\tau|^{-1/2}\, G(\tau)
\]
or equivalently,
\[
  \|W_T\|_{2,\infty} \le \frac{C(\theta)}{\sqrt{|\tau_0|}} \,  \|W
  \chi_{[\theta,2\theta]}\|_{2,\infty}
\]
therefore concluding the proof of Proposition \ref{prop-tip}.


\section{Proof of Theorem \ref{thm-main}}\label{sec-conclusion}
We will now combine Propositions \ref{prop-cylindrical} and \ref{prop-tip} to
conclude the proof of our main result Theorem \ref{thm-main}.  
Throughout  this section we will fix the  constant $\theta >0$ given in Proposition \ref{prop-tip}. 
For this constant $\theta$,  let us recall  our notation of the various regions 
$$C_\theta = \{ u_1 \geq \theta/4  \}, \qquad  D_\theta = \{ \theta/4 \leq u_1 \leq \theta/2  \},  \qquad \tip_\theta  = \{ u_1 \leq 2\theta \} $$
and that $\varphi_C$, $\varphi_T $ are supported on $C_\theta$, $\tip_\theta$ respectively and $\varphi_C \equiv 1$ on $C_{2\theta}$ and $\varphi_T \equiv 1$ on
$\tip_{\theta/2}$. 

\smallskip We have seen at the beginning of Section \ref{sec-outline}
that translating and dilating the original solution has an effect on the
rescaled rotationally symmetric solution $u(\sigma,\tau)$, as given in
formula \eqref{eq-ualphabeta}. Let $u_1(\sigma,\tau)$ and $u_2(\sigma,\tau)$ be any
two solutions to equation \eqref{eq-u} as in the statement of Theorem
\ref{thm-main} and let $u_2^{\beta \gamma}$ be defined by
\eqref{eq-ualphabeta}.  Our goal is to find parameters $(\beta,\gamma)$
so that the difference
\[
  w^{\beta \gamma}:= u_1-u_2^{\beta \gamma}\equiv 0.
\]

Proposition \ref{prop-tip} says that the weighted $L^2$-norm
$\|W^{\beta \gamma} \|_{2,\infty}$ of the difference of our solutions
$W^{\beta \gamma}(u,\tau):=\Psi_1(u,\tau) - \Psi_2^{\beta \gamma}(u,\tau)$
(after we change the variables) in the whole tip region 
$\tip_\theta := \{(y, \tau) : u_1(y,\tau) \le 2 \theta\}$ is controlled by
$\| W^{\beta \gamma} \, \chi_{_{[\theta,2\theta]}}\|_{2,\infty}$, where
$\chi_{[\theta,2\theta]}(u)$ is supported in the transition region between the
cylindrical and tip regions. By \eqref{eq-other-one} which will be shown below,  
$\| W^{\beta \gamma} \, \chi_{_{[\theta, 2\theta]}}\|_{2,\infty}$ can be estimated in terms of 
$\| w^{\beta \gamma} \, \chi_{_{D_{4\theta}}}\|_{\hilb,\infty}$, where $D_\theta = \{(\sigma, \tau) :  \theta/4 \leq u_1(\sigma,\tau) \leq \theta/2  \}$.
Therefore combining Propositions \ref{prop-cylindrical} and \ref{prop-tip} gives
the crucial estimate \eqref{eqn-w1230} which will be shown in detail in
Proposition \ref{prop-cor-main} below.  This estimate says that the norm of the
difference $w^{\beta \gamma}_{\cC}$ of our solutions when restricted in the
cylindrical region is dominated by the norm of its projection onto the zero eigenspace of the operator $\cL$
(the linearization of our equation on the limiting cylinder).  Note that 
Proposition \ref{prop-cylindrical} holds under the assumption that the
projection of $w_\cyl^{\beta \gamma}$ onto the positive eigenspace of $\cL$
is zero, that is $\pr_+ w_\cyl(\tau_0)^{\beta \gamma} =0$.  

\smallskip After having established that the projection onto the zero eigenspace
$a(\tau):= \langle w_\cyl^{\beta \gamma}, \psi_2 \rangle$ dominates in the
$\| w^{\beta \gamma}_{\cC} \|_{\hilb,\infty}$, the conclusion of Theorem
\ref{thm-main} will follow by establishing an appropriate differential
inequality for $a(\tau)$, for $\tau \leq \tau_0\ll -1$ and also having that
$a(\tau_0) = \mathcal{P}_0 w_C^{\beta \gamma}(\tau_0) = 0$. 
It follows from this  discussion that it is essential to have 
\begin{equation}\label{eqn-abc}
  \pr_+ w_\cyl^{\beta \gamma}(\tau_0) = \pr_0 w_\cyl^{\beta \gamma}(\tau_0) = 0
\end{equation}
holding for the same $\tau_0$ when $\tau_0 \ll  -1$. This will be done by appropriately choosing the parameters $\beta$ and $\gamma$. 
In fact, we will  next show that for every $\tau_0 \ll -1$ we can find parameters
$\beta=\beta(\tau_0)$ and $\gamma=\gamma(\tau_0)$ such
that \eqref{eqn-abc} holds and we will also give their asymptotics relative to
$\tau_0$.  Let us emphasize that we need to be able {\em for every}
$\tau_0 \ll -1$ to find parameters $ \beta, \gamma$ so that
\eqref{eqn-abc} holds, since up to the final step of our proof we have to keep
adjusting $\tau_0$ by taking it even more negative so that our estimates hold
(see Remark \ref{rem-choice-par} below).

\smallskip

We will need the following result whose proof is identical to the analogous result in \cite{ADS2}.

\begin{proposition}\label{lem-rescaling-components-zero}
  There is a number $ \tau_* \ll -1 $ such that for all $\tau\leq \tau_*$ there
  exist $ b $ and  $ \Gamma $ such that the difference
  $w^{\beta \gamma} :=u_1 - u_2^{\beta \gamma}$ satisfies
   \[
    \langle \varphi_\cyl \, w^{\beta \gamma}, \psi_0  \rangle =  0 \qquad \mbox{and} \qquad 
 \langle   \varphi_\cyl \,w^{\beta \gamma}, \psi_2 \rangle = 0
  \]where $\psi_0(\sigma)  \equiv 1$ and $\psi_2(\sigma) = \sigma^2 -2$ are the positive and null eigenvesctors of the
 operator $\cL$. 
   In addition, the parameters $\beta$ and $\gamma$ can be chosen so that
\begin{equation}\label{btheta}
  b := \sqrt{1+\beta e^\tau} -1  \qquad \mbox{and} \qquad \Gamma := \frac{\gamma - \log(1+\beta e^\tau)}{\tau}.
\end{equation}
 satisfy
  \begin{equation}\label{eq-b-Theta-bound}
    |b| = o\bigl( |\tau|^{-1} \bigr) \qquad \mbox{and}   \qquad |\Gamma| = o(1), \qquad \mbox{as}\,\, \tau \to -\infty.
  \end{equation}
  Equivalently, this means that $(\beta, \gamma)$ is
  admissible with respect to $\tau$, according to our Definition
  \ref{def-admissible}.
\end{proposition}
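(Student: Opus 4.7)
The plan is to view the two orthogonality conditions as a nonlinear system $F(b,\Gamma)=0$ in the unknowns $(b,\Gamma)$ of \eqref{btheta} and to solve it, for $\tau_0\ll-1$, by a quantitative implicit-function / Brouwer fixed-point argument. The reparameterization is natural because at $\tau=\tau_0$ the rescaled profile takes the simple form
\[
u_2^{\beta\gamma}(\sigma,\tau_0)=(1+b)\,u_2\!\Bigl(\tfrac{\sigma}{1+b},\,\tau_0(1+\Gamma)\Bigr),
\]
so $u_2^{\beta\gamma}$ depends smoothly on $(b,\Gamma)$ with derivatives at the origin computable directly from $u_2$.

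Next I would compute $F(0,0)$ and $DF(0,0)$ using Theorem \ref{thm-asym}. For $F(0,0)=(\langle\varphi_\cyl(u_1-u_2),\psi_0\rangle,\langle\varphi_\cyl(u_1-u_2),\psi_2\rangle)$ the common leading expansion $u_i=\sqrt{2}\bigl(1-(\sigma^2-2)/(8|\tau_0|)\bigr)+o(|\tau_0|^{-1})$ on bounded intervals gives $|u_1-u_2|=o(|\tau_0|^{-1})$ there; the Gaussian weight then quenches the tail, using the uniform bound $\|u_i\|_\infty\le\sqrt 2$ together with the collar-region asymptotics \ref{thm-asym}(ii) to extend control to slowly-growing intervals $|\sigma|\lesssim L(\tau_0)$, yielding $F(0,0)=o(|\tau_0|^{-1})$ componentwise. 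Differentiating in $(b,\Gamma)$ and inserting the same expansion gives
\[
\partial_b u_2^{\beta\gamma}\big|_0=u_2-\sigma u_{2\sigma}=\sqrt 2+O(|\tau_0|^{-1}),\qquad \partial_\Gamma u_2^{\beta\gamma}\big|_0=\tau_0 u_{2\tau}=\tfrac{\sqrt 2}{8|\tau_0|}\psi_2+o(|\tau_0|^{-1}),
\]
so that, using $\langle\psi_0,\psi_2\rangle_\hilb=0$, the Jacobian $DF(0,0)$ is to leading order the diagonal matrix $-\mathrm{diag}\bigl(\sqrt 2\,\|\psi_0\|_\hilb^2,\;\tfrac{\sqrt 2}{8|\tau_0|}\|\psi_2\|_\hilb^2\bigr)$, which is invertible with inverse of norm $O(|\tau_0|)$ in the second coordinate.

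With these in hand I would define the box $B_\eta:=\{|b|\le\eta/|\tau_0|,\,|\Gamma|\le\eta\}$ for a small $\eta$ and the Newton map $\mathcal N(b,\Gamma):=(b,\Gamma)-DF(0,0)^{-1}F(b,\Gamma)$. The estimate $F(0,0)=o(|\tau_0|^{-1})$ places $\mathcal N(0,0)$ well inside $B_{\eta/2}$ for $|\tau_0|$ large, and a short computation using $\partial^2_{(b,\Gamma)}u_2^{\beta\gamma}$ (bounded via the same asymptotics and the a priori estimates in the Appendix) shows that the nonlinear remainder $F(b,\Gamma)-F(0,0)-DF(0,0)\cdot(b,\Gamma)$ is of size $O(b^2+\Gamma^2/|\tau_0|)$ on $B_\eta$, hence strictly dominated by the linear contribution. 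Brouwer's fixed-point theorem (or the contraction principle) then produces $(b,\Gamma)\in B_\eta$ with $F(b,\Gamma)=0$, and since $\eta$ may be taken to zero with $|\tau_0|\to\infty$ (because $F(0,0)=o(|\tau_0|^{-1})$) one obtains the asymptotics $|b|=o(|\tau_0|^{-1})$ and $|\Gamma|=o(1)$ of \eqref{eq-b-Theta-bound}, translating into admissibility via \eqref{btheta}.

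The main obstacle will be proving the $o(|\tau_0|^{-1})$ bound on both components of $F(0,0)$. Theorem \ref{thm-asym}(i) supplies pointwise $o(|\tau_0|^{-1})$ control of $u_1-u_2$ only on bounded $\sigma$-intervals, whereas $\varphi_\cyl$ is supported throughout the cylindrical region whose width grows like $\sqrt{|\tau_0|}$. One has to interpolate between the two scales by combining the Gaussian decay of $e^{-\sigma^2/4}\psi_i$, which kills the tail without a small prefactor once the cutoff $L$ grows slowly with $|\tau_0|$, with the refined collar-region asymptotics of Theorem \ref{thm-asym}(ii) and the a priori estimates of the Appendix to extend uniform $o(|\tau_0|^{-1})$ control from $|\sigma|\le L$ to $|\sigma|\le L(\tau_0)$. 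This delicate matching is the analog of the corresponding step in \cite{ADS2}.
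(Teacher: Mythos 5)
The paper's proof of this proposition is a one-sentence reference to Proposition~7.1 in \cite{ADS2}, noting that the cylindrical asymptotics are the same up to a constant; your quantitative fixed-point argument in the variables $(b,\Gamma)$ is precisely the structure of that proof, so the overall approach matches.

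One computational slip you should fix: the Jacobian $DF(0,0)$ is not diagonal. You discarded the $(2,1)$ entry $\partial_b F_2|_0 = -\langle\varphi_\cyl\,(u_2-\sigma u_{2\sigma}),\psi_2\rangle$, but from $u_2-\sigma u_{2\sigma} = \sqrt 2 + \tfrac{\sqrt 2}{8|\tau_0|}(\sigma^2+2) + o(|\tau_0|^{-1})$ and $\langle\sigma^2+2,\psi_2\rangle=\|\psi_2\|^2$ one gets
\[
\partial_b F_2|_0 \;=\; -\frac{\sqrt 2}{8|\tau_0|}\,\|\psi_2\|_\hilb^2 + o(|\tau_0|^{-1}),
\]
which is of exactly the same size as the $(2,2)$ entry. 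So $DF(0,0)$ is lower-triangular rather than diagonal, and $\langle\psi_0,\psi_2\rangle=0$ only kills the $(1,2)$ entry. This does not break the argument — the matrix is still invertible, its inverse still has $O(1)$ first row and $O(|\tau_0|)$ second row, and the anisotropic scaling of $B_\eta$ (with $|b|\le\eta/|\tau_0|$) makes the induced cross-term, which is $\approx -b$ in the second Newton coordinate, of size $\eta/|\tau_0|\ll\eta$ — but the diagonality claim as written is false and the algebra should be carried out with the triangular matrix.

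Otherwise the ingredients you identify are the right ones. In particular, the $o(|\tau_0|^{-1})$ bound on $F(0,0)$ that you flag as the main obstacle is already packaged by the paper as the $L^2$-with-Gaussian-weight estimate \eqref{eq-L2-asymp0} imported from \cite{ADS3}: subtracting the two instances of that estimate immediately gives $\|u_1-u_2\|_\hilb = o(|\tau_0|^{-1})$, which together with the Gaussian decay and exponential smallness of $1-\varphi_\cyl$ yields both components of $F(0,0)=o(|\tau_0|^{-1})$ without a separate interpolation step. The translation of $|b|=o(|\tau_0|^{-1})$, $|\Gamma|=o(1)$ into the admissibility bounds \eqref{eq-asymp-par1} via \eqref{btheta} is also correct.
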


\begin{proof}
The proof relies only on the asymptotics of our solution in the cylindrical region (see Theorem \ref{thm-asym} and its proof in \cite{ADS3}). Since the asymptotics of our Ricci flow rotationally symmetric solution in the cylindrical region are very similar to the cylindrical region asymptotics  of  ancient mean curvature flow solution with rotational symmetry (see \cite{ADS1}) and they differ just by 
a constant, the proof of this Proposition is identical to the proof of corresponding Proposition 7.1 in \cite{ADS2}.
\end{proof}

\begin{remark}[The choice of parameters $(\beta, \gamma)$]\label{rem-choice-par}
  We can choose $\tau_0 \ll -1$ to be any small number so that
  $\tau_0 \le \tau_*$, where $\tau_*$ is as in Proposition
  \ref{lem-rescaling-components-zero} and so that all our uniform estimates in
  previous sections hold for $\tau \le \tau_0$.  Note also that having
  Proposition \ref{lem-rescaling-components-zero} we can decrease $\tau_0$ if
  necessary and choose parameters $\beta$ and $\gamma$ again so that we
  still have $\pr_+ w_\cyl(\tau_0) = \pr_0 w_\cyl(\tau_0) = 0$, without
  effecting our estimates.  Hence, from now on we will be assuming that we have
  fixed parameters $\beta$ and $\gamma$ at some time $\tau_0 \ll -1$, to
  have both projections zero at time $\tau_0$.  As a consequence of Proposition
  \ref{lem-rescaling-components-zero} which shows that the parameters
  $(\beta, \gamma)$ are {\em admissible} with respect to $\tau_0$ and
  Remark \ref{rem-cylindrical}, all the estimates for
  $w = u_1- u_2^{\beta \gamma}$ will then hold for all $\tau \le \tau_0$,
  {\em independently of our choice} of $(\beta, \gamma)$.
\end{remark}

As we pointed out above, we need to show next that the norms of the difference
of our two solutions with respect to the weights defined in the cylindrical and
the tip regions satisfy comparison inequalities in the intersection between the regions, so
called {\em transition} region. We need those inequalities to conclude the proof of Theorem \ref{thm-main}. 
 We have the following. 

\begin{lemma}
\label{lem-weighted-H1-L2}
Given $0 < \theta \ll 1$, there exists $C(\theta) >0$ such that 
\be\label{eqn-help}
|\tau| \int  (w \,   \chi_{_{D_\theta}})^2  \, e^{-\sigma^2/4} \, d\sigma \leq  C(\theta) 
\Big (  \int  \chi_{_{\tip_{\theta/2}}}  w_\sigma^2 \, e^{-\sigma^2/4} \, d\sigma + o(1) \int  w^2_C \, e^{-\sigma^2/4} \, d\sigma \Big ). 
 \ee
\end{lemma}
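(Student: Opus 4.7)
The plan is to derive \eqref{eqn-help} via a weighted Hardy-type inequality that exploits the Gaussian measure together with the sharp tip-region asymptotic \eqref{eqn-diam5}, which guarantees $\sigma^2 \ge 4|\tau|(1 - c\theta^2)$ on $\tip_\theta$ for $\theta$ small. Since $D_\theta \subset \tip_{\theta/2} \subset \tip_\theta$, the factor $|\tau|$ on the left-hand side can effectively be replaced by $\sigma^2/4$, and the problem reduces to bounding $\int (w\eta)^2 \sigma^2 e^{-\sigma^2/4}\, d\sigma$ for a well-chosen cut-off $\eta$.

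First I would choose a smooth cut-off $\eta(\sigma,\tau)$ with $\eta \equiv 1$ on $\{u_1 \le \theta/2\} \supset D_\theta$ and $\supp\eta \subset \tip_{\theta/2} = \{u_1 \le \theta\}$, so that $\eta$ transitions only on $\{\theta/2 \le u_1 \le \theta\}$. By Lemma \ref{lemma-good1} one has $u_\sigma^2 \approx 1/(u^2|\tau|)$ in the collar, so the $\sigma$-length of this transition region is of order $\theta^2 \sqrt{|\tau|}$; hence $\eta$ may be constructed with the pointwise bound $|\eta_\sigma|^2 \le C(\theta)/|\tau|$. Next, using $(e^{-\sigma^2/4})_\sigma = -(\sigma/2) e^{-\sigma^2/4}$, integration by parts on $[\sigma_-(\tau), \sigma_+(\tau)]$ (the tip boundary contributions are $O(e^{-|\tau|\log|\tau|(1+o(1))})$ and negligible), and Cauchy--Schwarz on the resulting cross term yield, for any smooth $g$,
\[
\int g^2 \sigma^2 e^{-\sigma^2/4}\, d\sigma \le 4 \int g^2 e^{-\sigma^2/4}\, d\sigma + 16 \int g_\sigma^2 e^{-\sigma^2/4}\, d\sigma.
\]
Applied to $g := w\eta$, and using $\sigma^2 \ge 4|\tau|(1-c\theta^2)$ on $\supp\eta$ together with $\chi_{D_\theta} \le \eta^2$, one absorbs the $4\int (w\eta)^2$ term on the right into the left (which is valid once $\tau_0 \ll -1$) to obtain
\[
|\tau| \int (w\chi_{D_\theta})^2 e^{-\sigma^2/4}\, d\sigma \le C \int ((w\eta)_\sigma)^2 e^{-\sigma^2/4}\, d\sigma.
\]

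Finally, split $((w\eta)_\sigma)^2 \le 2 \eta^2 w_\sigma^2 + 2 \eta_\sigma^2 w^2$. Since $\supp\eta \subset \tip_{\theta/2}$, the first piece produces the term $C\int \chi_{\tip_{\theta/2}} w_\sigma^2 e^{-\sigma^2/4}$. For the second, note that $\supp \eta_\sigma \subset \{\theta/2 \le u_1 \le \theta\} \subset \cC_{2\theta}$ where $\varphi_C \equiv 1$, so $w = w_C$ there; combined with the pointwise bound $|\eta_\sigma|^2 \le C(\theta)/|\tau|$ from Step one, this gives
\[
\int \eta_\sigma^2 w^2 e^{-\sigma^2/4}\, d\sigma \le \frac{C(\theta)}{|\tau|} \int w_C^2 e^{-\sigma^2/4}\, d\sigma,
\]
and the prefactor $C(\theta)/|\tau| = o(1)$ as $\tau_0 \to -\infty$, producing the $o(1) \int w_C^2 e^{-\sigma^2/4}$ term of the lemma. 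The main obstacle is precisely the construction of $\eta$ with the sharp pointwise derivative bound $|\eta_\sigma|^2 \le C(\theta)/|\tau|$: this is what turns the cylindrical contribution into an $o(1)$ error, and it relies essentially on the refined collar asymptotic $u_\sigma \approx 1/(u\sqrt{|\tau|})$ of Lemma \ref{lemma-good1}; without the $|\tau|^{-1/2}$ control on the length of the transition $\{\theta/2 \le u_1 \le \theta\}$ one would only obtain a uniform constant $C(\theta)$, which would be insufficient for the absorption in Section \ref{sec-conclusion}.
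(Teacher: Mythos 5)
Your proof is correct and follows the same route as the paper: a cut-off $\eta$ interpolating between $\{u_1 \le \theta/2\}$ and $\{u_1 \le \theta\}$, a weighted Hardy--Poincar\'e inequality with Gaussian weight, the splitting $((w\eta)_\sigma)^2 \le 2\eta^2 w_\sigma^2 + 2\eta_\sigma^2 w^2$, the observation that $\supp\eta_\sigma$ lies where $\varphi_\cyl \equiv 1$, and the absorption using $\sigma^2 \gtrsim |\tau|$ on $D_\theta$. The only difference is cosmetic: you derive the Hardy inequality directly by integration by parts with (exponentially small) boundary terms at $\sigma_\pm(\tau)$, whereas the paper cites Lemma 4.12 of \cite{ADS1} and drops a positive boundary term at $\ell_2 = u_1^{-1}(\theta/4,\tau)$; your explicit quantification $|\eta_\sigma|^2 \le C(\theta)/|\tau|$ via the collar bound $|u_{1\sigma}| \le C(\theta)/\sqrt{|\tau|}$ is a useful sharpening of the paper's terser assertion that $\eta_\sigma \ll o(1)$.
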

\begin{proof}
Fix a $\tau \leq \tau_0 \ll -1$ and consider a smooth  function $\eta(\sigma, \tau)$ defined on $\sigma \geq 0$, satisfying   $0 \leq \eta \leq 1$, and 
$$\eta \equiv 1\,\, \mbox{on}\,\,  \{ \sigma:  u_1(\sigma,\tau) \leq \theta/2 \} \qquad \mbox{and} \qquad \eta \equiv 0 \,\,   \mbox{on} \,\, \{ \sigma: u_1(\sigma,\tau) \geq \theta \}.$$
Denote by $\ell_1:=\ell_1(\theta,\tau) >0$, $\ell_2:=\ell_2(\theta,\tau) >0$ the points  at which   $u_1(\ell_1, \tau) = \theta$ and $u_1(\ell_2, \tau) = \theta/4$.  Clearly, $\ell_2 > \ell_1$ by the monotonicity  of 
$u_1(\cdot,\tau)$ on the set $ u_1(\sigma,\tau) \leq \theta$, for $\tau \ll -1$.

Next set,  $v:= \eta \, w$ and use the inequality 
\begin{multline}
\label{eq-weighted-H1-L2}
\int_0^{\ell_2}  v_\sigma^2 \, e^{-\sigma^2/4} \, d\sigma + \frac 14 \, \int_0^{\ell_2}   v^2 \,e^{-\sigma^2/4} \, d\sigma\\
\geq \frac14 \ell_2 e^{-\ell_2^2/4}v(\ell_2)^2 + \frac 1{16} \int_0^{\ell_2} \sigma^2 \, v^2 \, e^{-\sigma^2/4} \, d\sigma 
\end{multline}
where $\ell_2=\ell_2(\theta,\tau)$ is the number defined above.  This standard weighted Poincar\'e type inequality was shown in Lemma 4.12 in \cite{ADS1}. 
Since $v \equiv 0$ on $0 \leq \sigma \leq \ell_1$ (corresponding to $u_1(\sigma,\tau)  \geq  \theta$) we obtain (after dropping the positive boundary term)
the bound 
\bee
 \frac 1{16} \int_{\ell_1}^{\ell_2} \sigma^2 \, v^2 \, e^{-\sigma^2/4} \, d\sigma \leq \int_{\ell_1}^{\ell_2}  v_\sigma^2 \, e^{-\sigma^2/4} \, d\sigma + \frac 14 \, \int_{\ell_1}^{\ell_2}   v^2 \,e^{-\sigma^2/4} \, d\sigma. \\
\eee
Using that $\ell_1 \gg 1000$ for $\tau \ll -1$, we get that 
\bee
 \frac 1{32} \int_{\ell_1}^{\ell_2} \sigma^2 \, v^2 \, e^{-\sigma^2/4} \, d\sigma \leq \int_{\ell_1}^{\ell_2}  v_\sigma^2 \, e^{-\sigma^2/4} \, d\sigma.
\eee
Since, $v: = \eta w$ and $v_\sigma^2 \leq 2 \, \big ( \eta^2 w_\sigma^2 +  \eta_\sigma^2 \, w^2 \big ) $, we conclude the bound
\bee
 \int_{\ell_1}^{\ell_2} \sigma^2 \, \eta^2 w^2  \, e^{-\sigma^2/4} \, d\sigma \leq 64 \int_{\ell_1}^{\ell_2}  \eta^2 w_\sigma^2 \, e^{-\sigma^2/4} \, d\sigma + 
 64 \int_{\ell_1}^{\ell_2}  \eta^2_\sigma  \, w^2 \, e^{-\sigma^2/4} \, d\sigma. 
\eee
However,  by definition $\eta_\sigma \neq  0$ only on the set $C_{2\theta}:=\{ u_1 \geq \theta/2\}$ which is contained in the set were $\varphi_C \equiv 1$, i.e. on the set where $w = w_C$. In addition,
$\eta_\sigma \ll o(1)$, as $\tau \to -\infty$.  Using also that $\eta \equiv 1$ on $D_\theta:= \{ \sigma: \, \theta/4 \leq u_1(\sigma,\tau)  \leq \theta/2\}$ and that $\eta$ is supported on 
$\tip_{\theta/2}:= \{ \sigma:\, u_1(\sigma,\tau) \leq \theta \}$, 
 we finally conclude the bound
\bee
 \int  \sigma^2 (w \,   \chi_{D_\theta})^2  \, e^{-\sigma^2/4} \, d\sigma \leq 64  \int  \chi_{_{\tip_{\theta/2}} }  w_\sigma^2 \, e^{-\sigma^2/4} \, d\sigma + o(1)\,  \int  w^2_C \, e^{-\sigma^2/4} \, d\sigma.
 \eee
Furthermore we have shown that on the region $D_\theta $ we have $\sigma^2 \geq c_\theta |\tau|$. Combining the above readily implies \eqref{eqn-help}. 
\end{proof}

We recall  the definitions of the norms $\| w \|_{\hv\,,\infty}$ and $\| W \|_{2, \infty}$
given in \eqref{eqn-cyl-norm}  and \eqref{eqn-normt00}-\eqref{eqn-normt}  respectively. As a  corollary of the previous Lemma  we have the following relations between our norms.

\begin{corollary}
\label{cor-equiv-norm}
Given $0 < \theta \ll 1$, there exists $C(\theta)$ and $\tau_0 \ll -1$ such that for all $\tau \le \tau_0$ we have,
 \be\label{eqn-help1}
\| w \, \chi_{_{D_{\theta}}}\|_{\hilb,\infty}  \leq  \frac{C(\theta)}{\sqrt{|\tau_0|}}  \,
 \Big (  \| W_T \|_{2,\infty} + o(1) \, \| w_C \|_{\hilb,\infty} \Big )
 \ee
and  also 
 \begin{equation}
 \label{eq-other-one}
\| W \, \chi_{_{[\theta, 2\theta]}}\|_{2,\infty}  \le C(\theta) \,  \| w_\sigma   \chi_{_{D_{4\theta}}}\|_{\hilb,\infty}. 
 \end{equation}
\end{corollary}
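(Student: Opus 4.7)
Both inequalities are comparisons between the two weighted norms on the overlap $\{\theta/4\le u_1\le 2\theta\}$ between the cylindrical and tip regions. Three structural facts drive the argument. First, on this overlap the tip weight $e^\mu$ reduces to $e^{-\sigma^2/4}$ by the construction \eqref{eqn-weight1}--\eqref{eqn-weight2} of $\mu$, since $\zeta\equiv 1$ on $\{u\ge \theta/2\}$ and the two candidate values for $\mu_u$ agree up to $o(1)$ in the transition band by Lemma~\ref{lemma-good1}. Second, the coordinate change $d\sigma = du/\Psi_1$ converts $\sigma$-integrals into $u$-integrals, with the Bryant soliton asymptotics of Proposition~\ref{prop-convergence-tip} giving $\Psi_1\sim (u\sqrt{|\tau|})^{-1}$ in the collar and $\Psi_1\lesssim 1$ throughout $\tip_\theta$. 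Third, since $\Psi_i(u,\tau) = -u_{i\sigma}(\sigma_i(u,\tau),\tau)$ near the right tip, one has the pointwise identity
\[
w_\sigma(\sigma,\tau) + W\bigl(u_1(\sigma,\tau),\tau\bigr)
= u_{2\sigma}^{\beta\gamma}\bigl(\sigma_2^{\beta\gamma}(u_1(\sigma,\tau),\tau),\tau\bigr) - u_{2\sigma}^{\beta\gamma}(\sigma,\tau),
\]
whose right-hand side is bounded by $\sup|u_{2\sigma\sigma}^{\beta\gamma}|\cdot|\sigma_1-\sigma_2^{\beta\gamma}|$. The mean value bound $|\sigma_1-\sigma_2^{\beta\gamma}|\lesssim |w|$ together with the refined appendix a priori estimate $|u\,u_{\sigma\sigma}|=o(1)$ from Proposition~\ref{claim-1} make this an $o(1)\,|w|$ error.

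For \eqref{eqn-help1} I would begin with the Poincar\'e-type inequality \eqref{eqn-help} of Lemma~\ref{lem-weighted-H1-L2}, which already reduces $\|w\chi_{_{D_\theta}}\|_{\hilb,\infty}^2$ to $|\tau_0|^{-1}$ times the sum of $\|w_\sigma\chi_{_{\tip_{\theta/2}}}\|_{\hilb,\infty}^2$ and an $o(1)$ tail in $\|w_C\|_{\hilb,\infty}^2$. The pointwise identity above gives $|w_\sigma|\le(1+o(1))|W|$ up to the absorbed $o(1)|w|$ error, and passing to $u$-coordinates via $d\sigma = du/\Psi_1$ together with the uniform bound $\Psi_1\lesssim 1$ yields
\[
\|w_\sigma\chi_{_{\tip_{\theta/2}}}(\cdot,\tau)\|_{\hilb}^2 \le C\,\|W_T(\cdot,\tau)\|^2 + o(1)\,\|w_C(\cdot,\tau)\|_{\hilb}^2.
\]
Integrating over $[\tau-1,\tau]$ and combining the $|\tau_0|^{-1}$ from Lemma~\ref{lem-weighted-H1-L2} with the $|\tau|^{-1/2}$ weight implicit in the tip norm definition \eqref{eqn-normt0} then produces \eqref{eqn-help1}.

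For \eqref{eq-other-one} the same identity, read in reverse on the band $\{\theta\le u\le 2\theta\}$, gives $|W(u,\tau)|\le |w_\sigma(\sigma_1(u,\tau),\tau)| + o(1)|w|$. Converting $du = \Psi_1\,d\sigma$ and using the collar asymptotic $\Psi_1\sim(u\sqrt{|\tau|})^{-1}\sim (\theta\sqrt{|\tau|})^{-1}$ on this band, the resulting $\sqrt{|\tau|}$ factor is exactly absorbed by the $|\tau|^{-1/4}$ weight in $\|\cdot\|_{2,\infty}$, yielding \eqref{eq-other-one}.

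The principal obstacle is the precise bookkeeping of $|\tau|$-powers: the interplay of $\Psi_1\sim (u\sqrt{|\tau|})^{-1}$ in the collar, the $|\tau|^{-1/4}$ in $\|\cdot\|_{2,\infty}$, the $|\tau|^{-1/2}$ from Lemma~\ref{lem-weighted-H1-L2}, and the exponential decay of $e^{-\sigma^2/4}$ on the tip region must fit together correctly. Once the pointwise $w_\sigma$--$W$ identity is in hand and the a priori bound $|u\,u_{\sigma\sigma}| = o(1)$ is used to dismiss the cross-term error, the remainder is a careful but mechanical coordinate-change computation.
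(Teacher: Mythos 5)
The overall skeleton matches the paper: both proofs run through the weighted Poincar\'e-type inequality \eqref{eqn-help} of Lemma~\ref{lem-weighted-H1-L2}, then change variables $du=u_{1\sigma}\,d\sigma$ between the cylindrical and tip coordinates, and use the size of $\Psi_1=|u_{1\sigma}|\sim |\tau|^{-1/2}$ on the transition band $\{\theta/4\le u_1\le 2\theta\}$ to convert the $|\tau|^{-1}$ prefactor into the $|\tau|^{-1/4}$ weight built into the tip norm $\|\cdot\|_{2,\infty}$. You have also correctly noticed something the paper glosses over: the paper simply writes $W_T^2=w_\sigma^2$, whereas in fact $W(u,\tau)=\Psi_1(u,\tau)-\Psi_2^{\beta\gamma}(u,\tau)$ involves evaluating $u_{1\sigma}$ at $\sigma_1(u,\tau)$ but $u_{2\sigma}^{\beta\gamma}$ at $\sigma_2^{\beta\gamma}(u,\tau)\neq\sigma_1(u,\tau)$, so $W(u_1(\sigma,\tau),\tau)$ and $-w_\sigma(\sigma,\tau)$ differ by the cross-term you wrote down. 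That observation is a genuine improvement in precision over the paper's informal identification.

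However, the quantitative estimate you give for that cross-term has a real gap, and it is exactly the ``bookkeeping of $|\tau|$-powers'' you flagged as the danger. You claim the mean value bound $|\sigma_1-\sigma_2^{\beta\gamma}|\lesssim |w|$. That is off by a factor: since $u_1(\sigma_1)=u_2^{\beta\gamma}(\sigma_2^{\beta\gamma})$, one gets $w(\sigma_1)=u_2^{\beta\gamma}(\sigma_2^{\beta\gamma})-u_2^{\beta\gamma}(\sigma_1)$, hence
\[
|\sigma_1-\sigma_2^{\beta\gamma}| \;=\; \frac{|w(\sigma_1)|}{|u_{2\sigma}^{\beta\gamma}(\xi')|} \;\sim\; \frac{\sqrt{|\tau|}}{c(\theta)}\,|w|,
\]
because $|u_{2\sigma}^{\beta\gamma}|\sim c(\theta)/\sqrt{|\tau|}$ on the band $\{\theta/4\le u_1\le 2\theta\}$ (this is exactly what \eqref{eqn-good}--\eqref{eqn-good5} encode). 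Feeding that into the error $|u_{2\sigma\sigma}^{\beta\gamma}(\xi)|\,|\sigma_1-\sigma_2^{\beta\gamma}|$ with the bound $|u\,u_{\sigma\sigma}|\le\eta$ from Proposition~\ref{claim-1} gives an error of order $\eta\sqrt{|\tau|}\,|w|$, which grows, not $o(1)\,|w|$. If instead you use the refined cylindrical-region estimate $|u_{\sigma\sigma}|\le C(\theta)/\sqrt{|\tau|}$ from Lemma~\ref{lem-loc-est}, the $\sqrt{|\tau|}$ cancels and the error is $O_\theta(1)\,|w|$ --- controlled, but not $o(1)|w|$. So the specific reason you cite (``$|u\,u_{\sigma\sigma}|=o(1)$ from Proposition~\ref{claim-1}'') does not give the bound you assert, and the $o(1)$ claim needs to be downgraded to a $C(\theta)$. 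In this corollary the $C(\theta)|w|$ version is likely still sufficient (the $1/\sqrt{|\tau_0|}$ prefactor eventually dominates any fixed $C(\theta)$ in Proposition~\ref{prop-cor-main}), but the step as you wrote it is not correct, and you would need to redo the estimate with Lemma~\ref{lem-loc-est} and track the $\theta$-dependent constant explicitly rather than invoking an $o(1)$.
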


\begin{proof}
To prove \eqref{eqn-help1} we recall that  $W^2_T= w_{\sigma}^2$ and $\Psi:= |u_{1\sigma}| \leq  c(\theta) \, |\tau|^{-1/2}$  on $\tip_{\theta/2}$ (since this bound  holds in $D_{2\theta}$ and $u_{i}$ are concave). 
Hence, also   using the change of variables $du = u_{1\sigma}\, d\sigma$, we deduce from \eqref{eqn-help} that for any $\tau \leq \tau_0 \ll -1$ we have 
\bee\begin{split}
 \int  (w \,   \chi_{_{D_\theta}})^2  \, e^{-\sigma^2/4} \, d\sigma &\leq  \frac{C(\theta)}{|\tau|}  
\Big (  \int   W_T^2  \, \Psi^{-1} \,  e^\mu  \, du  + o(1) \int  w^2_C \, e^{-\sigma^2/4} \, d\sigma \Big )\\
&\leq  \frac{C(\theta) }{|\tau|} \, 
\Big (  |\tau|^{-1/2}  \int   W_T^2  \, \Psi^{-2} \,  e^\mu  \, du  + o(1) \,   \int  w^2_C \, e^{-\sigma^2/4} \, d\sigma \Big )
\end{split}
\eee
and  \eqref{eqn-help1} readily follows using the definitions of our norms.

On the other hand, using $du = u_{1\sigma}\, d\sigma$ and the bound   $\Psi  :=  |u_{1\sigma}| \geq  C(\theta) \, |\tau|^{-1/2} $  on 
$D_{4\theta} :=  \{\sigma: \,  \theta \le u_1(\sigma,\tau) \le 2\theta\}$ we also have 
\begin{equation*}
\begin{split}
\int (W \chi_{[\theta,2\theta]})^2 \Psi^{-2} \, e^{\mu}\, du &= \int (w_{\sigma} \chi_{D_{4\theta}})^2\,   \Psi^{-1} \,
e^{-\frac{\sigma^2}{4}} \, d\sigma \\
&\le  C(\theta) \, |\tau|^{\frac12} \, \int (w_{\sigma} \chi_{D_{4\theta}})^2 e^{-\frac{\sigma^2}{4}}\, d\sigma
\end{split}
\end{equation*}
which combined with  he definition of our norms proves \eqref{eq-other-one}. 
\end{proof}

We next combine our  main results in the previous two sections, Propositions
\ref{prop-cylindrical} and  \ref{prop-tip}, with Corollary \ref{cor-equiv-norm} to establish our {\em crucial estimate} which says
that what actually dominates in the norm $\|w_\cyl \|_{\hv,\infty}$ is
$\|\pr_0 w_\cyl\|_{\hv,\infty}$. 

\begin{proposition}
  \label{prop-cor-main}
  For any $\epsilon >0$ there exists a $\tau_0 \ll -1$ so that we have
  \begin{equation}
    \label{eqn-w1230}
    \| \hat w_\cyl \|_{\hv,\infty} \leq \epsilon\, \|\pr_0 w_\cyl\|_{\hv,\infty}.
  \end{equation}
\end{proposition}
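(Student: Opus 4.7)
The plan is to chain together the main estimates from Sections \ref{sec-cylindrical} and \ref{sec-tip}, together with the comparison inequalities of Corollary \ref{cor-equiv-norm}, in order to absorb every norm appearing on the right-hand side of Proposition \ref{prop-cylindrical} into $\|\pr_0 w_\cyl\|_{\hv,\infty}$ plus a small multiple of $\|\hat w_\cyl\|_{\hv,\infty}$ itself. By Proposition \ref{lem-rescaling-components-zero} and Remark \ref{rem-choice-par}, we may fix an admissible pair $(\beta,\gamma)$ depending on $\tau_0$ so that $\pr_+ w_\cyl(\tau_0) = \pr_0 w_\cyl(\tau_0)=0$, which activates the hypothesis of Proposition \ref{prop-cylindrical}.

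First, I would apply Proposition \ref{prop-cylindrical} to get
\[
\|\hat w_\cyl\|_{\hv,\infty} \le \epsilon_1\bigl(\|w_\cyl\|_{\hv,\infty} + \|w\,\chi_{D_\theta}\|_{\hilb,\infty}\bigr),
\]
for $\tau_0=\tau_0(\epsilon_1)\ll -1$ and $\epsilon_1>0$ arbitrarily small to be chosen. The term $\|w\,\chi_{D_\theta}\|_{\hilb,\infty}$ is the one that must be handled via the tip analysis. Using the first inequality in Corollary \ref{cor-equiv-norm} I would bound
\[
\|w\,\chi_{D_\theta}\|_{\hilb,\infty}\le \frac{C(\theta)}{\sqrt{|\tau_0|}}\bigl(\|W_T\|_{2,\infty}+o(1)\,\|w_\cyl\|_{\hilb,\infty}\bigr).
\]
Next, Proposition \ref{prop-tip} bounds $\|W_T\|_{2,\infty}\le \frac{C}{\sqrt{|\tau_0|}}\|W\,\chi_{[\theta,2\theta]}\|_{2,\infty}$, and the second inequality in Corollary \ref{cor-equiv-norm} converts the right side back into a cylindrical-region norm, $\|W\,\chi_{[\theta,2\theta]}\|_{2,\infty}\le C(\theta)\,\|w_\sigma\,\chi_{D_{4\theta}}\|_{\hilb,\infty}\le C(\theta)\,\|w_\cyl\|_{\hv,\infty}$ (since the cut-off $\varphi_\cyl$ equals $1$ on $D_{4\theta}\subset \cyl_{2\theta}$).

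Combining these three bounds, we obtain
\[
\|w\,\chi_{D_\theta}\|_{\hilb,\infty} \le \frac{C(\theta)}{|\tau_0|}\,\|w_\cyl\|_{\hv,\infty},
\]
and feeding this back into Proposition \ref{prop-cylindrical} yields
\[
\|\hat w_\cyl\|_{\hv,\infty} \le \epsilon_1\Bigl(1+\tfrac{C(\theta)}{|\tau_0|}\Bigr)\|w_\cyl\|_{\hv,\infty} \le 2\epsilon_1\,\|w_\cyl\|_{\hv,\infty},
\]
by taking $\tau_0$ more negative if necessary. Now I would decompose $w_\cyl=\hat w_\cyl+\pr_0 w_\cyl$ and use the orthogonality (in $\hilb$) and equivalent continuity (in $\hv$) of the splitting to write $\|w_\cyl\|_{\hv,\infty}\le \|\hat w_\cyl\|_{\hv,\infty}+\|\pr_0 w_\cyl\|_{\hv,\infty}$. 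Absorbing $\|\hat w_\cyl\|_{\hv,\infty}$ on the left gives
\[
(1-2\epsilon_1)\|\hat w_\cyl\|_{\hv,\infty} \le 2\epsilon_1\,\|\pr_0 w_\cyl\|_{\hv,\infty},
\]
and choosing $\epsilon_1=\epsilon_1(\epsilon)$ small enough that $\tfrac{2\epsilon_1}{1-2\epsilon_1}\le \epsilon$ gives the desired bound \eqref{eqn-w1230}.

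The only subtle point is the bookkeeping of the dependencies: the choice of $\theta$ is fixed first (so that Proposition \ref{prop-tip} applies and $C(\theta)$ is fixed), then $\epsilon_1$ is chosen small depending on the target $\epsilon$, and finally $\tau_0$ is chosen sufficiently negative depending on both $\theta$ and $\epsilon_1$ so that Proposition \ref{prop-cylindrical}, Proposition \ref{prop-tip}, and Corollary \ref{cor-equiv-norm} all apply simultaneously and the factor $C(\theta)/|\tau_0|$ is tiny. The hypothesis $\pr_+w_\cyl(\tau_0)=0$ for the admissible pair is guaranteed by Proposition \ref{lem-rescaling-components-zero}. I do not anticipate a genuine obstacle beyond this ordering; the substantive work has already been done in the cylindrical and tip analyses, and what remains is a clean absorption argument.
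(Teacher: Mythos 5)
Your argument is correct and follows essentially the same route as the paper: choose $(\beta,\gamma)$ via Proposition~\ref{lem-rescaling-components-zero}, chain Proposition~\ref{prop-cylindrical}, Corollary~\ref{cor-equiv-norm} and Proposition~\ref{prop-tip} to get $\|\hat w_\cyl\|_{\hv,\infty}\le \epsilon\,\|w_\cyl\|_{\hv,\infty}$, then decompose $w_\cyl=\hat w_\cyl+\pr_0 w_\cyl$ and absorb. The paper leaves the final absorption step implicit ("the last estimate readily yields \eqref{eqn-w1230}"), which you simply spell out; the order in which the intermediate bounds are assembled differs cosmetically but the chain of estimates is the same.
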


\begin{proof}
By Proposition \ref{lem-rescaling-components-zero} we know that for every
$\tau_0 \ll -1$  sufficiently close to negative infinity,  we can choose parameters $(\beta, \gamma)$  which are admissible with respect to $\tau_0$ and such that
$\pr_+ w_\cyl(\tau_0) = \pr_0 w_\cyl(\tau_0) = 0$.  From now on we will always
consider $w(\sigma,\tau) = u_1(\sigma,\tau) - u_2^{\beta \gamma}(\sigma,\tau)$, for these
chosen parameters $\beta$ and $\gamma$.

Let $0 < \theta \ll 1$ be a sufficiently small number so that Proposition \ref{prop-tip} holds, namely  there exists  $\tau_0 \ll -1$
so that
\[
  \|W_T\|_{2,\infty} \leq  \frac{C(\theta)}{\sqrt{|\tau_0|}} \, \|W
  \chi_{[\theta,2\theta]}\|_{2,\infty}.
\]
In addition, we may choose $\tau_0 \ll -1$ so that  Corollary \ref{cor-equiv-norm}  holds, that is 
\[\|W \chi_{[\theta,2\theta]}\|_{2,\infty} \le C(\theta) \, \|w_{\sigma}\, \chi_{D_{4\theta}}\|_{\cH,\infty}.\]
Since $D_{4\theta} :=  \{\sigma:\,  \theta \le u_1(\sigma,\tau) \le 2\theta\}$ is contained in the set where  $\varphi_C = 1$, we have $ \|w_{\sigma}\, \chi_{D_{4\theta}}\|_{\cH,\infty}
\leq  \|w_C \|_{\hv,\infty}$. Hence, the last two estimates yield 
\begin{equation}
  \label{eq-dominates2}
  \|W_T\|_{2,\infty} \leq   \frac{C(\theta)}{\sqrt{|\tau_0|}} \, \|w_C\|_{\cD,\infty}.
\ee

\sk

Now given any $\epsilon  >0$ and the  $\theta >0$ as above, Proposition  \ref{prop-cylindrical} implies  for $\tau_0$ sufficiently negative we have 
\[
  \|\hat{w}_C\|_{\hv,\infty} \leq  \frac{\epsilon}{3} \, \big ( \|w_\cyl\|_{\hv,\infty} +
  \|w \chi_{D_{\theta}\|_{\hilb,\infty}} \big )
\]
for all $\tau \leq  \tau_0$,  where as before
$D_{\theta} = \{\sigma: \, {\theta}/{4} \le u_1(\sigma,\tau) \le \theta/2\}$.
Using  \eqref{eqn-help1} to estimate $ \|w \chi_{D_{\theta}\|_{\hilb,\infty}}$, we get  the bound 
\begin{equation}
  \label{eq-dominates1}
  \begin{split}
    \|\hat{w}_C\|_{\hv,\infty}  \leq  \frac{\epsilon}{2}\, (\|w_\cyl\|_{\hv,\infty} + C(\theta)\,
    \|W_T\|_{2,\infty}). 
  \end{split}
\end{equation}
 Combining \eqref{eq-dominates2} 
with \eqref{eq-dominates1} yields
\[
  \|\hat{w}_C\|_{\hv,\infty} \leq  \frac{\epsilon}{2}\,\Bigl( \|w_\cyl\|_{\hv,\infty} +
  \frac{C(\theta)}{\sqrt{|\tau_0|}}\, \|w_\cyl\|_{\hv,\infty}\Bigr) \leq \epsilon \, 
 \|w_\cyl\|_{\hv,\infty}
\]
by choosing $|\tau_0|$ sufficiently large relative to $C(\theta)$.  The last estimate readily yields \eqref{eqn-w1230} finishing the proof of the Proposition.

\end{proof}

\begin{proof}[Proof of the Main Theorem \ref{thm-main}]
Fix the  constant $\theta >0$ given in Proposition \ref{prop-tip} and for this constant consider the cylindrical and tip regions 
as defined before.  Recall that
$w^{\beta \gamma}(\sigma,\tau) = u_1(\sigma,\tau) - u_2^{\beta \gamma}(\sigma,\tau)$,
which we shortly denote by $w(\sigma,\tau)= u_1(\sigma,\tau) - u_2(\sigma,\tau)$, where
$u_2^{\beta \gamma}(\sigma,\tau)$ is given by \eqref{eq-ualphabeta}.

\smallskip 

Following the notation from previous sections we have
\[
  \frac{\partial}{\partial \tau} w_\cyl = \mathcal{L}[w_\cyl] + \cE[w_\cyl] +
  \bar{\cE}[w,\varphi_\cyl] + \cE_{nl},
\]
with $w_\cyl = \hat{w}_\cyl + a(\tau) \, \psi_2$, where
$a(\tau) = \langle w_\cyl, \psi_2\rangle$.  The error terms $\cE[w_{\cyl}]$, $\bar{\cE}[w,\varphi_\cyl]$ and $\cE_{nl}$ are given by formulas \eqref{eq-E10}, \eqref{eq-bar-E} and \eqref{eq-nonlocal-error}, respectively. Projecting the above equation on the null 
eigenspace generated by $\psi_2$ while using that
$\langle \mathcal{L}[w_\cyl] , \psi_2 \rangle =0$ we obtain
\[
  \frac{d}{d\tau}a(\tau) = \langle \cE[w_\cyl] + \bar{\cE}[w,\varphi_\cyl] + \cE_{nl} ,
  \psi_2 \rangle.
\]
Since ${\displaystyle \frac{\langle \psi_2^2,\psi_2\rangle}{\| \psi_2\|^2} = 8}$
we can write the above equation as
\[
  \frac{d}{d\tau}a(\tau) = 2\, \frac{a(\tau)}{|\tau|} + F(\tau)
\]
where
\begin{equation}
  \label{eq-F-tau}
    F(\tau) := \frac{ \langle \big (\cE(w_\cyl) - \frac{a(\tau)}{8|\tau|} \psi_2^2 ) + \bar{\cE}[w,\varphi_\cyl] +(\cE_{nl} - \frac{a(\tau)}{8|\tau|}\, \psi_2^2 \big ) , \psi_2\rangle}{\|\psi_2\|^2} 
\end{equation}
Solving the above ordinary differential equation for $a(\tau)$ in terms of $F(\tau)$ 
yields
\[
  a(\tau) = C_0\, \tau^{-2} - \tau^{-2} \, \int_{\tau}^{\tau_0} F(s) \, s^2\,
    ds.
\]
By Remark \ref{rem-choice-par} we may assume $\alpha(\tau_0) = 0$. This means that $C_0=0$ 
which implies
\begin{equation}
  \label{eq-alpha-CC}
  |a(\tau)| = |\tau|^{-2}\, \big | \int_{\tau}^{\tau_0} F(s) \, s^2\, ds \big |. 
\end{equation}
Define
$$\|a\|_{\hilb,\infty}(\tau) = \sup_{\tau' \le \tau} \Bigl(\int_{\tau'-1}^{\tau'}
|a(\zeta)|^2\, d \zeta \Bigr)^{\frac12} \qquad \mbox{and} \qquad \|a\|_{\hilb,\infty} :=\|a\|_{\hilb,\infty}(\tau_0).$$  Since
$\pr_0 w_\cyl(\cdot,\tau) = a(\tau)\, \psi_2(\cdot)$, we have
\[
  \|\pr_0 w_\cyl\|_{\hv,\infty}(\tau) =\|a\|_{\hilb,\infty}(\tau) \,
  \|\psi_2\|_{\hv}.
\]
 Next we
need the following claim.

\begin{claim}
  \label{claim-Fs}
  For every $\epsilon > 0$ there exists a $\tau_0$ depending only on  $\epsilon$,  $\theta$ and dimension $n$  so that
  \[\Bigl|\int_{\tau-1}^{\tau} F(s)\, ds\Bigr| \le
    \frac{\epsilon}{|\tau|}\,\|a\|_{\hilb,\infty}
  \]
  for $\tau \le \tau_0$.
\end{claim}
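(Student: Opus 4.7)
The plan is to decompose $w_\cyl = a(\tau)\psi_2 + \hat w_\cyl$ and exploit Proposition \ref{prop-cor-main}: for any prescribed $\epsilon' > 0$ we may take $\tau_0 \ll -1$ so that $\|\hat w_\cyl\|_{\hv,\infty} \le \epsilon'\, \|a\|_{\hilb,\infty}\, \|\psi_2\|_\hv$. Combining this with Corollary \ref{cor-equiv-norm} and Proposition \ref{prop-tip} then gives $\|W_T\|_{2,\infty}$ and $\|w\chi_{D_\theta}\|_{\hilb,\infty}$ each bounded by $\epsilon'\,\|a\|_{\hilb,\infty}$. Each of the three contributions to $F(\tau)$ from \eqref{eq-F-tau} is then treated separately and estimated in $L^1$ over $[\tau-1, \tau]$; the two subtractions of $\frac{a(\tau)}{8|\tau|}\psi_2^2$ are designed so that, via $\frac{\langle \psi_2^2, \psi_2\rangle}{\|\psi_2\|^2} = 8$, they cancel the leading-order $\frac{a}{|\tau|}$ contributions coming from the projections of $\cE(w_\cyl)$ and $\cE_{nl}$ onto $\psi_2$.

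For the local error, insert the asymptotic expansion $u_i(\sigma,\tau) = \sqrt{2} - \frac{\sqrt{2}}{8|\tau|}\psi_2(\sigma) + o(|\tau|^{-1})$ from Theorem \ref{thm-asym}(i) into \eqref{eq-E10}. A direct computation shows that the coefficient $\frac{2u_{2\sigma}}{u_1} - J_1$ of $(w_\cyl)_\sigma$ vanishes at order $|\tau|^{-1}$, while the coefficient of $w_\cyl$ satisfies $-\frac{u_2^2-2}{2u_1 u_2} = \frac{\psi_2}{8|\tau|} + o(|\tau|^{-1})$. Substituting $w_\cyl = a\psi_2 + \hat w_\cyl$ produces a leading contribution $\frac{a(\tau)\psi_2^2}{8|\tau|}$ which is exactly what is subtracted in \eqref{eq-F-tau}. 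The residual splits into three pieces, each bounded by $\epsilon|\tau|^{-1}\|a\|_{\hilb,\infty}$ after projection onto $\psi_2$ and time averaging: (a) the genuinely quadratic terms $\frac{w\,w_\cyl}{2u_1}$ and $\frac{w_\sigma (w_\cyl)_\sigma}{u_1}$, estimated via the pointwise bound $\sup|w(\cdot,\tau)| = O(|\tau|^{-1})$ coming from Theorem \ref{thm-asym}(i), so that $\|w\cdot w_\cyl\|_{\hv^*} = O(|\tau|^{-1}\|a\|_{\hilb,\infty})$; (b) sub-leading corrections in the asymptotic expansions of $u_{2\sigma}^2$, $J_1$ and $u_2^2-2$, each carrying an extra $|\tau|^{-1}$ factor; and (c) terms involving $\hat w_\cyl$ in place of $a\psi_2$, bounded using Cauchy--Schwarz and Proposition \ref{prop-cor-main}.

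The nonlocal term is handled analogously. Using formula \eqref{eqn-J12} together with the leading asymptotics $u_{2\sigma} \approx -\frac{\sqrt{2}\sigma}{4|\tau|}$ and $w_{\sigma\sigma}/u_1 \approx \sqrt{2}\,a(\tau)$ (from $w \approx a\psi_2$ and $\psi_2'' = 2$), the leading contribution of $\cE_{nl}$ is $\frac{a(\tau)\sigma^2}{|\tau|}$. Although this is not pointwise equal to $\frac{a\psi_2^2}{8|\tau|}$, an explicit Gaussian moment computation gives $\langle \sigma^2 - \frac{\psi_2^2}{8}, \psi_2\rangle = 0$, so both expressions project to the same multiple of $\psi_2$ and the subtraction cancels the leading projection. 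The sub-leading terms are handled by the integration-by-parts and Fubini argument already used in \eqref{eq-I1-est}--\eqref{eq-I2-est} in the proof of Proposition \ref{prop-error}, each producing an extra factor $|\tau|^{-1}$ from the asymptotic corrections and additional smallness from $\hat w_\cyl$.

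Finally, $\bar\cE[w,\varphi_\cyl]$ from \eqref{eq-bar-E} is supported on $D_\theta$ where, by Theorem \ref{thm-asym}(i), $\sigma^2 \ge c_\theta|\tau|$; pairing against $\psi_2$ under the Gaussian weight $e^{-\sigma^2/4}$ produces a factor $e^{-c_\theta|\tau|/4}$ which, combined with Corollary \ref{cor-equiv-norm}, makes this term negligible compared to $|\tau|^{-1}\|a\|_{\hilb,\infty}$. The main obstacle will be the careful algebraic cancellation in the nonlocal term: one must verify that after the $\sigma'$-integration in \eqref{eqn-J12} the leading projection onto $\psi_2$ exactly matches $\frac{a}{|\tau|}$ produced by the subtraction (including all constants arising from the Gaussian moments), and that all sub-leading corrections can be estimated uniformly in the admissible parameters $(\beta,\gamma)$ from Remark \ref{rem-choice-par}.
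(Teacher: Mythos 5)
Your proposal is essentially correct and follows the same architecture as the paper's proof: decompose $F$ into the three contributions from $\cE(w_\cyl)$, $\bar{\cE}[w,\varphi_\cyl]$ and $\cE_{nl}$, feed in the coercive estimate of Proposition~\ref{prop-cor-main} to control $\hat w_\cyl$ by $\epsilon\,\|a\|_{\hilb,\infty}$, insert the sharp asymptotics of Theorem~\ref{thm-asym}(i) (in its refined $L^2$ form \eqref{eq-L2-asymp0}), and exploit the designed cancellation with $\frac{a(\tau)}{8|\tau|}\psi_2^2$ for both the local and nonlocal pieces. Your verification of the nonlocal cancellation via the moment identity $\langle\sigma^2 - \frac{1}{8}\psi_2^2,\psi_2\rangle = 0$ is correct and algebraically equivalent to the paper's route, which instead integrates by parts in $\bar I$ and uses $\int\psi_{2\sigma}^2\psi_2\,d\mu = \frac{1}{2}\langle\psi_2^2,\psi_2\rangle$. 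One point that needs care: your phrase ``the pointwise bound $\sup|w(\cdot,\tau)|=O(|\tau|^{-1})$ coming from Theorem~\ref{thm-asym}(i)'' is not literally available, since (i) gives asymptotics only on compact $\sigma$-sets. The paper instead controls the quadratic terms $\frac{ww_\cyl}{2u_1}$, $\frac{w_\sigma(w_\cyl)_\sigma}{u_1}$ by writing $w = (u_1-\sqrt{2}+\tfrac{\sqrt{2}\psi_2}{8|\tau|}) - (u_2-\sqrt{2}+\tfrac{\sqrt{2}\psi_2}{8|\tau|})$ and using the $o(|\tau|^{-1})$ bound on each factor in the weighted $L^2$ norm \eqref{eq-L2-asymp0} together with Cauchy--Schwarz, plus the local sup bound \eqref{eq-L2-asymp2} for $|\sigma|\le L$ and Gaussian decay for $|\sigma|\ge L$. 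Replacing your pointwise estimate with this $L^2$ version closes the argument; the remaining steps you sketch (exponential smallness of $\bar{\cE}[w,\varphi_\cyl]$ from the Gaussian weight on $D_\theta$, the Fubini/integration-by-parts treatment of the nonlocal residuals, and admissibility of $(\beta,\gamma)$) align with the paper's Steps 1--3.
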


Assume for the moment that the Claim holds. We will finish the proof of the theorem. Set $\epsilon := 1/2$ and choose a sufficiently negative number $\tau_0 < \min (\tau^*, -100)$  (where $ \tau^*$ 
is as in Proposition  \ref{lem-rescaling-components-zero} ) and such that  Claim \ref{claim-Fs} holds. Such a number $\tau_0$ depends only on the constant  $\theta$ and on dimension $n$.
Proposition \ref{lem-rescaling-components-zero} tells us that for the chosen $\tau_0$ 
we can choose parameters $\beta$ and
$\gamma$  and such that
$\pr_+ w_\cyl(\tau_0) = \pr_0 w_\cyl(\tau_0) = 0$.  We will next see that  for
that choice of parameters $w(\sigma,\tau) \equiv 0$. To this end, observe that for $\tau \leq \tau_0$ we have 
\begin{equation*}
  \begin{split}
    \Bigl|\int_{\tau}^{\tau_0} F(s) \, s^2\, ds\Bigr| &\le \sum_{j=[\tau]-1}^{[\tau_0]} \int_j^{j+1} s^2 F(s)\, ds \le \epsilon\, \|a\|_{\hilb,\infty}\,\sum_{j=[\tau]-1}^{\tau_0} |j|\,   \\
    &\le \epsilon  \|\alpha\|_{\hilb,\infty} \, \sum_{j=[\tau]-1}^{\tau_0} | j |   \\
    &\le \frac 12 \, |\tau|^2\,\|a\|_{\hilb,\infty}.
  \end{split}
\end{equation*}
Combining the last inequality    with \eqref{eq-alpha-CC} while choosing $\epsilon = 1/2$ yields
\[
  |a(\tau)| \le \frac12\|a\|_{\hilb,\infty}, \qquad \mbox{for all} \,\,\,
  \tau\le \tau_0.
\]
This implies
\[
  \|a\|_{\hilb,\infty} \le \frac12 \, \|a\|_{2,\infty}
\]
and hence $\|a\|_{\hilb,\infty} = 0$, which further gives
\[
  \|\pr_0 w_\cyl\|_{\hv,\infty} = 0.
\]
Finally, \eqref{eqn-w1230} implies $\hat{w}_C \equiv 0$ and hence,
$w_\cyl \equiv 0$ for $\tau \le \tau_0$.  By \eqref{eq-other-one} and the fact
that $\varphi_\cyl \equiv 1$ on $D_{4\theta}$ we have
$W\chi_{[\theta,2\theta]} \equiv 0$ for $\tau \le \tau_0$.  Proposition
\ref{prop-tip} then yields that $W_T \equiv 0$ for $\tau\le \tau_0$.  All these
imply $u_1(y,\tau) \equiv u_2^{\beta\gamma}(y,\tau)$, for
$\tau\le \tau_0$.  By forward uniqueness of solutions to the mean curvature flow
(or equivalently to cylindrical equation \eqref{eq-u}), we have
$u_1 \equiv u_2^{\beta\gamma}$, and hence
$ g_1 \equiv g_2^{\alpha\beta\gamma}.$ This concludes the proof of the main
Theorem \ref{thm-main}.

\smallskip To complete the proof of Theorem \ref{thm-main} we still need to
prove Claim \ref{claim-Fs}, and  we do it next. 

\begin{proof}[Proof of Claim \ref{claim-Fs}]
Throughout the proof we will use the estimates 
\begin{equation}
  \label{eq-par-int}
  \|\hat{w}_C\|_{\cD,\infty} \leq  \epsilon \, \|a\|_{\cH,\infty} \qquad \mbox{and} \qquad \|w_\cyl\|_{\hv,\infty} \leq C \, \| a\|_{\hilb,\infty}
\end{equation}
which hold  for all $\tau_0 \ll -1$. These estimates  follow  from Proposition \ref{prop-cor-main} and we can achieve the first estimate to hold for any $\epsilon > 0$, 
for $\tau \le \tau_0$, by choosing $\tau_0 = \tau_0(\epsilon) \ll -1$ sufficiently small.  By  \eqref{eqn-help1} and \eqref{eq-dominates2}, we also have
\begin{equation}
\label{eq-transition-est}
  \|w \, \chi_{D_{\theta}}\|_{\hilb,\infty} \leq 
  \frac{C(\theta)}{\sqrt{|\tau_0|}}\, \|w_\cyl\|_{\hilb,\infty}, \qquad
  \mbox{for} \,\, \tau_0 \ll -1.
\end{equation}
\smallskip
On the other hand, similarly to the proof of Corollary \ref{cor-equiv-norm}, we have  
$$\|w_\sigma \chi_{D_\theta}\|_{\hilb,\infty} \le C \sqrt{|\tau_0| }  \, \|W_T \|_{2,\infty}$$
which combined with \eqref{eq-dominates2} implies that 
$$
\| w_\sigma \chi_{D_\theta}\|_{\hilb,\infty} \le \epsilon \,\|w_C\|_{\cD,\infty}
$$
for $\tau_0 \ll -1$. 
The last bound and \eqref{eq-transition-est} give us the bound 
\begin{equation}
\label{eq-transition-der-est}
\|w\chi_{D_\theta}\|_{\cD,\infty}  \le \epsilon \,\|w_C\|_{\cD,\infty}.
\end{equation}

\smallskip

From the definition of $\bar{\cE}[w,\varphi_\cyl]$ given in \eqref{eq-bar-E} and
the definition of cut off function $\varphi_\cyl$, we see that the support of
$\bar \cE[w,\varphi_\cyl]$ is contained in
\[
  \Bigl(\sqrt{2 - \theta^2} - \epsilon_1\Bigr)\, \sqrt{2|\tau|} \le
  |\sigma| \le \Bigl(\sqrt{2 - \frac{\theta^2}{4}} + \epsilon_1\Bigr)\,
  \sqrt{2|\tau|}
\]
where $\epsilon_1$ is so tiny that
$\sqrt{2 - \frac{\theta^2}{4}} + \epsilon_1 < \sqrt{2}$.  Also by the
\emph{a priori} estimates in Appendix \ref{sec-appendix} we have \begin{equation}
  \label{eq-der-bounds10}
  |u_{\sigma}| + |u_{\sigma\sigma}| \le \frac{C(\theta)}{\sqrt{|\tau|}}, \qquad \mbox{for} \,\, |\sigma| \leq \big (\sqrt{2 - \frac{\theta^2}{4}} + \epsilon_1\big ) \sqrt{2|\tau|}.
\end{equation}
Furthermore, in \cite{ADS3} we have showed that our ancient solutions $u_i$,
$i\in \{1,2\}$ satisfy
\begin{equation}
  \label{eq-L2-asymp0}
  \begin{split}
    \sup_{\tilde{\tau} \le \tau}  \,\,\Bigl \|u_i - \sqrt{2} + \frac{\sqrt{2}}{8|\tau|}\, \psi_2 \Bigr\|&= o(|\tau|^{-1}), \\
  \sup_{\tilde{\tau}\le\tau}\,\, \Big\|\Bigl(u_i + \frac{\sqrt{2}}{8|\tau|}\, \psi_2\Bigr)_{\sigma}\Bigr\| &=
    o(|\tau|^{-1}) \\
    \sup_{\tilde{\tau} \le \tau}\Big\|\Bigl(u_i + \frac{\sqrt{2}}{8|\tau|}\, \psi_2\Bigr)_{\sigma\sigma}\Bigr\|  &= o(|\tau|^{-1}).
  \end{split}
\end{equation}
\smallskip
In particular, this implies
\begin{equation}
  \label{eq-L2-asymp1}
  \sup_{\tau' \le \tau} \big \|u_i - \sqrt{2} \big \|  +  \sup_{\tau' \le \tau} \big \|u_{i \sigma}\big \| + \sup_{\tau' \le \tau} \big \|u_{i\sigma\sigma}\big \| = O(|\tau|^{-1}).
  \end{equation}
 Moreover, by standard regularity parabolic estimates, for every $L > 0$, there exists a $\tau_0 \ll -1$ so that for $\tau\le\tau_0$,
  \begin{equation}
  \label{eq-L2-asymp2}
 \sup_{|\sigma| \le L}\Big(| u_{i\sigma}\Big| + \Big|u_i - \sqrt{2}|\Big) = O(|\tau|^{-1}). 
\end{equation}

\sk
\sk

We will now apply the estimates above to achieve the desired bound. We need to estimate all   projections of  error terms
on the right hand side of \eqref{eq-F-tau}, and we  will treat each of the three terms separately in three different steps. 
We   start with the simplest bound,  which is the projection of the error $\bar{\cE}[w,\varphi_\cyl]$ due to the cut-off function.

\begin{step} For every $\epsilon > 0$ there exists a $\tau_0 \ll -1$ so that for $\tau \le \tau_0$ we have
\begin{equation}
\label{eq-proj-error-cut-off}
 \|\psi_2\|^{-2}\, \Bigl|\int_{\tau-1}^{\tau} \langle \bar{\cE}[w,\varphi_\cyl], \psi_2\rangle\,
  ds \Bigr| \leq  \epsilon \, \frac{\|a\|_{\hilb,\infty}}{|\tau|}.
\end{equation}
\end{step} 
\sk 

We have 
\begin{equation*}
  \label{eq-barE-est}
  \begin{split}
    \int_{\tau-1}^{\tau} |\langle \bar{\cE}[w,\varphi_\cyl], \psi_2\rangle|\, d\tau' &\le
    \int_{\tau-1}^{\tau} \|\bar{\cE}[w,\varphi_\cyl]\|_{\hv^*} \|\psi_2 \, \bar {\chi} \|_{\hv}\, d\tau' \\&\leq 
     e^{-|\tau|/4}\, \|\bar{\cE}[w,\varphi_\cyl]\|_{\hv^*,\infty}
  \end{split}
\end{equation*}
where $\bar {\chi} $ denotes a smooth function with a support in
$|\sigma| \ge (\sqrt{2-\theta^2/4} - 2\epsilon_1)\, \sqrt{2|\tau|}$, being
equal to one for
$|\sigma| \ge (\sqrt{2-\theta^2/4} - \epsilon_1)\, \sqrt{2|\tau|}$. Combining the last estimate  with  Proposition \ref{prop-error}, Proposition \ref{prop-cor-main} and \eqref{eq-transition-est}, also using  \eqref{eq-par-int}, implies that for every $\epsilon > 0$ we can find a $\tau_0 \ll -1$ so that for
$\tau \le \tau_0$ the desired bound \eqref{eq-proj-error-cut-off} holds. 
\qed
\sk

We will next estimate the main error term.  

\begin{step} For every $\epsilon > 0$ there exists a $\tau_0 \ll -1$ so that for $\tau \le \tau_0$ we have
\begin{equation}
\label{eq-proj-error-main}
\|\psi_2\|^{-2}\, \Big|\int_{\tau-1}^{\tau} \langle \cE(w_C) - \frac{a(\tau)}{8|\tau|}\, \psi_2^2, \psi_2\rangle\, d\tau'\Big| \leq  \frac{\epsilon}{|\tau|}\, \|a\|_{\cH,\infty}.
\end{equation}
\end{step}

We have 
\begin{equation*}
  \begin{split}
  \cE(w_\cyl) - \frac{a(\tau)}{8|\tau|}\, \psi_2^2 &= \Big(\frac{w_{\sigma}}{u_1} + \frac{2u_{2\sigma}}{u_1} - J_1\Big)\, (w_C)_\sigma - \Big(\frac{w}{2u_1} + \frac{u_{2\sigma}^2}{u_1 u_2}\Big)\,w_C \\
  & \qquad - \Big(\frac{u_2^2 - 2}{2u_1 u_2}\, w_C + \frac{a(\tau)}{8|\tau|}\, \psi^2\Big),
  \end{split}
  \end{equation*}
where (since $u_{1\sigma} =0$)   we have 
\[J_1 := 2\int_0^{\sigma} \frac{u_{1\sigma\sigma}}{u_1}\, d\sigma' = 2\int_0^{\sigma} \frac{u_{1\sigma}^2}{u_1^2}\, d\sigma' + 2\frac{u_{1\sigma}}{u_1}(\sigma,\tau).\]   
These imply
\begin{equation}\label{eq-E-recall}
\begin{split}
\cE(w_C) - \frac{a(\tau)}{8|\tau|}\, \psi^2 = \Big(&-\frac{w_{\sigma}}{u_1} + 2\int_0^{\sigma} \frac{u_{1\sigma}^2}{u_1^2}\, d\sigma' \Big ) \, (w_C)_\sigma  \\&- \Big(\frac{w}{2u_1} + \frac{u_{2\sigma}^2}{u_1u_2} \Big)\, w_C -  \Big(\frac{u_2^2 - 2}{2u_1 u_2}\, w_C + \frac{a(\tau)}{8|\tau|}\, \psi^2\Big). 
\end{split}
\end{equation}

\sk

We  use the same symbol $\epsilon$ to denote possibly different, but uniformly small constants from line to line. 
Also, we  will often use the bound $|\psi_2 |  \leq 2(\sigma^2 +1)$ and the fact  that the operator 
$f \to \sigma f$ is bounded from $\cD$ to $\cH$. 
We start with the last
term in \eqref{eq-E-recall}. Using the decomposition $w_C =\hat w_C + a(\tau) \, \psi_2$ we  write this  term as 
\begin{equation}
\label{eq-rewrite}
\begin{split}
&\int \Big(\frac{u_2^2 -2}{2u_1u_2}\,  w_C + \frac{a(\tau)}{8|\tau|}\, \psi_2\Big)\psi_2\, d\mu = \int\frac{u_2^2-2}{2u_1u_2} \, \hat{w}_C \, \psi_2\, d\mu \\
&+ a(\tau)  \int \frac{(u_2 - \sqrt{2})}{2u_1u_2}\, \Big((u_2 - \sqrt{2})(1- \sqrt{2}\, u_1) + 2(\sqrt{2} - u_1)\Big)\, \psi_2^2\, d\mu \\
&+ \frac{a(\tau)}{\sqrt{2}} \int \Big( u_2 - \sqrt{2}  + \frac{\sqrt{2}}{8|\tau|} \Big)\, \psi_2^2\, d\mu.
\end{split}
\end{equation}
Since the operator $f \to \sigma f$ is bounded from $\cD$ to $\cH$ we have
\begin{equation*}
\begin{split}
\Big|\int \frac{u_2^2-2}{2u_1u_2} \,  \hat{w}_C  \psi_2\, d\mu\Big| &\le C(\theta)\, \Big(\int(u_2 - \sqrt{2})^2 (\sigma^2 + 1)\, d\mu\Big)^{\frac12}\, \Big(\int \hat{w}_C^2 (\sigma^2 + 1)\, d\mu\Big)^{\frac12} \\
&\le C(\theta)\, (\|u_2 - \sqrt{2}\| + \|u_{2\sigma}\|)\, \|\hat{w}_C\|_{\cD}. 
\end{split}
\end{equation*}
Hence, Proposition \ref{prop-cor-main} and \eqref{eq-L2-asymp1} imply that for $\tau \le \tau_0 \ll -1$ we have
\begin{equation*}
\label{eq-proj-error20}
\|\psi_2\|^{-2}\,\Big|\int_{\tau-1}^{\tau} \int \frac{u_2^2-2}{2u_1u_2} \hat{w}_C \psi_2\, d\mu\, d\tau'\Big| \leq  \frac{\epsilon}{|\tau|}\, \|a\|_{\cH,\infty}.
\end{equation*}
To estimate the second term on the right hand side in \eqref{eq-rewrite}, it is enough to estimate
\begin{equation*}
\label{eq-proj-error21}
\begin{split}
\int_{\tau-1}^{\tau} & |a(\tau') |\int (u_2 - \sqrt{2})^2 \psi_2^2\, d\mu\, d\tau' \\
&\le \|a\|_{\cH,\infty} \Big(\int_{\tau-1}^{\tau}\int (u_2 - \sqrt{2})^2\, d\mu\, \int (u_2 - \sqrt{2})^2\psi_2^4\, d\mu \, d\tau'\Big)^{\frac12} \\
&\le \frac{C}{|\tau|}\, \|a\|_{\cH,\infty} \sup_{\tau\le \tau_0} \Big(\int(u_2 - \sqrt{2})^2\psi_2^4\, d\mu\Big)^{\frac12} \\
&\le \frac{C}{|\tau|}\, \|a\|_{\cH,\infty} \sup_{\tau\le \tau_0} \Big(\int_{|\sigma| \le L} (u_2 - \sqrt{2})^2\psi_2^4\, d\mu + \int_{\{|\sigma| \ge L\}\cap \cC_{\theta}} (u_2 - \sqrt{2})^2\psi_2^4\, d\mu \Big)^{\frac12} \\
&\leq  \frac{\epsilon}{|\tau|}\, \|a\|_{\cH,\infty},
\end{split}
\end{equation*}
where in the second inequality in the previous estimate we have used \eqref{eq-L2-asymp1} and in the last inequality, for $|\sigma| \le L$ we have used \eqref{eq-L2-asymp2} and for $|\sigma| \ge L$, we have found $L$ sufficiently big so that we can make 
$\int_{|\sigma| \ge L} (u  - \sqrt{2})^2\psi_2^4\, d\mu$ as small as we want. For the third term by  the Cauchy-Schwarz inequality and \eqref{eq-L2-asymp0} we  have
\begin{equation}
\label{eq-proj-error22}
\Big|\int_{\tau-1}^{\tau} a(\tau') \int (u_2 - \sqrt{2} + \frac{\sqrt{2}}{8|\tau|}\, \psi_2)\, \psi_2^2\, d\mu \, d\tau'\Big| \leq  \frac{\epsilon}{|\tau|}\, \|a\|_{\cH,\infty}.
\end{equation}
Inserting  these three last estimates into  \eqref{eq-rewrite} we conclude the following bound for the last term in \eqref{eq-E-recall}
\begin{equation}
\label{eq-proj-error2}
\Big|\int_{\tau-1}^{\tau}\int \Big(\frac{u_2^2 -2}{2u_1u_2} w_C + \frac{a(\tau)}{8|\tau|}\, \psi^2_2\Big)\psi_2\, d\mu\, d\tau'\Big| \leq  \frac{\epsilon}{|\tau|}\, \|a\|_{\cH,\infty}.
\end{equation}

\sk
\sk

Next, lets bound the middle term   in \eqref{eq-E-recall}. First, we have 
\[\Big|\int \frac{u_{2\sigma}^2}{u_1u_2} w_C \psi_2\, d\sigma\Big| \le C(\theta)\int_{|\sigma| \le L}u_{2\sigma}^2  |w_C| |\psi_2|\, d\sigma + C(\theta) \int_{|\sigma| \ge L} u_{2\sigma}^2|w_C| |\psi_2|\, d\sigma.\]
For any $\epsilon > 0$, choose $L$ big enough so that $\int_{|\sigma|\ge L} |\psi_2|\, d\mu \leq  \epsilon$. For that $L$ we can choose a $\tau_0 \ll -1$ so that \eqref{eq-L2-asymp2} holds for $\tau\le \tau_0$. By H\"older's inequality and by \eqref{eq-der-bounds10} (where the latter is used in the second  integral above) we get
\[\Big|\int \frac{u_{2\sigma}^2}{u_1u_2} w_C \psi_2\, d\mu\Big| \le C(\theta) \|w_C\|_{\cH}  \sup_{|\sigma| \le L} u_{2\sigma}^2+ \frac{\epsilon}{|\tau|}\,\|w_C\|_{\cH}.\]
If we choose $\tau_0 \ll -1$ sufficiently small, the last inequality,  \eqref{eq-par-int} and \eqref{eq-L2-asymp2} yield
\begin{equation}
\label{eq-proj-error1}
\|\psi_2\|^{-2}\,\Big|\int_{\tau-1}^{\tau} \int \frac{u_{2\sigma}^2}{u_1u_2} w_C \psi_2\, d\mu \, d\tau'\Big| \leq  \frac{\epsilon}{|\tau|}\, \|a\|_{\cH,\infty}.
\end{equation}
Furthermore, using that $f \to \sigma f$ is a bounded operator from $\cD$ to $\cH$, we can bound 
\begin{equation*}
\begin{split}
\Big|\int \frac{w w_C}{2u_1}\, \psi_2\, d\mu\Big| &\le C(\theta)\, \int w_C^2 \, |\psi_2| + C(\theta)\,\int |w_C| |w|\,  \chi_{D_{\theta}}\, |\psi_2|\, d\mu \\
&\le C(\theta) \|w_C\|^2_{\cD} + C(\theta)\, \|w_C\|_{\cH} \Big(\int w^2\,  \psi_2^2\,  \chi_{D_{\theta}}\, d\mu\Big)^{\frac12}. 
\end{split}
\end{equation*}
This together with \eqref{eq-L2-asymp1} yield
\begin{equation}
\label{eq-proj-error3}
\begin{split}
\|\psi_2\|^{-2}\, &\Big|\int_{\tau-1}^{\tau} \int \frac{w w_C}{2u_1}\, \psi_2\, d\mu\,d\tau'\Big| \le \\
&\le C(\theta) \|a\|_{\cH,\infty} \, \sum_{i=1}^2  \Big ( \int_{\tau-1}^{\tau}\int_{\supp\varphi_C}\big(u_i - \sqrt{2} + \frac{\sqrt{2}\psi_2}{8|\tau|}\big)^2\, d\mu\, d\tau'\Big)^{\frac12} \\
&+ C(\theta)\|a\|_{\cH,\infty}\, \sum_{i=1}^2  \Big( \int_{\tau-1}^{\tau} \int_{\supp_{\varphi_C}}\big(u_i - \sqrt{2} + \frac{\sqrt{2} \psi_2}{8|\tau|}\big)_{\sigma}^2\, d\mu\, d\tau'\Big)^{\frac12} \\
&+ C(\theta)\, \|a\|_{\cH,\infty} e^{-c_0|\tau|} \\
& \leq  \frac{\epsilon}{|\tau|}\, \|a\|_{\cH,\infty},
\end{split}
\end{equation}
for $\tau \le \tau_0 \ll -1$ sufficiently small. These last two bounds yield to the estimate for 
the middle term. 

\sk

To deal with the first term, we first see that similarly to the last estimate above we have
\begin{equation*}
\begin{split}
\Big|\int \frac{w_{\sigma} (w_C)_\sigma }{u_1}\, \psi_2\, d\mu\Big | &\le C(\theta) \int (w_C)_\sigma^2\, |\psi_2|\, d\mu + C(\theta)\int |w_{\sigma}||(w_C)_\sigma||\psi_2|\chi_{D_{\theta}}\, d\mu \\
&\le C(\theta) \|w_{C}\|_{\cD}^2 + C(\theta)\,\int (w_C)_{\sigma\sigma}^2\, d\mu + C(\theta) \|w_C\|_{\cD} e^{-c_0|\tau|}. 
\end{split}
\end{equation*}
This implies
\begin{equation}
\label{eq-term-123}
\Big|\int_{\tau-1}^{\tau} \int\frac{w_{\sigma} (w_C)_\sigma}{u_1}\, \psi_2\, d\mu\, d\tau'\Big| \leq  \frac{\epsilon}{|\tau|}\, \|a\|_{\cH,\infty} + \int_{\tau-1}^{\tau}\int (w_C)_\sigma\sigma^2\, d\mu\, d\tau'.
\end{equation}
By the proof of Lemma \ref{lemma-energy} we have for $\tau \le \tau_0$,
\begin{equation}
\label{eq-sec-der-energy}
\int_{\tau-1}^{\tau}\int (w_C)_{\sigma\sigma}^2\, d\mu\, d\tau' \le C\int_{\tau-2}^{\tau}\int w_C^2\, d\mu\, d\tau' + \int_{\tau-2}^{\tau}\int \cE^2\, d\mu \, d\tau',
\end{equation}
where $\cE = \cE(w_C) + \cE[w,\varphi_C] + \cE_{nl}$ is the error term  given by \eqref{eq-E10}, \eqref{eq-bar-E} and \eqref{eq-nonlocal-error}. Similarly as in Proposition \ref{prop-error} where we have estimated $\|\cE\|_{\cD^*,\infty}$, we could estimate $\|\cE\|_{\cH,\infty}(\tau)$ for $\tau \le \tau_0$. By carefully analyzing all the terms we estimated in the proof of Proposition \ref{prop-error} we conclude that for $\tau \le \tau_0 \ll -1$,
\begin{equation*}
\begin{split}
\sup_{\tilde{\tau}\le\tau} \int_{\tit-1}^{\tit} \int \cE^2\, d\mu \,d\tau' &\leq  \epsilon \, \|w_C\|^2_{\cD,\infty}(\tau) + \epsilon \, \|w \chi_{D_{\theta}}\|^2_{\cH,\infty}(\tau) \\
&+ \epsilon \, \|w_{\sigma} \chi_{D_{\theta}}\|^2_{\cH,\infty}(\tau) + \Big\|\frac{(u_2^2-2) w_C}{2u_1u_2}\Big\|^2_{\cH,\infty}(\tau).
\end{split}
\end{equation*}
Combining this and \eqref{eq-transition-der-est} yields
\begin{equation}
\label{eq-we-need1}
\sup_{\tilde{\tau}\le\tau} \int_{\tit-1}^{\tit} \int \cE^2\, d\mu \,d\tau' \leq  C(\theta)\, \epsilon \,  \|w_C\|^2_{\cD,\infty}(\tau) 
\leq C(\theta) \,  \frac{\epsilon}{|\tau|}\, \|a\|_{\cH,\infty},
\end{equation}
where in the last inequality we used similar arguments that we used to obtain \eqref{eq-proj-error3}. This estimate, \eqref{eq-sec-der-energy} and similar arguments to the ones we used to derive \eqref{eq-proj-error3} yield
\begin{equation}
\label{eq-we-need2}
\int_{\tau-1}^{\tau}\int (w_C)_{\sigma\sigma}^2\, d\mu\, d\tau' \le C \, \|w_C\|_{\cD,\infty}^2(\tau) \leq   \frac{\epsilon}{|\tau|}\, \|a\|_{\cH,\infty}.
\end{equation} 
Finally, \eqref{eq-term-123} and \eqref{eq-we-need2} imply the bound 
\begin{equation}
\label{eq-proj-error4}
\|\psi_2\|^{-2}\, \Big|\int_{\tau-1}^{\tau} \int \frac{w_{\sigma} (w_C)_\sigma}{u_1}\, \psi_2\, d\mu\, d\tau' \Big| \leq \frac{\epsilon}{|\tau|}\, \|a\|_{\cH,\infty}
\end{equation}
for a different but still arbitrarily and uniformly  small constant $\epsilon$. 
It only remains  to bound 
\begin{equation}
\begin{split}
\int (w_C)_\sigma \,\big (  \int_0^{\sigma}\frac{u_{1\sigma}^2}{u_1^2}\, d\sigma'  \big ) \psi_2\, d\mu =
2 \int_{A_+} (w_C)_\sigma \big (  \int_0^{\sigma}\frac{u_{1\sigma}^2}{u_1^2}\, d\sigma' \big ) \, \psi_2\, d\mu
\end{split}
\end{equation}
where $A_+ := \supp\varphi_C\cap \{\sigma \ge 0\}$. 
Applying Cauchy-Schwarz inequality and Fubini's theorem imply
\begin{equation*}
\begin{split}
\Big|\int_{A_+} (w_C)_\sigma \, &\big (  \int_0^{\sigma}\frac{u_{1\sigma}^2}{u_1^2}\, d\sigma' \big ) \, \psi_2\, d\mu\Big|
 \le C(\theta)\, \int |(w_C)_\sigma \, \psi_2| \big ( \int_0^{\sigma} u_{1\sigma}^2\, d\sigma' \big ) \, d\mu \\
&= C(\theta)\, \int_0^{u_1^{-1}(\theta,\tau)} u_{1\sigma}^2 \int_{\sigma'}^{u_1^{-1}(\theta,\tau)}|\psi_2| |(w_C)_\sigma| 
\, e^{-\frac{\sigma^2}{4}}\, d\sigma  d\sigma' \\
&\le C(\theta)\|w_{C}\|_{\cD}\, \int_0^{u_1^{-1}(\theta,\tau)} u_{1\sigma}^2\, \Big(\int_{\sigma'}^{\infty} \psi_2^2 \, e^{-\frac{\sigma^2}{4}}\, d\sigma\Big)^{\frac12}\, d\sigma' \\
&\le C(\theta)\, \|w_C\|_{\cD}\, \int_0^{u_1^{-1}(\theta,\tau)} u_{1\sigma}^2 \, e^{-\frac{\sigma'^2}{16}}\, d\sigma' \\
&\le C(\theta)\, \|w_C\|_{\cD}\, \Big(\int_{|\sigma| \le L}u_{1\sigma}^2 \, e^{-\frac{\sigma^2}{16}}\, d\sigma +  \int_{\{|\sigma| \ge L\}\cap\cC_{\theta}} u_{1\sigma}^2e^{-\frac{\sigma^2}{16}}\, d\sigma\Big)
\end{split}
\end{equation*}
To estimate the second integral in the last line above we use \eqref{eq-der-inter} and then find $L \gg 1$ big enough so that $\int_{|\sigma| \ge L} e^{-\frac{\sigma^2}{16}}\, d\sigma$ can be made as small as we want. Then we find $\tau_0 \ll -1$ sufficiently small so that \eqref{eq-L2-asymp2}  holds on $|\sigma| \le L$ for $\tau \le \tau_0$ and this we apply to the first integral in the last line in the previous estimate. Finally, for $\tau \le \tau_0$ we have
\begin{equation}
\label{eq-proj-error5}
\|\psi_2\|^{-2}\, \Big|\int_{\tau-1}^{\tau}\int (w_C)_\sigma \,\big (  \int_0^{\sigma}\frac{u_{1\sigma}^2}{u_1^2}\, d\sigma' \big ) \,
\psi_2\, d\mu\, d\tau'\Big| \leq  \frac{\epsilon}{|\tau|}\, \|a\|_{\cH,\infty}.
\end{equation}
Estimates  \eqref{eq-proj-error2}, \eqref{eq-proj-error1},  \eqref{eq-proj-error3},  \eqref{eq-proj-error4} and   \eqref{eq-proj-error5} lead to \eqref{eq-proj-error-main} as desired. \qed 

\sk
\sk 
 
\begin{step}
For every $\epsilon > 0$ there exists a $\tau_0 \ll -1$ so that for $\tau \le \tau_0$ we have
\begin{equation}
\label{eq-proj-nonlocal}
\|\psi_2\|^{-2}\,\Big|\int_{\tau-1}^{\tau}\langle \cE_{nl} - \frac{a(\tau)}{8|\tau|}\, \psi_2^2, \psi_2\rangle\, d\tau'\Big| \leq  \frac{\epsilon}{|\tau|}\, \|a\|_{\cH,\infty}.
\end{equation}
\end{step}

\sk
\sk

We recall that  $\langle \cE_{nl}, \psi_2\rangle =  \langle u_{2\sigma}\varphi_C (J_2 - J_1), \psi_2\rangle$. By \eqref{eqn-J12} 
this can be written as 
\begin{equation}
\label{eq-ENL-rewrite}
\begin{split}
\langle \cE_{nl}, \psi_2\rangle   &= -2\, \int u_{2\sigma}\varphi_C\, \psi_2\, \big ( \int_0^{\sigma} \frac{w_{\sigma\sigma}}{u_1}\, d\sigma' \big ) \, d\mu \\&\quad + 2\int u_{2\sigma}\varphi_C\, \psi_2  \big (   \int_0^{\sigma} \frac{u_{2\sigma\sigma}}{u_1u_2}\, w\, d\sigma'  \big ) \, d\mu
 \\
&= -2\bar{I} + 2I.
\end{split}
\end{equation}
Furthermore, we  can write
\[I := \int u_{2\sigma} \varphi_C \psi_2 \, \int_0^{\sigma}\frac{u_{2\sigma\sigma}}{u_1u_2}\, w\, d\sigma'\, d\mu =: I_1 + I_2\]
where
\[I_1 := \int u_{2\sigma}\varphi_C \psi_2 \int_0^{\sigma} \frac{u_{2\sigma\sigma}}{u_1u_2} w_C\, d\sigma' \, d\mu\]
and
\[I_2 := \int u_{2\sigma}\varphi_C \psi_2 \int_0^{\sigma} \frac{u_{2\sigma\sigma}}{u_1u_2} (1-\varphi_C) w\, d\sigma'\, d\mu.\]
To estimate $I_1$, we write $I_1=2 I_1|_{A_+}$ where   $I_1|_{A_+}$ denotes  
the same integral  $I_1$ restricted on $\supp \varphi_C \cap \{ \sigma \geq 0 \}$.  To  estimate $I_1|_{A_+}$,  we 
note that by Fubini's Theorem and \eqref{eq-der-inter} for  a small $\eta > 0$, we have 
\begin{equation}
\label{eq-I1-inter}
\begin{split}
\Big| {I_1}_{ |_{A_+}} \Big| &\le C(\theta)\, \int_0^{u_1^{-1}(\theta,\tau)}|u_{2\sigma\sigma}| |w_C|\,\int_{\sigma'}^{u_1^{-1}(\theta,\tau)} |u_{2\sigma}|\varphi_C |\psi_2| \, e^{-\frac{\sigma^2}{4}}\, d\sigma\, d\sigma' \\
&\le \frac{C(\theta)}{\sqrt{|\tau|}}\, \int_0^{u_1^{-1}(\theta,\tau)}|u_{2\sigma\sigma}| |w_C| \, e^{-\frac{(1-\eta)\sigma'^2}{4}} \, \int_{\sigma'}^{u_1^{-1}(\theta,\tau)} |\psi_2| \, e^{-\eta\sigma^2/4}\, d\sigma\, d\sigma'. 
\end{split}
\end{equation}
Integration by parts readily gives the bound 
\begin{equation*}
\begin{split}
\int_{\sigma'}^{u_1^{-1}(\theta,\tau)} |\psi_2| \, e^{-\eta\sigma^2/4}\, d\sigma &\le \int_{\sigma'}^{\infty} (\sigma^2 + 2) \, e^{-\eta \, \sigma^2/4}\, d\sigma \leq C \, \big ( 1  +  \sigma' ) \, e^{-\eta\,\sigma'^2/4}
\end{split}
\end{equation*}
which inserting in the above estimate yields 
\begin{equation*}
\begin{split}
\Big|  {I_1}_{ |_{A_+}} \Big| 
&\le \frac{C(\theta)}{\sqrt{|\tau|}}\, \int_0^{u_1^{-1}(\theta,\tau)} \,  |u_{2\sigma\sigma}||w_C| (\sigma+1)\, e^{-\frac{\sigma^2}{4}}\, d\sigma \\ 
&\le \frac{C(\theta)}{\sqrt{|\tau|}}\, \Big(\int_{\cC_{\theta}} u_{2\sigma\sigma}^2\, d\mu\Big)^{\frac12}\, \Big(\Big(\int w_C^2 \, \sigma^2\, d\mu\Big)^{\frac12} + \|w_C\|\Big) \\
&\le \frac{C(\theta)}{\sqrt{|\tau|}}\, \Big(\int_{\cC_{\theta}} u_{2\sigma\sigma}^2\, d\mu\Big)^{\frac12}\, \|w_C\|_{\cD}. 
\end{split}
\end{equation*}
Using  \eqref{eq-L2-asymp0} we conclude the bound 
\begin{equation*}
\label{eq-I1}
\int_{\tau-1}^{\tau} | I_1 |\, d\tau' = 2 \int_{\tau-1}^{\tau} | {I_1}_{ |_{A_+}}|\, d\tau' \le \frac{C(\theta)}{|\tau|^{\frac32}} \, \|a\|_{\cH,\infty} \leq 
\frac{\epsilon}{|\tau|}\, \|a\|_{\cH,\infty}.
\end{equation*}
To estimate $I_2$ we simply use 
\[|I_2| \le C(\theta)\Big(\int w^2 \chi_{D_{\theta}} \, d\mu\Big)^{\frac12} e^{-c_0(\theta)\, |\tau|}\]
which together  \eqref{eq-par-int} and \eqref{eq-transition-est} yields
\[\int_{\tau-1}^{\tau} |I_2|\, d\tau' \leq  \frac{\epsilon}{|\tau|}\, \|a\|_{\cH,\infty}.\]
Combining these two estimates for $I_1$ and $I_2$ finally gives us the bound 
\begin{equation}
\label{eq-I}
\int_{\tau-1}^{\tau} |I|\, d\tau' \leq \frac{\epsilon}{|\tau|}\, \|a\|_{\cH,\infty}.
\end{equation}
It remains to estimate $\bar I$. For that we first integrate by parts using that $w_\sigma(0,\tau)=0$, to obtain 
\begin{equation*}
\label{eq-I-term}
\begin{split}
\bar{I} &:= \int u_{2\sigma} \varphi_C \psi_2\, \int_0^{\sigma} \frac{w_{\sigma\sigma}}{u_1}\, d\sigma'\, d\mu \\
&= \int u_{2\sigma}\varphi_C \psi_2 \int_0^{\sigma} \frac{w_{\sigma} u_{1\sigma}}{u_1^2}\, d\sigma' d\mu + \int u_{2\sigma}\varphi_C \psi_2 \frac{w_{\sigma}}{u_1}\, d\mu  \\
&= \int u_{2\sigma}\varphi_C \psi_2 \int_0^{\sigma} \frac{w_{\sigma} u_{1\sigma}}{u_1^2}\, d\sigma' d\mu 
 -   \int \frac{u_{2\sigma}\psi_2}{u_1} (\varphi_C)_{\sigma} w\, d\mu  +  \int\frac{u_{2\sigma} \psi_2}{u_1}\, (w_C)_{\sigma}\, d\mu.
\end{split} 
\end{equation*}
Note that similarly to estimating $I_1$ term above (see \eqref{eq-I1-inter}) we have 
\begin{equation*}
\Big|\int u_{2\sigma}\varphi_C \psi_2 \, \int_0^{\sigma} \frac{w_{\sigma} u_{1\sigma}}{u_1^2}\, d\sigma'\, d\mu\Big| \le \frac{C(\theta)}{\sqrt{|\tau|}}\, \Big(\int_{\cC_{\theta}} u_{2\sigma}^2\, d\mu\Big)^{\frac12}\, (\|w_C\|_{\cD} + \|(w_C)_{\sigma\sigma}\|_{\cH} )
\end{equation*}
hence, using \eqref{eq-par-int}, \eqref{eq-L2-asymp0}, \eqref{eq-sec-der-energy}, \eqref{eq-we-need1} and \eqref{eq-we-need2}  we obtain
\begin{equation}
\label{eq-I-term-1}
\Big|\int_{\tau-1}^{\tau} \int u_{2\sigma}\varphi_C \psi_2 \, \int_0^{\sigma} \frac{w_{\sigma} u_{1\sigma}}{u_1^2}\, d\sigma'\, d\mu\, d\tau'\Big| \leq  \frac{\epsilon}{|\tau|}\, \|a\|_{\cH,\infty}.
\end{equation}
Similarly,  we obtain
\begin{equation}
\label{eq-I-term-2}
\Big|\int_{\tau-1}^{\tau} \int \frac{u_{2\sigma}\psi_2}{u_1} (\varphi_C)_\sigma\, w\, d\mu\, d\tau'\Big| \leq  \frac{\epsilon}{|\tau|}\, \|a\|_{\cH,\infty}.
\end{equation}
%
%
For the last term that gives  $\bar I$ we need to estimate 
\begin{equation}
\begin{split}
\label{nonlocal-main}
\int \frac{u_{2\sigma} \psi_2}{u_1}\, (w_C)_\sigma\, d\mu = \frac{1}{\sqrt{2}} &\int \frac{u_{2\sigma}\psi_2(\sqrt{2} - u_1)}{u_1}\, (w_C)_\sigma\, d\mu \\&\qquad +  \frac{1}{\sqrt{2}}\, \int u_{2\sigma}\psi_2 (w_C)_\sigma\, d\mu. 
\end{split}
\end{equation}
For  first term in \eqref{nonlocal-main}, after applying   Cauchy-Schwarz inequality, \eqref{eq-der-inter} and using Lemma 4.12 from \cite{ADS1}, \eqref{eq-sec-der-energy}, \eqref{eq-we-need1}, \eqref{eq-we-need2} and \eqref{eq-L2-asymp1} we obtain for $\tau \leq \tau_0 \ll -1$ the bound 
\begin{equation}
\label{equation-nonlocal-main1}
\begin{split}
&\Big|\int \frac{u_{2\sigma}\psi_2(\sqrt{2} - u_1)}{u_1}\, (w_C)_\sigma\, d\mu \Big| \le \\
&\le 
\frac{C(\theta)}{\sqrt{|\tau|}}\, (\|(w_C)_\sigma\|_{\cH} + \|\sigma\, (w_C)_\sigma\|_{\cH})\, (\|u_1 - \sqrt{2}\|_{\cH} + \|\sigma\,(u_1 - \sqrt{2})\, \|_{\cH})\\
&\le \frac{C(\theta)}{\sqrt{|\tau|}}\, (\|w_{C}\|_{\cD} + \|(w_C)_{\sigma\sigma}\|_{\cH})\, (\|u_1 - \sqrt{2}\|_{\cD}) \\
&\le \frac{C(\theta)}{\sqrt{|\tau|}}\, \|w_C\|_{\cD} \,\,\|u_1 - \sqrt{2}\|_{\cD} \le \frac{\epsilon}{|\tau|}\, \|a\|_{\cH,\infty}.
\end{split}
\end{equation}
For the second  term on the right hand side in \eqref{nonlocal-main} we have
\begin{equation}
\label{eq-I-term-6}
\begin{split}
\frac{1}{\sqrt{2}}\, &\int u_{2\sigma} (w_C)_\sigma\psi_2\, d\mu \\&= \frac{1}{\sqrt{2}} \int\Big(u_2 - \sqrt{2} + \frac{\sqrt{2}\psi_{2}}{8|\tau|}\Big)_{\sigma}\, (w_C)_\sigma\,\psi_2\, d\mu - \int \frac{\psi_{2\sigma}}{8|\tau|}\, \psi_2 (w_C)_\sigma\, d\mu \\
&= \frac{1}{\sqrt{2}} \int\Big(u_2 - \sqrt{2} + \frac{\sqrt{2}\psi_{2}}{8|\tau|}\Big)_{\sigma}\, (w_C)_\sigma\,\psi_2\, d\mu\\& \quad - \frac{1}{8|\tau|} \int \psi_{2\sigma}\psi_2 \hat{w}_{C\sigma}\, d\mu - \frac{a(\tau)}{8|\tau|}\,\int \psi_{2\sigma}^2 \psi_2\, d\mu.
\end{split}
\end{equation}
Note that by Cauchy-Schwarz inequality, Lemma 4.12 in \cite{ADS1},  \eqref{eq-par-int}, \eqref{eq-L2-asymp0} and \eqref{eq-we-need2} we have
\begin{equation}
\label{eq-I-term-4}
\|\psi_2\|^{-1}\,\Big|\frac{1}{\sqrt{2}}\int_{\tau-1}^{\tau} \int\Big(u_2 - \sqrt{2} + \frac{\sqrt{2}\psi_{2}}{8|\tau|}\Big)_{\sigma}\, (w_C)_\sigma\,\psi_2\, d\mu\, d\tau'\Big| \leq  \frac{\epsilon}{|\tau|}\, \|a\|_{\cH,\infty}.
\end{equation}
Also, by  Cauchy-Schwarz inequality and \eqref{eq-par-int} we get
\begin{equation}
\label{eq-I-term-5}
\Big|\int_{\tau-1}^{\tau} \frac{1}{8|\tau'|} \, \int \psi_{2\sigma}\psi_2 \, \hat{w}_{C\sigma}\, d\mu\, d\tau'\Big| \leq  \frac{\epsilon}{|\tau|}\, \|a\|_{\cH,\infty}
\end{equation}
and using   $\int \psi_{2\sigma}^2 \psi_2\, d\mu = 4\|\psi_2\|^2$ and that $\langle \psi_2^2, \psi_2\rangle = 8\, \|\psi_2\|^2$ we have 
\[\int \psi_{2\sigma}^2\psi_2\, d\mu = \frac12 \langle \psi_2^2,\psi_2\rangle.\]
The last identity   and estimates \eqref{eq-I-term-1}, \eqref{eq-I-term-2}, \eqref{nonlocal-main}, \eqref{equation-nonlocal-main1}, \eqref{eq-I-term-6}, \eqref{eq-I-term-4} and \eqref{eq-I-term-5}  yield
\be\label{eqn-barI}
\Big|\int_{\tau-1}^{\tau} \Big(-2\bar{I} - \frac{a(\tau)}{8|\tau|}\, \langle \psi_2^2, \psi_2\rangle\Big)\,d\tau'\Big| \leq  \frac{\epsilon}{|\tau|}\, \|a\|_{\cH,\infty}.\ee 
Combining this with \eqref{eq-ENL-rewrite}, \eqref{eq-I} and \eqref{eqn-barI} leads to \eqref{eq-proj-nonlocal} as desired.

\sk
\sk 
Finally, inserting  in \eqref{eq-F-tau} the main estimates from the three steps,   \eqref{eq-proj-error-cut-off}, \eqref{eq-proj-error-main} and \eqref{eq-proj-nonlocal} concludes the proof of Claim \ref{claim-Fs}.
\end{proof}
\smallskip

The proof of Theorem \ref{thm-main} is now complete.
\end{proof}

%
%
%
%
%
%
%



\appendix

\appendix

\section{A priori bounds for rotationally symmetric data}
\label{sec-appendix}

In this section we will establish some preliminary  a'priori bounds for rotationally symmetric solutions. 
We assume throughout this section that 
$u(\cdot, \tau)$ is a solution of \eqref{eq-u}. We recall that we have denoted by  $\sigma_{\pm}(\tau)$  the points of maximal scalar curvature of
our rescaled solution  solution $\bar M_\tau$. For any $\theta \in (\sqrt{2},0)$, let us recall the definition of the {\em cylindrical region} (in un-rescaled coordinates), that is 
$$\cC_{\theta} := \{ (\sigma,\tau): \,\,   u(\sigma,\tau) \geq \frac{\theta}{4}  \}.$$

\subsection{Derivative bounds in the cylindrical region} 

Results in \cite{ADS3} and monotonicity of $u_{\sigma}$ immediately imply that 
for every $ \theta \in (0, \sqrt{2})$   there exist  $C(\theta)$, $c(\theta)$ and $\tau_0 \ll -1$ such that for  all $\tau \le \tau_0$ we have
\be\label{eqn-est-der100}
(\sigma,\tau)\in \cC_\theta \implies  |u_{\sigma}| \le \frac{C(\theta)}{\sqrt{|\tau|}}.
\ee
It also follows that 
\begin{equation}
\label{eq-dist1-1}
(\sigma,\tau) \in \cC_{\theta} \implies |\sigma-\sigma_{\pm}(\tau)| \geq c(\theta) \, |\sigma_{\pm}(\tau)|,  \qquad \mbox{for}\,\, \tau \leq \tau_0 \ll -1.
\end{equation}
Moreover, using convexity of $u(\sigma,\tau)$ as in \cite{ADS1}, we have
\begin{equation}
\label{eq-conv-u}
|u_{\sigma}(\sigma,\tau)| \le \frac{C}{\sigma_+(\tau) - \sigma},
\end{equation}
if $u_{\sigma} \leq  0$. If $u_{\sigma} \ge 0$, similar estimate holds if we replace $\sigma_ +(\tau)$ by $|\sigma_-(\tau)|$.

\sk

We will next derive higher order derivative estimates which  hold away from the tips of our surface. 

\begin{lemma}
\label{lem-loc-est}
For any $\theta < \sqrt{2}$ there exist constants $C(\theta) > 0$ and $\tau_0=\tau_0(\theta)\ll -1$ so that the bounds 
\begin{equation}
\label{eq-der-inter}
|u_\sigma| + |u_{\sigma\sigma}| + |u_{\sigma\sigma\sigma}| \le \frac{C(\theta)}{\sqrt{|\tau|}}
\end{equation}
hold on $\cC_\theta$, for all $\tau \leq \tau_0$. 
\end{lemma}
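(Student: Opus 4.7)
The bound on $|u_\sigma|$ is already in \eqref{eqn-est-der100}, so the task is to upgrade it to the same decay for $u_{\sigma\sigma}$ and $u_{\sigma\sigma\sigma}$. My plan is to use interior parabolic regularity applied to \eqref{eqn-u}, exploiting that on $\cC_\theta$ the solution $u$ is bounded above and below by positive constants (in fact $\theta/4 \le u \le 2$ for $\tau \le \tau_0$, by Theorem~\ref{thm-asym} combined with $u\to\sqrt 2$ uniformly on compact $\sigma$ and $u\le \sqrt{2-z^2/2}+o(1)\le\sqrt 2$ globally), and its first derivative is already small of order $|\tau|^{-1/2}$. Consequently the nonlinear coefficients $u_\sigma^2/u$, $1/u$, $u/2$ of \eqref{eqn-u} are smooth in $(u,u_\sigma)$ and uniformly bounded along with their derivatives on the relevant range.

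The concrete scheme is to set $v:=u_\sigma$ and differentiate \eqref{eqn-u} once in $\sigma$; because the frame $(\partial_\tau,\partial_\sigma)$ has been adjusted so that these vector fields commute, this produces a clean linear parabolic equation for $v$ with bounded coefficients and a source term made up of products of $u$, $u_\sigma$, $u_{\sigma\sigma}$, and the nonlocal factors $J$ and $J_\sigma$. Applying interior Schauder (or $W^{2,p}$) estimates on unit parabolic cylinders $Q_1(\sigma_0,\tau_0)\Subset\cC_{\theta/2}$ gives
\[
\sup_{Q_{1/2}}\big(|v_\sigma|+|v_{\sigma\sigma}|\big)\le C(\theta)\Big(\sup_{Q_1}|v|+\sup_{Q_1}|\text{source}|\Big),
\]
and the $|\tau_0|^{-1/2}$ decay for $|u_{\sigma\sigma}|,|u_{\sigma\sigma\sigma}|$ will follow once both $\sup_{Q_1}|v|$ and $\sup_{Q_1}|\text{source}|$ are shown to be $O(|\tau_0|^{-1/2})$. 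The first is \eqref{eqn-est-der100}; the second decomposes into terms each of which carries a factor of $u_\sigma$ or $u-\sqrt 2$, both of which are $O(|\tau|^{-1/2})$ on $\cC_\theta$ by Theorem~\ref{thm-asym} (in fact better on compact subsets of $\sigma$).

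The main obstacle will be the nonlocal term: $J(\sigma,\tau)=2\int_0^\sigma u_{\sigma\sigma}/u\,d\sigma'$ appears in the $v$-equation, and its $\sigma$-derivative $J_\sigma=2u_{\sigma\sigma}/u$ is \emph{precisely} the quantity we are trying to control. To break this circularity I will run a two-step bootstrap. First, a crude application of Schauder to \eqref{eqn-u} itself yields only $|u_{\sigma\sigma}|\le C(\theta)$ (bounded, not yet decaying). Using the containment $\cC_\theta\subset\{|\sigma|\le C\sqrt{|\tau|}\}$ that follows from Theorem~\ref{thm-asym}(ii), this turns into $|J|\le C(\theta)\sqrt{|\tau|}$ on $\cC_\theta$, hence $|Ju_\sigma|\le C(\theta)$, and the zeroth-order coefficient $J_\sigma$ in the $v$-equation is controlled by the crude $L^\infty$ bound on $u_{\sigma\sigma}$. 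Then a second Schauder run, now with uniformly bounded coefficients, produces the improved decay $|u_{\sigma\sigma}|\le C(\theta)/\sqrt{|\tau|}$, after which a third application (differentiating once more and using the now-established decay as input for the source) yields the analogous estimate for $u_{\sigma\sigma\sigma}$, completing \eqref{eq-der-inter}.
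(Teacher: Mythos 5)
The overall plan — differentiate \eqref{eqn-u} in $\sigma$ and apply interior parabolic regularity to $v=u_\sigma$ — is the paper's, and your identification of $u_\sigma$ as the object to estimate is correct. However, there are three genuine gaps in the way you carry it out.

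First, the "circularity" you built the bootstrap around does not exist. When you differentiate \eqref{eqn-u} in $\sigma$, the term $-J_\sigma u_\sigma = -\tfrac{2u_{\sigma\sigma}}{u}u_\sigma$ cancels exactly against the cross term $\tfrac{2u_\sigma u_{\sigma\sigma}}{u}$ coming from $\partial_\sigma(u_\sigma^2/u)$, and $-\tfrac12 u_\sigma$ cancels with $+\tfrac12 u_\sigma$. The resulting equation is
\[
v_\tau = v_{\sigma\sigma} - \tfrac{\sigma}{2}\,v_\sigma - J\,v_\sigma + \frac{v(1-v^2)}{u^2},
\]
which contains $J$ but not $J_\sigma$. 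The obstacle your three-step bootstrap is designed to circumvent is therefore illusory.

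Second, your bootstrap would in any case deliver a useless estimate on $J$. Starting from $|u_{\sigma\sigma}|\le C(\theta)$ and $|\sigma|\le C\sqrt{|\tau|}$ you get $|J|\le C(\theta)\sqrt{|\tau|}$, i.e.\ $J$ appears as a drift coefficient of size $\sim\sqrt{|\tau_0|}$ on any unit parabolic cylinder, and the Schauder constant then degenerates as $\tau_0\to-\infty$, destroying the claimed decay. The correct estimate $|J|\le C(\theta)/\sqrt{|\tau|}$ follows directly (no bootstrap) by integrating $J$ by parts: since $u_\sigma(0,\tau)=0$ by reflection symmetry, $J = 2\,\tfrac{u_\sigma}{u} + 2\int_0^\sigma \bigl(\tfrac{u_\sigma}{u}\bigr)^2 d\sigma'$, which depends only on $u$ and $u_\sigma$ and is controlled by \eqref{eqn-est-der100} and the diameter bound.

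Third, you assert the $v$-equation has "bounded coefficients," but the drift $-\tfrac{\sigma}{2}$ has size $\sim\sqrt{|\tau_0|}$ on $\mathcal{C}_\theta$, so interior estimates on fixed unit cylinders $Q_1(\sigma_0,\tau_0)$ would again pick up a $\tau_0$-dependent constant. The paper removes this by transporting along the characteristics of the scaling drift, setting $\bar{z}(\eta,\bar\tau) := v(\sigma_0 e^{\bar\tau/2}+\eta,\,\tau_0+\bar\tau)$; in the $(\eta,\bar\tau)$ variables on the rectangle $|\eta|\le1$, $-1\le\bar\tau\le0$, the drift becomes $-\tfrac{\eta}{2}-J$, uniformly bounded, and the quasilinear structure conditions of \cite{LSU} then give $|\bar z_\eta(0,0)|+|\bar z_{\eta\eta}(0,0)| \le C\sup_{\mathcal{Q}}|\bar z| \le C(\theta)/\sqrt{|\tau_0|}$ in one pass, using the convexity bound \eqref{eq-conv-u} to control $\sup|\bar z|$ on the tilted rectangle. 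Without this change of variables (or something playing the same role) your argument does not give a bound uniform in $\tau_0$.
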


\begin{proof}
We first notice that by \eqref{eq-dist1-1} we have that for any $\theta < \sqrt{2}$, there exists an $\alpha=\alpha(\theta) <1$ such that 
$$ (\sigma,\tau) \in  \cC_{\theta} \implies \sigma  \in [\alpha\sigma_-(\tau), \alpha \, \sigma_+(\tau)]  \quad \mbox{and} \quad |\sigma_{\pm} - \sigma | \geq (1-\alpha) \, |\sigma_{\pm}(\tau)|.$$

The bound on $|u_\sigma|$ readily follows by \eqref{eqn-est-der100}.  
To obtain higher order derivative estimates on $u$ we first differentiate the evolution equation \eqref{eq-u} with respect to $\sigma$.  If we write $\ud := u_{\sigma}$ then we obtain
\[\frac{\pd \ud}{\pd\tau} = \ud_{\sigma\sigma} + \frac{\ud (1 - \ud^2)}{u^2}.\]
The vector fields $\partial_{\tau}$ and $\partial_{\sigma}$ do not commute. To overcome this we introduce commuting variables as in Section \ref{sec-outline}. Then the equation becomes
\[
\frac{\pd \ud}{\pd\tau} = \ud_{\sigma\sigma} -\frac{\sigma}{2}\ud_{\sigma} - J(\sigma,\tau) \, \ud_{\sigma} + \frac{\ud (1 - \ud^2)}{u^2}
\]
where $J(\sigma,\tau)$ is given by \eqref{eqn-defn-J}. 
We will localize the proof of our desired estimate \eqref{eq-der-inter} by introducing the following change of variables.  Assume with no loss of generality that $\sigma_0 > 0$. Given a point $(\sigma_0, \tau_0)$ in space-time with $\sigma_0 \leq \alpha \, \sigma_+(\tau_0)$, we let
\[
\bar{\ud}(\eta,\bar{\tau}) := \ud\bigl(\sigma_0e^{\bar{\tau}/2} + \eta, \tau_0 + \bar{\tau}\bigr).
\]
If we choose $-\tau_0$ large enough, depending on $\alpha\in(0,1)$, then this function is defined on the rectangle
\[
\cQ := \{(\eta,\bar{\tau})\,\,\,|\,\,\, |\eta| \le 1, -1 \le \bar{\tau} \le 0\}.
\]
To see this, recall that $s_+(t)$ for the unrescaled  flow is monotonically decreasing. Hence   $e^{-\tau/2} \, \sigma_+(\tau) = s(-e^{-\tau})$,  is a decreasing function of $\tau$, and thus
\begin{equation}
\label{eq-comp-diam1}
\sigma_+(\tau_0) \le e^{-\bar{\tau}/2} \sigma_+(\tau_0 + \bar{\tau}) \quad 
\text{ for $\bar{\tau}\in [-1,0]$.}
\end{equation}
For any $\alpha<1$, we choose $\alpha'= (1+\alpha)/2$.  We also choose $\tau_0(\alpha)$ so that
\[
\alpha \, \sigma_+(\tau') +1 \leq \alpha' \, \sigma_+(\tau') \quad \text{for all $\tau'<\tau_0$}.
\]
For any $(\eta, \bar{\tau}) \in \cQ$ we then get
\[
\sigma_0 e^{\bar{\tau}/2} + \eta \leq \alpha \sigma_+(\tau_0) e^{\bar{\tau}/2} + 1 \leq \alpha\sigma_+(\tau_0+\bar{\tau}) + 1 \leq \alpha'\sigma_+(\tau_0+\bar{\tau}).
\]
It follows that $\bar{\ud}(\eta,\bar{\tau}) = \ud(\sigma_0 \, e^{\bar{\tau}/2}+\eta, \tau_0+\bar{\tau})$ is indeed defined on $\cQ$.

\sk
On the other hand, a  computation shows that $\bar{\ud}$ satisfies
\[
\frac{\pd\bar{\ud}}{\pd\bar{\tau}}
= \bar{\ud}_{\eta\eta} - \eta \frac{\bar{\ud}_{\eta}}{2} - J(\sigma,\tau_0+\bar{\tau}) \, \bar{\ud}_{\eta} + \frac{\bar{\ud} (1 - \bar{\ud}^2)}{u^2}
\]
which can be written as 
\begin{equation}
\label{eq-vbar-evolution}
\frac{\pd\bar{\ud}}{\pd\tau}
= a(\eta,\bar{\tau},\bar{\ud},\bar{\ud}_{\eta})\, \bar{\ud}_{\eta\eta} + b(\eta,\bar{\tau},\bar{\ud},\bar{\ud}_{\eta}),
\end{equation}
where
\[
a(\eta,\bar{\tau},\bar{\ud},p) =1, 
\qquad
b(\eta,\bar{\tau},\bar{\ud},p) = \frac{\bar{\ud}\, (1-\bar{\ud}^2)}{u^2} - \frac{\eta}{2}\, p - J(\bar \sigma,\tau_0+\bar{\tau})\, p,
\]
and 
\begin{equation}
\label{eq-JJ}
J(\bar \sigma,\tau_0+\bar{\tau}) =  2\, \frac{\bar z}{u(\bar \sigma,\tau_0+\bar{\tau})}  + 2\int_0^{\bar \sigma} \frac{u_{\sigma'}^2}{u^2}\, d\sigma',
\quad \mbox{for} \,\, \bar \sigma:= \sigma_0 \, e^{\bar{\tau}/2}+\eta.
\end{equation}
Estimate \eqref{eq-conv-u} combined with $u(0, \tau) = \sqrt{2(n-1)}+\delta(\tau)$ tell us that on the rectangle $\cQ$ we can bound $\bar{\ud} = u_{\sigma}(\sigma_0e^{\bar{\tau}/2} +\eta,\tau_0+\bar{\tau})$ by
\[
|\bar{\ud}|
\le \frac{C}{\sigma_+(\tau_0+\tau) - \sigma_0e^{\bar{\tau}/2}-\eta}
\le \frac{C}{\sigma_+(\tau_0+\bar{\tau}) - \sigma_0 e^{\bar{\tau}/2} -1}.
\]
By \eqref{eq-comp-diam1} we have $e^{\bar{\tau}/2}\sigma_+(\tau_0) \le \sigma_+(\tau_0+\bar{\tau})$, which then implies
\[
|\bar{\ud}|\le \frac{C}{e^{\bar{\tau}/2}\sigma_+(\tau_0) - \sigma_0 e^{\bar{\tau}/2} -1}
\le \frac{C}{\sigma_+(\tau_0) - \sigma_0 - e^{-\bar{\tau}/2}},
\]
for $\bar{\tau}\in [-1,0]$.  Since $\sigma_0 \leq \alpha \, \sigma_+(\tau_0)$, we have
\[
\sigma_0+e^{-\bar{\tau}/2} \leq \sigma_0+e^{1/2}\leq \alpha'\sigma_+(\tau_0),
\]
with $\alpha'=(1+\alpha)/2$, assuming again that $-\tau_0$ is sufficiently large.  In the end we get the following estimate for $\bar \ud$ on the rectangle $\cQ$
\begin{equation}
\label{eq-v-bound-on-Q}
|\bar{\ud}| \leq \frac{C(\alpha)}{\sigma_+(\tau_0)}.
\end{equation}
We next apply this bound to the coefficients $a$ and $b$ in the equation \eqref{eq-vbar-evolution} for $\bar{\ud}$.  We have that $a(\eta,\tau,\bar{\ud},p) = 1$. Using  \eqref{eq-JJ}, \eqref{eq-v-bound-on-Q} and asymptotics proved in \cite{ADS3}, we get
\[|J(\bar \sigma,\tau_0+\bar{\tau})| \le  \frac{C(\alpha)}{\sigma_+(\tau_0+\bar{\tau}) - \sigma_0 e^{\bar{\tau}/2} - 1} + C(\alpha) \int_0^{\bar \sigma} \frac{d\sigma'}{(\sigma_+(\tau_0+\bar{\tau}) - \sigma')^2}.\]
Using that $\bar \sigma = \sigma_0 \, e^{\bar{\tau}/2}+\eta$, $\sigma_+(\tau_0+\bar{\tau}) \ge e^{\bar{\tau}/2}\, \sigma_+(\tau_0)$,  $\sigma_0 \leq \alpha \, \sigma_+(\tau_0)$ and $\sigma_+(\tau_0) \sim \sqrt{|\tau_0|}$, for $\tau_0 \ll -1$ (the latter   follows from our results in \cite{ADS3}) we have
\begin{equation*}
\begin{split}
|J(\bar \sigma,\tau_0+\bar{\tau})| &\le \frac{C(\alpha)}{(e^{\bar{\tau}/2}\, (\sigma_+(\tau_0) - \sigma_0) -1} + C(\alpha)\, \int_0^{\bar \sigma} \frac{d\sigma'}{(e^{\bar{\tau}/{2}} \sigma_+(\tau_0) -\sigma')^2}   \\
&\le  \frac{C(\alpha)}{(e^{\bar{\tau}/2}\, (\sigma_+(\tau_0) - \sigma_0) -1} + \frac{C(\alpha)\, ( 1 + \alpha \, \sigma_+(\tau_0) e^{\bar{\tau}/2})}{(e^{\bar{\tau}/2}\sigma_+(\tau_0) (1 - \alpha) - 1)^2}\\
&\le \frac{C(\alpha)}{\sqrt{|\tau_0|}}.  
\end{split}
\end{equation*}
The above bound on $J(\bar \sigma,\tau_0+\bar{\tau})$, the lower bound $u \ge \theta$ and \eqref{eq-v-bound-on-Q} imply
\[
|b(\eta,\tau,\bar{\ud},p)| \le C\, (1+p^2) = C \, a(\eta,\tau,\bar{\ud},p)\, (1+p^2).
\]
As a consequence of these bounds on the coefficients $a$ and $b$ and   classical interior estimates   for equation \eqref{eq-vbar-evolution} (see \cite{LSU}), we obtain  
\[
|\bar{\ud}_{\eta}(0,0)| + |\bar{\ud}_{\eta\eta}(0,0)|
\le C_0\sup_{\cQ} |\bar{\ud}(\eta,\tau)|
\le \frac{C(\alpha)}{\sqrt{|\tau_0|}}.
\]
Finally, since $\bar{\ud}_\eta(0,0) = u_{\sigma\sigma}(\sigma_0,\tau_0)$, $\bar{\ud}_{\eta\eta}(0,0) = u_{\sigma\sigma\sigma}(\sigma_0,\tau_0)$ 
and $\alpha = \alpha(\theta)$ this completes the proof of Lemma~\ref{lem-loc-est}.
\end{proof}

\subsection{The concavity of $u^2$}

Our aim in this section is to prove the following result which will allow us to better estimate $u(\cdot,\tau)$ and its derivatives in the collar region. 

\begin{prop}\label{claim-2} Let $(S^3, g(t))$ be a compact, ancient solution to the Ricci flow on $S^3$. Then there exist $\tau_0 \ll -1$ and $L \gg 1$ so that the function $u^2$ is concave on $u \ge {L}/{\sqrt{|\tau|}}$, for $\tau \leq \tau_0 \ll -1$. 
\end{prop}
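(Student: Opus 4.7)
My plan is to translate the concavity $(u^{2})_{\sigma\sigma}\le 0$ into tip variables: using $Y=u_{\sigma}^{2}$ with $Y_{u}=2u_{\sigma\sigma}$, the claim becomes
$$P(u,\tau):=uY_{u}+2Y\le 0,$$
or equivalently, in the soliton variable $\rho=u\sqrt{|\tau|}$ with $Z(\rho,\tau)=Y(u,\tau)$,
$$\rho Z_{\rho}+2Z\le 0 \qquad \text{on } \rho\ge L.$$
I will verify this on three overlapping pieces covering $\{\rho\ge L\}$ and use a maximum principle argument to glue them together.

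\emph{Bryant soliton layer $L\le\rho\le 2L$.} Inserting the ansatz $Z_{0}(\rho)=\rho^{-2}+a\rho^{-4}+O(\rho^{-6})$ into the limiting soliton ODE (obtained by sending $|\tau|\to\infty$ in \eqref{eqn-Z}) and matching the $\rho^{-6}$-coefficient forces $a=2$, so that $\rho Z_{0,\rho}+2Z_{0}=-4\rho^{-4}+O(\rho^{-6})$. Thus there exists $L_{0}$ such that $\rho Z_{0,\rho}+2Z_{0}\le -2\rho^{-4}$ on $[L_{0},2L_{0}]$. By Proposition \ref{prop-convergence-tip}, $Z(\cdot,\tau)\to Z_{0}$ smoothly on compact $\rho$-sets, so for $\tau\le\tau_{0}\ll -1$ the same bound (halved) persists for $Z(\cdot,\tau)$ on $[L_{0},2L_{0}]$.

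\emph{Cylindrical layer $u\ge 2\theta$.} Theorem \ref{thm-asym}(i), upgraded by standard parabolic regularity (Lemma \ref{lem-loc-est}), gives $u^{2}(\sigma,\tau)=2-\frac{\sigma^{2}-2}{2|\tau|}+o(|\tau|^{-1})$ in $C^{2}_{\rm loc}$, hence $(u^{2})_{\sigma\sigma}=-|\tau|^{-1}(1+o(1))<0$. Off compact $\sigma$-sets, the sharp decay \eqref{eqn-good} ($\sigma u u_{\sigma}\to -2$) together with Lemma \ref{lem-loc-est} keeps the sign correct throughout the cylindrical region.

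\emph{Collar layer $L/\sqrt{|\tau|}\le u\le 2\theta$.} This is the main obstacle. Setting $v=u^{2}$ and using the commuting-variable form of \eqref{eqn-u}, one checks $v_{\tau}=v_{\sigma\sigma}-\tfrac{\sigma}{2}v_{\sigma}-Jv_{\sigma}+v-2$, so $P=v_{\sigma\sigma}$ evolves by
$$P_{\tau}=P_{\sigma\sigma}-\bigl(\tfrac{\sigma}{2}+J\bigr)P_{\sigma}-2J_{\sigma}P-J_{\sigma\sigma}v_{\sigma}.$$
The zeroth-order coefficient $-2J_{\sigma}=-4u_{\sigma\sigma}/u$ is $\ge 0$ by concavity of $u$, which is unfavorable for propagating $P\le 0$; the forcing term $-J_{\sigma\sigma}v_{\sigma}$ also requires control. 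The sharp a priori bounds \eqref{eqn-uss} from Proposition \ref{claim-1} ($|u_{\sigma\sigma}|\le\eta/u$ and $|u_{\sigma\sigma\sigma}|\le\eta/u^{2}$), combined with \eqref{eqn-good}, yield $|J_{\sigma\sigma}v_{\sigma}|=o(|\tau|^{-1})$ and $-2J_{\sigma}\le\eta/u^{2}$ in the collar. The strictly negative boundary values obtained above at $\rho=L_{0}$ and at $u=2\theta$ then serve as barriers: applying the maximum principle to $P$ (after multiplying by a suitable cut-off and adding a tiny positive barrier of the form $-\epsilon |\tau|^{-1}$ to handle the possibly wrong-signed zeroth-order term) gives $P\le 0$ throughout the collar for all $\tau\le\tau_{0}$ with $\tau_{0}\ll -1$ chosen depending on $L_{0}$ and $\theta$.

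The main obstacle is the collar step: $P$ is very small there and the linearized evolution has an unfavorable sign in its zeroth-order term, so without the sharp refined derivative bounds of Section \ref{sec-appendix} (in particular Proposition \ref{claim-1} and its consequence \eqref{eqn-good}) one cannot close a maximum principle argument; the soliton and cylindrical pieces, by contrast, are essentially direct consequences of the asymptotics already recorded in Theorem \ref{thm-asym}.
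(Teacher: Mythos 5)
Your translation of the concavity into $uY_u+2Y\le 0$ and your soliton-layer boundary computation (including the $a=2$ coefficient in the Bryant expansion giving $\rho Z_{0\rho}+2Z_0\sim -4\rho^{-4}$) coincide with what the paper does at the boundary $u=L/\sqrt{|\tau|}$. However, the rest of your strategy diverges, and the collar-layer step has a genuine gap. The paper does not localize to three layers; it proves a single global maximum principle (Lemma~\ref{lemma-decreasing}) on the whole set $\{u\ge L/\sqrt{|\tau|}\}$, and then obtains a contradiction from the negative boundary value by a scaling/compactness argument. The reason the global maximum principle works is an algebraic observation you miss: working with the \emph{unrescaled} quantity $Q=\psi^2$ (recall $(u^2)_{\sigma\sigma}=(\psi^2)_{ss}$ by scale invariance), the evolution reorganizes into
\[
(Q_{ss})_t=(Q_{ss})_{ss}-\Bigl(J+\frac{Q_s}{\psi^2}\Bigr)(Q_{ss})_s+\frac{4\psi_{ss}}{\psi^3}\bigl(Q_s^2-QQ_{ss}\bigr),
\]
and since $\psi_{ss}\le 0$ and $Q_s^2-QQ_{ss}=2\psi^2\psi_s^2-2\psi^3\psi_{ss}\ge 0$, the reaction term is \emph{non-positive} without any smallness hypothesis. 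This turns the maximum principle into a one-line argument, with no need for the collar barriers at all. In your rescaled form $P_\tau=P_{\sigma\sigma}-(\tfrac{\sigma}{2}+J)P_\sigma-2J_\sigma P-J_{\sigma\sigma}v_\sigma$, the coefficient $-2J_\sigma\ge 0$ is destabilizing precisely because you have separated the $\psi_{sss}$-contribution into the drift differently; the favorable cancellation $Q_s^2-QQ_{ss}\ge 0$ has been obscured.

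Concretely, the gap in your collar step: you acknowledge that $-2J_\sigma P$ is wrong-signed when $P>0$ and that $-J_{\sigma\sigma}v_\sigma$ is a forcing term, and you propose to compensate with ``a tiny positive barrier of the form $-\epsilon|\tau|^{-1}$.'' But a barrier subtracted from $P$ must satisfy the same linearized inequality up to favorable sign, and with a destabilizing zeroth-order coefficient of size $\eta/u^2$ (large near $u=L/\sqrt{|\tau|}$) and a forcing term only shown to be $o(|\tau|^{-1})$, it is not clear that a static $O(|\tau|^{-1})$ barrier survives integration backward in time from $\tau=-\infty$; no computation is offered. Similarly, your cylindrical layer invokes Theorem~\ref{thm-asym}(i) upgraded to $C^2_{\rm loc}$ via Lemma~\ref{lem-loc-est}, but Lemma~\ref{lem-loc-est} only gives $|u_{\sigma\sigma}|\le C(\theta)/\sqrt{|\tau|}$, which is much weaker than the $o(|\tau|^{-1})$ error you need to fix the sign of $(u^2)_{\sigma\sigma}=-|\tau|^{-1}(1+o(1))$; the $C^2$-upgrade requires the $L^2$-asymptotics \eqref{eq-L2-asymp0} plus interpolation, which you do not cite. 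In sum, the soliton piece is fine, but the central collar argument is unconvincing, whereas the paper's reorganization of the evolution of $Q_{ss}$ removes the difficulty entirely and also makes the cylindrical layer unnecessary.
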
 

The proof of this Proposition combines a contradiction argument based on scaling and the following maximum principle Lemma.

\begin{lemma}
\label{lemma-decreasing}
Under the assumptions of  Proposition \ref{claim-2}, there exist a $\tau_0 \ll -1$  such that if $\big ( (u^2)_{\sigma\sigma}\big ) _{\max} > 0$ is attained in $\{u \geq {L}/{\sqrt{|\tau|}}\}$, for a sufficiently large number  $L$, then $$\frac{d}{d\tau} \max (u^2)_{\sigma\sigma} \le 0.$$ 
\end{lemma}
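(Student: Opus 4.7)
The plan is to apply the parabolic maximum principle to the evolution of $w := (u^{2})_{\sigma\sigma}$. Setting $v = u^{2}$ and using the identity $2uu_{\sigma\sigma} = v_{\sigma\sigma} - 2u_{\sigma}^{2}$, the commuting-variable equation \eqref{eqn-u} yields
\begin{equation*}
  v_{\tau} \;=\; v_{\sigma\sigma} \;-\; \tfrac{\sigma}{2}\,v_{\sigma} \;-\; J\,v_{\sigma} \;+\; v \;-\; 2.
\end{equation*}
Differentiating twice in $\sigma$ (the operators $\partial_\tau$ and $\partial_\sigma$ commute in our coordinates) and observing that the term $+v-2$ contributes a $+v_{\sigma\sigma}$ which cancels the $-v_{\sigma\sigma}$ produced by applying $\partial_\sigma$ twice to $-\tfrac{\sigma}{2} v_{\sigma}$, I obtain
\begin{equation*}
  w_{\tau} \;=\; w_{\sigma\sigma} \;-\; \tfrac{\sigma}{2}\,w_{\sigma} \;-\; J\,w_{\sigma} \;-\; 2J_{\sigma}\,w \;-\; J_{\sigma\sigma}\,v_{\sigma},
\end{equation*}
with $J_{\sigma} = 2u_{\sigma\sigma}/u$ and $J_{\sigma\sigma} = 2u_{\sigma\sigma\sigma}/u - 2u_{\sigma}u_{\sigma\sigma}/u^{2}$.

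At an interior maximum $\sigma_{0}$ inside $\{u \ge L/\sqrt{|\tau|}\}$ at which $w(\sigma_{0},\tau) > 0$, the standard conditions $w_{\sigma}(\sigma_{0}) = 0$ and $w_{\sigma\sigma}(\sigma_{0}) \le 0$ hold. The first unfolds to $6u_{\sigma}u_{\sigma\sigma} + 2uu_{\sigma\sigma\sigma} = 0$, which I use to eliminate $u_{\sigma\sigma\sigma}$ from $J_{\sigma\sigma}$, obtaining $J_{\sigma\sigma}(\sigma_{0},\tau) = -8u_{\sigma}u_{\sigma\sigma}/u^{2}$. Substituting together with $v_{\sigma} = 2uu_{\sigma}$ and $u_{\sigma\sigma} = (w - 2u_{\sigma}^{2})/(2u)$, and dropping the non-positive term $w_{\sigma\sigma}(\sigma_{0})$, elementary algebra condenses the pointwise bound to
\begin{equation*}
  w_{\tau}(\sigma_{0},\tau) \;\le\; -\,\frac{2}{u^{2}}\bigl(w - 2u_{\sigma}^{2}\bigr)\bigl(w - 4u_{\sigma}^{2}\bigr).
\end{equation*}

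This right-hand side is non-positive whenever $u_{\sigma\sigma}(\sigma_{0},\tau) \le 0$, since then $w - 2u_{\sigma}^{2} = 2uu_{\sigma\sigma} \le 0$ and also $w - 4u_{\sigma}^{2} < 0$. Hence it suffices to establish the pointwise sign statement $u_{\sigma\sigma} \le 0$ throughout $\{u \ge L/\sqrt{|\tau|}\}$ for $L$ and $|\tau_{0}|$ sufficiently large; the conclusion $\tfrac{d}{d\tau}\max w \le 0$ then follows by the standard max-principle argument for smooth moving maxima. For this I appeal to the asymptotic description of the solution: Theorem \ref{thm-asym}(i) gives $u_{\sigma\sigma} = -\tfrac{1}{4|\tau|}(1+o(1)) < 0$ on bounded $\sigma$-intervals, while in the collar region $L/\sqrt{|\tau|} \le u \le 2\theta$ the smooth convergence $Z(\rho,\tau) \to Z_{0}(\rho)$ (Proposition \ref{prop-convergence-tip}), combined with the strict monotonicity $Z_{0,\rho}(\rho) < 0$ for $\rho > 0$ of the Bryant profile, gives $u_{\sigma\sigma} = \tfrac{1}{2}Z_{0,\rho}(\rho)\sqrt{|\tau|}\,(1+o(1)) < 0$. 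The main obstacle lies here: the maximum-principle computation alone leaves a ``bad strip'' $2u_{\sigma}^{2} < w < 4u_{\sigma}^{2}$ (equivalently $0 < uu_{\sigma\sigma} < u_{\sigma}^{2}$) where the inequality above does not have the right sign, and ruling out this strip unavoidably requires the global asymptotic input from Theorem \ref{thm-asym} and Proposition \ref{prop-convergence-tip}; a secondary technical point is that the same asymptotics force $w < 0$ in a neighborhood of $\{u = L/\sqrt{|\tau|}\}$, so any positive maximum is genuinely interior and the interior max-principle is applicable.
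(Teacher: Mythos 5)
Your computation of the evolution of $w:=(u^2)_{\sigma\sigma}$ and the pointwise bound $w_\tau(\sigma_0,\tau)\le -\tfrac{2}{u^2}(w-2u_\sigma^2)(w-4u_\sigma^2)$ at an interior maximum are correct, and this is essentially the same calculation as in the paper, only carried out in the rescaled commuting variables $(\sigma,\tau)$ instead of the unrescaled $(s,t)$. The paper computes the evolution of $Q_{ss}$ with $Q=\psi^2$ and isolates the zeroth-order coefficient $\tfrac{4\psi_{ss}}{\psi^3}(Q_s^2-QQ_{ss})$, which after the substitutions $Q_{ss}=w$, $Q_s^2-QQ_{ss}=u^2(4u_\sigma^2-w)$, $\psi_{ss}=(w-2u_\sigma^2)/(2u\sqrt{-t})$ is exactly your expression (up to the Jacobian between $\partial_t$ and $\partial_\tau$). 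So the two proofs reach the same reaction term via equivalent algebra.

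Where you overcomplicate, and where you make a claim that is simply false, is in the final step. You assert that ruling out the ``bad strip'' $2u_\sigma^2<w<4u_\sigma^2$, i.e.\ that $u_{\sigma\sigma}\le 0$, ``unavoidably requires the global asymptotic input from Theorem~\ref{thm-asym} and Proposition~\ref{prop-convergence-tip}.'' This is not the case. The inequality $u_{\sigma\sigma}\le 0$, equivalently $\psi_{ss}\le 0$, is nothing but the statement $K_0=-\psi_{ss}/\psi\ge 0$, which holds \emph{everywhere} (not only for $u\ge L/\sqrt{|\tau|}$, and not only for $\tau\le\tau_0$): it is the positivity of a sectional curvature, established in the introduction via Hamilton--Ivey pinching and the strong maximum principle for any closed $3$-dimensional $\kappa$-noncollapsed ancient solution. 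The paper uses exactly this — the claim $Q_s^2-QQ_{ss}=2\psi^2\psi_s^2-2\psi^3\psi_{ss}\ge 0$ is just $\psi_{ss}\le 0$, and the same sign of $\psi_{ss}$ in the prefactor $\tfrac{4\psi_{ss}}{\psi^3}$ closes the argument. Appealing to the fine asymptotic expansions and the smooth convergence to the Bryant soliton is a much heavier tool than is needed, and it obscures the fact that the favorable sign comes from elementary curvature positivity. Your remark about $w<0$ near $\{u=L/\sqrt{|\tau|}\}$ (so that a positive maximum is interior) is a correct observation, but it belongs to the proof of Proposition~\ref{claim-2} rather than to this lemma, whose hypothesis already supposes a positive maximum attained in the given set.
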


\begin{proof}
Since $(u^2)_{\sigma\sigma}$ is scaling invariant quantity, we will work in the original variables $(s, t, \psi(s,t))$. Define $Q(s,t) := \psi^2(s,t)$. Note that $(\psi^2)_{ss} = (u^2)_{\sigma\sigma}$. Hence, it is sufficient to show that $\frac{d}{dt} \max (\psi^2)_{ss} \le 0$. 

It is easy to compute that in commuting variables $s$ and $t$ we have
\[Q_t = Q_{ss} - 2 Q_s\,  \int_0^s \frac{\psi_{ss}}{\psi}\, ds -2\]
implying,
\[(Q_{ss})_t = (Q_{ss})_{ss} - \left(J + \frac{Q_s}{\psi^2}\right)\, (Q_{ss})_s + \frac{4\psi_{ss}}{\psi^3}\, (Q_s^2 - Q Q_{ss}),\]
where $J = 2\int_0^s \frac{\psi_{ss}}{\psi}\, ds'$.
We also have that  $Q_s^2 - Q Q_{ss} = 2\psi^2 \psi_s^2 - 2\psi^3 \psi_{ss} \ge 0$, yielding that at the maximum of $Q_{ss}$ we have 
\[\frac{d}{d\tau} \max Q_{ss} \le 0.\]
If the maximum of $Q_{ss}$ is attained in the set $\{u > {L}/{\sqrt{|\tau|}}\}$, since $(Q_{ss})_s = 0$ at the maximum point and $\psi > 0$ there, we conclude that $\frac{d}{d\tau} \max (u^2)_{\sigma\sigma} \le 0$. 
\end{proof}

\begin{proof}[Proof of Proposition \ref{claim-2}]
Denote by $q(\sigma,\tau) = u^2(\sigma,\tau)$ and recall that $q_{\sigma\sigma} = Q_{ss}$. We claim that at the boundary of the set $\{u > {L}/{\sqrt{|\tau|}}\}$ we have 
$q_{\sigma\sigma} < 0$. To see that, lets write
\[q_{\sigma\sigma} = u Y_u + Y = \rho \, Z_{\rho} + 2\, Z\]
where $Z(\rho,\tau) := Y(u, \tau)$, $\rho:= u\, \sqrt{|\tau|}$ so that the boundary  $u = {L}/{\sqrt{|\tau|}}$ corresponds to $\rho = L$. We know that $Z(L,\tau)$ converges, as $\tau\to -\infty$ to the Bryant soliton $Z_0(L)$, whose maximal scalar curvature is equal to one. On the other hand,  the asymptotics \eqref{eqn-Zasym} of $Z_0(L)$, imply that 
\[L Z_{0\rho}(L) + 2Z_0(L) = - 4 \, L^{-4} + o(L^{-4}) < 0, \qquad \mbox{for} \,\, L \gg1. \]
This means that for $L \gg 1$  and $\tau \le \tau_0 \ll -1$ we have that $q_{\sigma\sigma}$ is {\em negative at the boundary}, namely 
\[q_{\sigma\sigma} |_{u = \frac{L}{\sqrt{|\tau|}}} = L Z_{\rho}(L,\tau) + 2Z(L,\tau) < 0.\]

\sk
Next, we claim there exist $\tau_0 \ll -1$ and $L \gg 1$ so that $\max_{u  \ge \frac{L}{\sqrt{|\tau|}}} q_{\sigma\sigma} \le 0$. Assume the statement were not true. Since, $q_{\sigma\sigma} <0$ at the boundary, this means that  there exist sequences  $\tau_j \to -\infty$, $\sigma_j$ and $L_j\to \infty$ so that 
$$q_{\sigma\sigma}(\sigma_j,\tau_j) = \max_{u \ge \frac{L_j}{\sqrt{|\tau_j}}} q_{\sigma\sigma} > 0
\qquad \mbox{and} \qquad u(\sigma_j, \tau_j) > \frac{L_j}{\sqrt{|\tau_j|}}.$$
Lemma \ref{lemma-decreasing} then implies that
$q_{\sigma\sigma}(\sigma_j,\tau_j)   \ge c > 0$, for some uniform constant $c >0$. 
Since $q_{\sigma\sigma} = 2 ( u_{\sigma\sigma} + u_\sigma^2)$ and $u_{\sigma\sigma} \le 0$, we conclude  that $u_{\sigma}^2(\sigma_j,\tau_j) \ge c/2$, or expressed in the 
tip region variables, that $Y(u_j,\tau_j) \ge  c/2$, where $u_j > \frac{L_j}{\sqrt{|\tau_j|}}$.
Since $Y_u = 2 u_{\sigma\sigma} \le 0$ we conclude that \[Z(L_j,\tau_j) = Y\Big(\frac{L_j}{\sqrt{|\tau_j|}}, \tau_j\Big) \ge Y(u_j,\tau_j) \ge \frac c2.\]
Since for any $L$ large we have $L_j \ge L$,  for $j$ sufficiently large,   and since $Z(\rho,\tau)$ is decreasing in $\rho$, we have that $Z(L,\tau_j) \ge Z(L_j,\tau_j) \ge \frac c2$.
On the other hand, we have that the $\lim_{j\to\infty} Z(L,\tau_j) = Z_0(L) \approx  1/L^2$, provided  $L \gg1$. 
All these lead to a  contradiction, if we choose $L$ sufficiently large. 
\end{proof}

\subsection{Estimates in the Collar region} In this section we will prove
that any of our solutions satisfies the  sharp estimate  \eqref{eqn-good} in the collar region $\collar_{L,\theta}$,  
provided that $L \gg 1$ and $\tau \leq \tau_0 \ll -1$.  This estimate  played a crucial role in estimating error terms in the
entire Section \ref{sec-tip} dealing with  he tip region. We first show, in the next Proposition,  that our solutions   behave  geometrically as  cylinders  in the region  $u \geq L/\sqrt{|\tau|}$,  for $L \gg 1$ and $\tau \leq \tau_0 \ll -1$.

\begin{prop}\label{claim-1} Given an $\eta >0$ there exist $L \gg 1$ and $\tau_0 \ll -1$ such that for $\tau \le \tau_0$,
\be\label{eqn-cyl-estimate}
\frac{K_1}{K_0} = - \frac{u\, u_{\sigma\sigma}}{1-u_\sigma^2}  \leq \eta, \qquad \mbox{on}\,\,\, \, u \geq \frac{L}{\sqrt{|\tau|}}.
\ee
and moreover,
\be\label{eq-der-3}
u^2 \, |u_{\sigma\sigma\sigma}| < \eta.
\ee
\end{prop}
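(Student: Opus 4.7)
The plan is to prove \eqref{eqn-cyl-estimate} by a contradiction blow-up argument combining Hamilton's compactness theorem for $\kappa$-solutions, Brendle's classification \cite{Br} of three-dimensional noncompact rotationally symmetric $\kappa$-solutions, and the fine asymptotics of Theorem \ref{thm-asym}. The derivative bound \eqref{eq-der-3} will follow by an identical scale-invariant blow-up.

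Suppose \eqref{eqn-cyl-estimate} fails for some $\eta>0$: there exist sequences $L_j\to\infty$, $\tau_j\to-\infty$ and $\sigma_j$ with $u_j:=u(\sigma_j,\tau_j)\ge L_j/\sqrt{|\tau_j|}$ and $-uu_{\sigma\sigma}/(1-u_\sigma^2)|_{(\sigma_j,\tau_j)}\ge\eta$. Set $\rho_j:=u_j\sqrt{|\tau_j|}\ge L_j\to\infty$ and let $(s_j,t_j)$ denote the corresponding point in the un-rescaled flow $g(t)$. I would rescale the original Ricci flow by $\lambda_j^2:=K_1(s_j,t_j)\approx 1/(|t_j|u_j^2)$ centered at $(s_j,t_j)$, obtaining a sequence of pointed ancient $\kappa$-solutions $(\tilde g_j,p_j)$ on $S^3$ with $\tilde K_1(p_j)=1$ and $\tilde K_0(p_j)\ge\eta$ at time $0$. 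Hamilton's compactness produces a subsequential smooth pointed Cheeger-Gromov limit $(M_\infty,g_\infty(t),p_\infty)$, itself a rotationally symmetric three-dimensional ancient $\kappa$-solution satisfying the same curvature relations at $p_\infty$. Since the $u_j$ are uniformly bounded above, the diameter estimate $d(t_j)=4\sqrt{|t_j|\log|t_j|}(1+o(1))$ from Theorem \ref{thm-asym}(iii) yields $\lambda_j\,d(t_j)\approx 4|\tau_j|/\rho_j\to\infty$, so $M_\infty$ is noncompact.

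By Brendle's classification \cite{Br}, $M_\infty$ is then either a round cylinder quotient or the Bryant soliton. The cylinder is immediately ruled out by $\tilde K_0(p_\infty)\ge\eta>0$. To exclude the Bryant soliton I would show that the rescaled distance from $p_\infty$ to the tip diverges: by Theorem \ref{thm-asym}(iii), near its right tip the unrescaled solution is smoothly close to the Bryant profile on scale $k(t_j)^{-1/2}\approx\sqrt{|t_j|/|\tau_j|}$, and the Bryant asymptotic $\bar\psi(\bar s)\sim\sqrt{\bar s}$ gives
\[
s_+(t_j)-s_j\,\approx\,\sqrt{k(t_j)}\,\psi(s_j,t_j)^2\,=\,u_j^2\sqrt{|t_j|\,|\tau_j|},
\]
so that $\lambda_j(s_+(t_j)-s_j)\approx\rho_j\to\infty$ and the Bryant tip escapes to infinity in the limit. (In the complementary regime $\rho_j\to\infty$ with $u_j\to 0$, where the Bryant approximation does not directly apply at $(s_j,t_j)$, the same estimate for $s_+(t_j)-s_j$ follows from inverting the intermediate profile $\bar u(z,\tau)\to\sqrt{2-z^2/2}$ of Theorem \ref{thm-asym}(ii).) Both admissible limits thus lead to a contradiction, proving \eqref{eqn-cyl-estimate}.

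For \eqref{eq-der-3}, since $u^2|u_{\sigma\sigma\sigma}|$ is scale-invariant under Ricci flow rescalings, the same blow-up procedure applies: assuming $u^2|u_{\sigma\sigma\sigma}|(\sigma_j,\tau_j)\ge\eta$ along a sequence, the just-proved \eqref{eqn-cyl-estimate} passes to the limit and forces $\tilde K_0\le\eta\tilde K_1$ everywhere on $M_\infty$; combined with noncompactness and the Bryant exclusion above, $M_\infty$ must be a cylinder, on which $\tilde\psi$ is constant and hence $\tilde\psi^2|\tilde\psi_{\tilde s\tilde s\tilde s}|\equiv 0$, contradicting its value $\ge\eta$ at $p_\infty$ obtained by smooth convergence. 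The main obstacle throughout is excluding the Bryant soliton as a blow-up limit: this requires tracking simultaneously the cylindrical scale $\sqrt{|t|}$, the Bryant scale $k(t)^{-1/2}$, and the rescaling scale $K_1^{-1/2}$, and invoking Theorem \ref{thm-asym}(iii) to verify that the maximum-curvature point of the original solution escapes to infinity at the rescaling scale.
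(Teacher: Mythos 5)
Your proposal follows the same high-level blow-up strategy as the paper's proof — rescale around the would-be bad point, invoke compactness of $\kappa$-solutions, apply Brendle's classification, and exclude the cylinder (by $K_0(p_\infty)>0$) and the Bryant soliton (by sending the tips to infinity). The difference is in how the rescaled distance to the tips is shown to diverge. The paper proves this indirectly through a separate Claim \eqref{eq-implication}: if $p_j$ were within bounded rescaled distance of the tip $p_j^1$, a blow-up would converge to the Bryant soliton, forcing $R(p_j)\sim R(p_j^1)\sim|\tau_j|$, and then the elementary bound $|u_\sigma|\le 1$ gives $|\sigma_j-\sigma_j^1|\ge u_j\ge L_j/\sqrt{|\tau_j|}$, contradicting $|\sigma_j-\sigma_j^1|\lesssim 1/\sqrt{|\tau_j|}$. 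You instead try to compute the distance directly from the asymptotics. This route is attractive but not quite complete as written: the inversion $\bar\psi(\bar s)\sim\sqrt{\bar s}$ is valid only at Bryant scale (i.e.\ $\rho$ bounded), while the intermediate-profile inversion of Theorem~\ref{thm-asym}(ii) converges only on compact subsets of $|z|<2$ — and the blow-up points in question, with $\rho_j\to\infty$ and $u_j\to 0$, fall in the gap between these two regimes. A rigorous version of the estimate you want is available from the concavity of $u^2$ (Proposition~\ref{claim-2}): since $Y_u\le 0$, one has $Y(u,\tau_j)\le Z(L,\tau_j)\to Z_0(L)\approx 1/L^2$ for $u\ge L/\sqrt{|\tau_j|}$, hence $\sigma_+(\tau_j)-\sigma_j\ge \int_{L/\sqrt{|\tau_j|}}^{u_j} Y^{-1/2}\,du'\gtrsim L\,u_j$ for large $j$, and so the rescaled distance is $\gtrsim L$ for any fixed $L$.

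Two further technical points. First, you invoke Hamilton's compactness, but what is actually needed (and what the paper uses) is Perelman's compactness theorem for $\kappa$-solutions — Hamilton's theorem requires a priori curvature and injectivity radius bounds that are not available here. Second, you rescale by $K_1(p_j)$ rather than $R(p_j)$. For the compactness argument you need the normalized scalar curvature at $p_j$ to stay bounded, i.e.\ $R(p_j)/K_1(p_j)=4(K_0/K_1)(p_j)+2$ to be bounded; but an a priori upper bound on $K_0/K_1$ at $p_j$ is exactly the kind of estimate you do not yet have. Rescaling by $R(p_j)$ (as the paper does) avoids this circularity: then $\tilde K_0(p_\infty)+\tilde K_1(p_\infty)\ge 1/4$ and $\tilde K_0(p_\infty)\ge \eta\,\tilde K_1(p_\infty)$ together still exclude the cylinder. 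Finally, your exclusion of the Bryant limit by noting "the tip escapes to infinity" is correct in substance; the paper states it via the splitting theorem (a line in the limit forces a cylinder), which is the cleaner way to make the same observation precise. With these repairs the proposal matches the paper's result.
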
 

\begin{proof}
We will use similar arguments to the ones we used to prove the analogous statement in \cite{ADS2}, based on the  
the following claim: 

\begin{claim} For every $\bar L > 0$ there exist an $L \gg 1$ and a $\tau_0 \ll -1$ so that
\begin{equation}
\label{eq-implication}
u(\sigma,\tau) \ge \frac{L}{\sqrt{|\tau|}} \,\, \implies \,\,  \dist_{g(\tau)}(p, p_{\tau}^k) \ge \frac{\bar L}{\sqrt{R(p,\tau)}}
\end{equation}
where $p$ is any  point in our manifold, corresponding to $\sigma$,  and $p_{\tau}^k, \, k=1,2$ is any of the two tip  points 
where the  scalar curvature becomes maximal, corresponding to $\sigma_{\pm}(\tau)$.  
\end{claim} 

\begin{proof}[Proof of Claim] 

To show above claim we argue by contradiction.  Assume the claim is not correct, meaning there exist an $\bar L > 0$ and sequences $L_j\to \infty$, $\tau_j\to -\infty$ and $\sigma_j$, so that
\begin{equation}
\label{eq-close-tip}
u(\sigma_j,\tau_j)\ge \frac{L_j}{\sqrt{|\tau_j|}} \qquad \mbox{but} \qquad  \dist_{g(\tau_j)}(p_j, p_j^1) \le \frac{\bar L}{\sqrt{R(p_j,\tau_j)}}
\end{equation}
for say $k =1$,  where $p_j \in M$ corresponds to $\sigma_j$ and $p_j^k$ is a brief notation for $p_{\tau_j}^k$. Note that the distance between points $p_j$ and $p_j^1$ is measured with respect to metric $g(\cdot,\tau_j)$. Rescale the flow around $(p_j,\tau_j)$ by $\lambda_j := R(p_j,\tau_j)$, that is set $\tilde {g}_j(\cdot,\tau) = \lambda_j \, g(\cdot, \tau_j + \lambda_j^{-1} \tau)$. Then $R_{\tilde g_j}(p_j,0) = 1$ and by Perelman's compactness theorem for $\kappa$-solutions (see section 11 in \cite{Pe1}) we can extract a convergent subsequence $(M, g_j(\cdot,\tau), p_j)$ that converges to a complete $\kappa$-solution. Since the limit is complete, noncompact, by \cite{Br} we know it is either a shrinking round cylinder or a Bryant soliton. By \eqref{eq-close-tip} we have that the rescaled distance from $p_j$ to $p_j^1$ at time zero is $\widetilde{\dist}_j(p_j,p_j^1) \le \bar L$. Moreover at the tip point the scaling invariant quantity  ${\ds ({K_1}/{K_0})(p_j^1,\tau_j) = 1}$, 
yielding  that  the limiting metric is actually the Bryant soliton. 

By results in \cite{ADS3} we have $R(p_j^1,\tau_j) \sim |\tau_j|$. By \eqref{eq-close-tip} and Perelman's compactness theorem we have that $ 1\le R_{\tilde g_j}(p_j^1,0) = \frac{R(p_j^1,\tau_j)}{R(p_j,\tau_j)} \le C$, for all $j \ge j_0$ and a uniform constant $C$. Hence, for $j \ge j_0$ we have that $R(p_j,\tau_j) \sim |\tau_j|$ as well. Furthermore, if $\sigma_j$ and $\sigma_j^1$ are corresponding to points $p_j$ and $p_j^1$, respectively, we have,
\begin{equation}
\label{eq-sigma1}
|\sigma_j - \sigma_j^1| \le \dist_{g(\tau_j)}(p_j,p_j^1) \le \frac{\bar L}{\sqrt{R(p_j,\tau_j)}} \le \frac{C \bar L}{\sqrt{|\tau_j|}}.
\end{equation}
On the other hand, using $|u_{\sigma}| \le 1$ we get
\begin{equation}
\label{eq-sigma2}
|\sigma_j - \sigma_j^1| = \int_0^{u(\sigma_j,\tau_j)} \frac{du}{|u_{\sigma}|} \ge u(\sigma_j,\tau_j) \ge \frac{L_j}{\sqrt{|\tau_j|}}.
\end{equation}
Combining \eqref{eq-sigma1} and \eqref{eq-sigma2}, if we let $j\to\infty$, yield contradiction. This concludes the proof of 
the claim.
\end{proof}
\sk

To prove \eqref{eqn-cyl-estimate},  due to \eqref{eq-implication} it is enough to prove the following statement: {\em for every $\eta > 0$ there exist $L \gg 1$ and $\tau_0 \ll -1$ such that for $\tau \le \tau_0$ we have
\[ \min \big ( \dist_{g(\tau)}(p,p_{\tau}^1), \dist_{g(\tau)}(p,p_{\tau}^2) )  \ge \frac{L}{\sqrt{R(p,\tau)}} \,\, \implies \,\, \frac{K_1}{K_0}(p,\tau) \le \eta.\]} 

To prove this  we argue again by contradiction. Assume there exist $\eta > 0$ and sequences $L_j\to \infty$, $\tau_j \to -\infty$ and $p_j$ so that
\[\frac{K_1}{K_0}(p_j,\tau_j) \ge \eta  \quad \mbox{but} \quad \min \big ( \dist_{\tau_j}(p_j,p_j^1), \dist_{\tau_j}(p_j,p_j^2) \big ) \ge \frac{L_j}{\sqrt{R(p_j,\tau_j)}}.\]
Rescale the metric around $(p_j,\tau_j)$ by $\lambda_j := R(p_j,\tau_j)$, that is  consider a sequence of rescaled metrics $\tilde{g}_j(\cdot,\tau) = \lambda_j \, g(\cdot, \tau_j + \tau \lambda_j^{-1})$.  Then the rescaled distance satisfies $\widetilde{\dist}_j(p_j,p_j^k) \ge L_j$,
 for both $k =1$ and $k = 2$. Using Perelman's compactness theorem for $\kappa$-solutions, the fact that ${K_1}/{K_0}$ is scaling invariant quantity and Brendle's classification result of complete noncompact $\kappa$-solutions (see \cite{Br}), after passing to a subsequence we conclude that the  sequence of rescaled solutions $(M, \tilde g_j(\cdot,\tau), p_j)$ subconverges to a Bryant soliton. Since the $\lim_{j\to \infty} \widetilde{\dist}_j(p_j,p_j^k) = \infty$ for both $k = 1$ and $k =2$ and since $K_0, K_1 \ge 0$ for our rotationally symmetric solution,  the splitting theorem implies that  the limit has to split off a line, implying that the limit has to be a cylinder, which contradicts the above fact the limit has to be the Bryant soliton at the same time. 
 
 \sk  Finally, the bound \eqref{eq-der-3} follows by similar arguments using again that $u^2\, u_{\sigma\sigma\sigma}$ is
 a scaling invariant quantity.  This finishes the proof of the proposition. 
\end{proof}

\sk 

Based on Proposition \ref{claim-2} we next show the following crucial for our purposes sharp bound. This bound 
is  extensively used in Section \ref{sec-tip}. 

\begin{lemma}\label{lemma-crucial-estimate} Fix $\eta >0$ small. There exists 
$\theta, L$   and $\tau_0 \ll -1$ such  that 
\be\label{eqn-good}
| 1 + \frac{ \sigma \, u u_\sigma}2| < \eta 
\ee holds on $\collar_{L,\theta}$, for $\tau \leq \tau_0 \ll -1$. 

\end{lemma}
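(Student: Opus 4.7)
The plan is to give a direct proof (not by contradiction) based on the identity, valid for $\sigma > 0$ (so that $u_\sigma = -\sqrt{Y}$), rewritten in the tip-scale variables $z := \sigma/\sqrt{|\tau|}$, $\rho := u\sqrt{|\tau|}$, $Z(\rho,\tau) := u_\sigma^2$:
\[
  1 + \frac{\sigma u u_\sigma}{2} \;=\; 1 - \frac{z\,\rho\sqrt{Z(\rho,\tau)}}{2}.
\]
I will show that both $z = 2 + O(\theta^2) + o_\tau(1)$ and $\rho\sqrt{Z} = 1 + O(\theta^2 + L^{-2}) + o_\tau(1)$ uniformly on $\collar_{L,\theta}$, which forces the right-hand side to be $O(\theta^2 + L^{-2}) + o_\tau(1)$, hence $<\eta$ for $\theta$ small, $L$ large, and $\tau_0$ sufficiently negative.

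The central estimate is on $\rho^2 Z$. Proposition \ref{claim-2} gives concavity $(u^2)_{\sigma\sigma}\leq 0$ on the collar, which rewrites in tip coordinates (via $(u^2)_{\sigma\sigma}=2(Y+uu_{\sigma\sigma})$ and $uY_u = 2uu_{\sigma\sigma}$) as $(\rho^2 Z)_\rho\leq 0$, so $\rho^2 Z$ is non-increasing in $\rho$ at fixed $\tau$. The inner boundary $\rho = L$ then provides an upper bound: Theorem \ref{thm-asym}(iii) gives $Z(L,\tau)\to Z_0(L)$ smoothly, and Bryant's asymptotics \eqref{eqn-Zasym} yield $L^2 Z_0(L) = 1 + O(L^{-2})$. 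The outer boundary $u = 2\theta$ provides a lower bound: there $z\to 2\sqrt{1-2\theta^2}$, which lies in a compact subset of $|z|<2$, so Theorem \ref{thm-asym}(ii) together with the $C^1$ interior regularity of Lemma \ref{lem-loc-est} yields $\rho^2 Z = |\tau| u^2 u_\sigma^2 \to z^2/4 = 1-2\theta^2$, and monotonicity propagates this bound throughout. Together one obtains $\rho^2 Z = 1 + O(\theta^2 + L^{-2}) + o_\tau(1)$ uniformly on the collar. Once this is established, the $z$-estimate follows from $\sigma_+(\tau)/\sqrt{|\tau|}\to 2$ (a consequence of the diameter asymptotic in Theorem \ref{thm-asym}(iii)) together with the identity
\[
  \sigma_+(\tau) - \sigma \;=\; \frac{1}{\sqrt{|\tau|}}\int_0^{u\sqrt{|\tau|}} \frac{d\rho'}{\sqrt{Z(\rho',\tau)}},
\]
bounded via $\sqrt{Z}\geq \sqrt{1-2\theta^2}/\rho$ (from the lower bound just established) and integrated over $u\leq 2\theta$ to yield $(\sigma_+(\tau)-\sigma)/\sqrt{|\tau|} = O(\theta^2) + o_\tau(1)$.

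The main obstacle will be the lower bound on $\rho^2 Z$ throughout the collar. The Bryant asymptotic from Theorem \ref{thm-asym}(iii) applies only for bounded $\rho$, while the cylindrical asymptotic from Theorem \ref{thm-asym}(ii) applies only for $z$ in a compact subset of $|z|<2$ (i.e.\ $\rho$ not close to the inner boundary). The concavity-driven monotonicity $(\rho^2 Z)_\rho\leq 0$ is precisely the bridge between these two descriptions, and the compatibility of the two asymptotic values ($1 + O(L^{-2})$ at $\rho = L$ from the Bryant side, and $1-2\theta^2$ at $u = 2\theta$ from the cylindrical side) confirms geometrically that the collar is the transition region between the cylindrical and Bryant regimes of the ancient solution; once the sandwich is closed the desired bound $|1 + \sigma u u_\sigma/2| < \eta$ follows.
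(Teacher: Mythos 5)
The paper's own proof is a bare reference: ``Having Proposition~\ref{claim-2} and unique asymptotics that we proved in \cite{ADS3}, the proof\dots is identical to the proof of Corollary 4.7 in \cite{ADS2}.'' Your strategy — rewrite $1+\tfrac{\sigma u u_\sigma}{2}=1-\tfrac{z\rho\sqrt Z}{2}$, show $\rho^2 Z$ is monotone in $\rho$ by Proposition~\ref{claim-2}, sandwich it between the Bryant value at $\rho=L$ and the cylindrical value at $u=2\theta$, and then recover $z\to 2$ from an arc-length integral — matches exactly the ingredients the paper flags, so the route is essentially the same. However, two steps in your sketch do not close as written.

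First, the lower bound at the outer boundary. You claim that Theorem~\ref{thm-asym}(ii) (a $C^0$ statement) ``together with the $C^1$ interior regularity of Lemma~\ref{lem-loc-est}'' yields $|\tau|\,u^2u_\sigma^2\to z^2/4$ at $u=2\theta$. But Lemma~\ref{lem-loc-est} only gives $|u_{\sigma\sigma}|\le C(\theta)/\sqrt{|\tau|}$, i.e.\ $|\bar u_{zz}|\le C(\theta)\sqrt{|\tau|}$, which is not uniform in $\tau$, so it does not furnish the equicontinuity of $\bar u_z$ needed to upgrade $C^0$ to $C^1$ convergence. You either need the sharper derivative asymptotics underlying Theorem~\ref{thm-asym} (the $o(|\tau|^{-1})$ statements quoted later as \eqref{eq-L2-asymp0}--\eqref{eq-L2-asymp2}, coming from \cite{ADS3}), or you can avoid $C^1$ convergence entirely: since $(u^2)_{\sigma\sigma}\le 0$ implies $|u\,u_\sigma|$ is non-increasing in $u$, the $C^0$ limit $z(u,\tau)\to 2\sqrt{1-u^2/2}$ on the band $2\theta\le u\le 4\theta$ combined with $\sigma(2\theta,\tau)-\sigma(4\theta,\tau)=\int_{2\theta}^{4\theta}du'/|u_\sigma(u')|\ge \tfrac{1}{|2\theta\,u_\sigma(2\theta)|}\int_{2\theta}^{4\theta}u'\,du'$ pins down $4\theta^2|\tau|\,u_\sigma(2\theta)^2\ge 1-O(\theta^2)-o_\tau(1)$ directly.

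Second, the integral estimate for $z$. The bound $\sqrt{Z}\ge\sqrt{1-2\theta^2}/\rho$ which you invoke for $\int_0^{u\sqrt{|\tau|}}d\rho'/\sqrt{Z}$ fails for $\rho'<L$: as $\rho'\to 0$, $Z\to Z_0(\rho')\to 1$, so $\rho'\sqrt Z\to 0$, not $\sqrt{1-2\theta^2}$. The fix is routine — split at $\rho'=L$; on $[0,L]$ use $Z\ge c(L)>0$ from the Bryant convergence, contributing $O(L)/|\tau|\to 0$; on $[L,u\sqrt{|\tau|}]$ use the lower bound on $\rho^2 Z$ — but the sketch as stated is wrong. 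With both of these fixes the argument is sound.
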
 
\begin{proof}

Having Proposition \ref{claim-2} and unique asymptotics that we proved in \cite{ADS3}, the proof of the first estimate  is identical to the proof of Corollary 4.7 in \cite{ADS2}. 

\end{proof}


\subsection{Energy estimate for the linear equation} In this final section we prove 
 the following standard energy estimate adopted to  ancient solutions.    

\begin{lemma}
\label{lemma-energy}
Let $w$ be a compactly supported, ancient solution to
\[w_{\tau} = \cL[w] + g, \qquad \mbox{on} \,\, \,\, \R \times (-\infty, \tau_0]. \]
Then,  there exists a uniform constant $C$ so that
\bee
\begin{split}
\sup_{\tau \le \tau_0} \int w_{\sigma}^2\, d\mu &+ \sup_{\tau \le \tau_0} \int_{\tau-1}^{\tau} \int w_{\sigma\sigma}^2\,d\mu \, d\tau' \\
&\le C\, 
\Big( \sup_{\tau \le \tau_0}\int_{\tau-1}^{\tau} \int w^2\, d\mu \, d\tau' + \sup_{\tau \le \tau_0} \int_{\tau-1}^{\tau}\int g^2\, d\mu \, d\tau' \Big).
\end{split}
\eee 
\end{lemma}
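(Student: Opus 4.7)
The plan is to exploit the self-adjointness of $\cL$ on $\hilb = L^2(d\mu)$, with $d\mu = e^{-\sigma^2/4}d\sigma$, and run a standard parabolic energy argument — an $L^2$-in-space identity tested against $w$, combined with an $H^1$-in-space identity obtained by differentiating $\int w_\sigma^2\,d\mu$ in time — each localized by a smooth time cut-off $\eta(\tau')$ with $\eta\equiv 1$ on $[\tau-1,\tau]$, $\supp\eta\subset[\tau-2,\tau]$, and $|\eta'|\le 2$. Since $w$ is compactly supported in $\sigma$, the weighted integrations by parts produce no boundary terms, and the only book-keeping subtlety is the handling of the $\eta\eta'$ terms.

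Write $\cL = A + \mathrm{id}$ with $A f := f_{\sigma\sigma} - \tfrac{\sigma}{2}f_\sigma$; integration by parts against the weight yields the key identities $\langle A f, h\rangle = -\int f_\sigma h_\sigma\,d\mu$ and, in particular, $\langle Aw, w\rangle = -\int w_\sigma^2\,d\mu$. Testing the equation against $w$ gives
\begin{equation*}
\tfrac{1}{2}\tfrac{d}{d\tau}\int w^2\,d\mu + \int w_\sigma^2\,d\mu = \int w^2\,d\mu + \int wg\,d\mu.
\end{equation*}
Multiplying by $\eta^2$ and integrating over $(-\infty,\tau]$, the boundary contribution at $\tau'=\tau$ is the nonnegative quantity $\tfrac12\int w^2(\cdot,\tau)\,d\mu$, and Young's inequality on the $\int wg\,d\mu$ term yields
\begin{equation*}
\int w^2(\cdot,\tau)\,d\mu + \int_{\tau-1}^\tau\!\!\int w_\sigma^2\,d\mu\,d\tau' \le C\int_{\tau-2}^\tau\!\!\int\big(w^2+g^2\big)\,d\mu\,d\tau'.
\end{equation*}

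For the $H^1$-bound I would differentiate $\int w_\sigma^2\,d\mu = -\langle Aw,w\rangle$ in time and use self-adjointness of $A$ to get $\tfrac12\tfrac{d}{d\tau}\int w_\sigma^2\,d\mu = -\langle Aw, w_\tau\rangle = -\langle Aw, Aw+w+g\rangle$. A short computation (the cross term $-\int\sigma w_{\sigma\sigma}w_\sigma\,d\mu$ is handled by integration by parts using $(\sigma e^{-\sigma^2/4})' = (1-\tfrac{\sigma^2}{2})e^{-\sigma^2/4}$) shows $\int(Aw)^2\,d\mu = \int w_{\sigma\sigma}^2\,d\mu + \tfrac12\int w_\sigma^2\,d\mu$, so Young's inequality produces the differential inequality
\begin{equation*}
\tfrac{d}{d\tau}\int w_\sigma^2\,d\mu + \int w_{\sigma\sigma}^2\,d\mu \le C\Big(\int w_\sigma^2\,d\mu + \int g^2\,d\mu\Big).
\end{equation*}
Multiplying by $\eta^2$ and integrating on $(-\infty,\tau]$ again, the favorable boundary term is now $\int w_\sigma^2(\cdot,\tau)\,d\mu$, yielding
\begin{equation*}
\int w_\sigma^2(\cdot,\tau)\,d\mu + \int_{\tau-1}^\tau\!\!\int w_{\sigma\sigma}^2\,d\mu\,d\tau' \le C\int_{\tau-2}^\tau\!\!\int\big(w_\sigma^2+g^2\big)\,d\mu\,d\tau'.
\end{equation*}

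To close the argument I would feed the first (shifted) inequality into the right-hand side of the second to control $\int_{\tau-2}^\tau\!\int w_\sigma^2\,d\mu\,d\tau'$ by the integral of $w^2+g^2$ over $[\tau-3,\tau]$, then cover this three-unit window by three unit windows and take $\sup_{\tau\le\tau_0}$. This yields the stated estimate. There is no essential obstacle beyond tracking the cut-off terms; the whole argument is linear and rests only on the algebraic identity $\int(Aw)^2\,d\mu = \int w_{\sigma\sigma}^2\,d\mu + \tfrac12\int w_\sigma^2\,d\mu$ and the self-adjointness of $A$.
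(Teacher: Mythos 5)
Your proof is correct and follows the same two-step energy-estimate strategy as the paper: test the equation against $w$ to control $\int w_\sigma^2$ on time slabs, derive an analogous $H^1$-to-$H^2$ differential inequality, and close by localizing in time with a cut-off and feeding the first bound into the second. The only notable difference is an algebraic streamlining — by pairing with $Aw$ and using the identity $\int(Aw)^2\,d\mu=\int w_{\sigma\sigma}^2\,d\mu+\tfrac12\int w_\sigma^2\,d\mu$, you sidestep the explicit $\sigma$-weighted cross terms ($\sigma w w_\sigma$, $\sigma w_\sigma g$, $\sigma^2 w_\sigma^2$) and the appeal to the boundedness of $f\mapsto\sigma f$ from $\cD$ to $\cH$ that appear when the paper multiplies by $w_{\sigma\sigma}$ directly, but the underlying computation is the same.
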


\begin{proof}
If we multiply the linear equation by $w$ and integrate it by parts, we obtain
\[\frac12\frac{d}{d\tau} \int w^2\, d\mu = -\int w_{\sigma}^2\, d\mu + \int w^2\, d\mu + \int gw \, d\mu.\]
For any number $\tau\in(-\infty,\tau_0)$, set $\eta(\tau') = \tau'-\tau + 2$ so that $0 \le \eta(\tau') \le 2$ for $\tau' \in [\tau-2,\tau]$. Then, for any $\tau'\in [\tau-2,\tau]$, we have
\bee
\begin{split}
\frac 12 \frac{d}{d\tau'}\Big(\eta(\tau') \int w^2\, d\mu\Big) = &-\eta(\tau') \int w_{\sigma}^2\, d\mu \\
&+( \eta(\tau') + \frac 12)\,\int w^2\, d\mu + \eta(\tau')\int gw \, d\mu.
\end{split}
\eee 
If we integrate it from $\tau-2$ to $\tau'\in [\tau-1,\tau]$, for every $\tau \le \tau_0$ and Cauchy-Schwarz inequality for the last term on the right hand side, we get
\begin{equation}
\label{eq-energy-first-der}
\begin{split}
\sup_{\tau\le \tau_0} \int w^2\, d\mu &+ \sup_{\tau \le \tau_0} \int_{\tau-1}^{\tau} \int w_{\sigma}^2 \, d\mu d\tau' \\
& \le C \,  \Big( \sup_{\tau \le \tau_0} \int_{\tau-1}^{\tau}\int w^2\, d\mu d\tau' + \sup_{\tau \le \tau_0}\int_{\tau-1}^{\tau} \int g^2\, d\mu d\tau' \Big)
\end{split}
\end{equation}
for a uniform constant $C$.

\sk

Next, lets multiply the equation for $w$ by $w_{\sigma\sigma}$ and integrate by parts to get
\begin{equation*}
\begin{split}
\frac12 \frac{d}{d\tau} \int w_{\sigma}^2\, d\mu &+ \int (w_{\sigma\sigma}^2 - \sigma w_{\sigma} w_{\sigma\sigma} + \frac14 \sigma^2 w_{\sigma}^2)\, d\mu \\&=  \int (\frac12 \sigma w w_{\sigma} + \frac12 \sigma w_{\sigma} g  -  w w_{\sigma\sigma} - g w_{\sigma\sigma})\, d\mu.
\end{split} 
\end{equation*}
If we multiply the previous equation by the function $0 \le \eta(\tau) \le 2$  which is defined as above, while using 
\[\int \sigma w_{\sigma} w_{\sigma\sigma}\, d\mu = -\frac12 \int w_{\sigma}^2\, d\mu + \frac14 \int \sigma^2 w_{\sigma}^2\, d\mu\]
and Cauchy-Schwarz,  we obtain for e
\begin{equation*}
\begin{split}
\frac{d}{d\tau'}\Big(\eta (\tau') \int w_{\sigma}^2\, d\mu\Big) &+ \eta(\tau')  \int w_{\sigma\sigma}^2\, d\mu + \frac12\, \eta(\tau') 
 \int w_{\sigma}^2\, d\mu \\
&\le \epsilon \int \sigma^2 w_{\sigma}^2\, d\mu + C_{\epsilon} \int w^2\, d\mu + C_{\epsilon}\int g^2\, d\mu + C\int w_{\sigma}^2\, d\mu. 
\end{split} 
\end{equation*}
Choose $\epsilon >0$ sufficiently small. Using that the operator $f \to \sigma f$ is bounded from $\cD$ to $\cH$, by choosing  small enough, if we integrate the 
previous estimate for  $\tau' \in [\tau-2, \tau]$, for every $\tau \le \tau_0$, similarly as above we get
\bee
\begin{split}
\sup_{\tau \le \tau_0} \int w_{\sigma}^2\, d\mu &+ \sup_{\tau \le \tau_0} \int_{\tau-1}^{\tau}\int w_{\sigma\sigma}^2\, d\mu \\&\le C\sup_{\tau\le \tau_0} \int_{\tau-1}^{\tau} \int (w^2 + w_{\sigma}^2)\, d\mu + \sup_{\tau\le\tau_0}\int_{\tau-1}^{\tau}\int g^2\, d\mu.
\end{split}
\eee
Combining this estimate with \eqref{eq-energy-first-der} concludes the proof of the Lemma.
\end{proof}



\begin{thebibliography}{100}


\bibitem{ADS1} Angenent, S., Daskalopoulos, P., Sesum, N., {\em Unique asymptotics of ancient convex mean curvature flow solutions};  to appear in J.  Diff.  Geom.

\sk

\bibitem{ADS2} Angenent, S., Daskalopoulos, P., Sesum, N., {\em Uniqueness of two-convex closed ancient solutions to the mean curvature flow};  arXiv:1804.07230.

\sk   
\bibitem{ADS3} Angenent, S., Daskalopoulos, P., Sesum, N.,  {\em Unique asymptotics   of ancient compact non-collapsed  solutions to the 3-dimensional Ricci flow}.   
 
 \sk 
  \bibitem{AK1} Angenent, S., Knopf, D., \emph{An example of neck pinching for Ricci flow on $S^{n+1}$}; Math. Res. Lett. 11 (2004); 493--518.
  
  \smallskip
  
  \bibitem{AK2} Angenent, S., Knopf, D., \emph{Precise asymptotics of the Ricci flow neck pinch}; Comm. Anal. Geom. 15 (2007); 773--844.
  
  \smallskip
  
  \bibitem{ACK} Angenent, S.,Caputo, M.C., Knopf, D.,   \emph{Minimally invasive surgery for Ricci flow singularities.}; J. Reine Angew. Math. (Crelle) 672 (2012), 39--87. 
  \sk
  \bibitem{AIK} Angenent, S., Isenberg, J., Knopf, D., \emph{Degenerate neck pinches in Ricci flow}; J. Reine Angew. Math. (Crelle) 709 (2015); 81--117.
  \sk 
\bibitem{BKN}  Bakas, I., Kong, S.,  Ni, L., Ancient solutions of Ricci flow on spheres 
and generalized hopf fibrations, J. Reine Angew. Math. 663 (2012) 20--248. 

\sk  
\bibitem{BK} Bamler, R., Kleiner, B., {\em On the rotational symmetry of $3$-dimensional $\kappa$-solutions}; arXiv:1904.05388.  

\sk
\bibitem{BLT} Bourni, T., Langford, M., Tinaglia,G., {\em Collapsing ancient solutions of mean curvature flow}; arXiv: 1705.06981v2.
\sk  
  \bibitem{Br} Brendle, S., {\em Ancient solutions to the Ricci flow in dimension three}; arXiv: 1811.02559.
  
  \smallskip
  
  \bibitem{Br1} Brendle, S., {\em Rotational symmetry of self-similar solutions to the Ricci flow}; Invent. Math. 194 (2013), no. 3, 731--764.
  
  \smallskip
  
  \bibitem{Br2} Brendle, S., {\em Rotational symmetry of ancient solutions to the Ricci flow in dimension three - the compact case}; arXiv:1904.07835.
  
  \sk
    
   \bibitem{BC} Brendle, S., Choi, K., {\em  Uniqueness of convex ancient solutions to mean curvature flow in $\mathbb{R}^3$}; arXiv:1711.00823.
  
  \smallskip
  
  \bibitem{BC1} Brendle, S., Choi, K., {\em
  Uniqueness of convex ancient solutions to mean curvature flow in higher dimensions}; arXiv:1804.00018.
  
  \smallskip  
  
  \bibitem{BHS} Brendle, S., Huisken, G., Sinestrari,C., \emph{Ancient solutions to the Ricci flow with pinched curvatures}; Duke Math.  J.  158 (2011); 537--551.

 \sk
  
  \bibitem{Bryant}  Bryant, R.L., {\em Ricci flow solitons in dimension three with SO(3)-symmetries}, available
at www.math.duke.edu/~bryant/3DRotSymRicciSolitons.pdf. 
  
  
  \smallskip
 
  \bibitem{DHS} Daskalopoulos, P., Hamilton, R., Sesum, N., {\em Classification of ancient compact solutions to the Ricci flow on surfaces}; J. Differential Geom. 91 (2012), no. 2, 171--214.
  
 \smallskip
  


\smallskip

\bibitem{Fa2}  Fateev, V.A., {\em  The sigma model (dual) representation for a two-parameter family of integrable quantum field theories};
 Nuclear Phys. B 473 (1996) 509--538. 
  
 \smallskip
  
  
\bibitem{FOZ} Fateev, V.A., Onofri, E. and Zamolodchikov, Al. B. , Integrable deformations of the O(3) sigma  model. The sausage model. Nuclear Phys. B 406 (1993) 521--565. 

\sk

\bibitem{Ha100} Hamilton,R., {\em The formation of singularities in the Ricci flow.}; Surveys in differential geometry, Vol. II
(Cambridge, MA, 1993), 7--136, Int. Press, Cambridge, MA, 1995.

\smallskip 

 \bibitem{Ha} Hamilton, R., \emph{Three manifolds with positive Ricci curvature}; J. Diff. Geom. 17 (1982); 255--306.
  
  \smallskip
  
  \bibitem{Ha1} Hamilton, R., \emph{The Harnack estimate for the Ricci flow}; J. Diff. Geom. 37 (1993); 225--243.
  
  \smallskip
  
 \bibitem{HO} Haslhofer, R., Hershkovits, O., {\em Ancient solutions of the mean curvature flow}; Comm. Anal. Geom. 24 (2016), no. 3, 593--604. 
  
  \smallskip

 \bibitem{K1} King, J.R., {\em Exact polynomial solutions to some nonlinear diffusion equations}, Physica. D {\bf 64} (1993), 39--65.
  
  \smallskip

  
  \bibitem{KL} Kleiner, B., Lott, J., \emph{Notes on Perelman's paper}; Geom. Topol. 12 (2008); 2587--2855.
  
  \smallskip
  
\bibitem{LSU}  Ladyzhenskaya, O.A., Solonnikov,V.A., UralÕceva, N.N., {\emph Linear and Quasi-linear Equations of Parabolic Type}; A.M.S. Tran sl. Math. Monogr. 23 (1968).
   
   \smallskip
   
   \bibitem{Ni} Ni,L., {\em  Closed type I ancient solutions to Ricci flow}; Recent advances in geometric analysis, 147--150, Adv. Lect. Math. (ALM), 11, Int. Press, Somerville, MA, 2010. 
   

   \sk
  \bibitem{Pe1} Perelman, G., \emph{The entropy formula for the Ricci flow and its geometric applications}; arXiv: 0211159.
    
  \smallskip
  
  \bibitem{R} Rosenau, P., {\em Fast and superfast diffusion processes}, Phys. Rev. Lett. {\bf 74} (1995), 1056--1059.
  
  \smallskip
  
    \bibitem{Wh} White, B., {\em The nature of singularities in mean curvature flow of mean convex sets}; J. Amer. Math. Soc., 16(1):123--138, 2003.
  

   
\end{thebibliography}
\end{document}